\newtheorem{theorem}{theorem}[section]
\newtheorem{prop}[theorem]{Proposition}
\newtheorem{notation}[theorem]{Notation}
\newtheorem{lemma}[theorem]{Lemma}
\newtheorem{cor}[theorem]{Corollary}
\newtheorem{fact}[theorem]{Fact}
\theoremstyle{definition}
\newtheorem{definition}[theorem]{Definition}
\newtheorem{remark}[theorem]{Remark}
\newtheorem{example}[theorem]{Example}
\newcommand\reallywidehat[1]{%
\savestack{\tmpbox}{\stretchto{%
  \scaleto{%
    \scalerel*[\widthof{\ensuremath{#1}}]{\kern-.6pt\bigwedge\kern-.6pt}%
    {\rule[-\textheight/2]{1ex}{\textheight}}
  }{\textheight}%
}{0.5ex}}%
\stackon[1pt]{#1}{\tmpbox}%
}
\newcommand{\one}{1}
\newcommand{\zero}{0}
\title{Choice-Free Stone Duality}
\thanks{Postprint of the article in \textit{The Journal of Symbolic Logic}, Vol.~85, No.~1, March 2020, 109-148. This version updates the label on the top arrow in Figure 2, the phrasing of the proof of Proposition 9.1, a reference in Example 8.22 from `Lemma 9.2' to `Lemma~8.21', and a reference in \S~10.1 from `Definition 3.10' to `Definition 3.8'.}
\author{Nick Bezhanishvili}
\address{Institute for Logic, Language and Computation\\
University of Amsterdam\\
1090 GE Amsterdam, The Netherlands}
\email{n.bezhanishvili@uva.nl}
\author{Wesley H. Holliday}
\address{Department of Philosophy and\\
Group in Logic and the Methodology of Science\\
University of California, Berkeley\\
Berkeley, CA 94720, USA}
\email{wesholliday@berkeley.edu}
\keywords{Stone duality, Boolean algebra, regular open algebra, Vietoris hyperspace, spectral spaces, Stone locales, Axiom of Choice.}
\subjclass{03G05, 06E15, 06D22, 03E25}
\begin{document}

\begin{abstract} The standard topological representation of a Boolean algebra via the clopen sets of a Stone space requires a nonconstructive choice principle, equivalent to the Boolean Prime Ideal Theorem. In this paper, we describe a choice-free topological representation of Boolean algebras. This representation uses a subclass of the spectral spaces that Stone used in his representation of distributive lattices via compact open sets. It also takes advantage of Tarski's observation that the regular open sets of any topological space form a Boolean algebra.  We prove without choice principles that any Boolean algebra arises from a special spectral space $X$ via the compact regular open sets of $X$; these sets may also be described as those that are both compact open in $X$ and regular open in the upset topology of the specialization order of $X$, allowing one to apply to an arbitrary Boolean algebra simple reasoning about regular opens of a separative poset. Our representation is therefore a mix of Stone and Tarski, with the two connected by Vietoris: the relevant spectral spaces also arise as the hyperspace of nonempty closed sets of a Stone space endowed with the upper Vietoris topology. This connection makes clear the relation between our point-set topological approach to choice-free Stone duality, which may be called the hyperspace approach, and a point-free approach to choice-free Stone duality using Stone locales. Unlike Stone's representation of Boolean algebras via Stone spaces, our choice-free topological representation of Boolean algebras does not show that every Boolean algebra can be represented as a field of sets; but like Stone's representation, it provides the benefit of a topological perspective on Boolean algebras, only now without choice.  In addition to representation, we establish a choice-free dual equivalence between the category of Boolean algebras with Boolean homomorphisms and a subcategory of the category of spectral spaces with spectral maps. We show how this duality can be used to prove some basic facts about Boolean~algebras.
\end{abstract}

\maketitle

\tableofcontents

\section{Introduction}\label{Intro}

Stone \cite{Stone1936} proved that any Boolean algebra (BA) $\mathbb{A}$ is isomorphic to the field of clopen sets of a Stone space (zero-dimensional compact Hausdorff space), namely the Stone dual of $\mathbb{A}$. As the Stone dual of $\mathbb{A}$ is the set of ultrafilters of $\mathbb{A}$ with the topology generated by $\{\widehat{a}\mid a\in \mathbb{A}\}$, where $\widehat{a}$ is the set of ultrafilters containing $a$, Stone's representation requires a nonconstructive choice principle---equivalent to the Boolean Prime Ideal Theorem---asserting the existence of sufficiently many ultrafilters. 

In this paper, we describe a choice-free topological representation of BAs. This representation uses a subclass of the spectral spaces that Stone \cite{Stone1938} used in his representation of distributive lattices via compact open sets.  It also takes advantage of Tarski's \cite{Tarski1937,Tarski1938} observation that the regular open sets of any topological space form a Boolean algebra.  We prove without choice principles that any Boolean algebra arises from a special spectral space $X$ via the compact regular open sets of $X$; these sets may also be described as those that are both compact open in $X$ and regular open in the upset topology of the specialization order of $X$, allowing one to apply to an arbitrary BA simple reasoning about regular opens of a separative poset.\footnote{The consideration of two topologies is clearly related to Priestley's \cite{Priestley1970} alternative representation for distributive lattices using certain ordered Stone spaces: any distributive lattice arises from a Priestley space via the sets that are both clopen in the Stone topology of the space and open in the upset topology arising from the additional order. We consider a Priestley-like version of our representation of BAs in Section \ref{ChoiceSection}.}  Our representation is therefore a mix of Stone and Tarski, with the two connected by Vietoris~\cite{Vietoris1922}: the relevant spectral spaces also arise as the hyperspace of nonempty closed sets of a Stone space endowed with the upper Vietoris topology. We characterize these spectral spaces, which we call UV-spaces, with several axioms including a special separation axiom, reminiscent of the Priestley separation axiom \cite{Priestley1970}. The connection with the Vietoris hyperspace construction makes clear the relation between our point-set topological approach to choice-free Stone duality, which may be called the hyperspace approach, and a point-free approach to choice-free Stone duality using Stone locales \cite{Johnstone1982,Vickers1989}. 

Unlike Stone's representation of BAs via Stone spaces, the choice-free topological representation of BAs via UV-spaces does not show that every BA can be represented as a field of sets, with complement as set-theoretic complement and join as union. Such a representation implies the Boolean Prime Ideal Theorem.\footnote{If a BA is isomorphic to a field $\mathcal{F}$ of sets over a set $X$, then picking any point $x\in X$ gives us an ultrafilter $\{S\in\mathcal{F}\mid x\in S\}$. The statement that every BA contains an ultrafilter then implies that for any disjoint filter-ideal pair in a BA, the filter can be extended to an ultrafilter disjoint from the ideal. The equivalent dual statement for ideals is the Boolean Prime Ideal Theorem.} However, like Stone's representation, ours provides the benefit of a topological perspective on BAs, only now without choice. 

In addition to representation, we establish a choice-free dual equivalence between the category of BAs with Boolean homomorphisms and the category of UV-spaces with special spectral maps. We show how this duality can be applied by using it to prove some basic theorems about BAs.

The axiom of choice and its variants have traditionally been of general interest to logicians. Interest in choice also arises specifically in connection with topology and Stone duality as in \cite{Johnstone1981,Johnstone1982,Johnstone1983}. In this paper, we assume the motivations summarized in \cite{Herrlich2006} for investigating mathematics without the axiom of choice---in particular, mathematics based on ZF set theory instead of only ZFC. Only starting in our applications section (Section 8) will we go beyond ZF by using the axiom of dependent choice (DC), which is widely considered to be constructively acceptable (see \cite[\S~14.76]{Schechter1996}). There we work in the style of what is called \textit{quasiconstructive mathematics} in \cite{Schechter1996}, defined as ``mathematics that permits conventional rules of reasoning plus ZF + DC, but no stronger forms of Choice'' (p.~404).

The paper is organized as follows. Sections \ref{PossSection} and \ref{RepresentationSection} present requisite background and the representation to be used in the following sections, which is redescribed in Section \ref{ROsection}. Section \ref{VietorisSection} characterizes the resulting duals of BAs as UV-spaces; Section \ref{DualitySection} establishes the dual equivalence result; and Section \ref{L&H} contrasts our hyperspace approach with a localic approach. Section \ref{DictionarySection} contains a ``duality dictionary'' for translating between BA notions and UV notions, and Section \ref{ApplicationSection} contains sample applications of the duality. Although our focus is on choice-free duality, Section \ref{ChoiceSection} considers three perspectives on UV-spaces assuming choice. Section \ref{Conclusion} concludes with a brief recap and look ahead.

\section{Background}\label{PossSection}

The choice-free topological representation of BAs that we will describe results from ``topologizing'' the choice-free representation of BAs in \cite{Holliday2015,Holliday2018}.\footnote{The focus of \cite{Holliday2015,Holliday2018} is on modal algebras, but here we present only the Boolean side of the story.} A \textit{possibility frame} from \cite{Holliday2015} is a triple $(S,\leqslant, P)$ where $(S,\leqslant)$ is a poset and $P$ is a collection of \textit{regular open} sets in the upset topology $\mathsf{Up}(S,\leqslant)$ of the poset, such that $P$ contains $S$ and is closed under intersection and the operation $\neg$ defined by 
\begin{equation}
\neg U=\{x\in S\mid \forall x'\geqslant x\;\; x'\not\in U\}.\label{NegationEQ}
\end{equation}
Recall that an open set $U$ in a space is regular open iff $U=\mathsf{int}(\mathsf{cl}(U))$. Since the closure and interior operations in $\mathsf{Up}(S,\leqslant)$ are calculated by
\begin{eqnarray}
&&\mathsf{cl}_\leqslant (U)=\{x\in S\mid \exists y\geqslant x : y\in U\}, \label{IntDef}\\
&&\mathsf{int}_\leqslant (U)=S\setminus \mathsf{cl}_\leqslant (S\setminus U)= \{x\in S\mid \forall y\geqslant x \;\; y\in U\},\label{ClDef}
\end{eqnarray}
an open set $U$ in $\mathsf{Up}(S,\leqslant)$ is regular open iff
\begin{equation}U=\mathsf{int}_\leqslant(\mathsf{cl}_\leqslant(U))=\{x\in S \mid \forall x'\geqslant x\,\exists x''\geqslant x': x''\in U\}. \label{ROeq}\end{equation}
Also note that $\neg U=\mathsf{int}_\leqslant (X\setminus U)$, so $U$ is regular open iff $U=\neg\neg U$.

As Tarski \cite{Tarski1937,Tarski1938,Tarski1956} observed, the regular open sets of any topological space form a (complete) Boolean algebra with binary meet as intersection and complement as interior of set-theoretic complement, so any subalgebra thereof is also a Boolean algebra. Thus, for any possibility frame $(S,\leqslant,P)$, the set $P$ gives us a Boolean algebra.\footnote{\label{LocaleNote}From the perspective of locale theory (see Section \ref{L&H}), the collection $\mathsf{Up}(S,\leqslant)$ of upsets forms a locale with meet as intersection and join as union. Equivalently, $\mathsf{Up}(S,\leqslant)$ may be viewed as a complete Heyting algebra. Then $\neg U$ is the \textit{pseudocomplement} of $U$ in $\mathsf{Up}(S,\leqslant)$, i.e., the largest upset whose meet with $U$ is $\varnothing$, and $P$ is a subalgebra of the Boolean algebra of all \textit{regular elements} (i.e., those $U$ such that $U=\neg\neg U$) of  $\mathsf{Up}(S,\leqslant)$.}

Conversely, given any Boolean algebra $\mathbb{A}$, we construct a possibility frame $(\mathrm{PropFilt}(\mathbb{A}),\subseteq,\{\widehat{a}\mid a\in \mathbb{A}\})$ where $\mathrm{PropFilt}(\mathbb{A})$ is the set of proper filters of $\mathbb{A}$, ordered by inclusion, and $\widehat{a}=\{F\in \mathrm{PropFilt}(\mathbb{A})\mid a\in F\}$; then $\{\widehat{a}\mid a\in \mathbb{A}\}$ is a collection of regular open sets from  $\mathsf{Up}(\mathrm{PropFilt}(\mathbb{A}),\subseteq)$ that satisfies the required closure conditions, and under the operations $\cap$ and $\neg$ it becomes a Boolean algebra isomorphic to $\mathbb{A}$.\footnote{It can then be proved choice-free that the complete BA of all regular opens from $\mathsf{Up}(\mathrm{PropFilt}(\mathbb{A}),\subseteq)$ is a \textit{canonical extension} of $\mathbb{A}$ in the sense of \cite{Gehrke2001} (see \cite[\S~5.6]{Holliday2018} and Theorem \ref{CanonicalExt} below).} The possibility frames that arise (isomorphically) in this way, called \textit{filter-descriptive} in \cite{Holliday2015}, are exactly those satisfying the separation property that if $x\not\leqslant y$, then there is a $U\in P$ such that $x\in U$ and $y\not\in U$, and the ``filter realization'' property that if $F$ is a proper filter in $P$, then $F=\{U\in P\mid x\in U\}$ for some $x\in S$. In \cite{Holliday2015,Holliday2018} it is proved without choice principles that the category of filter-descriptive frames with appropriate morphisms (see Section \ref{DualitySection}) is dually equivalent to the category of BAs with Boolean homomorphisms.

In Section \ref{RepresentationSection}, we will show that the duality just sketched can be understood topologically as a choice-free duality between BAs and special \textit{spectral spaces}. In particular, the dual possibility frame $(S,\leqslant, P)$ of a BA gives rise to a spectral space $X$ by using $P$ as a basis for a topology on $S$.  This makes $\leqslant$ the specialization order of $X$. We can then conveniently pick out among all regular opens in the upset topology of $\leqslant$ just those that give us back our original BA via $P$: those that are also \textit{compact open} in $X$. It turns out we may equivalently think of these compact sets as regular open in $X$, though thinking of them as regular open in the upset topology of $\leqslant$ has the advantage of simplifying reasoning.  The story above is our starting point, but we go much further: we develop a full topological duality, including a duality dictionary for many algebraic concepts, along with sample applications via topological proofs of basic facts about BAs.

There are several precedents for the strategy of working with all proper filters of a lattice. In the context of logic, since the early 1980s logicians have studied alternative semantics for classical first-order logic and classical modal logics in which one builds a canonical model using all consistent and deductively closed sets of formulas, rather than only maximally consistent sets of formulas \cite{Roper1980,Humberstone1981,Benthem1981,Benthem1986,Benthem1988,Holliday2015,Benthem2016b}. Although not presented as such, these constructions are essentially applications of the fact indicated above that any BA $\mathbb{A}$ embeds into the BA of regular open upsets in the poset of proper filters of $\mathbb{A}$. If $\mathbb{A}$ is the Lindenbaum-Tarski algebra of a logic, then its poset of proper filters is isomorphic to the poset of consistent and deductively closed sets of formulas. The subsets of this canonical model that are definable by a formula then correspond to the sets $\widehat{a}$ above.

The idea of topologizing the set of proper filters also appears in Goldblatt's \cite{Goldblatt1975} representation of ortholattices, discussed in Section \ref{GoldblattSection}. However, Goldblatt uses a different topology on the set of proper filters with the consequence that his representation is not choice free. 

After completing the following work, we learned that Moshier and Jipsen \cite{Moshier2014} propose a choice-free duality for arbitrary lattices using the space of all filters endowed with the analogous $\widehat{a}$ topology. Though we work with proper filters (since otherwise there would  be only two regular open sets with respect to $\leqslant$, namely $\varnothing$ and the whole space), the more important difference is that we study what happens in the special case of BAs.

Our approach to choice-free Stone duality for BAs is also closely related to a point-free approach. The collection $\mathrm{Filt}(\mathbb{A})$ of all filters of a BA $\mathbb{A}$ ordered by inclusion is an example of what we will call a \textit{Stone locale}: a zero-dimensional compact locale (see Section \ref{L&H} for definitions). The category of Stone locales with localic maps\footnote{For the definition of localic maps, see, e.g., \cite[\S~II.2]{Picado2012}.} is dually equivalent to the category of BAs with Boolean homomorphisms. However, our aim is to provide a choice-free duality using spaces instead of locales. We do so by taking the non-zero elements of the Stone locale $\mathrm{Filt}(\mathbb{A})$ as the points of a new space with an appropriate topology, namely the upper Vietoris topology (see Section \ref{RepresentationSection}). Thus, we call our approach to choice-free Stone duality the \textit{hyperspace approach}, in contrast to the \textit{localic approach} using Stone locales.

The hyperspace approach allows us to retain the intuitiveness of reasoning with a set of points, without paying the price of choice principles. But there is a cost, or at least a currency exchange: whereas standard Stone duality represents each BA as a subalgebra of the powerset of a set, the choice-free dualities in \cite{Holliday2015} and in this paper represent each BA as a subalgebra of the regular open algebra of a separative poset. 

\begin{definition} Let $(S,\leqslant)$ be a poset, and for $x\in S$, let $\mathord{\Uparrow}x=\{x'\in S\mid x\leqslant x'\}$. Then $(S,\leqslant)$ is \textit{separative} iff for any $x,y\in S$, $x\not\leqslant y$ implies that there is a $z\in \mathord{\Uparrow}y$ such that $\mathord{\Uparrow}z\cap\mathord{\Uparrow}x=\varnothing$. Equivalently, $(S,\leqslant)$ is separative iff every principal upset $\mathord{\Uparrow}x$ is regular open in $\mathsf{Up}(S,\leqslant)$.
\end{definition}
\noindent It is easy to see that the separation property mentioned for possibility frames above implies separativity of the underlying partial order.

Thus, with the choice-free duality for BAs that we will pursue, instead of reasoning about sets with intersection and set-theoretic complement, we reason about separative posets (given by the specialization orders of our spaces) with intersection and the operation $\neg$ defined in (\ref{NegationEQ}). A major difference is that for $U\subseteq S$, while $U\cup (S\setminus U)=S$, we often have ${U\cup \neg U\subsetneq S}$.\footnote{From the perspective of Footnote \ref{LocaleNote}, the observation that we often have ${U\cup \neg U\subsetneq S}$ reflects the fact that $\mathsf{Up}(S,\leqslant)$ is a Heyting algebra that is typically not Boolean.} This makes reasoning with $\neg$ more subtle, but one can quickly get used to reasoning patterns with $\neg$ of the kind shown in the following lemmas.

\begin{lemma} Let $(S,\leqslant)$ be a poset and $U$ regular open in $\mathsf{Up}(S,\leqslant)$. If $x\not\in U$, then there is an $x'\geqslant x$ such that $x'\in \neg U$.
\end{lemma}
\begin{proof} If $x\not\in U$, then since $U$ is regular open, it follows by (\ref{ROeq}) that there is an $x'\geqslant x$ such that for all $x''\geqslant x'$, $x''\not\in U$, which means $x'\in \neg U$.
\end{proof}

\begin{lemma}\label{EitherInfinite} Let $(S,\leqslant)$ be an infinite separative poset and $U$ regular open in $\mathsf{Up}(S,\leqslant)$. Then either $U$ or $\neg U$ is infinite.
\end{lemma}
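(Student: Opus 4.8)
The plan is to argue by contradiction: suppose both $U$ and $\neg U$ are finite, so that $F:=U\cup\neg U$ is a finite set, and derive a contradiction with $|S|$ being infinite. The key invariant I would attach to a point $x\in S$ is its \emph{upper trace} in $F$, namely $\tau(x)=\mathord{\Uparrow}x\cap F$. The first observation I would record is that $F$ is \emph{dense}, in the sense that every $x\in S$ has some $y\geqslant x$ with $y\in F$: if $x\in U$ this is witnessed by $x$ itself, and if $x\notin U$ then, since $U$ is regular open, the preceding lemma yields an $x'\geqslant x$ with $x'\in\neg U\subseteq F$. In particular $\tau(x)\neq\varnothing$ for every $x$.

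Next I would run a pigeonhole argument. Since $F$ is finite it has only finitely many subsets, so $\tau$ takes only finitely many values; as $S$ is infinite, there must be two distinct points sharing a common trace. Relabelling if necessary (antisymmetry forbids $c\leqslant c'$ and $c'\leqslant c$ simultaneously for distinct points), I may fix distinct $c,c'\in S$ with $\tau(c)=\tau(c')$ and $c\not\leqslant c'$. Now separativity applies to the pair $c\not\leqslant c'$: it furnishes a $z\geqslant c'$ with $\mathord{\Uparrow}z\cap\mathord{\Uparrow}c=\varnothing$. By density there is an $f\in F$ with $f\geqslant z$; then $f\geqslant z\geqslant c'$ gives $f\in\mathord{\Uparrow}c'\cap F=\tau(c')$, and since $\tau(c')=\tau(c)$ we get $f\geqslant c$ as well. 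Thus $f\in\mathord{\Uparrow}z\cap\mathord{\Uparrow}c$, contradicting the disjointness just obtained. This contradiction shows that $U$ and $\neg U$ cannot both be finite, which is the claim.

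I expect the main subtlety to lie in choosing the right object to push through the pigeonhole rather than in any single calculation. A natural first attempt is to locate a single incompatible pair $u\in U$, $w\in\neg U$ (incomparable, with no common upper bound) admitting infinitely many common lower bounds, and to try to contradict this directly from separativity; but such configurations do in fact occur in separative posets, so no contradiction can come from one pair alone. The decisive point is that the leverage must be \emph{global}: finiteness of the whole set $F=U\cup\neg U$ bounds the number of possible upper traces, while density of $F$ guarantees every separating witness $z$ is itself captured by some $f\in F$ lying above it. Note that the two hypotheses are used in distinct places---density and the trace bound require only that $U$ be regular open together with $U,\neg U$ both finite, whereas the final step is exactly where separativity is indispensable, and infinitude of $S$ is exactly what powers the pigeonhole.
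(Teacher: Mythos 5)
Your proof is correct, and every step is choice-free: the pigeonhole you invoke is just the ZF fact that an infinite set admits no injection into a finite one. It runs on the same engine as the paper's proof---a pigeonhole on upper traces together with a separativity witness $z\geqslant c'$ whose upset misses $\mathord{\Uparrow}c$---but the packaging is genuinely different. The paper argues directly rather than by contradiction: assuming only that $U$ is finite, it partitions $S$ by the trace $x\mapsto \mathord{\Uparrow}x\cap U$, extracts an \emph{infinite} cell $I$, and shows that $x\mapsto \mathord{\Uparrow}x\cap \neg U$ is injective on $I$; there the separativity witness $z$ lands in $\neg U$ automatically, because $\mathord{\Uparrow}z\subseteq\mathord{\Uparrow}y$ and $\mathord{\Uparrow}x\cap U=\mathord{\Uparrow}y\cap U$ force $\mathord{\Uparrow}z\cap U=\varnothing$; hence $\wp(\neg U)$, and so $\neg U$, is infinite. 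Two consequences of the difference: (i) the paper's argument never uses regular openness of $U$---whereas your density step for $F=U\cup\neg U$ rests on the preceding lemma and hence on regular openness---so it in fact proves the dichotomy for an arbitrary subset $U$ with $\neg U$ as in (\ref{NegationEQ}); (ii) your version needs only a single trace collision rather than an infinite cell, and no injection into a power set, at the price of assuming both sets finite and tracking traces in the larger set $U\cup\neg U$. So your closing diagnosis of where the hypotheses do work is accurate for your argument, with one correction: regular openness is a convenience here, not a necessity for the lemma.
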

\begin{proof} Let $x\sim y$ iff $\mathord{\Uparrow}x\cap U=\mathord{\Uparrow}y\cap U$. If $U$ is finite, then $\sim$ partitions the infinite set $S$ into finitely many cells, one of which must be infinite. Call it $I$, and define $f\colon I\to \wp(\neg U)$ by $f(x)=\mathord{\Uparrow}x\cap \neg U$. We claim that $f$ is injective. For if $x,y\in I$ and $x\not\leqslant y$, then by separativity, there is a $z\in \mathord{\Uparrow}y$ such that $\mathord{\Uparrow}z\cap\mathord{\Uparrow}x=\varnothing$. It follows, since $\mathord{\Uparrow}x\cap U=\mathord{\Uparrow}y\cap U$, that $\mathord{\Uparrow}z\cap U=\varnothing$, so $z\in \neg U$. Thus, $z\in f(y)$ but $z\not\in f(x)$, so $f$ is injective. Then since $I$ is infinite, it follows that $\wp(\neg U)$ is infinite and hence $\neg U$ is infinite.\end{proof}

\section{Representation of BAs using spectral spaces}\label{RepresentationSection} Before reviewing spectral spaces, let us fix some notational conventions. 

We will conflate a BA $\mathbb{A}$ and its underlying set, and we will conflate a topological space $X$ and its underlying set, so that we will write, e.g., `$a\in\mathbb{A}$', `$x\in X$', etc. The top and bottom elements of a bounded lattice such as a BA are denoted `$\one$' and `$\zero$', respectively, possibly with subscripts to indicate the relevant algebra. We will often consider filters in a BA, as well as principal upsets in the specialization order of a space. To avoid any confusion about which side a principal filter/upset is on---the algebra side or the space side---we make the following notational distinction.

\begin{notation} \textnormal{Let $\mathbb{A}$ be a BA whose underlying order is $\leq$ and $X$ a space whose specialization order is $\leqslant$. For $a\in\mathbb{A}$ and $x\in X$:
\begin{enumerate}
\item $\mathord{\uparrow} a = \{b\in\mathbb{A}\mid a\leq b \}$ and $\mathord{\downarrow} a = \{b\in\mathbb{A}\mid b\leq a \}$;
\item $\mathord{\Uparrow}x = \{y\in X\mid x\leqslant y\}$ and $\mathord{\Downarrow}x = \{y\in X\mid y\leqslant x\}$.
\end{enumerate}}
\end{notation}
It will also help to distinguish between the built-in complement operation of a BA $\mathbb{A}$ and the operation $\neg$ defined in (\ref{NegationEQ}) of Section~\ref{PossSection}.
\begin{notation} Given a BA $\mathbb{A}$ and a space $X$ whose specialization order is $\leqslant$:
\begin{enumerate}
\item let $-$ be the complement operation in $\mathbb{A}$;
\item let $\neg$ be the operation defined for $U\subseteq X$ by $\neg U=\mathsf{int}_\leqslant (X\setminus U)$.
\end{enumerate}
\end{notation}

It is important to remember that we are distinguishing two interior (resp.~closure) operations associated with a given space $X$.
\begin{notation} \textnormal{For a space $X$ whose specialization order is $\leqslant$:
\begin{enumerate}
\item $\mathsf{int}$ and $\mathsf{cl}$ are the interior and closure operations for $X$;
\item $\mathsf{int}_\leqslant$ and $\mathsf{cl}_\leqslant$ are the interior and closure operations for the upset topology with respect to $\leqslant$, as in (\ref{IntDef})--(\ref{ClDef}) of Section~\ref{PossSection}.
\end{enumerate}}
\end{notation}

As is well known, the operations $\mathsf{int}_\leqslant$ and $\mathsf{cl}_\leqslant$ coincide with $\mathsf{int}$ and $\mathsf{cl}$, respectively, if and only if $X$ is an Alexandroff space.

The following notation will be used throughout.

\begin{notation}\label{COROnotation} Let $X$ be a space. We define the following collections of subsets of $X$:
\begin{enumerate}
\item $\mathsf{O}(X)$ is the collection of sets that are open in $X$;
\item $\mathsf{C}(X)$ is the collection of sets that are compact in $X$;
\item $\mathsf{CO}(X)=\mathsf{C}(X)\cap\mathsf{O}(X)$;
\item $\mathsf{RO}(X)$ is the collection of sets that are regular open in $X$;
\item $\mathsf{CRO}(X)=\mathsf{C}(X)\cap \mathsf{RO}(X)$;
\item $\mathcal{RO}(X)$ is the collection of sets that are regular open in ${\mathsf{Up}(X,\leqslant)}$, where $\leqslant$ is the specialization order of $X$;
\item $\mathsf{O}\mathcal{RO}(X)=\mathsf{O}(X)\cap \mathcal{RO}(X)$;
\item $\mathsf{CO}\mathcal{RO}(X)=\mathsf{CO}(X)\cap \mathcal{RO}(X)$;
\item $\mathsf{Clop}(X)$ is the collection of sets that are clopen in $X$.
\end{enumerate}
\end{notation}

Let us now recall the notion of a spectral space and two theorems illustrating its importance.

\begin{definition}\label{SpectralDef} A topological space $X$ is a \textit{spectral space} if $X$ is compact, $T_0$, coherent ($\mathsf{CO}(X)$ is closed under intersection and forms a base for the topology of $X$), and sober (every completely prime filter in $\mathsf{O}(X)$ is $\mathsf{O}(x)=\{U\in \mathsf{O}(X)\mid x\in U\}$ for some $x\in X$).
\end{definition}

\begin{theorem}[Stone \cite{Stone1938}] $L$ is a distributive lattice iff $L$ is isomorphic to the lattice of compact open sets of a spectral space.
\end{theorem}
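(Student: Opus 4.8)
The plan is to handle the two directions separately, the substance lying in ``only if.'' For ``if,'' suppose $L \cong \mathsf{CO}(X)$ for a spectral space $X$. I would note that $\mathsf{CO}(X)$ is a bounded sublattice of $(\wp(X),\cap,\cup)$: it contains $\varnothing$ and $X$ (the latter compact because $X$ is), it is closed under binary intersection by coherence, and it is closed under binary union since a finite union of compact opens is compact open. As meet and join in $\mathsf{CO}(X)$ are just $\cap$ and $\cup$, which distribute over one another set-theoretically, $\mathsf{CO}(X)$ is a bounded distributive lattice, and hence so is any isomorphic $L$.

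For ``only if,'' given a bounded distributive lattice $L$ I would take $X$ to be the set of prime filters of $L$ with the topology having $\{\widehat{a}\mid a\in L\}$ as a basis, where $\widehat{a}=\{F\mid a\in F\}$. Routine checks show that $a\mapsto\widehat{a}$ is a bounded lattice homomorphism: $\widehat{a\wedge b}=\widehat{a}\cap\widehat{b}$ (so the proposed basis is closed under intersection and really is a basis), $\widehat{a\vee b}=\widehat{a}\cup\widehat{b}$ (here primality of $F$ is used), $\widehat{\zero}=\varnothing$, and $\widehat{\one}=X$. Injectivity is the point at which I would invoke the prime filter theorem for distributive lattices: if $a\not\leq b$, then the filter $\mathord{\uparrow}a$ is disjoint from the ideal $\mathord{\downarrow}b$, so it extends to a prime filter containing $a$ but not $b$, giving $\widehat{a}\neq\widehat{b}$.

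Next I would show $\mathsf{CO}(X)=\{\widehat{a}\mid a\in L\}$. Each $\widehat{a}$ is compact: if $\widehat{a}\subseteq\bigcup_i\widehat{a_i}$ and $J$ is the ideal generated by $\{a_i\}$, then $a\in J$, for otherwise $\mathord{\uparrow}a\cap J=\varnothing$ and a prime filter would contain $a$ while missing every $a_i$, contradicting the cover; hence $a\leq a_{i_1}\vee\dots\vee a_{i_n}$ and $\widehat{a}\subseteq\widehat{a_{i_1}}\cup\dots\cup\widehat{a_{i_n}}$. Conversely a compact open is a union of basic opens, so by compactness a finite union $\widehat{a_1}\cup\dots\cup\widehat{a_n}=\widehat{a_1\vee\dots\vee a_n}$. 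Combined with injectivity this yields $L\cong\mathsf{CO}(X)$. The spectral axioms then follow: $X=\widehat{\one}$ is compact; $T_0$ holds since distinct prime filters are separated by some $\widehat{a}$; coherence is immediate from the homomorphism identities; and soberness is the remaining clause.

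The main obstacles are two. First, the essential appeal to the prime filter theorem for injectivity makes this classical proof genuinely nonconstructive---exactly the dependence the rest of the paper is designed to avoid. Second, soberness demands the most care: from a completely prime filter $\mathcal{F}\subseteq\mathsf{O}(X)$ one forms $F=\{a\mid\widehat{a}\in\mathcal{F}\}$ and must verify both that $F$ is a bona fide prime filter (upward closure and closure under meet are easy, while primality follows from complete primality of $\mathcal{F}$ together with $\widehat{a\vee b}=\widehat{a}\cup\widehat{b}$) and that $\mathcal{F}$ coincides with the neighborhood filter $\mathsf{O}(F)$, the latter using complete primality to reduce arbitrary opens to the basic opens $\widehat{a}$.
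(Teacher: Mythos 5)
The paper states this theorem purely as background, citing Stone (1938), and supplies no proof of it, so there is no in-paper argument to compare against; judged on its own terms, your proof is the standard classical one and is correct. The ``if'' direction, the verification that $\{\widehat{a}\mid a\in L\}$ is a basis closed under intersection, the two prime-filter-theorem applications (injectivity when $a\not\leq b$, and compactness of $\widehat{a}$ via the ideal generated by the $a_i$), the identification $\mathsf{CO}(X)=\{\widehat{a}\mid a\in L\}$, and the sobriety check all go through as you describe; one small reading convention you silently (and correctly) adopt is that $L$ should be a \emph{bounded} distributive lattice, since $\mathsf{CO}(X)$ always contains $\varnothing$ and $X$, and your restriction to covers by basic opens in the compactness argument is harmless because any open cover refines to a basic one. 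The instructive comparison is with the paper's choice-free analogue for BAs (Proposition 3.9 and Theorem 3.11): there the space consists of \emph{all} proper filters rather than only the prime ones, so the principal filter $\mathord{\uparrow}a$ is itself a point of $\widehat{a}$, and both of your appeals to the prime filter theorem evaporate---compactness of $\widehat{a}$ follows because $\mathord{\uparrow}a$ must land in some $\widehat{b_i}$, forcing $a\leq b_i$ and hence $\widehat{a}\subseteq\widehat{b_i}$, while injectivity follows because $\mathord{\uparrow}a\in\widehat{a}\setminus\widehat{b}$ whenever $a\not\leq b$. The price paid for that constructivity is that joins are no longer unions ($\widehat{a\vee b}\supsetneq\widehat{a}\cup\widehat{b}$ in general, since non-prime filters may contain $a\vee b$ without containing either disjunct), which is precisely why the paper must cut back to the regular open sets $\mathsf{CO}\mathcal{RO}$ to recover a Boolean algebra, whereas in your prime spectrum primality buys you $\widehat{a\vee b}=\widehat{a}\cup\widehat{b}$ at the cost of the Prime Ideal Theorem. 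Your sobriety argument, incidentally, parallels the paper's proof of sobriety for $UV(\mathbb{A})$ almost verbatim, with primality of $F$ the only extra check.
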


\begin{theorem}[Hochster \cite{Hochster1969}] $X$ is a spectral space iff $X$ is homeomorphic to the spectrum of a commutative ring.
\end{theorem}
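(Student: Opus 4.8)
The plan is to prove the two directions separately, since they are of very different character. The easy direction is that the spectrum $\mathrm{Spec}(R)$ of any commutative ring $R$, with the Zariski topology, is a spectral space, and here I would simply verify the four clauses of Definition \ref{SpectralDef}. Compactness follows from the fact that a family of basic opens $D(f_i)=\{\mathfrak{p}\mid f_i\notin\mathfrak{p}\}$ covers $\mathrm{Spec}(R)$ iff the $f_i$ generate the unit ideal, whence finitely many already do; the $T_0$ axiom holds because distinct primes are separated by some $D(f)$; coherence holds because the $D(f)$ are compact, form a base, and satisfy $D(f)\cap D(g)=D(fg)$, so $\mathsf{CO}(\mathrm{Spec}(R))=\{D(f)\mid f\in R\}$ is closed under intersection; and sobriety holds because every irreducible closed set is $V(\mathfrak{p})=\mathsf{cl}\{\mathfrak{p}\}$ for a unique prime $\mathfrak{p}$.

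The substantial content is the converse: every spectral space $X$ is homeomorphic to some $\mathrm{Spec}(R)$. My strategy would be to reduce first to distributive lattices and then to the finite case. By the preceding theorem of Stone \cite{Stone1938}, $X$ is determined up to homeomorphism by its lattice $L=\mathsf{CO}(X)$ of compact opens, a bounded distributive lattice, with $X\cong\mathrm{Spec}(L)$; so it suffices to produce, for each bounded distributive lattice $L$, a commutative ring $R$ with $\mathsf{CO}(\mathrm{Spec}(R))\cong L$ as lattices. I would then treat the finite case first: a finite bounded distributive lattice corresponds to a finite poset $X$ (a finite $T_0$ space), and I would construct a ring realizing $\mathrm{Spec}(R)\cong X$ with the prescribed specialization order.

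To pass from finite to arbitrary $L$, I would exploit the fact that $L$ is the filtered colimit of its finitely generated sublattices $L_i$, each of which is \emph{finite} (a finitely generated distributive lattice being finite). Dually, this presents $X$ as a cofiltered inverse limit of the finite spectral spaces $X_i=\mathrm{Spec}(L_i)$ in the category of spectral spaces and spectral maps. Since the spectrum functor converts filtered colimits of rings into cofiltered limits of spectral spaces, I would choose rings $R_i$ realizing the $X_i$ compatibly with the transition maps and set $R=\varinjlim R_i$; its spectrum is then $\varprojlim X_i\cong X$.

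The hard part will be twofold. First, the explicit realization of finite posets (equivalently finite distributive lattices) as spectra of rings is already nontrivial and demands a genuine ring-theoretic construction rather than a formal argument. Second, and more delicate, is arranging these finite realizations \emph{functorially}, so that each transition lattice map $L_i\to L_j$ lifts to a ring homomorphism $R_i\to R_j$ inducing the correct spectral map, and then verifying that $\mathsf{CO}(\mathrm{Spec}(\varinjlim R_i))$ is exactly $L$ rather than larger---i.e., that no spurious compact opens or primes appear in the limit, and that the limit topology coincides with the spectral topology on $X$. This compatibility together with the interchange of $\mathrm{Spec}$ with filtered colimits is where the real work concentrates, and is essentially the technical core of Hochster's original argument.
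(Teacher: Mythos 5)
The paper never proves this statement: it is quoted as classical background with a bare citation to Hochster \cite{Hochster1969}, so there is no internal argument to compare yours against, and I assess your outline on its own terms. Your easy direction is essentially correct, with one slip: $\mathsf{CO}(\mathrm{Spec}(R))$ is not $\{D(f)\mid f\in R\}$ but the collection of \emph{finite unions} of such sets. For instance, in $\mathrm{Spec}(k[x,y])$ the set $D(x)\cup D(y)$ is compact open but equals no $D(f)$, since $(x,y)$ is not the radical of a principal ideal. This does not damage coherence---finite unions of basic opens are still closed under finite intersection via $D(f)\cap D(g)=D(fg)$---but the identification as stated is false. (Note also that your compactness and sobriety arguments lean on Krull's theorem that every proper ideal lies in a prime, i.e., on a choice principle; that is consistent with the fact that Hochster's theorem, unlike the rest of this paper, is not choice-free.)

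The converse is where the theorem lives, and there your proposal is a roadmap rather than a proof: the two steps you defer are the entire content. First, the finite case---realizing an arbitrary finite poset as $\mathrm{Spec}(R)$ with the prescribed specialization order---is a genuine ring-theoretic construction, and nothing in your sketch produces it. Second, and more seriously, the ``arrange these finite realizations functorially'' step cannot be carried out the naive way: it is a classical fact, going back to Hochster's own paper, that $\mathrm{Spec}$ viewed as a functor to spectral spaces is \emph{not full}---there exist spectral maps between affine spectra induced by no ring homomorphism---so you cannot lift the transition maps of your inverse system one at a time and expect the lifts to cohere. The whole filtered system of rings must be produced at once by a single canonical construction, and devising such a construction is precisely the technical core of Hochster's proof, which your plan names but does not supply. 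Your surrounding scaffolding, on the other hand, is sound: finitely generated bounded distributive lattices are indeed finite, so $L=\mathsf{CO}(X)$ is a filtered colimit of finite lattices; dually $X$ is a cofiltered limit of finite $T_0$ spaces (this is Hochster's own characterization of spectral spaces); $\mathrm{Spec}$ does convert filtered colimits of rings into cofiltered limits of spaces; and your worry about ``spurious compact opens'' in the limit dissolves, since Stone duality for distributive lattices (Stone \cite{Stone1938}) is an equivalence and hence identifies $\mathsf{CO}$ of the cofiltered limit with the colimit of the lattices $\mathsf{CO}(X_i)$, which is $L$. But with the finite realization and the coherent lifting both missing, the proposal as written does not constitute a proof.
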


We will show that every BA $\mathbb{A}$ can be represented as $\mathsf{CO}\mathcal{RO}(X)$ (or equivalently $\mathsf{CRO}(X)$, as shown in Section \ref{ROsection}) for some spectral space $X$. Using the nonconstructive Boolean Prime Ideal Theorem, one could prove this by taking $X$ to be the Stone space of $\mathbb{A}$: since the specialization order $\leqslant$ in a Stone space is the discrete order, all subsets are regular open in $\mathsf{Up}(X,\leqslant)$, and it can be proved that the compact open sets of $X$ are exactly the clopen sets used in the standard Stone representation. However, it is also possible to provide a choice-free representation, as shown below.

We first recall the \textit{upper Vietoris topology} \cite{Vietoris1922} on the hyperspace of nonempty closed sets of a Stone space. Where $\mathsf{F}(X)$ is the collection of nonempty closed subsets of $X$ and $U\in\mathsf{Clop}(X)$, let \[\Box U =\{F\in \mathsf{F}(X)\mid F\subseteq U\}.\]
Observe that $\Box U\cap \Box V = \Box (U\cap V)$, so $\{\Box U\mid U\in\mathsf{Clop}(X)\}$ is closed under binary intersection.

\begin{definition}\label{UVStoneDef} Given a Stone space $X$, define $\mathscr{UV}(X)$ to be the space of nonempty closed sets of $X$ with the topology generated by the family $\{\Box U\mid U\in\mathsf{Clop}(X)\}$. 
\end{definition}

The same idea can be applied to the space of proper filters of a BA. For $a\in \mathbb{A}$, let \[\widehat{a} = \{F\in \mathrm{PropFilt}(\mathbb{A})\mid a\in F\}.\] Observe that $\widehat{a}\cap\widehat{b}=\widehat{a\wedge b}$, so $\{\widehat{a}\mid a\in \mathbb{A}\}$ is closed under binary intersection.

\begin{definition}\label{UVofBA} Given a BA $\mathbb{A}$, define $UV(\mathbb{A})$ to be the space of proper filters of $\mathbb{A}$ with the topology generated by $\{\widehat{a}\mid a\in \mathbb{A}\}$.
\end{definition}

\begin{prop}\label{UVStone}  For any Stone space $X$, $\mathscr{UV}(X)$ is homeomorphic to $UV(\mathsf{Clop}(X))$, regarding $\mathsf{Clop}(X)$ as the BA of clopen subsets of $X$.
\end{prop}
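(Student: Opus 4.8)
The plan is to exhibit an explicit bijection between the underlying sets and then check that it is a homeomorphism by matching up the two subbases. The key observation that drives everything is a correspondence between nonempty closed subsets of a Stone space $X$ and proper filters of its clopen algebra $\mathsf{Clop}(X)$. Concretely, I would define a map $\Phi\colon \mathscr{UV}(X)\to UV(\mathsf{Clop}(X))$ by sending a nonempty closed set $F$ to the collection $\Phi(F)=\{U\in\mathsf{Clop}(X)\mid F\subseteq U\}$. First I would verify that $\Phi(F)$ is indeed a proper filter of $\mathsf{Clop}(X)$: it is upward closed and closed under finite intersection because these properties transfer from the subset relation, and it is proper precisely because $F\neq\varnothing$ (so $F\not\subseteq\varnothing$, i.e., $\zero\notin\Phi(F)$).

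Next I would construct the inverse. Given a proper filter $G$ of $\mathsf{Clop}(X)$, the natural candidate is $\Psi(G)=\bigcap\{U\mid U\in G\}$, the intersection of all clopen sets in the filter. Since each $U\in G$ is closed, $\Psi(G)$ is closed; the work is to show $\Psi(G)$ is \emph{nonempty}, and here I would use compactness of $X$ rather than any choice principle: the clopen sets in $G$ have the finite intersection property (since $G$ is a proper filter closed under intersection, no finite subfamily can have empty intersection, as that would force $\zero\in G$), so by compactness their total intersection is nonempty. I would then check that $\Phi$ and $\Psi$ are mutually inverse. The composite $\Phi\Psi$ recovering $G$ amounts to showing that $U\in G$ iff $\Psi(G)\subseteq U$; the nontrivial direction, that $\Psi(G)\subseteq U$ implies $U\in G$, again uses compactness together with zero-dimensionality, extracting from the covering of the compact set $\Psi(G)$ by $U$ a finite subfamily of $G$ whose intersection lies inside $U$. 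The composite $\Psi\Phi$ recovering $F$ from a closed set uses that a Stone space is $T_0$ and has enough clopen sets to separate points from closed sets, so that $\bigcap\{U\in\mathsf{Clop}(X)\mid F\subseteq U\}=F$.

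Finally I would confirm that $\Phi$ is a homeomorphism by showing it matches the subbases exactly. The subbasic opens of $UV(\mathsf{Clop}(X))$ are the sets $\widehat{U}=\{G\mid U\in G\}$ for $U\in\mathsf{Clop}(X)$, while those of $\mathscr{UV}(X)$ are the sets $\Box U=\{F\mid F\subseteq U\}$. The heart of the matter is the identity $\Phi(\Box U)=\widehat{U}$, which follows immediately from the defining equivalence $U\in\Phi(F)\iff F\subseteq U$; this simultaneously shows $\Phi$ is open and continuous onto the subbasic generators, hence a homeomorphism once bijectivity is in hand.

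I expect the main obstacle to be the two nonemptiness/surjectivity arguments that invoke compactness, specifically checking that $\Psi(G)$ is nonempty and that $\Psi(G)\subseteq U$ forces $U\in G$. These are exactly the points where a naive argument would reach for an ultrafilter (and hence the Boolean Prime Ideal Theorem), so the care lies in routing everything through compactness of $X$ applied to the finite intersection property, keeping the proof within ZF. Everything else---the filter axioms, the subbasis matching---is routine transfer along the subset relation.
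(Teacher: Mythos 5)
Your proposal is correct and matches the paper's proof essentially step for step: the paper uses the same map $C\mapsto\{U\in\mathsf{Clop}(X)\mid C\subseteq U\}$, establishes bijectivity via the same compactness/finite-intersection-property arguments (packaged as injectivity plus surjectivity rather than an explicit two-sided inverse $\Psi$, but the content is identical), and verifies the homeomorphism by the same subbasis matching $f^{-1}[\widehat{U}]=\Box U$ and $f[\Box U]=\widehat{U}$, with the latter's right-to-left inclusion likewise relying on surjectivity. Nothing in your route differs in substance from the paper's.
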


\begin{proof} Let $f: C\mapsto \{U\in\mathsf{Clop}(X)\mid C\subseteq U\}$. Since $X$ is nonempty, $f(C)$ is clearly a proper filter in $\mathsf{Clop}(X)$, so $f(C)\in UV(\mathsf{Clop}(X))$. For injectivity, if $C\neq C'$, then without loss of generality suppose $x\in C\setminus C'$. Since $X$ is compact Hausdorff, it follows that there is a $U\in\mathsf{Clop}(X)$ such that $C'\subseteq U$ but $x\not\in U$, so $C\not\subseteq U$. Hence $U\in f(C')$ but $U\not\in f(C)$. For surjectivity, if $F$ is a proper filter in $\mathsf{Clop}(X)$, then $F$ has the finite intersection property, so by the compactness of $X$, we have that $\bigcap F$ is nonempty, and since $\bigcap F$  is the intersection of closed sets, it is closed. We claim that $f(\bigcap F)=F$. That $f(\bigcap F)=\{U\in\mathsf{Clop}(X)\mid \bigcap F\subseteq U\}\supseteq F$ is immediate. To see that $f(\bigcap F)\subseteq F$, if $U\in\mathsf{Clop}(X)$ and $\bigcap F\subseteq U$, so $X\setminus U\subseteq \bigcup \{X\setminus V\mid V\in F\}$, then by compactness there is a finite $F_0\subseteq F$ such that $X\setminus U\subseteq \{X\setminus V\mid V\in F_0\}$ and hence $\bigcap F_0\subseteq U$. Then since $F_0$ is finite, it follows that $U\in F$. For continuity of $f$, if $\widehat{U}$ is a basic open in $UV(\mathsf{Clop}(X))$, so $U\in\mathsf{Clop}(X)$, then we have:
\begin{eqnarray*}
f^{-1}[\widehat{U}]&=&\{C\in UV(X)\mid f(C)\in \widehat{U}\}\\
&=&\{C\in UV(X)\mid U\in f(C)\}\\
&=&\{C\in UV(X)\mid C\subseteq U\}\\
&=& \Box U.
\end{eqnarray*}
For openness of $f$, if $\Box U$ is a basic open in $\mathscr{UV}(X)$, so $U\in\mathsf{Clop}(X)$, then we have:
\begin{eqnarray*}
f[\Box U]&=&\{f(C)\mid C\in \Box U\} \\
&=& \{f(C)\mid C\subseteq U\} \\
&=& \{f(C)\mid U\in f(C)\} \\
&=& \widehat{U}.
\end{eqnarray*}
For the last equality, the left-to-right inclusion follows from the fact that $f(C)$ is a proper filter, since $C\neq \varnothing$, and the right-to-left inclusion follows from the surjectivity of $f$.
\end{proof}

\begin{remark} Assuming the Boolean Prime Ideal Theorem, one can also prove that for any BA $\mathbb{A}$, $UV(\mathbb{A})$ is homeomorphic to $\mathscr{UV}(\mathrm{Stone}(\mathbb{A}))$, where $\mathrm{Stone}(\mathbb{A})$ is the Stone dual of $\mathbb{A}$ (see Section~\ref{UVStoneSection}).
\end{remark}

\begin{prop}\label{IsSpectral} For any BA $\mathbb{A}$:
\begin{enumerate}
\item\label{Spectral} $UV(\mathbb{A})$ is a spectral space;
\item\label{Specialization} the specialization order in $UV(\mathbb{A})$ is the inclusion order.
\end{enumerate}
\end{prop}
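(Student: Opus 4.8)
The plan is to verify each of the four defining properties of a spectral space for $UV(\mathbb{A})$ in turn, and then separately identify the specialization order. Throughout, the key tool is that the basic opens are exactly the sets $\widehat{a}$, that $\widehat{a}\cap\widehat{b}=\widehat{a\wedge b}$ (noted just before Definition~\ref{UVofBA}), and that a filter $F$ belongs to $\widehat{a}$ iff $a\in F$. A useful preliminary observation is that each $\widehat{a}$ is not only open but compact: if $\widehat{a}\subseteq\bigcup_{i}\widehat{b_i}$, one shows that $a\leq b_{i_1}\vee\dots\vee b_{i_n}$ for some finite subcover by testing against the principal filter $\mathord{\uparrow}a$ (which lies in $\widehat{a}$), and then uses that a proper filter containing $a$ and lying in the union must meet some $\widehat{b_i}$; this is the one place where a genuine filter-theoretic argument, rather than bookkeeping, is needed.

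First I would establish coherence. Since $\{\widehat{a}\mid a\in\mathbb{A}\}$ is closed under finite intersection (because $\widehat{a}\cap\widehat{b}=\widehat{a\wedge b}$ and $\widehat{\one}=UV(\mathbb{A})$), this family is a base rather than merely a subbase, so every open set is a union of sets $\widehat{a}$. I would then argue that the compact open sets are precisely the $\widehat{a}$: each $\widehat{a}$ is compact by the preliminary observation, and conversely any compact open set is a finite union of basic opens $\widehat{a_1}\cup\dots\cup\widehat{a_n}$, which one checks equals $\widehat{a_1\vee\dots\vee a_n}$ using the fact that a proper filter containing a join of finitely many elements, being prime-free in general, need not contain a disjunct — so here care is required, and I would instead show directly that the finite union of basic opens is again compact open and that $\mathsf{CO}(UV(\mathbb{A}))$ is closed under intersection via $\widehat{a}\cap\widehat{b}=\widehat{a\wedge b}$. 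Compactness of the whole space is then immediate since $UV(\mathbb{A})=\widehat{\one}$ is itself a basic open, hence compact.

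Next I would verify $T_0$: given distinct proper filters $F\neq G$, without loss of generality there is some $a\in F\setminus G$, so $F\in\widehat{a}$ while $G\notin\widehat{a}$, separating them by an open set. For sobriety I would take a completely prime filter $\mathcal{F}$ in $\mathsf{O}(UV(\mathbb{A}))$ and recover a point from it: the set $G=\{a\in\mathbb{A}\mid \widehat{a}\in\mathcal{F}\}$ should be the desired proper filter, with $\mathcal{F}=\mathsf{O}(G)$. That $G$ is a filter follows from $\widehat{a}\cap\widehat{b}=\widehat{a\wedge b}$ together with the fact that $\widehat{a}\subseteq\widehat{b}$ when $a\leq b$; that $G$ is proper follows because $\widehat{\zero}=\varnothing\notin\mathcal{F}$; and complete primeness is exactly what is needed to show $\widehat{a}\in\mathcal{F}$ implies the matching membership. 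This sobriety step, reconstructing the point and checking $\mathsf{O}(G)=\mathcal{F}$ on both inclusions, is where I expect the main obstacle, since it requires matching up the completely prime filter of opens with the concrete filter $G$ and confirming no spurious opens are added or omitted.

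Finally, for part~(\ref{Specialization}) I would compute the specialization order directly. Recall that $F\leqslant G$ in the specialization order iff $F$ lies in the closure of $\{G\}$, equivalently iff every basic open containing $F$ also contains $G$, i.e. iff for all $a$, $F\in\widehat{a}$ implies $G\in\widehat{a}$. Unwinding the membership condition, this says $a\in F$ implies $a\in G$ for all $a$, which is exactly $F\subseteq G$. Hence the specialization order coincides with inclusion, completing the proof. None of this invokes a choice principle: every filter used is either principal or explicitly constructed, and compactness arguments rest on the finite intersection property of filters rather than on the existence of ultrafilters.
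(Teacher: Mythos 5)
Your proposal is correct and takes essentially the same route as the paper's proof: compactness of each $\widehat{a}$ via the principal filter $\mathord{\uparrow}a$, coherence by distributing intersections of finite unions of basic opens using $\widehat{a}\cap\widehat{b}=\widehat{a\wedge b}$, $T_0$ via a separating $\widehat{a}$, sobriety by recovering the point from $\{a\in\mathbb{A}\mid \widehat{a}\in\mathcal{F}\}$ using complete primeness, and the specialization order computed on basic opens. One small correction to your phrasing: testing $\mathord{\uparrow}a$ against the cover yields $a\leq b_i$ for a \emph{single} index $i$, so $\widehat{b_i}$ alone is the subcover (as in the paper); the weaker join bound $a\leq b_{i_1}\vee\dots\vee b_{i_n}$ that you state would not by itself suffice, since $\widehat{b_{i_1}\vee\dots\vee b_{i_n}}$ may properly contain $\widehat{b_{i_1}}\cup\dots\cup\widehat{b_{i_n}}$ --- the very pitfall you correctly flagged and avoided in your coherence paragraph.
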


\begin{proof} We first show that each $\widehat{a}$ is compact open in $UV(\mathbb{A})$. Since the sets $\widehat{b}$ form a basis, it suffices to show that if $\widehat{a}\subseteq \underset{i\in I}{\bigcup}\widehat{b_i}$, then there is a finite subcover. If $\widehat{a}\subseteq \underset{i\in I}{\bigcup}\widehat{b_i}$, then every proper filter that contains $a$ also contains one of the $b_i$. In particular, the principal filter $\mathord{\uparrow}a$ contains one of the $b_i$, which implies $a\leq b_i$ and hence $\widehat{a}\subseteq \widehat{b_i}$, so $\widehat{b_i}$ alone is the finite subcover. It follows that $UV(\mathbb{A})$ is compact, since $X=\widehat{\one}$. It also follows by the definition of $UV(\mathbb{A})$ that the compact open sets form a basis. 

To see that the compact opens are closed under binary intersection, suppose $U$ and $V$ are compact open, so $U=\underset{i\in I}{\bigcup}\widehat{a_i}$ and $V=\underset{j\in J}{\bigcup}\widehat{b_j}$ for finite $I$ and $J$. Then 
\[U\cap V= \underset{i\in I,\,j\in J}\bigcup (\widehat{a_i}\cap\widehat{b_j})=\underset{i\in I,\,j\in J}\bigcup \widehat{a_i\wedge b_j},\]
which is a finite union of compact opens. Hence $U\cap V$ is compact open.

For $T_0$, if $F\neq F'$, without loss of generality suppose $a\in F\setminus F'$. Then $F\in \widehat{a}$ but $F'\not \in \widehat{a}$, and $\widehat{a}$ is open, so we are done.

For sobriety, we show that every completely prime filter $\mathcal{F}$ in $\mathsf{O}(UV(\mathbb{A}))$ is of the form $\mathsf{O}(F)=\{U\in \mathsf{O}(UV(\mathbb{A}))\mid F\in U\}$ for some $F\in UV(\mathbb{A})$. Let $F$ be the filter generated by $\{a\in\mathbb{A}\mid \widehat{a}\in\mathcal{F}\}$. Then since $\mathcal{F}$ is a proper filter in $\mathsf{O}(UV(\mathbb{A}))$, it follows that $F$ is a proper filter in $\mathbb{A}$. To see that $\mathcal{F}=\mathsf{O}(F)$, the right-to-left direction is immediate from the definition of $F$. For the left-to-right direction, suppose $U=\underset{i\in I}{\bigcup}\widehat{a_i}\in\mathcal{F}$. Then since $\mathcal{F}$ is completely prime, there is an $a_i$ such that $\widehat{a_i}\in\mathcal{F}$, which implies $a_i\in F$, so $F\in\widehat{a_i}$. Thus, $\widehat{a_i}\in \mathsf{O}(F)$ and hence $U\in\mathsf{O}(F)$.

For part \ref{Specialization}, we already saw above for $T_0$ that if $F\not\subseteq F'$, then $F\not\leqslant F'$. Conversely, if $F\subseteq F'$, then for any basic open $\widehat{a}$, if $F\in\widehat{a}$ and hence $a\in F$, then $a\in F'$ and hence $F'\in\widehat{a}$, so $F\leqslant F'$.
\end{proof}

We now provide the promised choice-free representation.

\begin{theorem}\label{MainRep}$\,$
\begin{enumerate}
\item\label{MainRep1} For each BA $\mathbb{A}$, the map $\widehat{\cdot}:\mathbb{A}\to \mathsf{CO}\mathcal{RO}(UV(\mathbb{A}))$ is an isomorphism from $\mathbb{A}$ to $\mathsf{CO}\mathcal{RO}(UV(\mathbb{A}))$ ordered by inclusion.
\item\label{MainRep2}  $\mathsf{CO}\mathcal{RO}(UV(\mathbb{A}))$ is a BA with operations given by:
\begin{equation}  U\wedge V=U\cap V\quad \mathord{-}U=\mathsf{int}_\leqslant(UV(\mathbb{A})\setminus U)\quad U\vee V=\mathsf{int}_\leqslant(\mathsf{cl}_\leqslant(U\cup V)).\label{MainRepEQ}\end{equation}
\end{enumerate}
\end{theorem}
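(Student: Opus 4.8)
The plan is to establish part (\ref{MainRep1}) first---that $\widehat{\cdot}$ is an order isomorphism onto $\mathsf{CO}\mathcal{RO}(UV(\mathbb{A}))$---and then obtain part (\ref{MainRep2}) by transporting the Boolean structure of $\mathbb{A}$ along this bijection, once the three operations in (\ref{MainRepEQ}) are checked to be computed correctly on the sets $\widehat{a}$. Since by Proposition \ref{IsSpectral}(\ref{Specialization}) the specialization order $\leqslant$ of $UV(\mathbb{A})$ is inclusion of filters, the relevant upset topology is $\mathsf{Up}(\mathrm{PropFilt}(\mathbb{A}),\subseteq)$, and $\neg$ denotes its pseudocomplement. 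The backbone of everything is the single identity $\neg\widehat{a}=\widehat{-a}$, which I would prove directly and choice-freely. The inclusion $\widehat{-a}\subseteq\neg\widehat{a}$ is immediate from properness: if $-a\in F$ and some proper $F'\supseteq F$ contained $a$, then $\zero=a\wedge(-a)\in F'$. For the converse, given $F$ with $-a\notin F$, the filter generated by $F\cup\{a\}$ is proper (improperness would give some $b\in F$ with $b\wedge a=\zero$, i.e.\ $b\leq -a$, forcing $-a\in F$ since $F$ is an upset), and it is a proper filter above $F$ containing $a$, so $F\notin\neg\widehat{a}$. From this identity together with the De Morgan law $\neg(U\cup V)=\neg U\cap\neg V$ (valid for the pseudocomplement, since $\mathsf{int}_\leqslant$ commutes with finite intersections) and the identity $\widehat{a}\cap\widehat{b}=\widehat{a\wedge b}$ noted after Definition \ref{UVofBA}, I obtain $\neg\neg\widehat{a}=\widehat{--a}=\widehat{a}$ and, more generally, $\neg\neg(\widehat{a_1}\cup\dots\cup\widehat{a_n})=\widehat{a_1\vee\dots\vee a_n}$.

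With these identities in hand, most of part (\ref{MainRep1}) falls out. Membership $\widehat{a}\in\mathsf{CO}\mathcal{RO}(UV(\mathbb{A}))$ combines the compact-openness of each $\widehat{a}$ from Proposition \ref{IsSpectral} with $\neg\neg\widehat{a}=\widehat{a}$, which by the remark after (\ref{ROeq}) means $\widehat{a}\in\mathcal{RO}(UV(\mathbb{A}))$. Order-preservation is trivial, and injectivity together with order-reflection reduce to testing against principal filters: if $a\not\leq b$ then $a\neq\zero$, so $\mathord{\uparrow}a$ is a proper filter lying in $\widehat{a}\setminus\widehat{b}$; hence $\widehat{a}\subseteq\widehat{b}$ implies $a\leq b$, and equality of the $\widehat{\cdot}$-images implies equality in $\mathbb{A}$.

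The main obstacle is surjectivity, and it is precisely where the regular-open condition earns its keep. Given $U\in\mathsf{CO}\mathcal{RO}(UV(\mathbb{A}))$, compactness together with the fact that the $\widehat{a}$ form a basis lets me write $U=\widehat{a_1}\cup\dots\cup\widehat{a_n}$. The displayed computation then gives $\neg\neg U=\widehat{a_1\vee\dots\vee a_n}$, and since $U$ is regular open in the upset topology we have $U=\neg\neg U$, so $U=\widehat{a_1\vee\dots\vee a_n}$ lies in the image of $\widehat{\cdot}$. It is worth stressing that without the regular-open condition a finite union of basic opens need not equal any single $\widehat{a}$ (a proper filter containing $a_1\vee a_2$ need contain neither $a_1$ nor $a_2$); thus regular-openness in $\mathsf{Up}$ is exactly what cuts the compact opens down to the image of $\widehat{\cdot}$.

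Finally, for part (\ref{MainRep2}), I would invoke Tarski's theorem that $\mathcal{RO}(UV(\mathbb{A}))$ is a Boolean algebra under meet $=\cap$, complement $=\neg=\mathsf{int}_\leqslant(UV(\mathbb{A})\setminus\cdot)$, and join $=\neg\neg(\cdot\cup\cdot)=\mathsf{int}_\leqslant(\mathsf{cl}_\leqslant(\cdot\cup\cdot))$---that is, exactly the operations in (\ref{MainRepEQ}). By part (\ref{MainRep1}) we have $\mathsf{CO}\mathcal{RO}(UV(\mathbb{A}))=\{\widehat{a}\mid a\in\mathbb{A}\}$, and the identities established above show this set is closed under the three operations (returning $\widehat{a\wedge b}$, $\widehat{-a}$, and $\widehat{a\vee b}$) and contains $\varnothing=\widehat{\zero}$ and $UV(\mathbb{A})=\widehat{\one}$. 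Hence $\mathsf{CO}\mathcal{RO}(UV(\mathbb{A}))$ is a subalgebra of the Boolean algebra $\mathcal{RO}(UV(\mathbb{A}))$, and therefore itself a Boolean algebra, with $\widehat{\cdot}$ a Boolean isomorphism onto it.
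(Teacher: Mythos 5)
Your proposal is correct, and it reaches the same two milestones as the paper's proof---the identity $\mathsf{CO}\mathcal{RO}(UV(\mathbb{A}))=\{\widehat{a}\mid a\in\mathbb{A}\}$ via compactness (finite unions of basic opens) plus regular-openness (collapsing the union), and then the operation equations---but it factors the computations differently. The paper proves the join identity $\reallywidehat{a_1\vee\dots\vee a_n}=\mathsf{int}_\leqslant(\mathsf{cl}_\leqslant(\widehat{a_1}\cup\dots\cup\widehat{a_n}))$ by a direct two-sided filter-extension argument (extending $F'$ by some $a_i$ with $-a_i\notin F'$ in one direction, and by $-a_1\wedge\dots\wedge-a_n$ in the other), and only later, in part 2, separately verifies $\widehat{-a}=\mathsf{int}_\leqslant(UV(\mathbb{A})\setminus\widehat{a})$. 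You instead isolate $\neg\widehat{a}=\widehat{-a}$ as the single filter-theoretic lemma and derive the join identity algebraically from it, using the pseudocomplement De Morgan law $\neg(U\cup V)=\neg U\cap\neg V$ and $\widehat{a}\cap\widehat{b}=\widehat{a\wedge b}$, so that $\neg\neg(\widehat{a_1}\cup\dots\cup\widehat{a_n})=\widehat{a_1\vee\dots\vee a_n}$ falls out by rewriting; this reduces the paper's three filter computations to one and makes the role of regular-openness ($U=\neg\neg U$) especially transparent. For part 2 you also take a slightly different route: rather than transporting the operations along the isomorphism and checking the displayed equalities as the paper does, you exhibit $\{\widehat{a}\mid a\in\mathbb{A}\}$ as a subalgebra of Tarski's regular-open algebra of $\mathsf{Up}(UV(\mathbb{A}),\leqslant)$---which is in fact the strategy the paper itself uses later for abstract UV-spaces in Proposition \ref{COROBA}. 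What the paper's more explicit computation buys is reusability: the intermediate equation (\ref{JoinEQ}) and the displayed equalities (\ref{AlreadyObserved}) are cited verbatim in several later proofs (e.g., Theorems \ref{SecondThm} and \ref{DualityThm}), whereas your streamlined version would require unwinding the De Morgan step at those points. Both arguments are fully choice-free, and your explicit check that $a\not\leq b$ yields the proper filter $\mathord{\uparrow}a\in\widehat{a}\setminus\widehat{b}$ correctly fills in what the paper dismisses as ``clearly $a\leq b$ iff $\widehat{a}\subseteq\widehat{b}$.''
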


\begin{proof} For part \ref{MainRep1}, we will show that
\begin{equation}\mathsf{CO}\mathcal{RO}(UV(\mathbb{A}))=\{\widehat{a}\mid a\in\mathbb{A}\},\label{MainEq}\end{equation}
for then the map $a\mapsto \widehat{a}$ is the isomorphism from $\mathbb{A}$ to $\mathsf{CO}\mathcal{RO}(UV(\mathbb{A}))$, since clearly $a\leq b$ iff $\widehat{a}\subseteq \widehat{b}$. 

For the right-to-left inclusion of (\ref{MainEq}), we showed in the proof of Proposition \ref{IsSpectral}.\ref{Spectral} that each $\widehat{a}$ is compact open in $UV(\mathbb{A})$. Now we show that $\widehat{a}$ is regular open in $\mathsf{Up}(UV(\mathbb{A}),\leqslant)$, using the fact from Proposition \ref{IsSpectral}.\ref{Specialization} that the specialization order $\leqslant$ is the inclusion order $\subseteq$. First, $\widehat{a}$ is an $\leqslant$-upset, for if $F\in \widehat{a}$ and $F\leqslant F'$, so $a\in F$ and $F\subseteq F'$, then $a\in F'$ and hence $F'\in \widehat{a}$. Then to see that $\widehat{a}$ is regular open, by (\ref{ROeq}) it suffices to show that if $F\not\in \widehat{a}$, then there is a proper filter $F'\supseteq F$ such that for all proper filters $F''\supseteq F'$, we have $F''\not\in \widehat{a}$. Indeed, if $F\not\in \widehat{a}$, so $a\not\in F$, then the filter $F'$ generated by $F\cup \{-a\}$ is a proper filter with $F'\supseteq F$, and for all proper filters $F''\supseteq F'$, we have $a\not\in F''$ and hence $F''\not\in \widehat{a}$.

For the left-to-right inclusion of (\ref{MainEq}), suppose $S$ is compact open, so $S=\widehat{a_1}\cup\dots\cup \widehat{a_n}$ for some $a_1,\dots,a_n\in\mathbb{A}$. Now if in addition $\widehat{a_1}\cup\dots \cup \widehat{a_n}$ is regular open in $\mathsf{Up}(UV(\mathbb{A}),\leqslant)$, then we claim 
\begin{equation}\widehat{a_1}\cup\dots\cup  \widehat{a_n}=\reallywidehat{a_1\vee\dots\vee a_n}.\label{FiniteJoinEq}\end{equation} 
First, we show 
\begin{equation}\reallywidehat{a_1\vee\dots\vee a_n}=\mathsf{int}_\leqslant(\mathsf{cl}_\leqslant(\widehat{a_1}\cup\dots\cup \widehat{a_n})).\label{JoinEQ}\end{equation}
For the left-to-right inclusion, if $F\in \reallywidehat{a_1\vee\dots\vee a_n}$, so $a_1\vee\dots \vee a_n\in F$, then for any proper filter $F'\supseteq F$, there is some $a_i$ such that $-a_i\not\in F'$. Thus, the filter $F''$ generated by $F'\cup \{a_i\}$ is proper, and $a_i\in F''$ implies $F''\in \widehat{a_i}$ and hence $F''\in\widehat{a_1}\cup\dots\cup \widehat{a_n}$. Thus, by (\ref{ROeq}), $F\in \mathsf{int}_\leqslant(\mathsf{cl}_\leqslant(\widehat{a_1}\cup\dots\cup \widehat{a_n}))$.  Conversely, if $F\not\in \reallywidehat{a_1\vee\dots\vee a_n}$, so $a_1\vee\dots\vee a_n\not\in F$, then the filter $F'$ generated by $F\cup\{-a_1\wedge\dots\wedge -a_n\}$ is a proper filter, and for every proper filter $F''\supseteq F'$, each $a_i$ is not in $F''$, so $F''\not\in \widehat{a_1}\cup\dots\cup \widehat{a_n}$. Thus, by (\ref{ROeq}), $F\not\in \mathsf{int}_\leqslant(\mathsf{cl}_\leqslant(\widehat{a_1}\cup\dots\cup \widehat{a_n}))$. Finally, if $\widehat{a_1}\cup\dots \cup \widehat{a_n}$ is regular open in $\mathsf{Up}(UV(\mathbb{A}),\leqslant)$, then $\widehat{a_1}\cup\dots \cup\widehat{a_n}=\mathsf{int}_\leqslant(\mathsf{cl}_\leqslant(\widehat{a_1}\cup\dots \cup\widehat{a_n}))$, which with (\ref{JoinEQ}) implies (\ref{FiniteJoinEq}). Thus, $S\in \{\widehat{a}\mid a\in\mathbb{A}\}$.

For part \ref{MainRep2}, since $a\mapsto\widehat{a}$ is an isomorphism, we have:
\begin{equation}\widehat{a}\wedge \widehat{b}=\widehat{a\wedge b}\qquad \mathord{-}\widehat{a}=\widehat{- a}\qquad \widehat{a}\vee\widehat{b}=\widehat{a\vee b}.\label{From1Eq}\end{equation}
We have already observed the first and third of the following equalities:
\begin{equation}\widehat{a\wedge b}=\widehat{a}\cap\widehat{b}\qquad \widehat{- a}=\mathsf{int}_\leqslant(UV(\mathbb{A})\setminus \widehat{a})\qquad \widehat{a\vee b}=\mathsf{int}_\leqslant(\mathsf{cl}_\leqslant(\widehat{a}\cup\widehat{b})).\label{AlreadyObserved}\end{equation}
For the second equality, if $F\in   \widehat{-a}$, so $-a\in F$, then for every proper filter $F'\supseteq F$, we have $-a\in F'$, so $a\not\in F'$ and hence $F'\not\in\widehat{a}$. Thus, $F\in \mathsf{int}_\leqslant(UV(\mathbb{A})\setminus \widehat{a})$. If $-a\not\in F$, then the filter $F'$ generated by $F\cup \{a\}$ is a proper filter such that $F\subseteq F'\in \widehat{a}$, so $F\not\in \mathsf{int}_\leqslant(UV(\mathbb{A})\setminus \widehat{a})$.

Combining (\ref{From1Eq}) and (\ref{AlreadyObserved}), we have:
\begin{equation}\widehat{a}\wedge \widehat{b}=\widehat{a}\cap\widehat{b}\qquad \mathord{-}\widehat{a}=\mathsf{int}_\leqslant (UV(\mathbb{A})\setminus \widehat{a})\qquad \widehat{a}\vee\widehat{b}=\mathsf{int}_\leqslant (\mathsf{cl}_\leqslant (\widehat{a}\cup\widehat{b})),\end{equation}
which with (\ref{MainEq}) shows that the BA operations of $\mathsf{CO}\mathcal{RO}(UV(\mathbb{A}))$ satisfy the equations in (\ref{MainRepEQ}).
\end{proof}

\begin{cor} For each Stone space $X$,  $\mathsf{Clop}(X)$ is isomorphic to $\mathsf{CO}\mathcal{RO}(\mathscr{UV}(X))$ via the map $U\mapsto \Box U$.
\end{cor}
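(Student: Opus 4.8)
The plan is to obtain the corollary as an immediate composition of the two main results already established. Taking $\mathbb{A}=\mathsf{Clop}(X)$ in Theorem \ref{MainRep}, I first have an isomorphism $\widehat{\cdot}\colon \mathsf{Clop}(X)\to \mathsf{CO}\mathcal{RO}(UV(\mathsf{Clop}(X)))$. By Proposition \ref{UVStone}, I also have a homeomorphism $f\colon \mathscr{UV}(X)\to UV(\mathsf{Clop}(X))$, given by $f\colon C\mapsto \{U\in\mathsf{Clop}(X)\mid C\subseteq U\}$. The strategy is to show that the map $U\mapsto \Box U$ is exactly the composite of $\widehat{\cdot}$ with the algebra isomorphism induced by $f$, so that it inherits the isomorphism property from its two factors.

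Next I would argue that the homeomorphism $f$ induces an isomorphism $\mathsf{CO}\mathcal{RO}(UV(\mathsf{Clop}(X)))\to \mathsf{CO}\mathcal{RO}(\mathscr{UV}(X))$ by $S\mapsto f^{-1}[S]$. This is where the only genuine content lies, but it is routine: since $\mathsf{CO}\mathcal{RO}$ is defined purely in terms of the topology together with the specialization order, and a homeomorphism preserves open sets, compactness, closure, and interior, as well as the specialization order (hence also the upset topology of that order and therefore regular openness in both the space sense and the $\mathcal{RO}$ sense), the inverse-image map $f^{-1}[\cdot]$ carries $\mathsf{CO}\mathcal{RO}(UV(\mathsf{Clop}(X)))$ bijectively onto $\mathsf{CO}\mathcal{RO}(\mathscr{UV}(X))$ and commutes with the Boolean operations displayed in (\ref{MainRepEQ}).

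Then I would compute the composite of these two isomorphisms and verify that it is precisely $U\mapsto \Box U$. For $U\in\mathsf{Clop}(X)$, the composite sends $U$ to $f^{-1}[\widehat{U}]$, and the identity $f^{-1}[\widehat{U}]=\{C\mid U\in f(C)\}=\{C\mid C\subseteq U\}=\Box U$ was already carried out in the proof of Proposition \ref{UVStone} when verifying continuity of $f$, so I can simply cite it rather than redoing the calculation. Hence $U\mapsto \Box U$ is a composite of isomorphisms and is therefore itself an isomorphism from $\mathsf{Clop}(X)$ onto $\mathsf{CO}\mathcal{RO}(\mathscr{UV}(X))$.

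The main obstacle, such as it is, is merely to state cleanly that a homeomorphism of spectral spaces induces an isomorphism of their $\mathsf{CO}\mathcal{RO}$ algebras; everything else is bookkeeping. No new combinatorics and no choice principles are required, since both input results are already choice-free.
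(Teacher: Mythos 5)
Your proposal is correct and follows essentially the same route as the paper's own proof: compose the isomorphism $U\mapsto\widehat{U}$ from Theorem \ref{MainRep} with the homeomorphism $f$ of Proposition \ref{UVStone}, using the identity $f^{-1}[\widehat{U}]=\Box U$ already computed there. The only difference is that you spell out explicitly the (tacitly used) fact that a homeomorphism induces, via inverse image, an isomorphism of $\mathsf{CO}\mathcal{RO}$ algebras, which the paper leaves implicit.
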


\begin{proof} By Theorem \ref{MainRep}, we have an isomorphism between $\mathsf{Clop}(X)$ and $\mathsf{CO}\mathcal{RO}(UV(\mathsf{Clop}(X)))$ via the map that sends $U\in\mathsf{Clop}(X)$ to $\widehat{U}\in \mathsf{CO}\mathcal{RO}(UV(\mathsf{Clop}(X)))$. By the proof of Proposition \ref{UVStone}, $\mathscr{UV}(X)$ is homeomorphic to $UV(\mathsf{Clop}(X))$ via the map $f$, which satisfies $f^{-1}[\widehat{U}]=\Box U$. Thus, $\mathsf{Clop}(X)$ is isomorphic to $\mathsf{CO}\mathcal{RO}(\mathscr{UV}(X))$ via the map $U\mapsto \Box U$.
\end{proof}

\section{Regular opens in the Alexandroff and spectral topologies}\label{ROsection} In response to the representation in the previous section, Tom\'{a}\v{s} Jakl (p.~c.) observed that in the special case of compact open sets, being regular open in the Alexandroff space $\mathsf{Up}(UV(\mathbb{A}))$ is equivalent to being regular open in the spectral space $UV(\mathbb{A})$, i.e., $\mathsf{CO}\mathcal{RO}(UV(\mathbb{A}))=\mathsf{CRO}(UV(\mathbb{A}))$. We have $U\in\mathsf{RO}(UV(\mathbb{A}))$  iff $U$ is an open set such that $U=\mathsf{int}(\mathsf{cl}(U))$, where $\mathsf{int}$ and $\mathsf{cl}$ are the interior and closure operations of $UV(\mathbb{A})$. This is equivalent to $U=U^{**}$, where $^*$ is the pseudocomplement operation on $\mathsf{O}(UV(\mathbb{A}))$:
\[U^*=\mathsf{int}(UV(\mathbb{A})\setminus U).\]
It is then easy to see that
\[U^*=\bigcup \{V\in \mathsf{O}(UV(\mathbb{A}))\mid U\cap V=\varnothing\}=\bigcup\{\widehat{c}\mid U\cap\widehat{c}=\varnothing\}.\]
Thus, we can derive $\mathsf{CO}\mathcal{RO}(UV(\mathbb{A}))=\mathsf{CRO}(UV(\mathbb{A}))$ from the following more basic facts.

\begin{prop}\label{StarNeg} Let $\mathbb{A}$ be a BA.
\begin{enumerate}
\item\label{StarNeg1} If $U\in\mathsf{O}(UV(\mathbb{A}))$, then $U^*\subseteq \neg U$;
\item\label{StarNeg2} If $U\in\mathsf{CO}(UV(\mathbb{A}))$, then $\neg U\subseteq U^*$.
\end{enumerate}
\end{prop}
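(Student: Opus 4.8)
The plan is to prove the two inclusions separately, with part \ref{StarNeg1} being essentially formal and part \ref{StarNeg2} carrying the real content via compactness. Throughout I would use the description
$U^*=\bigcup\{\widehat{c}\mid U\cap\widehat{c}=\varnothing\}$
recorded just above the statement, together with the facts, established in Proposition \ref{IsSpectral} and in the proof of Theorem \ref{MainRep}, that the specialization order $\leqslant$ is inclusion and that each basic open $\widehat{c}$ is an $\leqslant$-upset.

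For part \ref{StarNeg1}, suppose $F\in U^*$. Then $F\in\widehat{c}$ for some $c$ with $U\cap\widehat{c}=\varnothing$. Since $\widehat{c}$ is an upset and $F\in\widehat{c}$, we get $\mathord{\Uparrow}F\subseteq\widehat{c}$, and since $\widehat{c}$ is disjoint from $U$, every $F'\supseteq F$ lies outside $U$; by (\ref{NegationEQ}) this says exactly that $F\in\neg U$. Conceptually this is just the general fact that the spectral topology is coarser than the upset topology of its specialization order, so that $\mathsf{int}\subseteq\mathsf{int}_\leqslant$ and hence $U^*\subseteq\neg U$; but the basic-open formulation is the most direct and needs neither compactness nor the Boolean complement.

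For part \ref{StarNeg2}, I would first exploit compactness to write $U=\widehat{a_1}\cup\dots\cup\widehat{a_n}$, which is available because $U\in\mathsf{CO}(UV(\mathbb{A}))$ and the sets $\widehat{a}$ form a basis. Now take $F\in\neg U$. The key move is to show $-a_i\in F$ for each $i$: if instead $-a_i\notin F$, then the filter generated by $F\cup\{a_i\}$ is proper, hence a member of $UV(\mathbb{A})$ extending $F$ and lying in $\widehat{a_i}\subseteq U$, contradicting $F\in\neg U$. With all $-a_i\in F$, set $c=-(a_1\vee\dots\vee a_n)$, so that $c\in F$ and thus $F\in\widehat{c}$. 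Finally $\widehat{c}\cap U=\bigcup_i\widehat{c\wedge a_i}=\bigcup_i\widehat{\zero}=\varnothing$, since $a_i\leq a_1\vee\dots\vee a_n$ forces $c\wedge a_i=\zero$. Hence $\widehat{c}$ is a basic open containing $F$ and disjoint from $U$, so $F\in U^*$ by the displayed formula.

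I expect the main obstacle to lie in part \ref{StarNeg2}, specifically in inferring $-a_i\in F$ from $F\in\neg U$. This is where the Boolean structure is essential: it rests on the fact that in a BA the filter generated by $F\cup\{a_i\}$ is proper precisely when $-a_i\notin F$, and on the availability of the finite subcover $U=\widehat{a_1}\cup\dots\cup\widehat{a_n}$ supplied by compactness. Part \ref{StarNeg1}, by contrast, is purely order-topological.
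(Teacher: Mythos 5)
Your proof is correct and takes essentially the same route as the paper's: part \ref{StarNeg1} by noting that the basic open $\widehat{c}$ witnessing $F\in U^*$ is an $\leqslant$-upset disjoint from $U$, and part \ref{StarNeg2} by compactness giving $U=\widehat{a_1}\cup\dots\cup\widehat{a_n}$ and the witness $c=-(a_1\vee\dots\vee a_n)=-a_1\wedge\dots\wedge-a_n$, exactly the element used in the paper. If anything, you make explicit a step the paper asserts without comment, namely that $F\in\neg U$ forces $-a_i\in F$ because otherwise the filter generated by $F\cup\{a_i\}$ would be a proper extension of $F$ inside $\widehat{a_i}\subseteq U$.
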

\begin{proof} For part (\ref{StarNeg1}), suppose $F\in U^*$, so there is some $c$ such that $F\in\widehat{c}$, i.e., $c\in F$, and $U\cap\widehat{c}=\varnothing$, i.e., no proper filter containing $c$ belongs to $U$. Thus, no proper filter extending $F$ belongs to $U$, whence $F \in \neg U$.

For part (\ref{StarNeg2}), suppose $U\in\mathsf{CO}(X)$, so $U=\widehat{a_1}\cup\dots\cup\widehat{a_n}$ for some $a_1,\dots,a_n\in\mathbb{A}$. Then assuming $F\in\neg U$, we have $\neg a_1,\dots,\neg a_n\in F$ and hence $c:=\neg a_1\wedge\dots\wedge\neg a_n \in F$. Thus, $F\in\widehat{c}$, and clearly $U\cap\widehat{c}=\varnothing$. Therefore, $F\in U^*$.
\end{proof}

As an immediate corollary of Proposition \ref{StarNeg}, we have the following.

\begin{cor}\label{Jakl} For any BA $\mathbb{A}$, $\mathsf{CO}\mathcal{RO}(UV(\mathbb{A}))=\mathsf{CRO}(UV(\mathbb{A}))$.
\end{cor}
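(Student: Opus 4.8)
The plan is to prove the two inclusions $\mathsf{CO}\mathcal{RO}(UV(\mathbb{A}))\subseteq\mathsf{CRO}(UV(\mathbb{A}))$ and $\mathsf{CRO}(UV(\mathbb{A}))\subseteq\mathsf{CO}\mathcal{RO}(UV(\mathbb{A}))$ by reducing both, via Proposition \ref{StarNeg}, to the single identity $U^{**}=\neg\neg U$ for compact open $U$. Write $X=UV(\mathbb{A})$. Recall that a set lies in $\mathsf{RO}(X)$ iff it is open in $X$ and equal to $U^{**}$, where $U^*=\mathsf{int}(X\setminus U)$, whereas it lies in $\mathcal{RO}(X)$ iff it is open in $\mathsf{Up}(X,\leqslant)$ and equal to $\neg\neg U$. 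So each family is ``open set $+$ a double-negation fixed-point condition,'' and the whole content is that the two fixed-point conditions agree on compact open sets.

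First I would check that the openness and compactness requirements align automatically, so that only the double-negation conditions need comparing. Since $\mathsf{RO}(X)\subseteq\mathsf{O}(X)$, any $U\in\mathsf{CRO}(X)$ is compact and open in $X$, hence $U\in\mathsf{CO}(X)$; and because the open sets of $X$ are $\leqslant$-upsets (from the description of the specialization order in Proposition \ref{IsSpectral}), such a $U$ is also open in $\mathsf{Up}(X,\leqslant)$. Symmetrically, any $U\in\mathsf{CO}\mathcal{RO}(X)$ is compact open, hence compact and open in both topologies. Thus in each inclusion the only substantive step left is matching $U=U^{**}$ with $U=\neg\neg U$.

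The core step is the identity $U^{**}=\neg\neg U$ for compact open $U$. By Proposition \ref{StarNeg} (both parts, as a compact open set is in particular open), $U^*=\neg U$. The extra fact I need is that $\neg U$ is again compact open: for $U=\widehat{a_1}\cup\dots\cup\widehat{a_n}$, the computation inside the proof of Proposition \ref{StarNeg}.\ref{StarNeg2} identifies $\neg U$ with $\widehat{c}$ for $c=-a_1\wedge\dots\wedge -a_n$ (alternatively this follows from Theorem \ref{MainRep}, which gives $\mathsf{CO}\mathcal{RO}(X)=\{\widehat{a}\mid a\in\mathbb{A}\}$ and is closed under $\neg$). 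Applying Proposition \ref{StarNeg} a second time, now to $\neg U$, yields $(\neg U)^*=\neg(\neg U)$. Chaining these gives $U^{**}=(U^*)^*=(\neg U)^*=\neg\neg U$.

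With $U^{**}=\neg\neg U$ established, both inclusions are immediate. If $U\in\mathsf{CO}\mathcal{RO}(X)$ then $U=\neg\neg U=U^{**}$, so $U\in\mathsf{RO}(X)$ and, being compact, lies in $\mathsf{CRO}(X)$; conversely, if $U\in\mathsf{CRO}(X)$ then $U=U^{**}=\neg\neg U$, so $U\in\mathcal{RO}(X)$ and, being compact open, lies in $\mathsf{CO}\mathcal{RO}(X)$. I expect the main obstacle to be precisely the second application of Proposition \ref{StarNeg}: one must verify that $\neg U$ is compact \emph{open}, not merely open, so that part \ref{StarNeg2} applies to it, and this is exactly where the explicit description $\neg U=\widehat{c}$ is doing the work.
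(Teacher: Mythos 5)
Your proof is correct and takes essentially the same route as the paper: Section \ref{ROsection} reduces membership in $\mathsf{RO}(UV(\mathbb{A}))$ to the fixed-point condition $U=U^{**}$ and then derives the corollary as immediate from Proposition \ref{StarNeg}, exactly by identifying $^*$ with $\neg$ on compact open sets. The only thing you add is to make explicit the second application of Proposition \ref{StarNeg} to $\neg U$, justified by the observation---implicit in the proof of Proposition \ref{StarNeg}.\ref{StarNeg2}, where $\neg U=\widehat{c}$ for $c=-a_1\wedge\dots\wedge-a_n$---that $\neg U$ is again compact open, a detail the paper leaves to the reader.
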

\noindent Thus, by Theorem \ref{MainRep}, $\mathbb{A}$ is isomorphic to $\mathsf{CRO}(UV(\mathbb{A}))$. It is also easy to check that $-\widehat{a}=\mathsf{int}(UV(\mathbb{A})\setminus \widehat{a})$ and $\widehat{a}\vee\widehat{b}=\mathsf{int}(\mathsf{cl}(\widehat{a}\cup \widehat{b}))$.

If we do not restrict to compact open sets, then the operations $\neg$ and $^*$ may behave differently; however, the extent of this difference depends on one's set-theoretic assumptions. It is a theorem of $\mathrm{ZF}+\mathrm{BPI}$ that every infinite BA contains a non-principal ultrafilter (see, e.g., \cite[p.~174]{Givant2009}), in which case $\neg$ and $^*$ can be distinguished with an open set as in Proposition \ref{Nonprinc}.\ref{Nonprinc1} below. On the other hand, it is consistent with ZF that there is an infinite BA in which every filter is principal \cite{Plotkin1976} (for an overview, see \cite[p.~165]{Howard1998}), and in such a BA $\neg$ and $^*$ cannot be distinguished with open sets in light of Proposition \ref{Nonprinc}.\ref{Nonprinc2} (plus Proposition \ref{StarNeg}.\ref{StarNeg1}).

\begin{prop}\label{Nonprinc} Let $\mathbb{A}$ be a BA.
\begin{enumerate}
\item\label{Nonprinc0} $\mathsf{RO}(UV(\mathbb{A}))\subseteq \mathsf{O}\mathcal{RO}(UV(\mathbb{A}))$.
\item\label{Nonprinc1} If $F$ is a non-principal ultrafilter in $\mathbb{A}$ and $U=\bigcup\{\widehat{-a}\mid a\in F\}$, then:
\begin{enumerate}
\item\label{Nonprinc1a} $F\in \neg U\setminus U^*$;
\item\label{Nonprinc1b} $U=\neg\neg U$;
\item\label{Nonprinc1c} $U\subsetneq U^{**}$;
\item\label{Nonprinc1d} $\mathsf{O}\mathcal{RO}(UV(\mathbb{A}))\not\subseteq\mathsf{RO}(UV(\mathbb{A}))$.
\end{enumerate}
\item\label{Nonprinc2} Let $F$ be a principal filter in $\mathbb{A}$ and $U\in\mathsf{O}(UV(\mathbb{A}))$. If $F\in \neg U$, then $F\in U^*$.
\end{enumerate}
\end{prop}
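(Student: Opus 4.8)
The plan is to run everything off an explicit description of $\neg$ and $\neg\neg$ in terms of the basic opens. Fix an open $U$ and write $U=\bigcup_{i\in I}\widehat{a_i}$. First I would record, by the same filter-extension arguments used in Theorem~\ref{MainRep}, that
\[\neg U=\bigcap_{i\in I}\widehat{-a_i}\qquad\text{and}\qquad \neg\neg U=\bigcup\Big\{\,\reallywidehat{a_{i_1}\vee\cdots\vee a_{i_n}}\ :\ n\ge 1,\ i_1,\dots,i_n\in I\,\Big\}.\]
The first identity holds because $G\in\neg U$ iff every $a_i$ is incompatible with $G$, i.e.\ $-a_i\in G$ for all $i$. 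For the second, $F\in\neg\neg U$ iff no proper $F'\supseteq F$ lies in $\neg U$; if some finite join $b=a_{i_1}\vee\cdots\vee a_{i_n}$ lies in $F$ this is clear (any $F'\supseteq F$ containing all the $-a_{i_j}$ would contain $b\wedge -b=\zero$), and conversely, if no finite join lies in $F$, then $F\cup\{-a_i\mid i\in I\}$ has the finite intersection property (a finite meet $f\wedge\bigwedge_j(-a_{i_j})=\zero$ would force $f\le a_{i_1}\vee\cdots\vee a_{i_n}\in F$), so it generates a proper filter $F'\supseteq F$ with $F'\in\neg U$, contradicting $F\in\neg\neg U$. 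The one genuinely delicate point, and the step I expect to be the \emph{main obstacle}, is precisely this properness check when $I$ is infinite: it goes through only because properness depends on finite intersections alone, so it reduces to the failure of \emph{finite} joins of the $a_i$ to enter $F$.

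For part~(\ref{Nonprinc0}) I would then compare these with the spectral pseudocomplement. Writing $U^*=\bigcup\{\widehat c\mid \widehat c\cap U=\varnothing\}$ and noting $\widehat c\cap U=\varnothing$ iff $c\le -a_i$ for all $i$, I get that for any finite join $b=a_{i_1}\vee\cdots\vee a_{i_n}$ and any such $c$ one has $c\le\bigwedge_j(-a_{i_j})=-b$, hence $\widehat b\cap U^*=\varnothing$ and so $\widehat b\subseteq U^{**}$. By the formula for $\neg\neg U$ this yields $\neg\neg U\subseteq U^{**}$ for every open $U$. Now if $U\in\mathsf{RO}(X)$, so $U=U^{**}$, then $U\subseteq\neg\neg U\subseteq U^{**}=U$ (the first inclusion because open sets are $\leqslant$-upsets), whence $U=\neg\neg U\in\mathcal{RO}(X)$; since $U$ is also open this gives $U\in\mathsf{O}\mathcal{RO}(X)$.

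For part~(\ref{Nonprinc1}), take $U=\bigcup_{a\in F}\widehat{-a}$ with $F$ a non-principal ultrafilter. Here the generating set is closed under the finite joins that appear in the $\neg\neg U$ formula, since $(-a)\vee(-a')=-(a\wedge a')$ and $a\wedge a'\in F$; so that formula gives $\neg\neg U=U$ at once, which is~(\ref{Nonprinc1b}). For the rest I would show $U^*=\varnothing$: by the computation above $\widehat c\cap U=\varnothing$ iff $c\le a$ for all $a\in F$, i.e.\ $c$ is a lower bound of $F$; a nonzero such $c$ would give $F\subseteq\mathord{\uparrow}c$ and hence $F=\mathord{\uparrow}c$ by maximality, contradicting non-principality, so only $c=\zero$ qualifies and $U^*=\varnothing$. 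Then $U^{**}=\varnothing^*=X$, and since $F\notin U$ (as $a\in F$ forbids $F\in\widehat{-a}$) we get $U\subsetneq U^{**}$, which is~(\ref{Nonprinc1c}); also $F\in\neg U$ by maximality while $F\notin U^*=\varnothing$, giving~(\ref{Nonprinc1a}). Finally~(\ref{Nonprinc1d}) is immediate: $U$ is open and, by~(\ref{Nonprinc1b}), in $\mathcal{RO}(X)$, so $U\in\mathsf{O}\mathcal{RO}(X)$, yet $U\notin\mathsf{RO}(X)$ because $U\neq U^{**}$.

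Part~(\ref{Nonprinc2}) is the easiest and I would dispatch it directly. For a principal filter $F=\mathord{\uparrow}a$ one has $\widehat a=\{G\mid a\in G\}=\{G\mid F\subseteq G\}=\mathord{\Uparrow}F$. If $F\in\neg U$, then $\mathord{\Uparrow}F\cap U=\varnothing$, i.e.\ $\widehat a\cap U=\varnothing$; since $a\in F$ this exhibits a basic open witnessing $F\in U^*$, so $F\in U^*$. This is the converse, for principal filters, of the general inclusion $U^*\subseteq\neg U$ from Proposition~\ref{StarNeg}.\ref{StarNeg1}, and it is exactly what fails at non-principal ultrafilters by part~(\ref{Nonprinc1a}).
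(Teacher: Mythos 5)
Your proof is correct, and while it runs on the same computational core as the paper's --- incompatibility of basic opens ($\widehat{c}\cap\widehat{a}=\varnothing$ iff $c\wedge a=\zero$) together with the fact that properness of a generated filter is a finitary condition --- it is organized quite differently. The paper argues each part separately: for part \ref{Nonprinc0} it shows $\neg\neg(U^{**})\subseteq U^{**}$ directly, via the ideal $I=\{c\in\mathbb{A}\mid \forall b\in B\; b\wedge c=\zero\}$ and the filter generated by $\{a\wedge -c\mid a\in F,\, c\in I\}$; for part \ref{Nonprinc1a} it argues by contradiction that a witness $\widehat{c}$ for $F\in U^*$ would force $F=\mathord{\uparrow}c$ to be principal; for part \ref{Nonprinc1b} it uses ultrafilterhood to get $G\notin U\Rightarrow G\subseteq F$; and it computes $U^*=\varnothing$ only in part \ref{Nonprinc1c}. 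You instead prove one master identity --- for open $U=\bigcup_{i}\widehat{a_i}$, $\neg U=\bigcap_i\widehat{-a_i}$ and $\neg\neg U=\bigcup\{\widehat{b}\mid b \mbox{ a finite join of the } a_i\}$, which is the infinitary extension of equation (\ref{JoinEQ}) from Theorem \ref{MainRep} --- and read everything off it. This buys three things the paper's route leaves implicit: (i) part \ref{Nonprinc0} follows from the general inclusion $\neg\neg V\subseteq V^{**}$ for \emph{every} open $V$ (a companion to Proposition \ref{StarNeg}.\ref{StarNeg1} that the paper never isolates), sandwiched by $U\subseteq\neg\neg U\subseteq U^{**}=U$; (ii) your proof of part \ref{Nonprinc1b} uses only that $F$ is a filter, since the generators $-a$ are closed under finite joins via $(-a)\vee(-a')=-(a\wedge a')$, so it actually proves $U=\neg\neg U$ for an \emph{arbitrary} filter $F$ --- in the paper's terms, this is the $\mathcal{RO}$ part of Fact \ref{IdealIso}, your $U$ being $\zeta$ of the ideal dual to $F$; (iii) the single computation $U^*=\varnothing$ serves both parts \ref{Nonprinc1a} and \ref{Nonprinc1c}, your maximality argument being the contrapositive of the paper's principality argument. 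Your part \ref{Nonprinc2}, via $\widehat{a}=\mathord{\Uparrow}F$, is the paper's argument in tidier form, avoiding the decomposition of $U$ into basic opens. You also correctly flagged the one delicate point --- the properness check when the index set is infinite --- which is exactly where the paper's own arguments do their work. The trade-off is purely expository: the paper's per-part arguments are locally self-contained, while yours front-loads the two identities; in exchange, the identities make transparent why $\neg\neg$ commutes with finite but not arbitrary joins (cf.\ the Remark following Proposition \ref{Comp}).
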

\begin{proof} For part \ref{Nonprinc0}, suppose $U\in \mathsf{RO}(UV(\mathbb{A}))$, so $U=U^{**}$. Since $U^*\in\mathsf{O}(UV(\mathbb{A}))$, we have $U^*=\bigcup\{\widehat{b}\mid b\in B\}$ for some $B\subseteq\mathbb{A}$. Thus, 
\begin{eqnarray*}U^{**}&=&\bigcup\{\widehat{c}\mid \bigcup\{\widehat{b}\mid b\in B \} \cap  \widehat{c}=\varnothing\}\\
&=&\bigcup\{\widehat{c}\mid \forall b\in B\;\, \widehat{b} \cap  \widehat{c}=\varnothing\}\\
&=&\bigcup\{\widehat{c}\mid \forall b\in B\;\, b\wedge c=0\}.
\end{eqnarray*}
Let $I:= \{c\in\mathbb{A}\mid \forall b\in B\;\, b\wedge c=0\}$, and observe that $I$ is an ideal in $\mathbb{A}$. To see that $U^{**}\in \mathcal{RO}(UV(\mathbb{A}))$, suppose $F$ is a proper filter in $\mathbb{A}$ such that $F\not\in U^{**}$. It follows that $F\cap I=\varnothing$.  Let $F'$ be the filter generated by $\{a\wedge -c\mid a\in F, c\in I\}$. We claim that $F'$ is a proper filter. If not, then there are $a_1,\dots,a_n\in F$ and $c_1,\dots,c_n\in I$ such that $a_1\wedge -c_1\wedge\dots \wedge a_n\wedge -c_n=0$, so $a_1\wedge\dots\wedge a_n\leq c_1\vee\dots\vee c_n$. Then since $F$ is a filter containing $a_1,\dots,a_n$, we have $c_1\vee\dots\vee c_n\in F$, and since $I$ is an ideal containing $c_1,\dots,c_n$, we have $c_1\vee\dots\vee c_n\in I$, contradicting $F\cap I=\varnothing$. Hence $F'$ is a proper filter, and clearly every proper filter $F'' \supseteq F'$ is disjoint from $I$, so $F''\not\in U^{**}$. It follows that $F'\in\neg (U^{**})$, which with $F\subseteq F'$ implies $F\not\in \neg\neg (U^{**})$. Thus, $\neg\neg (U^{**})\subseteq U^{**}$, so we have $U^{**}=U\in \mathcal{RO}(UV(\mathbb{A}))$. 

For part (\ref{Nonprinc1a}), clearly $F\in\neg U$. Suppose for contradiction that $F\in U^*$, so there is a $c$ such that $F\in\widehat{c}$ and $U\cap\widehat{c}=\varnothing$. Since $F\in\widehat{c}$, we have $c\in F$. We claim that $F$ is the principal filter generated by $c$, i.e.,  $c\leq a$ for all $a\in F$. For if there is an $a\in F$ such that $c\not\leq a$, then $c\wedge - a\neq 0$, so there is a proper filter $G$ containing $c\wedge - a$. Hence $c,- a\in G$, so $G\in\widehat{c}$ and $G\in\widehat{- a}$. Since $a\in F$, $G\in\widehat{- a}$ implies $G\in U$. Then since $G\in\widehat{c}$, we have $G\in U\cap\widehat{c}$, contradicting $U\cap\widehat{c}=\varnothing$ above. Thus, $F\not\in U^*$.

For part (\ref{Nonprinc1b}), $U\subseteq\neg\neg U$ always holds. To see $\neg\neg U\subseteq U$, suppose $G\not\in U$. It follows by definition of $U$ that for all $a\in F$, $G\not\in \widehat{-a}$ and hence $-a\not\in G$. We claim that $G\subseteq F$. Suppose $b\not\in F$, so $-b\in F$ since $F$ is an ultrafilter. Then by what we derived above, $\mathnormal{--}b\not\in G$, i.e., $b\not \in G$. Thus, $G\subseteq F$. Then since $F\in\neg U$, we have $G\not\in \neg\neg U$.

For part (\ref{Nonprinc1c}), again $U\subseteq U^{**}$ always holds. Recall $U^*=\bigcup\{\widehat{c}\mid U\cap\widehat{c}=\varnothing\}$. Given the definition of $U$, the condition that $U\cap\widehat{c}=\varnothing$ is equivalent to: for all $a\in F$, $\widehat{-a}\cap\widehat{c}=\varnothing$. This is in turn equivalent to: for all $a\in F$, $-a\wedge c=0$, i.e., $c\leq a$. Since $F$ is a non-principal ultrafilter, the only $c$ such that $c\leq a$ for all $a\in F$ is given by $c:=0$. Thus, $U^*=\bigcup \{\widehat{0}\}=\bigcup\{\varnothing\}=\varnothing$. It follows that $U^{**}=UV(\mathbb{A})$. Then since $F\not\in U$, we have $U\subsetneq U^{**}$.

Part (\ref{Nonprinc1d}) is immediate from parts (\ref{Nonprinc1b})--(\ref{Nonprinc1c}).

For part (\ref{Nonprinc2}), since $U$ is open, $U=\bigcup\{\widehat{a_i}\mid i\in I\}$ for some $I$. Assuming $F\in\neg U$, we have $\neg a_i\in F$ for each $i\in I$.  If $F$ is a principal filter generated by some $c$, then $c\leq \neg a_i$ for each $i\in I$, so $U\cap\widehat{c}=\varnothing$. Hence $F\in U^*$.
\end{proof}

\begin{remark} The inclusions \[\mathsf{CO}\mathcal{RO}(UV(\mathbb{A}))= \mathsf{CRO}(UV(\mathbb{A}))\subseteq\mathsf{RO}(UV(\mathbb{A}))\subseteq \mathsf{O}\mathcal{RO}(UV(\mathbb{A}))\] can be understood in terms of the dual correspondence between these types of regular open sets and ideals in the BA $\mathbb{A}$, as we will show in Section \ref{DictionarySection}:
\begin{eqnarray*}
\mathsf{O}\mathcal{RO}(UV(\mathbb{A})) & \quad\mbox{corresponds to}\quad & \mbox{ideals of }\mathbb{A}\\
\mathsf{RO}(UV(\mathbb{A})) & \quad\mbox{corresponds to}\quad & \mbox{normal ideals of }\mathbb{A}\\
\mathsf{CO}\mathcal{RO}(UV(\mathbb{A})) & \quad\mbox{corresponds to}\quad & \mbox{principal ideals of }\mathbb{A}. \\
= \mathsf{CRO}(UV(\mathbb{A}))
\end{eqnarray*}
\end{remark}

Given Theorem \ref{MainRep} and the fact that $\mathsf{CO}\mathcal{RO}(UV(\mathbb{A}))=\mathsf{CRO}(UV(\mathbb{A}))$, we can reason about elements of a BA as compact open sets in $UV(\mathbb{A})$ that are regular open in either the Alexandroff  space $\mathsf{Up}(UV(\mathbb{A}))$ or in the spectral space $UV(\mathbb{A})$. Since the definition of a regular open set in the Alexandroff space is especially simple, given by the first-order condition (\ref{ROeq}) involving the specialization order $\leqslant$, we will continue to use this definition of regular open for the purposes of our calculations.

\section{Characterization of choice-free duals of BAs}\label{VietorisSection}

We now wish to characterize the spectral spaces $X$ that are homeomorphic to $UV(\mathbb{A})$ for some Boolean algebra $\mathbb{A}$. For the following definition, given $x\in X$, let $\mathsf{CO}\mathcal{RO}(x)=\{U\in \mathsf{CO}\mathcal{RO}(X)\mid x\in U\}$.

\begin{definition}\label{VOspace} A \textit{UV-space} is a $T_0$ space $X$ such that:
\begin{enumerate}
\item\label{CloseProp} $\mathsf{CO}\mathcal{RO}(X)$ is closed under $\cap$ and $\mathsf{int}_\leqslant (X\setminus \cdot)$ and is a basis for $X$;
\item\label{PossCompact} every proper filter in $\mathsf{CO}\mathcal{RO}(X)$ is $\mathsf{CO}\mathcal{RO}(x)$ for some $x\in X$.
\end{enumerate}
\end{definition}
\begin{remark} An equivalent definition of a $UV$-space (in light of Section \ref{ROsection} and the proof of Theorem \ref{SecondThm} below) substitutes $\mathsf{CRO}$ for $\mathsf{CO}\mathcal{RO}$ and $\mathsf{int}$ for $\mathsf{int}_\leqslant$ in Definition \ref{VOspace}.
\end{remark}

The conditions in Definition \ref{VOspace} are reminiscent of conditions mentioned earlier: compare part 1 with the statement of coherence in Definition \ref{SpectralDef} and part 2 with the statement of sobriety in Definition \ref{SpectralDef}. Note that the basis condition implies an analogue of the Priestley separation axiom \cite{Priestley1970}: if $x\not\leqslant y$, then there is a $U\in\mathsf{CO}\mathcal{RO}(X)$ such that $x\in U$ and $y\not\in U$.

\begin{prop}\label{COROBA} For any UV-space $X$, $\mathsf{CO}\mathcal{RO}(X)$ ordered by inclusion is a BA with the following operations:
\[U\wedge V=U\cap V\qquad\neg U=\mathsf{int}_\leqslant(X\setminus U)\qquad U\vee V=\mathsf{int}_\leqslant(\mathsf{cl}_\leqslant(U\cup V)).\]
\end{prop}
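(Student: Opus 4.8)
The plan is to realize $\mathsf{CO}\mathcal{RO}(X)$ as a Boolean subalgebra of $\mathcal{RO}(X)$, the collection of regular open sets of the Alexandroff space $\mathsf{Up}(X,\leqslant)$. By Tarski's observation \cite{Tarski1937}, $\mathcal{RO}(X)$ is already a (complete) Boolean algebra, whose bottom is $\varnothing$, whose top is $X$, whose meet is $\cap$, whose complement is $\neg U=\mathsf{int}_\leqslant(X\setminus U)$, and whose join is $U\vee V=\mathsf{int}_\leqslant(\mathsf{cl}_\leqslant(U\cup V))$. Here I would record the standard identity $\mathsf{int}_\leqslant(\mathsf{cl}_\leqslant(U\cup V))=\neg(\neg U\cap\neg V)$ (using $X\setminus\mathsf{int}_\leqslant W=\mathsf{cl}_\leqslant(X\setminus W)$), so that the join of $\mathcal{RO}(X)$ is the De Morgan dual of its meet and complement. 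Since a subset of a Boolean algebra that contains $\zero$ and $\one$ and is closed under meet and complement is automatically a subalgebra (and so, via De Morgan, closed under join), it suffices to verify these few closure conditions for $\mathsf{CO}\mathcal{RO}(X)\subseteq\mathcal{RO}(X)$.

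The inclusion $\mathsf{CO}\mathcal{RO}(X)\subseteq\mathcal{RO}(X)$ holds by the very definition of $\mathsf{CO}\mathcal{RO}(X)$ in Notation \ref{COROnotation}. Closure of $\mathsf{CO}\mathcal{RO}(X)$ under $\cap$ and under $\neg=\mathsf{int}_\leqslant(X\setminus\cdot)$ is exactly part \ref{CloseProp} of Definition \ref{VOspace}. For the bottom element, $\varnothing$ is vacuously compact, open, and regular open in $\mathsf{Up}(X,\leqslant)$, so $\varnothing\in\mathsf{CO}\mathcal{RO}(X)$; in particular the collection is nonempty. The one point that repays a moment's care is the top element: rather than assuming $X$ is compact, I would obtain $X\in\mathsf{CO}\mathcal{RO}(X)$ for free from $X=\neg\varnothing$ together with closure under $\neg$. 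Closure under $\vee$ then follows from the De Morgan identity above, with the resulting join computed in $\mathcal{RO}(X)$ equal to $\mathsf{int}_\leqslant(\mathsf{cl}_\leqslant(U\cup V))$.

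With these checks in place, $\mathsf{CO}\mathcal{RO}(X)$ is a Boolean subalgebra of $\mathcal{RO}(X)$ and hence a Boolean algebra in its own right, whose inherited meet, complement, and join are precisely the three operations displayed in the statement. I do not expect a genuine obstacle: the content is simply that the UV-space axioms were arranged so that $\mathsf{CO}\mathcal{RO}(X)$ inherits Tarski's Boolean structure. The only subtleties worth flagging are that compactness of the top $X$ is \emph{derived} (from $X=\neg\varnothing$) rather than presupposed, and that this proposition uses only part \ref{CloseProp} of Definition \ref{VOspace}, with the filter-realization condition of part \ref{PossCompact} playing no role here.
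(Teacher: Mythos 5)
Your proposal is correct and follows essentially the same route as the paper's own proof: invoke Tarski's result that the regular open sets of $\mathsf{Up}(X,\leqslant)$ form a Boolean algebra under exactly these operations, and then observe that Definition \ref{VOspace}.\ref{CloseProp} makes $\mathsf{CO}\mathcal{RO}(X)$ closed under $\cap$ and $\neg$, hence a subalgebra. Your extra care in deriving the top element via $X=\neg\varnothing$ (so that compactness of $X$ is not presupposed) and in spelling out the De Morgan identity for the inherited join merely makes explicit what the paper's terser ``subalgebra'' claim leaves implicit.
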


\begin{proof} As noted in Section \ref{PossSection}, it is a well-known result of Tarski that the collection of all regular open sets of a space forms a BA with the operations $\wedge$, $\neg$, and $\vee$ defined above (see, e.g., \cite[\S~4]{Halmos1963}). By Definition \ref{VOspace}.\ref{CloseProp}, in a UV-space $X$, $\mathsf{CO}\mathcal{RO}(X)$ with the operations $\wedge$ and $\neg$ is a subalgebra of the full regular open algebra and therefore a BA.\end{proof}

We now prove that Definition \ref{VOspace} provides our desired characterization.

\begin{theorem}\label{SecondThm} For any BA $\mathbb{A}$ and space $X$:
\begin{enumerate}
\item\label{SecondThmA} $UV(\mathbb{A})$ is a UV-space;
 \item\label{SecondThmB} $X$ is homeomorphic to $UV(\mathsf{CO}\mathcal{RO}(X))$ iff $X$ is a UV-space.
 \end{enumerate}
\end{theorem}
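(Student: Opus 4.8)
The plan is to treat the two parts separately, deriving part~\ref{SecondThmA} from Proposition~\ref{IsSpectral} and Theorem~\ref{MainRep}, and then handling part~\ref{SecondThmB} by feeding part~\ref{SecondThmA} into the easy direction and constructing an explicit unit map for the hard one.

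For part~\ref{SecondThmA}, I would check the three clauses of Definition~\ref{VOspace} against what is already established for $UV(\mathbb{A})$. Proposition~\ref{IsSpectral} gives that $UV(\mathbb{A})$ is spectral, hence $T_0$, and that its specialization order is inclusion. Theorem~\ref{MainRep} gives $\mathsf{CO}\mathcal{RO}(UV(\mathbb{A}))=\{\widehat{a}\mid a\in\mathbb{A}\}$ together with the fact that this family is a BA under $\cap$ and $\mathsf{int}_\leqslant(UV(\mathbb{A})\setminus\cdot)$, so closure under $\cap$ and $\neg$ is immediate; and the family is a basis because the $\widehat{a}$ generate the topology and are closed under finite intersection. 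For Definition~\ref{VOspace}.\ref{PossCompact}, I would transport a proper filter $\mathcal{G}$ of $\mathsf{CO}\mathcal{RO}(UV(\mathbb{A}))$ across the isomorphism $a\mapsto\widehat{a}$ to the proper filter $F=\{a\in\mathbb{A}\mid\widehat{a}\in\mathcal{G}\}$ of $\mathbb{A}$, so that $F\in UV(\mathbb{A})$ and $\mathsf{CO}\mathcal{RO}(F)=\{\widehat{a}\mid a\in F\}=\mathcal{G}$.

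For part~\ref{SecondThmB}, the left-to-right direction is essentially free: the hypothesis presupposes that $\mathsf{CO}\mathcal{RO}(X)$ is a BA, so $UV(\mathsf{CO}\mathcal{RO}(X))$ is a UV-space by part~\ref{SecondThmA}, and since each clause of Definition~\ref{VOspace} is invariant under homeomorphism (a homeomorphism preserves the specialization order, and hence compactness, openness, and both kinds of regular-openness), any $X$ homeomorphic to it is itself a UV-space. The substance is in the converse. Assuming $X$ is a UV-space, Proposition~\ref{COROBA} makes $\mathsf{CO}\mathcal{RO}(X)$ a BA, and I would define $\varepsilon\colon X\to UV(\mathsf{CO}\mathcal{RO}(X))$ by $\varepsilon(x)=\mathsf{CO}\mathcal{RO}(x)$ and show it is a homeomorphism. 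That $\varepsilon(x)$ is a \emph{proper} filter uses that $\varnothing\in\mathsf{CO}\mathcal{RO}(X)$ and hence, by closure under $\neg$, that $X=\mathsf{int}_\leqslant(X\setminus\varnothing)\in\mathsf{CO}\mathcal{RO}(X)$; thus $X\in\varepsilon(x)$ while $\varnothing\notin\varepsilon(x)$, with the upward-closure and meet conditions being routine. Injectivity follows from $T_0$ together with the basis clause (separate $x\neq y$ by a basic $U\in\mathsf{CO}\mathcal{RO}(X)$); surjectivity is exactly Definition~\ref{VOspace}.\ref{PossCompact}; and continuity follows from the computation $\varepsilon^{-1}[\widehat{U}]=\{x\mid U\in\varepsilon(x)\}=U$ for each basic open $\widehat{U}$.

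The main obstacle, and the step warranting the most care, is openness of $\varepsilon$. Since $\mathsf{CO}\mathcal{RO}(X)$ is a basis it suffices to compute the image of a basic open, and the goal is $\varepsilon[U]=\widehat{U}$ for $U\in\mathsf{CO}\mathcal{RO}(X)$. The inclusion $\varepsilon[U]\subseteq\widehat{U}$ is immediate from $x\in U\Rightarrow U\in\varepsilon(x)$, whereas the reverse inclusion is where surjectivity (Definition~\ref{VOspace}.\ref{PossCompact}) is indispensable: a filter $G\in\widehat{U}$ must first be realized as $\varepsilon(x)$ for some $x$ before one can read off $U\in G=\mathsf{CO}\mathcal{RO}(x)$ as $x\in U$. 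Being a continuous open bijection, $\varepsilon$ is then the desired homeomorphism, and the entire argument remains within ZF.
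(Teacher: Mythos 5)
Your proposal is correct and takes essentially the same approach as the paper's proof: part \ref{SecondThmA} by transporting the structure of $\{\widehat{a}\mid a\in\mathbb{A}\}$ from Theorem \ref{MainRep} and Proposition \ref{IsSpectral} across to the clauses of Definition \ref{VOspace}, and part \ref{SecondThmB} via the very same unit map $x\mapsto \mathsf{CO}\mathcal{RO}(x)$, with Definition \ref{VOspace}.\ref{PossCompact} supplying surjectivity and the reverse inclusion in the openness computation $\varepsilon[U]=\widehat{U}$. The only divergence is that you spell out two points the paper leaves implicit---homeomorphism-invariance of the UV-space axioms in the easy direction, and properness of $\mathsf{CO}\mathcal{RO}(x)$ via $\varnothing, X\in\mathsf{CO}\mathcal{RO}(X)$---both of which are accurate refinements rather than changes of method.
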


\begin{proof} For part \ref{SecondThmA}, to see that property \ref{CloseProp} of Definition \ref{VOspace} holds, if $U,V\in\mathsf{CO}\mathcal{RO}(UV(\mathbb{A}))$, then by the proof of Theorem \ref{MainRep} we have that $U=\widehat{a}$ and $V=\widehat{b}$ for some $a,b\in \mathbb{A}$. We also saw in the proof of Theorem \ref{MainRep} that $\widehat{a}\cap \widehat{b}=\widehat{a\wedge b}\in \mathsf{CO}\mathcal{RO}(UV(\mathbb{A}))$ and $\mathsf{int}_\leqslant (UV(\mathbb{A})\setminus \widehat{a})=\widehat{-a}\in \mathsf{CO}\mathcal{RO}(UV(\mathbb{A}))$. For property \ref{PossCompact}, if $\mathcal{F}$ is a proper filter in $\mathsf{CO}\mathcal{RO}(UV(\mathbb{A}))$, then by the proof of Theorem \ref{MainRep}, $G=\{a\in\mathbb{A}\mid\widehat{a} \in \mathcal{F}\}$ is a proper filter in $\mathbb{A}$. Then $G$ is an element of $UV(\mathbb{A})$ and $\mathsf{CO}\mathcal{RO}(G)=\mathcal{F}$.

For part \ref{SecondThmB}, the left-to-right direction follows from part \ref{SecondThmA}. For the right-to-left direction, we will show that the map $\epsilon :x\mapsto \mathsf{CO}\mathcal{RO}(x)$ is the desired homeomorphism from $X$ to $UV(\mathsf{CO}\mathcal{RO}(X))$. To see that $\epsilon$ is injective, if ${x\neq y}$, then by $T_0$, either $x\not\leqslant y$ or $y\not\leqslant x$, which by Definition \ref{VOspace}.\ref{CloseProp} implies $\mathsf{CO}\mathcal{RO}(x)\neq \mathsf{CO}\mathcal{RO}(y)$. That $\epsilon$ is surjective follows from Definition \ref{VOspace}.\ref{PossCompact}. To see that $\epsilon$ is continuous, it suffices to show that the inverse image of each basic open is open. A basic open of $UV(\mathsf{CO}\mathcal{RO}(X))$ is $\widehat{U}$ for some $U\in \mathsf{CO}\mathcal{RO}(X)$. Then we have: 
\begin{eqnarray*}
\epsilon^{-1}[\widehat{U}]&=&\{x\in X\mid \mathsf{CO}\mathcal{RO}(x)\in \widehat{U}\} \\
&=&\{x\in X\mid U\in \mathsf{CO}\mathcal{RO}(x)\} \\
&=& \{x\in X\mid x\in U\}\\
&=& U.
\end{eqnarray*}
Finally, to see that $\epsilon^{-1}$ is continuous, we have
\begin{eqnarray*}
\epsilon[U]&=&\{\mathsf{CO}\mathcal{RO}(x)\mid x\in U\}\\
&=&\{\mathsf{CO}\mathcal{RO}(x)\mid U\in \mathsf{CO}\mathcal{RO}(x)\}\\
&=&\widehat{U}.
\end{eqnarray*}
For the last equality, the left-to-right inclusion uses that $\mathsf{CO}\mathcal{RO}(x)$ is a proper filter, while the right-to-left follows from the surjectivity of $\epsilon$.
\end{proof}

For the following, recall that for a space $X$, its specialization order is $\leqslant$.

\begin{cor}\label{UVspectral} Let $X$ be a UV-space. Then:
\begin{enumerate}
\item\label{UVspectral1} $X$ is a spectral space;
\item\label{UVspectral3} every set in $ \mathsf{CO}(X)$ is a finite union of sets from $\mathsf{CO}\mathcal{RO}(X)$;
\item\label{UVspectral4} $(X,\leqslant)$ may be obtained from a complete Heyting algebra\footnote{In Section \ref{L&H}, we strengthen `complete Heyting algebra' to `Stone locale', but we will wait to introduce this notion.} by deleting the top element, and each $U\in\mathsf{CO}\mathcal{RO}(X)$ is a filter in $(X,\leqslant)$;
\item\label{UVspectral4.5} if $X$ is finite, then $(X,\leqslant)$ may be obtained from a Boolean algebra by deleting the top element;
\item\label{UVspectral5} if $U\in\mathsf{CO}\mathcal{RO}(X)$ and $z\in X$, then there is a unique $x\in U$ and $y\in\neg U$ such that $z=x\sqcap y$ where $\sqcap$ is the meet operation in $(X,\leqslant)$.
\item\label{UVspectral6} if $U,V\in\mathsf{CO}\mathcal{RO}(X)$, then \[U\vee V=U\cup V\cup \{x\sqcap y\mid x\in U,\, y\in V \}.\]
\end{enumerate}
\end{cor}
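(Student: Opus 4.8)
The plan is to reduce every clause to the concrete dual, using Theorem~\ref{SecondThm}.\ref{SecondThmB} to identify the UV-space $X$ homeomorphically with $UV(\mathbb{A})$ for $\mathbb{A}=\mathsf{CO}\mathcal{RO}(X)$; under this identification the points of $X$ are the proper filters of $\mathbb{A}$, the specialization order $\leqslant$ is $\subseteq$, the meet $\sqcap$ is intersection of filters, the members of $\mathsf{CO}\mathcal{RO}(X)$ are exactly the sets $\widehat{a}$, and $\neg\widehat{a}=\widehat{-a}$ (from the proof of Theorem~\ref{MainRep}). Part~\ref{UVspectral1} is then immediate, since $UV(\mathbb{A})$ is spectral by Proposition~\ref{IsSpectral}.\ref{Spectral}. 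For part~\ref{UVspectral3} I would reuse the proof of Proposition~\ref{IsSpectral}: each $\widehat{a}$ is compact open and the $\widehat{a}$ form a basis, so a compact open $S$ is covered by basic opens $\widehat{a}$ and, by compactness, by finitely many; hence $S=\widehat{a_1}\cup\dots\cup\widehat{a_n}$ with each $\widehat{a_i}\in\mathsf{CO}\mathcal{RO}(X)$.

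For part~\ref{UVspectral4} I would invoke that the lattice $\mathrm{Filt}(\mathbb{A})$ of all filters of $\mathbb{A}$ ordered by inclusion is a frame (complete Heyting algebra)---meets are intersections, joins are generated filters---whose top is the improper filter $\mathbb{A}$; deleting it leaves precisely the proper filters, i.e.\ $(X,\leqslant)$. That each $\widehat{a}$ is a filter in $(X,\leqslant)$ is a one-line check: it is a $\leqslant$-upset (shown in the proof of Theorem~\ref{MainRep}) and is closed under $\sqcap=\cap$, since $a\in F$ and $a\in F'$ give $a\in F\cap F'$. Part~\ref{UVspectral4.5} specializes this: when $X$ is finite so is $\mathbb{A}$, every filter is principal, and $a\mapsto\mathord{\uparrow}a$ is an order-reversing bijection $\mathbb{A}\to\mathrm{Filt}(\mathbb{A})$, exhibiting $\mathrm{Filt}(\mathbb{A})$ as (the order-dual of) the Boolean algebra $\mathbb{A}$, whose top $\mathord{\uparrow}0=\mathbb{A}$ we delete to recover $(X,\leqslant)$.

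The substance lies in parts~\ref{UVspectral5} and~\ref{UVspectral6}, both resting on a single filter-theoretic decomposition. Writing $z=F$ and $U=\widehat{a}$, I would set $x=\langle F\cup\{a\}\rangle$ and $y=\langle F\cup\{-a\}\rangle$ (filters generated in $\mathbb{A}$) and prove $x\cap y=F$: the inclusion $\supseteq$ is trivial, and for $\subseteq$, if $b\in x\cap y$ then $f\wedge a\leq b$ and $g\wedge -a\leq b$ for some $f,g\in F$, so with $h=f\wedge g\in F$ we get $b\geq (h\wedge a)\vee(h\wedge -a)=h\in F$. Clearly $a\in x$ and $-a\in y$. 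For uniqueness, if $x',y'$ are filters with $a\in x'$, $-a\in y'$ and $x'\cap y'=F$, then $F\subseteq x'$ forces $\langle F\cup\{a\}\rangle\subseteq x'$; conversely any $b\in x'$ satisfies $b\vee -a\in x'\cap y'=F$ (since $b\vee -a$ lies above elements of both $x'$ and $y'$), whence $(b\vee -a)\wedge a=b\wedge a\leq b$ exhibits $b\in\langle F\cup\{a\}\rangle$, so $x'=x$ and symmetrically $y'=y$.

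I expect the main obstacle to be the boundary behavior that makes part~\ref{UVspectral5} delicate: $x=\langle F\cup\{a\}\rangle$ is proper (a genuine point of $U\subseteq X$) iff $-a\notin F$, and $y$ is proper iff $a\notin F$. When $z\in U$ (resp.\ $z\in\neg U$) exactly one of $x,y$ collapses to the improper filter $\mathbb{A}$, i.e.\ to the top of the complete Heyting algebra of part~\ref{UVspectral4} that was deleted to form $X$; so part~\ref{UVspectral5} must be read in that ambient algebra, with the top serving as the missing $\neg U$-part (resp.\ $U$-part) of $z$, and the generic case $a,-a\notin F$ gives a decomposition entirely inside $X$. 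For part~\ref{UVspectral6} no such subtlety arises: using $U\vee V=\widehat{a\vee b}$ from Theorem~\ref{MainRep}, given $F\in\widehat{a\vee b}$, if $a\in F$ or $b\in F$ then $F\in U\cup V$; otherwise $a\vee b\in F$ forces $-a,-b\notin F$, so $x=\langle F\cup\{a\}\rangle$ and $y=\langle F\cup\{b\}\rangle$ are proper with $x\cap y=F$ (same computation, now via $h\wedge(a\vee b)\leq b$ and $h\wedge(a\vee b)\in F$), while the inclusion of the right-hand side in $U\vee V$ is routine since $a,b\leq a\vee b$. The only genuinely careful bookkeeping should be these properness case splits together with the uniqueness argument.
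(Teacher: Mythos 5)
Your proposal is correct and follows essentially the same route as the paper: identify $X$ with $UV(\mathbb{A})$ for $\mathbb{A}=\mathsf{CO}\mathcal{RO}(X)$ via Theorem~\ref{SecondThm}.\ref{SecondThmB}; obtain parts \ref{UVspectral1}--\ref{UVspectral4.5} from Proposition~\ref{IsSpectral}, the basis clause of Definition~\ref{VOspace}, and Tarski's observation that $\mathrm{Filt}(\mathbb{A})$ is a complete Heyting algebra; and prove parts \ref{UVspectral5}--\ref{UVspectral6} via the decomposition $x=F\sqcup\mathord{\uparrow}a$, $y=F\sqcup\mathord{\uparrow}\mathord{-}a$ --- your generated filters $\langle F\cup\{a\}\rangle$ and $\langle F\cup\{-a\}\rangle$ are exactly these. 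The only stylistic difference is that you verify $x\cap y=F$ and uniqueness elementwise, where the paper computes with distributivity in the ambient Heyting algebra, e.g.\ $F\sqcup\mathord{\uparrow}a=(G\sqcap H)\sqcup\mathord{\uparrow}a=(G\sqcup\mathord{\uparrow}a)\sqcap(H\sqcup\mathord{\uparrow}a)=G\sqcap\top=G$; these are interchangeable. Where you genuinely go beyond the paper is your properness caveat in part \ref{UVspectral5}, and you are right: $F\sqcup\mathord{\uparrow}a$ is proper iff $-a\notin F$, and the paper's existence step ($F=(F\sqcup\mathord{\uparrow}a)\sqcap(F\sqcup\mathord{\uparrow}\mathord{-}a)$ for arbitrary $F$) silently allows one factor to be the improper filter $\top$, which is not a point of $X$. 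Read literally, part \ref{UVspectral5} fails for $z\in U\cup\neg U$: in $UV(\mathbf{4})$ with $U=\widehat{p}=\{\mathord{\uparrow}p\}$ and $\neg U=\{\mathord{\uparrow}\mathord{-}p\}$, the point $z=\mathord{\uparrow}p$ admits no decomposition $z=x\sqcap y$ with $x\in U$ and $y\in\neg U$, since $z=x\cap y\subseteq y$ would put both $p$ and $-p$ in $y$. Your repair --- reading the decomposition in the complete Heyting algebra of part \ref{UVspectral4} with $\top$ supplying the collapsed factor, the generic case $a,-a\notin F$ landing entirely in $X$ --- is exactly the version the paper actually uses downstream in Lemma~\ref{SumLem}, where only points $z\notin U\cup\neg U$ are decomposed into pairs. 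Your argument for part \ref{UVspectral6}, including the properness check that $-a\in H$ together with $a\vee b\in H$ would force $b\in H$, coincides with the paper's.
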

\begin{proof} For part \ref{UVspectral1}, by Theorem \ref{SecondThm}.\ref{SecondThmB}, each UV-space $X$ is homeomorphic to the space $UV(\mathsf{CO}\mathcal{RO}(X))$, which is spectral by Proposition \ref{IsSpectral}.\ref{Spectral}. For part \ref{UVspectral3}, if $U\in \mathsf{CO}(X)$, then it is a finite union of basic open sets, so by Definition \ref{VOspace}.\ref{CloseProp}, it is a finite union of sets from $\mathsf{CO}\mathcal{RO}(X)$. 

For part \ref{UVspectral4}, as $X$ is homeomorphic to the $T_0$ space $UV(\mathsf{CO}\mathcal{RO}(X))$ of proper filters of $\mathsf{CO}\mathcal{RO}(X)$, it follows that $(X,\leqslant)$ is order-isomorphic to the poset $(UV(\mathsf{CO}\mathcal{RO}(X)),\subseteq)$ of proper filters of $\mathsf{CO}\mathcal{RO}(X)$ ordered by inclusion. As observed by Tarski \cite{Tarski1937b}, the filters of any BA (indeed, any distributive lattice) ordered by inclusion form a complete Heyting algebra, so the proper filters ordered by inclusion form a complete Heyting algebra minus the top element. Finally, suppose $U=\widehat{a}$ for $a\in \mathbb{A}$, and $F,G\in UV(\mathbb{A})$ are such that $F,G\in \widehat{a}$. Then $a\in F\cap G=F\sqcap G$, so $F\sqcap G\in \widehat{a}=U$. It follows, given that $U$ is an upset, that $U$ is a filter in $(UV(\mathbb{A}),\subseteq)$.

For part \ref{UVspectral4.5}, if $X$ is finite, then the BA $\mathsf{CO}\mathcal{RO}(X)$ is finite. As in part \ref{UVspectral4}, ${(X,\leqslant)}$ is order-isomorphic to the poset of proper filters of $\mathsf{CO}\mathcal{RO}(X)$ ordered by inclusion. Since any filter in a finite BA is principal, we obtain that $(X,\leqslant)$ is order-isomorphic to the poset of proper \textit{principal} filters of $\mathsf{CO}\mathcal{RO}(X)$ ordered by inclusion, which is obviously isomorphic to $\mathsf{CO}\mathcal{RO}(X)$ minus its top element.

For part \ref{UVspectral5}, let $X=UV(\mathbb{A})$. If $U\in \mathsf{CO}\mathcal{RO}(UV(\mathbb{A}))$, then by Theorem \ref{SecondThm}.\ref{SecondThmB} and the proof of Theorem \ref{MainRep}, we have $U=\widehat{a}$ and $\neg U=\widehat{-a}$ for some $a\in\mathbb{A}$, which implies $\mathord{\uparrow}a\in U$ and $\mathord{\uparrow}\mathord{-a}\in \neg U$. Let $\sqcap$ and $\sqcup$ be the meet and join operations in the  Heyting algebra arising from $(UV(\mathbb{A}),\subseteq)$, i.e., $F\sqcap G=F\cap G$ and $F\sqcup G$ is the filter generated by $F\cup G$. Let $\top$ be the top element of the Heyting algebra, which we may identify with the improper filter in $\mathbb{A}$. Thus, $\mathord{\uparrow}a\sqcup\mathord{\uparrow}\mathord{-}a=\top$. Now for any $F\in UV(\mathbb{A})$, we have $F=(F\sqcup \mathord{\uparrow}a)\sqcap(F\sqcup\mathord{\uparrow}\mathord{-}a)$. Suppose $G\in U$ and $H\in \neg U$, which implies $\mathord{\uparrow}a\subseteq G$ and  $\mathord{\uparrow}\mathord{-a}\subseteq H$, and $F=G\sqcap H$. Then we have
\[ F\sqcup \mathord{\uparrow}a =(G\sqcap H)\sqcup \mathord{\uparrow}a= (G\sqcup \mathord{\uparrow}a)\sqcap (H\sqcup \mathord{\uparrow}a)=G\sqcap \top = G,\]
and similarly $F\sqcup\mathord{\uparrow}\mathord{-}a=H$. This completes the proof of part \ref{UVspectral5}.

For part \ref{UVspectral6}, we show that $\widehat{a}\vee \widehat{b}= \widehat{a}\cup \widehat{b}\cup \{F\sqcap G\mid F\in \widehat{a},\, G\in \widehat{b} \}$. By the proof of Theorem \ref{MainRep}, $\widehat{a}\vee \widehat{b}=\widehat{a\vee b}$. To see that $\widehat{a\vee b}\supseteq \widehat{a}\cup \widehat{b}\cup \{F\sqcap G\mid F\in \widehat{a},\, G\in \widehat{b} \}$, obviously $\widehat{a\vee b}\supseteq \widehat{a}\cup \widehat{b}$. If $F\in \widehat{a}$ and $G\in \widehat{b}$, so $a\in F$ and $b\in G$, then $a\vee b\in F\cap G=F\sqcap G$, so $F\sqcap G\in \widehat{a\vee b}$. To see that $\widehat{a\vee b}\subseteq \widehat{a}\cup \widehat{b}\cup \{F\sqcap G\mid F\in \widehat{a},\, G\in \widehat{b} \}$, if $H\in \widehat{a\vee b}$, so $a\vee b\in H$, and $H\not\in \widehat{a}\cup \widehat{b}$, so $a\not\in H$ and $b\not\in H$, then we claim that $H= (H\sqcup \mathord{\uparrow}a)\sqcap (H\sqcup \mathord{\uparrow}b)$. For if $c$ is in the right-hand side, then there are $a_0\in H$ and $b_0\in H$ such that $a_0\wedge a\leq c$ and $b_0\wedge b\leq c$, which implies $a_0\wedge b_0\wedge (a\vee b)\leq c$. Then since $a_0,b_0,a\vee b\in H$, we have $c\in H$. Finally, both $H\sqcup \mathord{\uparrow}a$ and $H\sqcup \mathord{\uparrow}b$ are proper filters. For if $H\sqcup \mathord{\uparrow}a$ is improper, then $-a\in H$, which with $a\vee b\in H$ implies $b\in H$, which contradicts what we derived above. Similarly, that $H\sqcup \mathord{\uparrow}b$ is improper leads to a contradiction.\end{proof}

\begin{cor}\label{StoneCor} For any Stone space $X$, $\mathscr{UV}(X)$ is a UV-space.
\end{cor}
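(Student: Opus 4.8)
The plan is to chain together two results already established in the excerpt, using the fact that being a UV-space is a purely topological property and hence invariant under homeomorphism. First I would invoke Proposition \ref{UVStone}, which furnishes a homeomorphism between $\mathscr{UV}(X)$ and $UV(\mathsf{Clop}(X))$, where $\mathsf{Clop}(X)$ is regarded as the Boolean algebra of clopen subsets of the Stone space $X$. Since $X$ is a Stone space, $\mathsf{Clop}(X)$ is indeed a genuine BA, so this is legitimate.

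Next I would apply Theorem \ref{SecondThm}.\ref{SecondThmA} to the Boolean algebra $\mathbb{A}=\mathsf{Clop}(X)$, which immediately yields that $UV(\mathsf{Clop}(X))$ is a UV-space. It then remains only to transport the UV-space structure back across the homeomorphism $f$ of Proposition \ref{UVStone} from $UV(\mathsf{Clop}(X))$ to $\mathscr{UV}(X)$.

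The key observation making this last step work is that every ingredient in Definition \ref{VOspace} is determined by the topology alone: the $T_0$ condition is topological, the specialization order $\leqslant$ is recoverable from the topology, the collection $\mathsf{CO}\mathcal{RO}(\cdot)$ is defined from open, compact, and specialization-order data, and the filter-realization condition in part \ref{PossCompact} refers only to proper filters in $\mathsf{CO}\mathcal{RO}(\cdot)$. Consequently, a homeomorphism carries UV-spaces to UV-spaces, and one need only note that $f$ restricts to an isomorphism between $\mathsf{CO}\mathcal{RO}(\mathscr{UV}(X))$ and $\mathsf{CO}\mathcal{RO}(UV(\mathsf{Clop}(X)))$ respecting $\cap$, $\mathsf{int}_\leqslant(X\setminus\cdot)$, and membership. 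I do not expect any substantive obstacle here: the only mild care required is to confirm that homeomorphism-invariance of the UV-space axioms holds, which is routine since the axioms are stated intrinsically in terms of the topology and its induced specialization order. Thus $\mathscr{UV}(X)$ is a UV-space.
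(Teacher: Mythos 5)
Your proposal is correct and follows exactly the paper's own proof: invoke Proposition \ref{UVStone} for the homeomorphism $\mathscr{UV}(X)\cong UV(\mathsf{Clop}(X))$ and then Theorem \ref{SecondThm}.\ref{SecondThmA} for the latter being a UV-space. Your explicit verification that the UV-space axioms are homeomorphism-invariant is a point the paper leaves tacit, but it is routine and does not constitute a different route.
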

\begin{proof} By Proposition \ref{UVStone}, for any Stone space $X$, $\mathscr{UV}(X)$ is homeomorphic to $UV(\mathsf{Clop}(X))$, which is a UV-space by Theorem \ref{SecondThm}.\ref{SecondThmA}.
\end{proof}

\section{Morphisms and choice-free duality for BAs}\label{DualitySection}

To go beyond representation to categorical duality, we introduce appropriate morphisms. A \textit{spectral map} \cite{Hochster1969} between spectral spaces $X$ and $X'$ is a map $f\colon X\to X'$ such that $f^{-1}[U]\in\mathsf{CO}(X)$ for each $U\in\mathsf{CO}(X')$, which implies that $f$ is continuous. We combine this definition with the standard notion (in modal logic) of a {p-morphism} between ordered sets (see, e.g., \cite[p.~30]{Chagrov1997}).

\begin{definition}\label{UVmapDef} A \textit{UV-map} between UV-spaces $X$ and $X'$ is a spectral map $f\colon X\to X'$ that also satisfies the \textit{p-morphism condition}:
\[\mbox{if $f(x) \leqslant' y'$, then $\exists y: x\leqslant y$ and $f(y)=y'$}.\]
\end{definition}

\begin{figure}[h]
\begin{center}
\begin{tikzpicture}[->,>=stealth',shorten >=1pt,shorten <=1pt, auto,node
distance=2cm,thick,every loop/.style={<-,shorten <=1pt}] \tikzstyle{every state}=[fill=gray!20,draw=none,text=black]

\node[circle,draw=black!100,fill=black!100, label=below:$x$,inner sep=0pt,minimum size=.175cm] (x) at (0,0) {{}};

\draw[rotate=90] (1,0) ellipse (2cm and .7cm);

\draw[rotate=90] (1,-3) ellipse (2cm and .7cm);

\draw[rotate=90] (1,-6) ellipse (2cm and .7cm);

\draw[rotate=90] (1,-9) ellipse (2cm and .7cm);

\node[circle,draw=black!100,fill=black!100, label=below:$\;f(x)$,inner sep=0pt,minimum size=.175cm] (x') at (3,0) {{}};

\node[circle,draw=black!100,fill=black!100, label=above:$y'$,inner sep=0pt,minimum size=.175cm] (y') at (3,2) {{}};

\path (x') edge[->] node {{}} (y'); 

\path (x) edge[bend left,dotted,->] node {{}} (x');

\node[circle,draw=black!100,fill=black!100, label=below:$x$,inner sep=0pt,minimum size=.175cm] (x2) at (6,0) {{}};

\node[circle,draw=black!100,fill=black!100, label=above:$\exists y$,inner sep=0pt,minimum size=.175cm] (y2) at (6,2) {{}};

\node[circle,draw=black!100,fill=black!100, label=below:$\;f(x)$,inner sep=0pt,minimum size=.175cm] (x'2) at (9,0) {{}};

\node[circle,draw=black!100,fill=black!100, label=above:$y'$,inner sep=0pt,minimum size=.175cm] (y'2) at (9,2) {{}};

\path (x2) edge[->] node {{}} (y2); 
\path (x'2) edge[->] node {{}} (y'2); 

\node at (4.5,1) {{$\Rightarrow$}};

\path (x2) edge[bend left,dotted,->] node {{}} (x'2); 
\path (y2) edge[bend left,dotted,->] node {{}} (y'2); 

\end{tikzpicture}
\end{center}
\caption{The p-morphism condition of UV-maps.}\label{p-morphismFig}
\end{figure}
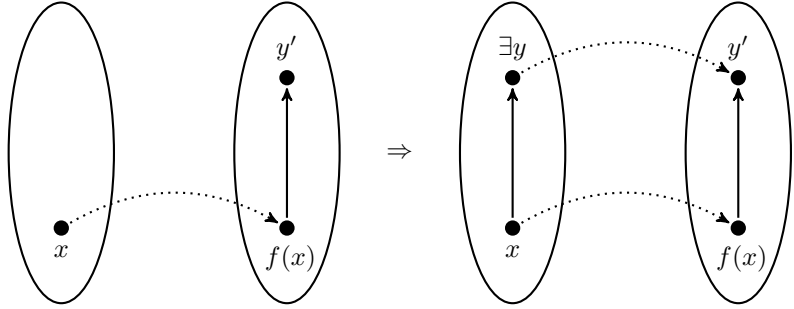

\begin{remark} A UV-map, like any continuous map, preserves the specialization order: if $x\leqslant y$, then $f(x)\leqslant f(y)$.
\end{remark}

\begin{fact}\label{PosetMaps} Let $P$ and $P'$ be partial orders, and let $f\colon P\to P'$ be an order-preserving map satisfying the p-morphism condition. If $U\in\mathcal{RO}(P')$, then $f^{-1}[U]\in\mathcal{RO}(P)$ \textnormal{(}where we regard $P,P'$ as spaces given by their upset topologies\textnormal{)}.
\end{fact}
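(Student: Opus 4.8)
The plan is to show that $f^{-1}[U]$ coincides with its double pseudocomplement $\neg\neg f^{-1}[U]$, working entirely with the first-order characterization (\ref{ROeq}) of regular opens in an upset topology and with the equivalence ``$U$ regular open iff $U=\neg\neg U$''. One inclusion will be free: every open set (equivalently, every upset, since in the upset topology the opens are exactly the upsets) is contained in its double pseudocomplement. So I first check that $f^{-1}[U]$ is an upset, which is immediate from order-preservation: if $x\in f^{-1}[U]$ and $x\leqslant y$, then $f(x)\leqslant' f(y)$, and as $U$ is an upset, $f(y)\in U$, so $y\in f^{-1}[U]$. This gives $f^{-1}[U]\subseteq\neg\neg f^{-1}[U]$ automatically, together with the fact that $f^{-1}[U]$ is open.

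The substance is the reverse inclusion $\neg\neg f^{-1}[U]\subseteq f^{-1}[U]$. I would unpack the hypothesis $x\in\neg\neg f^{-1}[U]$ via (\ref{ROeq}): it asserts that for every $x'\geqslant x$ there is some $x''\geqslant x'$ with $f(x'')\in U$. The goal is $f(x)\in U$, and since $U$ is regular open I may again invoke (\ref{ROeq}), now on the $P'$ side: it suffices to prove that for every $y'\geqslant' f(x)$ there exists $y''\geqslant' y'$ with $y''\in U$.

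So I fix an arbitrary $y'\geqslant' f(x)$. This is exactly where the p-morphism condition does the essential work: from $f(x)\leqslant' y'$ it produces a point $y\geqslant x$ with $f(y)=y'$, \emph{lifting} the witness from above $f(x)$ in $P'$ to above $x$ in $P$. Applying the unpacked hypothesis with $x':=y$ yields an $x''\geqslant y$ with $f(x'')\in U$, and by order-preservation $y'=f(y)\leqslant' f(x'')$. Hence $y'':=f(x'')$ satisfies $y''\geqslant' y'$ and $y''\in U$, which is what was required. By (\ref{ROeq}) this shows $f(x)\in\neg\neg U=U$, i.e.\ $x\in f^{-1}[U]$, completing the reverse inclusion and hence the proof.

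I expect the only delicate point to be this matching of the quantifier structure of (\ref{ROeq}) across the two posets: the p-morphism lifting takes a witness $y'\geqslant' f(x)$ to a point $y\geqslant x$ precisely so that the density assumption available at $y$ on the $P$ side can be fired, while order-preservation is then used to transport the resulting witness $x''$ forward into $U$. Both ingredients in the definition of a UV-map (order-preservation and the p-morphism condition) are used, one for each direction of the comparison, and everything else is routine unwinding of the definitions of $\neg$ and of regular openness in an upset topology.
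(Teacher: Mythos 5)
Your proof is correct and is essentially the paper's argument in contrapositive form: the paper assumes $x\notin f^{-1}[U]$, uses regularity of $U$ to pick a specific $y'\geqslant' f(x)$ with $\mathord{\Uparrow}y'\cap U=\varnothing$, lifts it via the p-morphism condition, and pushes extensions forward by order-preservation, which is exactly your direct verification of $\neg\neg f^{-1}[U]\subseteq f^{-1}[U]$ read backwards. The two ingredients are deployed identically in both proofs (lifting a point above $f(x)$ to a point above $x$, then transporting witnesses by monotonicity), so the difference is purely presentational.
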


\begin{proof} To see that $f^{-1}[U]\in\mathsf{Up}(P)$, suppose $x\in f^{-1}[U]$ and $x\leqslant y$. Then $f(x)\in U$, and since $f$ is order-preserving, $f(x)\leqslant' f(y)$, so $U\in \mathsf{Up}(P')$ implies $f(y)\in U$ and hence $y\in f^{-1}[U]$. Now to see that $f^{-1}[U]\in\mathcal{RO}(P)$, suppose $x\not\in f^{-1}[U]$, so $f(x)\not\in U$. Then since $U\in\mathcal{RO}(P')$, there is a $y'\geqslant' f(x)$ such that for all $z'\geqslant' y'$, we have $z'\not\in U$. It follows by the p-morphism condition that there is a $y$ such that $x\leqslant y$ and $f(y)=y'$. Then for any $z$ such that $y\leqslant z$, we have $f(y)\leqslant' f(z)$ and hence $y'\leqslant' f(z)$, which implies $f(z)\not\in U$ by our reasoning above, so $z\not\in f^{-1}[U]$. Thus, we have shown that if $x\not\in f^{-1}[U]$, then there is a $y\geqslant x$ such that for all $z\geqslant y$, $z\not\in f^{-1}[U]$. By (\ref{ROeq}), this completes the proof that $f^{-1}[U]\in\mathcal{RO}(P)$.
\end{proof}

From Fact \ref{PosetMaps} and the definition of UV-maps as special spectral spaces, we have the following.

\begin{cor}\label{InverseCor} Let $X$ and $X'$ be UV-spaces and $f\colon X\to X'$ a UV-map. Then $f^{-1}[U]\in\mathsf{CO}\mathcal{RO}(X)$ for every $U\in\mathsf{CO}\mathcal{RO}(X')$.\footnote{Cf.~the notion of an \textit{R-map} in \cite{Carnahan1973}, which is a map between spaces such that the inverse image of each regular open set is regular open.}
\end{cor}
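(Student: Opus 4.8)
The plan is to split the target membership $f^{-1}[U]\in\mathsf{CO}\mathcal{RO}(X)=\mathsf{CO}(X)\cap\mathcal{RO}(X)$ into its two conjuncts and discharge each using a result already in hand. The hypothesis $U\in\mathsf{CO}\mathcal{RO}(X')$ hands us both $U\in\mathsf{CO}(X')$ and $U\in\mathcal{RO}(X')$, which are exactly the two pieces of data the two halves of the argument will consume.

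First I would handle the compact-open half. Since $U\in\mathsf{CO}\mathcal{RO}(X')\subseteq\mathsf{CO}(X')$ and $f$ is by hypothesis a spectral map, the defining property of spectral maps immediately yields $f^{-1}[U]\in\mathsf{CO}(X)$. Nothing further is needed here.

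Next I would handle the regular-open half, namely $f^{-1}[U]\in\mathcal{RO}(X)$, where $\mathcal{RO}$ refers to regular openness in the upset topology of the specialization order. This is precisely the conclusion of Fact~\ref{PosetMaps} applied to the underlying posets $(X,\leqslant)$ and $(X',\leqslant')$, provided its two hypotheses hold: that $f$ is order-preserving and that $f$ satisfies the p-morphism condition. The p-morphism condition is part of the definition of a UV-map. Order-preservation follows because every spectral map is continuous and continuous maps preserve the specialization order, as recorded in the Remark just before Fact~\ref{PosetMaps}. Since $U\in\mathcal{RO}(X')$, Fact~\ref{PosetMaps} then gives $f^{-1}[U]\in\mathcal{RO}(X)$.

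Intersecting the two conclusions yields $f^{-1}[U]\in\mathsf{CO}(X)\cap\mathcal{RO}(X)=\mathsf{CO}\mathcal{RO}(X)$, as required. I do not anticipate any genuine obstacle here: the substantive work lives in Fact~\ref{PosetMaps}, and this corollary merely records that the two clauses defining a UV-map are exactly tailored to preserve the two clauses defining $\mathsf{CO}\mathcal{RO}$ under inverse image—the spectral-map clause preserving compact-openness, and the p-morphism clause (together with order-preservation, which comes for free from continuity) preserving regular-openness in the upset topology.
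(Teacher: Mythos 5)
Your proof is correct and matches the paper's own argument, which derives the corollary immediately from Fact~\ref{PosetMaps} together with the definition of a UV-map: the spectral-map clause handles the $\mathsf{CO}$ half, and Fact~\ref{PosetMaps} (with order-preservation coming from continuity) handles the $\mathcal{RO}$ half. No gaps.
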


Conversely, the condition that the inverse image of a $\mathsf{CO}\mathcal{RO}$ set is also $\mathsf{CO}\mathcal{RO}$ (or simply $\mathsf{CO}$) implies that $f$ is a spectral map.

\begin{fact}\label{IsSpectralMap} Let $X$ and $X'$ be UV-spaces. If $f\colon X\to X'$ is such that $f^{-1}[U]\in\mathsf{CO}(X)$ for every $U\in\mathsf{CO}\mathcal{RO}(X')$, then $f$ is a spectral map.
\end{fact}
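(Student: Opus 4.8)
The plan is to reduce the general compact open case to the compact open regular open case that the hypothesis already handles. The hypothesis gives $f^{-1}[U]\in\mathsf{CO}(X)$ only when $U\in\mathsf{CO}\mathcal{RO}(X')$, whereas being a spectral map demands this for \emph{every} $U\in\mathsf{CO}(X')$. The bridge between the two is Corollary~\ref{UVspectral}.\ref{UVspectral3}, which tells us that in the UV-space $X'$ every compact open set is a finite union of sets from $\mathsf{CO}\mathcal{RO}(X')$. Since inverse images commute with unions and finite unions of compact open sets are compact open, the result follows immediately.

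In detail, I would take an arbitrary $U\in\mathsf{CO}(X')$. By Corollary~\ref{UVspectral}.\ref{UVspectral3} applied to the UV-space $X'$, there are $U_1,\dots,U_n\in\mathsf{CO}\mathcal{RO}(X')$ with $U=U_1\cup\dots\cup U_n$. Then
\[f^{-1}[U]=f^{-1}[U_1\cup\dots\cup U_n]=f^{-1}[U_1]\cup\dots\cup f^{-1}[U_n].\]
By hypothesis, each $f^{-1}[U_i]\in\mathsf{CO}(X)$. A finite union of open sets is open, and a finite union of compact sets is compact, so $f^{-1}[U]\in\mathsf{CO}(X)$. As $U\in\mathsf{CO}(X')$ was arbitrary, $f$ satisfies the defining condition of a spectral map.

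There is no real obstacle here: the entire content of the statement is the observation that the hypothesis, phrased only for $\mathsf{CO}\mathcal{RO}$ sets, already suffices because of the decomposition of compact opens in a UV-space. The only point worth stating explicitly is the elementary closure fact that finite unions preserve both compactness and openness, which together with the commutation of preimages with unions completes the reduction.
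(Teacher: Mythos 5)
Your proof is correct and follows essentially the same route as the paper's: decompose an arbitrary $U\in\mathsf{CO}(X')$ into a finite union of $\mathsf{CO}\mathcal{RO}(X')$ sets via Corollary~\ref{UVspectral}.\ref{UVspectral3}, pull back using the fact that preimages commute with unions, and conclude since finite unions of compact opens are compact open. There is nothing to add or correct.
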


\begin{proof} Suppose $f\colon X\to X'$ satisfies the assumption, and $U\in\mathsf{CO}(X')$. By Proposition \ref{UVspectral}.\ref{UVspectral3}, $U$ is a finite union $\underset{i\in I}{\bigcup} U_i$ of sets $U_i\in\mathsf{CO}\mathcal{RO}(X')$. Then $f^{-1}[U]=f^{-1}[\underset{i\in I}{\bigcup} U_i]=\underset{i\in I}{\bigcup} f^{-1}[U_i]$. By the assumption,  $f^{-1}[U_i]\in\mathsf{CO}(X)$, so $f^{-1}[U]$ is a finite union of compact opens and is therefore compact open. Thus, $f$ is a spectral map.
\end{proof}

The following simple lemma is also useful.

\begin{lemma}\label{SubbasisLem} Let $X$ and $Y$ be spectral spaces and $f: X\to Y$. If for each set $U$ in some subbasis for $Y$, we have $f^{-1}[U]\in\mathsf{CO}(X)$, then $f$ is a spectral map.
\end{lemma}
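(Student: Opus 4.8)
The plan is to unwind the definition of a spectral map and reduce the problem to manipulating inverse images, using the fact that $f^{-1}$ commutes with unions and intersections together with the coherence of $X$ and $Y$ as spectral spaces. Recall that $f$ is a spectral map iff $f^{-1}[U]\in\mathsf{CO}(X)$ for every $U\in\mathsf{CO}(Y)$, so the goal is to upgrade the hypothesis ``$f^{-1}[U]\in\mathsf{CO}(X)$ for subbasic $U$'' to this universally quantified statement over all compact opens of $Y$.

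First I would bridge from the subbasis to the generated basis. A basis for $Y$ consists of the finite intersections of subbasis elements, so let $B=U_1\cap\dots\cap U_n$ with each $U_i$ subbasic. Then $f^{-1}[B]=f^{-1}[U_1]\cap\dots\cap f^{-1}[U_n]$ is a finite intersection of sets in $\mathsf{CO}(X)$, and since $X$ is spectral, $\mathsf{CO}(X)$ is closed under finite intersection (coherence, Definition \ref{SpectralDef}); hence $f^{-1}[B]\in\mathsf{CO}(X)$. The empty-intersection case gives $B=Y$, whose inverse image is $X\in\mathsf{CO}(X)$ since $X$ is compact and open, so this is unproblematic.

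Second I would pass from basic opens to arbitrary compact opens of $Y$. Given $U\in\mathsf{CO}(Y)$, write $U=\bigcup_{i\in I}B_i$ as a union of basic opens $B_i$; by compactness of $U$, finitely many suffice, say $U=B_{i_1}\cup\dots\cup B_{i_m}$. Then $f^{-1}[U]=f^{-1}[B_{i_1}]\cup\dots\cup f^{-1}[B_{i_m}]$ is a finite union of sets in $\mathsf{CO}(X)$, and a finite union of compact open sets is compact open. Therefore $f^{-1}[U]\in\mathsf{CO}(X)$ for every $U\in\mathsf{CO}(Y)$, which is exactly the condition that $f$ is a spectral map.

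I do not expect a genuine obstacle here; the argument is a routine combination of the closure of $\mathsf{CO}(X)$ under finite intersections and finite unions with the compactness that lets one truncate a basic-open cover of a compact open set to a finite subcover. The only point requiring a moment's care is making sure the coherence hypothesis on $X$ (closure of $\mathsf{CO}(X)$ under finite intersection) is invoked in the first step, and that both $X$ and $Y$ being spectral is what guarantees, respectively, this closure property and that the subbasis-generated basis refines appropriately to express each compact open as a finite union of basic opens.
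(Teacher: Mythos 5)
Your proof is correct and takes essentially the same route as the paper's: decompose each compact open of $Y$ (using its compactness) into a finite union of finite intersections of subbasic sets, pull back along $f^{-1}$, and conclude via closure of $\mathsf{CO}(X)$ under finite intersections (coherence) and finite unions. The only differences are cosmetic---you treat intersections before unions and make explicit the empty-intersection case and the compactness truncation, both of which the paper leaves implicit.
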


\begin{proof} By definition, every open set is a union of finite intersections of subbasic sets. Thus, every compact open set $V$ is a finite union $V_1\cup\dots\cup V_n$ of finite intersections of subbasic sets. Then since 
\[f^{-1}[V]=f^{-1}[V_1\cup\dots\cup V_n]=f^{-1}[V_1]\cup \dots\cup f^{-1}[V_n],\]
we have that $f^{-1}[V]$ is compact open if each $f^{-1}[V_i]$ is compact open. Now each $V_i$ is $U_1\cap\dots\cap U_n$ for some subbasic sets $U_1,\dots,U_n$. Then since
\[f^{-1}[V_i]=f^{-1}[U_1\cap\dots\cap U_n]=f^{-1}[U_1]\cap\dots\cap f^{-1}[U_n],\]
we have that $f^{-1}[V_i]$ is compact open if each $f^{-1}[U_j]$ is compact open. By assumption, each $f^{-1}[U_j]$ is compact open, so we are done.\end{proof}

One can easily check that UV-spaces with UV-maps form a category. We now prove the promised categorical duality.

\begin{theorem}\label{DualityThm} The category of UV-spaces with UV-maps is dually equivalent to the category of Boolean algebras with Boolean homomorphisms.
\end{theorem}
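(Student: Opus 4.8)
The plan is to establish the dual equivalence by constructing two contravariant functors and natural isomorphisms witnessing that their composites are naturally isomorphic to the identity functors. On objects, the functor $\mathbb{A}\mapsto UV(\mathbb{A})$ sends a BA to its UV-space (a UV-space by Theorem~\ref{SecondThm}.\ref{SecondThmA}), and conversely $X\mapsto\mathsf{CO}\mathcal{RO}(X)$ sends a UV-space to the BA guaranteed by Proposition~\ref{COROBA}. On arrows, I would send a Boolean homomorphism $h\colon\mathbb{A}\to\mathbb{B}$ to the map $h_*\colon UV(\mathbb{B})\to UV(\mathbb{A})$ defined by $h_*(F)=h^{-1}[F]$, and a UV-map $f\colon X\to X'$ to the map $f^*\colon\mathsf{CO}\mathcal{RO}(X')\to\mathsf{CO}\mathcal{RO}(X)$ defined by $f^*(U)=f^{-1}[U]$, which lands in $\mathsf{CO}\mathcal{RO}(X)$ by Corollary~\ref{InverseCor}.

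The verification splits into three bundles of checks. First, I would confirm that these assignments are well-defined: that $h^{-1}[F]$ is a proper filter of $\mathbb{A}$ when $F$ is a proper filter of $\mathbb{B}$ (using that $h$ preserves $\wedge$, $\one$, and $\zero$), and that $h_*$ is a UV-map---namely a spectral map (checking $h_*^{-1}[\widehat{a}]=\widehat{h(a)}$, which is $\mathsf{CO}\mathcal{RO}$, and invoking Lemma~\ref{SubbasisLem}) satisfying the p-morphism condition. Dually, I would verify that $f^*$ is a Boolean homomorphism, using the explicit operations from Proposition~\ref{COROBA}: preservation of $\cap$ and of the complement $\mathsf{int}_\leqslant(X\setminus\cdot)$ follows from Fact~\ref{PosetMaps}, since $f^{-1}$ commutes with the regular-open operations because $f$ is order-preserving and satisfies the p-morphism condition. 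Second, I would check functoriality: both assignments reverse composition and preserve identities, which is routine from $(g\circ h)^{-1}=h^{-1}\circ g^{-1}$.

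Third, and most substantially, I would exhibit the natural isomorphisms. For a BA $\mathbb{A}$, the unit is the map $\widehat{\cdot}\colon\mathbb{A}\to\mathsf{CO}\mathcal{RO}(UV(\mathbb{A}))$, which is an isomorphism by Theorem~\ref{MainRep}.\ref{MainRep1}; for a UV-space $X$, the counit is $\epsilon\colon x\mapsto\mathsf{CO}\mathcal{RO}(x)$, which is a homeomorphism by Theorem~\ref{SecondThm}.\ref{SecondThmB}. Here I would need to confirm that $\epsilon$ is not merely a homeomorphism but a UV-map in both directions (its continuity and openness were shown in the proof of Theorem~\ref{SecondThm}, and the p-morphism condition for a homeomorphism is immediate). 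The naturality squares must then be checked: for a Boolean homomorphism $h\colon\mathbb{A}\to\mathbb{B}$, the square relating $\widehat{\cdot}$ to $(h_*)^*$ commutes because $(h_*)^*(\widehat{a})=h_*^{-1}[\widehat{a}]=\widehat{h(a)}$, and dually for UV-maps.

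The main obstacle I anticipate is the naturality and coherence bookkeeping rather than any single hard idea, since the heavy lifting (representation and characterization) is already done in Theorems~\ref{MainRep} and~\ref{SecondThm}. Concretely, the delicate point is verifying that $h_*$ satisfies the p-morphism condition: given $h_*(F)=h^{-1}[F]\subseteq F'$ in $UV(\mathbb{A})$, one must produce a proper filter $G\supseteq F$ in $UV(\mathbb{B})$ with $h^{-1}[G]=F'$. The natural candidate is the filter of $\mathbb{B}$ generated by $F\cup h[F']$, and the real work is to show this is \emph{proper} (so that it yields a genuine point of $UV(\mathbb{B})$) and that its $h$-preimage is exactly $F'$; properness is where the filter-theoretic argument, using $h^{-1}[F]\subseteq F'$, must be carried out carefully.
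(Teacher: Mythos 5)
Your proposal follows essentially the same route as the paper's proof: the same pair of contravariant functors $UV(\cdot)$ with $h\mapsto h^{-1}[\cdot]$ and $\mathsf{CO}\mathcal{RO}(\cdot)$ with $f\mapsto f^{-1}[\cdot]$, the same supporting results (Lemma~\ref{SubbasisLem}, Corollary~\ref{InverseCor}, Fact~\ref{PosetMaps}, Theorems~\ref{MainRep} and~\ref{SecondThm}), and the same identification of the delicate step, where the paper likewise verifies the p-morphism condition by showing that the filter generated by $h[G']\cup F$ is proper and that its $h$-preimage is exactly $G'$. You have correctly isolated exactly the filter-theoretic computation that the paper carries out in detail, so nothing is missing from the plan.
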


\begin{proof} Suppose $h\colon \mathbb{A}\to\mathbb{B}$ is a BA homomorphism. Given $F\in UV(\mathbb{B})$, let $h_+(F)=h^{-1}[F]$. Then since $h$ is a homomorphism, and $F$ is a proper filter in $\mathbb{B}$, it follows that $h_+(F)$ is a proper filter in $\mathbb{A}$. Thus, \[h_+:UV(\mathbb{B})\to UV(\mathbb{A}).\] We claim that $h_+$ is a UV-map. First, to see that $h_+$ is a spectral map, it suffices by Lemma \ref{SubbasisLem} to show that for each basic open $\widehat{a}$ of $UV(\mathbb{A})$, we have $h_+^{-1}[\widehat{a}]\in\mathsf{CO}(UV(\mathbb{B}))$. Indeed, 
\begin{eqnarray*}
h_+^{-1}[\widehat{a}]&=&\{F\in UV(\mathbb{B})\mid h_+(F)\in \widehat{a}\} \\
&=&\{F\in UV(\mathbb{B})\mid h^{-1}[F]\in \widehat{a}\} \\
&=& \{F\in UV(\mathbb{B})\mid a\in h^{-1}[F]\} \\
&=& \{F\in UV(\mathbb{B})\mid h(a)\in F\}\\
&=&\widehat{h(a)},
\end{eqnarray*} 
and $\widehat{h(a)}$ is compact open by the proof of Proposition \ref{IsSpectral}.\ref{Spectral}. 

Next, we show that $h_+$ satisfies the p-morphism condition:
\[\mbox{if $h_+(F) \leqslant' G'$, then $\exists G: F\leqslant G$ and $h_+(G)=G'$}.\]
If $G'\in UV(\mathbb{A})$ and $h_+(F)\subseteq G'$, we claim that the filter $G$ generated by $h[G']\cup F$ is a proper filter. If not, then there are some $c_1,\dots,c_n\in h[G']$  such that $-(c_1\wedge\dots\wedge c_n)\in F$. Since $c_1,\dots,c_n\in h[G']$, there are some $c'_1,\dots,c'_n\in G'$ such that $h(c'_i)=c_i$, so $-(h(c'_1)\wedge\dots\wedge h(c'_n))\in F$. Then since $h$ is a homomorphism, we have $h(-(c_1\wedge\dots\wedge c_n))\in F$, so that $-(c_1\wedge\dots\wedge c_n)\in h^{-1}[F]=h_+(F)$, which with $h_+(F)\subseteq G'$ implies $-(c_1\wedge\dots\wedge c_n)\in G'$, which contradicts the fact that $c'_1,\dots,c'_n\in G'$ and $G'$ is a proper filter. Thus, $G$ is indeed a proper filter, and we have both $F\subseteq G$ and $G'\subseteq h^{-1}[G]=h_+(G)$. Finally, we claim that $h_+(G)\subseteq G'$.\footnote{Thanks to David Gabelaia and Mamuka Jibladze for pointing out this strengthening of the original proof.} For if $c'\in h_+(G)$, so $h(c')\in G$, then by  definition of $G$ there is a $b'\in G'$ and $a\in F$ such that $h(b')\wedge a\leq h(c')$, which implies $a\leq -h(b')\vee h(c')$ and hence $a\leq h(-b'\vee c')$. Then since $a\in F$, we have $h(-b'\vee c')\in F$, so $-b'\vee c'\in h^{-1}[F]=h_+(F)$.  Since $h_+( F )\subseteq G'$, it follows that $-b'\vee c'\in G'$, which with $b'\in G'$ implies $c'\in G'$, which completes the proof that $h_+(G)\subseteq G'$. Thus, $h_+(G)= G'$, so $h_+$ satisfies the p-morphism condition.

Finally, it is easy to see that $(\cdot)_+$ preserves the identity and composition. Thus, together $UV(\cdot)$ and $(\cdot)_+$ give us a contravariant functor from the category of BAs with BA homomorphisms to the category of UV-spaces with UV-maps. 

In the other direction, suppose $f: X\to Y$ is a UV-map. Given $U\in\mathsf{CO}\mathcal{RO}(Y)$, let $f^+(Y)=f^{-1}[Y]$. Then by Corollary \ref{InverseCor}, \[f^+: \mathsf{CO}\mathcal{RO}(Y)\to \mathsf{CO}\mathcal{RO}(X).\] We claim that $f^+$ is a BA homomorphism. First, $f^+(U\wedge V)=f^{-1}[U\cap V]=f^{-1}[U]\cap f^{-1}[V]=f^+(U)\wedge f^+(V)$. Second, since $f$ is a UV-map, we have that for all $x\in X$ and $U \in \mathsf{CO}\mathcal{RO}(Y)$, $\mathord{\Uparrow}f(x)\cap U=\varnothing$ iff ${\mathord{\Uparrow}x\cap f^{-1}[U]=\varnothing}$. It follows that $f^{-1}[\mathsf{int}_\leqslant (Y\setminus U)]=\mathsf{int}_\leqslant (X\setminus f^{-1}[U])$ and hence $f^+(\neg U)=\neg f^+(U)$. It is also easy to see that $(\cdot)^+$ preserves the identity and composition. Thus, together $\mathsf{CO}\mathcal{RO}(\cdot)$ and $(\cdot)^+$ give us a contravariant functor from the category of UV-spaces with UV-maps to the category of BAs with BA homomorphisms.

In Theorems \ref{MainRep} and \ref{SecondThm}.\ref{SecondThmB} we showed that each BA $\mathbb{A}$ is isomorphic to $\mathsf{CO}\mathcal{RO}(UV(\mathbb{A}))$ and each UV-space $X$ is homeomorphic to $UV(\mathsf{CO}\mathcal{RO}(X))$.

Finally, it is not difficult to check that the following diagrams commute for any BA homomorphism $h:\mathbb{A}\to\mathbb{B}$ and UV-map $f:X\to Y$:
\begin{center}
\begin{tikzpicture}[->,>=stealth',shorten >=1pt,shorten <=1pt, auto,node
distance=2cm,thick,every loop/.style={<-,shorten <=1pt}]
\tikzstyle{every state}=[fill=gray!20,draw=none,text=black]

\node (A) at (0,2) {{$\mathbb{A}$}};
\node (A') at (0,0) {{$\mathsf{CO}\mathcal{RO}(UV(\mathbb{A}))$}};
\node (B) at (5,2) {{$\mathbb{B}$}};
\node (B') at (5,0) {{$\mathsf{CO}\mathcal{RO}(UV(\mathbb{B}))$}};

\path (A) edge[->] node {{$h$}} (B);
\path (A') edge[<-] node {{}} (A);
\path (B) edge[->] node {{}} (B');
\path (B') edge[<-] node {{$(h_+)^+$}} (A');

\end{tikzpicture}
\end{center}

\begin{center}
\begin{tikzpicture}[->,>=stealth',shorten >=1pt,shorten <=1pt, auto,node
distance=2cm,thick,every loop/.style={<-,shorten <=1pt}]
\tikzstyle{every state}=[fill=gray!20,draw=none,text=black]

\node (A) at (0,2) {{$X$}};
\node (A') at (0,0) {{$UV(\mathsf{CO}\mathcal{RO}(X))$}};
\node (B) at (5,2) {{$Y$}};
\node (B') at (5,0) {{$UV(\mathsf{CO}\mathcal{RO}(Y))$}};

\path (A) edge[->] node {{$f$}} (B);
\path (A') edge[<-] node {{}} (A);
\path (B) edge[->] node {{}} (B');
\path (B') edge[<-] node {{$(f^+)_+$}} (A');

\end{tikzpicture}
\end{center}
This completes the proof.
\end{proof}

\section{The hyperspace approach and the localic approach}\label{L&H} In this section, we relate the hyperspace approach to choice-free duality using UV-spaces to the localic approach using Stone locales.

Recall that a \textit{locale} is a complete lattice $L$ satisfying the join-infinite distributive law for each $a\in L$ and $Y\subseteq L$:
\[a\wedge \bigvee Y= \bigvee \{a\wedge y\mid y\in Y\}.\]
The collection of open sets of any space ordered by $\subseteq$ is a locale. In point-free topology, it is locales rather than spaces that are the basic objects. If we ignore choices of signature, then a lattice is a locale iff it is a complete Heyting algebra. For more information on locales, see, e.g., \cite{Johnstone1982,Picado2012}.

A locale is \textit{compact} if $\bigvee Y=\one$ implies $\bigvee Y_0=\one$ for some finite $Y_0\subseteq Y$. A locale is \textit{zero-dimensional} if each element of the locale is a join of complemented elements, where an element $a$ is complemented if there exists an element $b$ such that $a\wedge b=\zero$ and $a\vee b=\one$.

\begin{definition} A \textit{Stone locale} is a compact zero-dimensional locale.
\end{definition}

The name `Stone locale' is justified by the fact that the locale of any Stone space is a Stone locale, and assuming the Boolean Prime Ideal Theorem, every Stone locale $L$ is the locale of opens of a Stone space, namely the Stone dual of the BA of complemented elements of $L$.

As mentioned in Section \ref{PossSection}, Stone locales provide another kind of choice-free Stone duality for BAs. A proof of the following may be found in \cite{Bezhanishvili2015}.

\begin{theorem}\label{StoneLocaleThm} The category of Stone locales with localic maps\footnote{For the definition of localic maps, see, e.g., \cite[\S~II.2]{Picado2012}.} is dually equivalent to the category of BAs with Boolean homomorphisms.
\end{theorem}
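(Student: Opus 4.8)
The plan is to exhibit a contravariant pair of functors together with two natural isomorphisms, working entirely with elements of locales and with filters of Boolean algebras, so that no appeal to points (hence no choice principle) is needed. On objects I would send a BA $\mathbb{A}$ to the Stone locale $\mathrm{Filt}(\mathbb{A})$ of all filters ordered by inclusion (a locale by the result of Tarski cited in Corollary~\ref{UVspectral}.\ref{UVspectral4}), and send a Stone locale $L$ to its collection $\mathrm{Comp}(L)$ of complemented elements. On morphisms, a BA homomorphism $h\colon\mathbb{A}\to\mathbb{B}$ would go to the preimage map $h^{-1}\colon\mathrm{Filt}(\mathbb{B})\to\mathrm{Filt}(\mathbb{A})$, which preserves arbitrary intersections and is the right adjoint of the frame homomorphism $F\mapsto\langle h[F]\rangle$, hence a localic map; and a localic map $f\colon L\to M$, being by definition the right adjoint of a frame homomorphism $f^{*}\colon M\to L$, would go to the restriction of $f^{*}$ to complemented elements (which $f^{*}$ preserves, since it preserves finite meets, finite joins, $\zero$, and $\one$). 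Both assignments are contravariant, so their composites are covariant, and I will show they are naturally isomorphic to the identities.

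Next I would verify the object-level claims. That $\mathrm{Comp}(L)$ is a BA is immediate, since the complemented elements of any bounded distributive lattice form a Boolean sublattice. That $\mathrm{Filt}(\mathbb{A})$ is a Stone locale requires two checks: compactness (if the filter generated by $\bigcup Y$ is improper, then some $a_{1}\in F_{1},\dots,a_{n}\in F_{n}$ with $F_{i}\in Y$ satisfy $a_{1}\wedge\dots\wedge a_{n}=\zero$, so finitely many of the $F_{i}$ already join to $\one$) and zero-dimensionality (each filter is the join of the principal filters $\mathord{\uparrow}a$ it contains, and each $\mathord{\uparrow}a$ is complemented with complement $\mathord{\uparrow}(-a)$, since $\mathord{\uparrow}a\cap\mathord{\uparrow}(-a)=\{\one\}$ and their join contains $\zero$). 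A lemma I would isolate is that \emph{in any Stone locale the complemented elements coincide with the compact elements}: a complemented $c$ is compact because from $c\leq\bigvee Y$ and $c\vee c'=\one$ one gets $\bigvee(Y\cup\{c'\})=\one$, whence by compactness $c\leq\bigvee Y_{0}$ for finite $Y_{0}\subseteq Y$; conversely a compact element, being a join of complemented elements by zero-dimensionality, reduces by compactness to a finite such join and is therefore complemented.

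With these in hand I would establish the two natural isomorphisms. For $\mathbb{A}\cong\mathrm{Comp}(\mathrm{Filt}(\mathbb{A}))$, I would show the complemented elements of $\mathrm{Filt}(\mathbb{A})$ are exactly the principal filters: given a complemented $F$ with complement $G$, choose $a\in F$ and $b\in G$ with $a\wedge b=\zero$ (available since $\zero\in F\vee G$); then for any $c\in F$ we get $c\vee b=\one$, because $\mathord{\uparrow}(c\vee b)\subseteq F\cap G=\{\one\}$, so $a\leq -b\leq c$ and hence $F=\mathord{\uparrow}a$. The map $a\mapsto\mathord{\uparrow}(-a)$ is then an order isomorphism of $\mathbb{A}$ onto the complemented elements, hence a BA isomorphism. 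For $L\cong\mathrm{Filt}(\mathrm{Comp}(L))$, I would first use the lemma to identify $\mathrm{Comp}(L)$ with the compact elements and show that $x\mapsto\{c\in\mathrm{Comp}(L)\mid c\leq x\}$ is an isomorphism $L\cong\mathrm{Idl}(\mathrm{Comp}(L))$ onto the ideal frame: zero-dimensionality yields $x=\bigvee\{c\in\mathrm{Comp}(L)\mid c\leq x\}$ (giving injectivity and order reflection), while compactness yields surjectivity onto all ideals. I would then transport along the complementation isomorphism $\mathrm{Idl}(\mathbb{B})\cong\mathrm{Filt}(\mathbb{B})$, valid for any BA $\mathbb{B}$, to reach $\mathrm{Filt}(\mathrm{Comp}(L))$. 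Naturality of both isomorphisms against $h^{-1}$ and $f^{*}$ is then a routine diagram check.

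The step I expect to be the main obstacle is the reconstruction $L\cong\mathrm{Filt}(\mathrm{Comp}(L))$: one must show that a Stone locale really is the filter (equivalently ideal) completion of its Boolean algebra of complemented elements, i.e.\ that $x\mapsto\{c\in\mathrm{Comp}(L)\mid c\leq x\}$ is a frame isomorphism, and do so without invoking spatiality or any prime or maximal element, which would smuggle in choice. The delicate points are keeping the filter-versus-ideal orientation straight---resolved by the complementation isomorphism between $\mathrm{Idl}$ and $\mathrm{Filt}$ of a BA---and verifying surjectivity onto \emph{all} filters purely from compactness and zero-dimensionality. Everything else reduces to standard adjunction bookkeeping for localic maps.
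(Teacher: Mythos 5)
Your proof is correct, and it follows the same underlying skeleton as the paper, which does not prove the theorem in detail but cites \cite{Bezhanishvili2015} and records the key correspondence as Lemma~\ref{ZLem}: $\mathrm{Filt}(\mathbb{A})$ is a Stone locale, and every Stone locale $L$ is recovered as $\mathrm{Filt}(Z(L))$ where $Z(L)$ is your $\mathrm{Comp}(L)$. What you add beyond the paper's sketch is exactly the deferred content: the morphism-level bookkeeping (that $h^{-1}$ is localic because its left adjoint $F\mapsto\langle h[F]\rangle$ is a frame homomorphism, that $f^{*}$ restricts to complemented elements, and the naturality squares, which do commute---e.g.\ $f^{*}(-d)\leq x$ iff $-d\leq f(x)$ by the adjunction $f^{*}\dashv f$), the isolated lemma that complemented and compact elements coincide in a Stone locale, and the identification of the complemented elements of $\mathrm{Filt}(\mathbb{A})$ with the principal filters. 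One point where your care pays off: the paper's one-line description of the isomorphism in Lemma~\ref{ZLem}.\ref{ZLem2}, sending $a\in L$ to $\mathord{\uparrow}a\cap Z(L)$, is as literally stated order-reversing (and not even injective in general: in $\mathrm{Idl}$ of the finite--cofinite algebra, both the ideal of finite sets and the top element have only the top complemented element above them), so it must be read with a complementation twist; your map $x\mapsto\{c\in\mathrm{Comp}(L)\mid -c\leq x\}$, obtained by composing $L\cong\mathrm{Idl}(\mathrm{Comp}(L))$ with the complementation isomorphism $\mathrm{Idl}\cong\mathrm{Filt}$, is the one that actually works, and it matches the twist already visible in the paper's own Proposition~\ref{SameTopologies}, where $\widehat{a}=\blacksquare\mathord{\uparrow}\mathord{-}a$. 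Likewise your unit $a\mapsto\mathord{\uparrow}(-a)$ correctly compensates for the fact that $a\mapsto\mathord{\uparrow}a$ is a dual isomorphism. The whole argument is element-wise lattice theory with no appeal to points, so the choice-freeness claim is sound.
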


The key to Theorem \ref{StoneLocaleThm} is the following correspondence.

\begin{lemma}\label{ZLem} $\,$
\begin{enumerate}
\item\label{ZLem1} For any BA $\mathbb{A}$, $(\mathrm{Filt}(\mathbb{A}),\subseteq)$ is a Stone locale.
\item\label{ZLem2} $L$ is a Stone locale iff $L$ is isomorphic to $(\mathrm{Filt}(Z(L)),\subseteq)$ where $Z(L)$ is the Boolean algebra of complemented elements of $L$.
\end{enumerate}
\end{lemma}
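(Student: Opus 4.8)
The plan is to prove the two parts of Lemma~\ref{ZLem} separately, with part~\ref{ZLem1} doing most of the work and part~\ref{ZLem2} following by a general abstract argument. For part~\ref{ZLem1}, I would first recall the fact, attributed to Tarski in Corollary~\ref{UVspectral}.\ref{UVspectral4}, that the filters of any BA (indeed any distributive lattice) ordered by inclusion form a complete Heyting algebra, and hence a locale: meets are intersections $\bigcap$, and the join of a family of filters is the filter generated by their union. So the only real content is to verify the two defining properties of a \emph{Stone} locale, namely compactness and zero-dimensionality, for $(\mathrm{Filt}(\mathbb{A}),\subseteq)$.

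For compactness, suppose $\bigvee_{i\in I} G_i = \one$, where $\one$ is the improper filter $\mathbb{A}$ (the top of the locale). Since the join is the filter generated by $\bigcup_i G_i$ and this filter is all of $\mathbb{A}$, in particular $\zero_{\mathbb{A}}$ lies in it, so there are finitely many $i_1,\dots,i_n$ and elements $a_{i_k}\in G_{i_k}$ with $a_{i_1}\wedge\dots\wedge a_{i_n}=\zero_{\mathbb{A}}$; this shows $\bigvee_{k} G_{i_k}=\one$, giving compactness. For zero-dimensionality, I would show that the complemented elements of $(\mathrm{Filt}(\mathbb{A}),\subseteq)$ are exactly the principal filters $\mathord{\uparrow}a$ for $a\in\mathbb{A}$: the pair $\mathord{\uparrow}a,\mathord{\uparrow}(-a)$ satisfies $\mathord{\uparrow}a\cap\mathord{\uparrow}(-a)=\mathord{\uparrow}\one_{\mathbb{A}}=\zero_{\mathrm{Filt}}$ and $\mathord{\uparrow}a\vee\mathord{\uparrow}(-a)=\one$, so every principal filter is complemented; conversely a complemented filter must be principal, since a filter with a complement is compact as a locale element and compactness forces it to be finitely generated, hence principal. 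Then every filter $G$ is the join $\bigvee_{a\in G}\mathord{\uparrow}a$ of principal filters, establishing zero-dimensionality and completing part~\ref{ZLem1}.

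For part~\ref{ZLem2}, the right-to-left direction is immediate from part~\ref{ZLem1}, since $(\mathrm{Filt}(Z(L)),\subseteq)$ is a Stone locale for the BA $Z(L)$. For the left-to-right direction, I would use that in any Stone locale $L$ the complemented elements $Z(L)$ form a BA (complemented elements of any distributive lattice are closed under the lattice operations and complementation), and then exhibit the isomorphism $L\cong(\mathrm{Filt}(Z(L)),\subseteq)$ by sending $a\in L$ to the filter $\{z\in Z(L)\mid z\leq a\}$ of complemented elements below it, or equivalently by the map in the reverse direction sending a filter $G$ of $Z(L)$ to $\bigvee G\in L$. Checking this is a well-defined order-isomorphism is where zero-dimensionality (every element is a join of its complemented approximants, so the first map is injective and order-reflecting) and compactness (to control the interaction of joins with the filter structure) both get used.

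The main obstacle I anticipate is the precise verification in part~\ref{ZLem2} that the assignment $a\mapsto\{z\in Z(L)\mid z\leq a\}$ is a bijection onto $\mathrm{Filt}(Z(L))$ that preserves and reflects order, and in particular that distinct elements of $L$ give distinct filters and that every filter of $Z(L)$ arises this way. Injectivity rests on zero-dimensionality, but surjectivity requires showing that for an arbitrary filter $G$ of $Z(L)$ the element $a:=\bigvee G$ recovers $G$ as exactly its set of complemented lower bounds, i.e.\ that no new complemented element sneaks in below $\bigvee G$; this is the step where one must argue carefully, likely invoking compactness together with the fact that a complemented $z\leq\bigvee G$ can be "covered" by finitely many members of $G$. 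Since part~\ref{ZLem2} is a known localic fact, I would, if space is tight, cite the standard treatment (e.g.\ \cite{Johnstone1982}) and present only the part~\ref{ZLem1} computation in full, this being the piece specific to our filter-based hyperspace perspective.
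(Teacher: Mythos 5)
Your part~\ref{ZLem1} is correct and actually supplies the verification that the paper dismisses with ``straightforward to check'': the compactness argument (if $\bigvee_i G_i$ is the improper filter, then $\zero_\mathbb{A}$ is a finite meet of elements drawn from finitely many $G_i$, so finitely many $G_i$ already join to $\one$) and the identification of the complemented elements of $(\mathrm{Filt}(\mathbb{A}),\subseteq)$ with the principal filters are both sound, and every filter is the join of the principal filters it contains. One step you assert without proof deserves a line: that a complemented element of a compact locale is compact uses the compactness just established together with the frame law, via $\one=z'\vee\bigvee Y$, passing to a finite $Y_0$, and then $z=z\wedge(z'\vee\bigvee Y_0)=\bigvee\{z\wedge y\mid y\in Y_0\}\leq\bigvee Y_0$. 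The right-to-left direction of part~\ref{ZLem2} via part~\ref{ZLem1} also matches the paper.

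The genuine gap is in the left-to-right direction of part~\ref{ZLem2}: you have swapped filters and ideals. The set $\{z\in Z(L)\mid z\leq a\}$ is an \emph{ideal} of $Z(L)$ (downward closed, closed under finite joins, containing $\zero$), not a filter, so your map does not land in $\mathrm{Filt}(Z(L))$ at all; and your proposed inverse $G\mapsto\bigvee G$ is constant on filters, since every filter of $Z(L)$ contains $\one_L$ and hence joins to $\one_L$. The ``careful argument'' you anticipate for surjectivity---a complemented $z\leq\bigvee G$ being covered by finitely many members of $G$---is exactly the right argument, but for the \emph{ideal} completion: it proves $L\cong(\mathrm{Idl}(Z(L)),\subseteq)$ via $a\mapsto\{z\in Z(L)\mid z\leq a\}$, with injectivity from zero-dimensionality ($a=\bigvee\{z\in Z(L)\mid z\leq a\}$) and surjectivity from compactness of complemented elements. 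The repair is short: since $Z(L)$ is a Boolean algebra, $I\mapsto\{\neg z\mid z\in I\}$ is an order isomorphism $\mathrm{Idl}(Z(L))\cong\mathrm{Filt}(Z(L))$, and composing yields the correct isomorphism $a\mapsto\{z\in Z(L)\mid \neg z\leq a\}=\{z\in Z(L)\mid z\vee a=\one\}$ onto $(\mathrm{Filt}(Z(L)),\subseteq)$---the same pattern as Proposition~\ref{SameTopologies}, where $\widehat{a}=\blacksquare\mathord{\uparrow}\mathord{-}a$. You are in good company: the paper's own one-line sketch displays the upper-set variant $\mathord{\uparrow}a\cap Z(L)$, the filter of complemented elements \emph{above} $a$, which as written is order-reversing and not injective (any filter of $\mathbb{A}$ whose only lower bound is $\zero$ is sent, like the improper filter, to $\{\top\}$), so the complementation twist is needed there as well, with the details deferred to the cited reference---which makes your fallback plan of citing the standard localic treatment reasonable, provided the stated map is corrected.
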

\begin{proof}(sketch) Part \ref{ZLem1} is straightforward to check. For part \ref{ZLem2}, the right-to-left direction follows from part \ref{ZLem1}. For the left-to-right direction, the isomorphism sends $a\in L$ to $\mathord{\uparrow}a\cap Z(L)$ (see \cite{Bezhanishvili2015}).
\end{proof}

We can characterize UV-spaces as the result of putting an appropriate topology on the (non-maximum) elements of a Stone locale. Given a Stone locale $L$, just as Johnstone \cite[\S~4.1]{Johnstone1982} defines the \textit{Vietoris space of $L$},  we may define the \textit{upper Vietoris space of $L$}. The starting observation is that in defining the upper Vietoris space of a Stone space $X$, instead of taking the points of the new space to be the nonempty closed sets of $X$, we can take the points to be the complements of such sets, i.e., the open sets of $X$ not equal to $X$. Then for $U\in\Omega(X)$, instead of defining 
\[\Box U =\{F\in \mathsf{F}(X)\mid F\subseteq U\},\]
we define
\begin{eqnarray*}
\blacksquare U &=&\{V\in \Omega(X)\setminus\{X\}\mid V^c\subseteq U\} \\
& =&\{V\in \Omega(X)\setminus\{X\}\mid U\cup V=X\}
\end{eqnarray*}
and let the topology be generated by $\{\blacksquare U\mid U\in\Omega(X)\}$. With this change of perspective, we can define the Vietoris space entirely in terms of the locale $\Omega(X)$, motivating the following definition.

 \begin{definition}\label{UVofLocale} The \textit{upper Vietoris space} of a Stone locale $L$ is the space whose set of points is $L^-=\{x\in L\mid x\neq 1\}$ and whose topology is generated by the sets \[\blacksquare x= \{y\in L^-\mid x\vee y=1\},\; x\in L.\]
 \end{definition}

Now suppose $L$ is the Stone locale $(\mathrm{Filt}(\mathbb{A}),\subseteq)$ for a BA $\mathbb{A}$. The join $F\vee G$ of two filters $F,G\in L$ is the filter generated by $F\cup G$, and the top element $1$ of $L$ is the improper filter. Our $UV(\mathbb{A})$ is exactly the topological space based on $L^-$ with the topology generated by the sets
\[\widehat{a}=\{F\in \mathrm{PropFilt}(\mathbb{A})\mid a\in F\},\;a\in\mathbb{A}.\]
We can now see that $UV(\mathbb{A})$ is exactly the upper Vietoris space of the Stone locale $(\mathrm{Filt}(\mathbb{A}),\subseteq)$.

 \begin{prop}\label{SameTopologies} Let $L$ be the Stone locale of filters of a BA $\mathbb{A}$. Then the topology on $L^-$ generated by $\{\blacksquare x\mid x\in L\}$ is equal to the topology on $L^-$ generated by $\{\widehat{a}\mid a\in\mathbb{A}\}$.\end{prop}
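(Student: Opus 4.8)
The plan is to show that the two families generate the same topology by proving that each is contained in the topology generated by the other; in fact I will establish the sharper statements that every $\blacksquare x$ is a union of sets of the form $\widehat{a}$ and that every $\widehat{a}$ is itself one of the sets $\blacksquare x$. Everything reduces to a single computation: describing, for filters $x,y$ of $\mathbb{A}$, exactly when $x\vee y=1$ in the locale $L=(\mathrm{Filt}(\mathbb{A}),\subseteq)$, where $\vee$ is the join (the filter generated by the union) and $1$ is the improper filter $\mathbb{A}$.

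First I would unpack this join. Since $x\vee y$ is the filter generated by $x\cup y$, we have $x\vee y=1$ iff $0$ lies in that filter, i.e.\ iff there are $p\in x$ and $q\in y$ with $p\wedge q=\zero$. Because a filter is upward closed, $p\wedge q=\zero$ is equivalent to $q\leq -p$, and hence to $-p\in y$. Thus $x\vee y=1$ iff $-p\in y$ for some $p\in x$. Feeding this into the definition of $\blacksquare x$ gives
\[\blacksquare x=\{y\in L^-\mid \exists p\in x:\ -p\in y\}=\bigcup_{p\in x}\widehat{-p},\]
so each $\blacksquare x$ is open in the topology generated by $\{\widehat{a}\mid a\in\mathbb{A}\}$.

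For the converse I would specialize $x$ to the principal filter $\mathord{\uparrow}(-a)$ and verify that $\blacksquare(\mathord{\uparrow}(-a))=\widehat{a}$. Using the description above, $y\in\blacksquare(\mathord{\uparrow}(-a))$ iff $-p\in y$ for some $p$ with $-a\leq p$. Taking $p=-a$ shows that $a\in y$ suffices; conversely, $-a\leq p$ yields $-p\leq a$, so $-p\in y$ forces $a\in y$ by upward closure. Hence $\blacksquare(\mathord{\uparrow}(-a))=\widehat{a}$, exhibiting each $\widehat{a}$ as a subbasic open of the $\blacksquare$-topology. Combining the two inclusions of subbases yields equality of the generated topologies.

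The only real content is the join computation in the first step; once the equivalence $x\vee y=1 \iff \exists p\in x\ (-p\in y)$ is in hand, both directions are immediate. I expect the main thing to get right is the correct bookkeeping with the improper filter as the top element $1$ and the passage $p\wedge q=\zero \iff q\leq -p \iff -p\in y$ justified by upward closure; beyond that, no genuine obstacle arises.
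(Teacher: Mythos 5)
Your proof is correct and follows essentially the same route as the paper's: both arguments come down to the identities $\widehat{a}=\blacksquare\mathord{\uparrow}\mathord{-}a$ and $\blacksquare F=\bigcup\{\widehat{-a}\mid a\in F\}$, which the paper verifies directly and which you derive cleanly from the single factored-out observation that $x\vee y=1$ iff $-p\in y$ for some $p\in x$. Isolating that join computation is a nice organizational touch, but the mathematical content is identical to the paper's proof.
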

 
 \begin{proof} Given $a\in\mathbb{A}$, we have: \[\widehat{a}=\blacksquare\mathord{\uparrow}\mathord{-}a.\] For $\widehat{a}\subseteq\blacksquare\mathord{\uparrow}\mathord{-}a$, if $F\in\widehat{a}$, so $F$ is a proper filter with $a\in F$, then clearly $F\vee \mathord{\uparrow}\mathord{-}a$, i.e., the filter generated by $F\cup \mathord{\uparrow}\mathord{-}a$, is the improper filter, so $F\in \blacksquare\mathord{\uparrow}\mathord{-}a$. For $\widehat{a}\supseteq\blacksquare\mathord{\uparrow}\mathord{-}a$, if $F\in \blacksquare\mathord{\uparrow}\mathord{-}a$, so $F$ is a proper filter such that the filter generated by $F\cup \mathord{\uparrow}\mathord{-}a$ is improper, then $a\in F$, so $F\in\widehat{a}$.
 
 Given $F\in L$, we have:
 \[\blacksquare F=\bigcup\{\widehat{-a}\mid a\in F\}.\]
 For the left-to-right inclusion, suppose $G\in \blacksquare F$, so $G$ is a proper filter such that $F\vee G$ is the improper filter. Hence there is some $a\in F$ such that $-a\in G$, so that $G\in\widehat{-a}$ and hence $G\in \bigcup\{\widehat{-a}\mid a\in F\}$. From right to left, suppose $G\in \bigcup\{\widehat{-a}\mid a\in F\}$, so for some $a\in F$, $G\in\widehat{-a}$, which means $-a\in G$. Then clearly $F\vee G$ is the improper filter, so $G\in \blacksquare F$.
 \end{proof}
 
Combining Proposition \ref{SameTopologies} with Definitions \ref{UVofBA} and \ref{UVofLocale}, we have the following as an immediate corollary.
 
 \begin{cor}\label{SameTopCor} For any BA $\mathbb{A}$, $UV(\mathbb{A})$ is the upper Vietoris space of the Stone locale $(\mathrm{Filt}(\mathbb{A}),\subseteq)$.
 \end{cor}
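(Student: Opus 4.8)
The plan is to observe that the statement is purely a matter of unwinding the two definitions and invoking Proposition~\ref{SameTopologies}; there is no substantive obstacle to overcome, which is exactly why the result is billed as an immediate corollary. First I would check that the two spaces share the same underlying set of points. By Definition~\ref{UVofLocale}, the upper Vietoris space of the Stone locale $L=(\mathrm{Filt}(\mathbb{A}),\subseteq)$ has point set $L^-=\{x\in L\mid x\neq 1\}$. Since the maximum element $1$ of the lattice of filters is the improper filter $\mathbb{A}$, deleting it leaves exactly the proper filters, so $L^-=\mathrm{PropFilt}(\mathbb{A})$, which by Definition~\ref{UVofBA} is precisely the point set of $UV(\mathbb{A})$.

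Second, I would match the topologies. By Definition~\ref{UVofLocale} the topology of the upper Vietoris space of $L$ is generated by $\{\blacksquare x\mid x\in L\}$, and by Definition~\ref{UVofBA} the topology of $UV(\mathbb{A})$ is generated by $\{\widehat{a}\mid a\in\mathbb{A}\}$. Proposition~\ref{SameTopologies} asserts precisely that these two families generate the same topology on $L^-$. Combining the agreement of point sets with the agreement of topologies yields that $UV(\mathbb{A})$ \emph{is} the upper Vietoris space of $(\mathrm{Filt}(\mathbb{A}),\subseteq)$ --- not merely homeomorphic to it, but literally the same space on the same set.

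The only step that deserves a moment's care is the identification of the top element of the filter locale with the improper filter, so that the ``delete the maximum'' operation in Definition~\ref{UVofLocale} returns exactly $\mathrm{PropFilt}(\mathbb{A})$ rather than some smaller or larger set. Once this is noted, everything else is a direct citation, and the corollary follows without any further calculation.
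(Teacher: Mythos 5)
Your proposal is correct and takes essentially the same route as the paper, which derives the corollary immediately by combining Proposition~\ref{SameTopologies} with Definitions~\ref{UVofBA} and~\ref{UVofLocale}. Your explicit check that the top element of $(\mathrm{Filt}(\mathbb{A}),\subseteq)$ is the improper filter, so that $L^-=\mathrm{PropFilt}(\mathbb{A})$, just spells out the point-set identification the paper leaves implicit.
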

 
We can now justify our choice of the terminology `UV-space' with the following choice-free characterization.
 
 \begin{theorem}\label{UVJustification} $X$ is a UV-space iff $X$ is homeomorphic to the upper Vietoris space of a Stone locale.
 \end{theorem}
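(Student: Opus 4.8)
The plan is to deduce both directions from results already in hand, chaining together Theorem~\ref{SecondThm}, Corollary~\ref{SameTopCor}, and Lemma~\ref{ZLem}.\ref{ZLem2}, together with one small observation: the upper Vietoris space construction of Definition~\ref{UVofLocale} depends only on the lattice structure of the locale, so it is invariant under locale isomorphism. For the left-to-right direction, suppose $X$ is a UV-space. By Proposition~\ref{COROBA}, $\mathsf{CO}\mathcal{RO}(X)$ is a BA, and by Theorem~\ref{SecondThm}.\ref{SecondThmB}, $X$ is homeomorphic to $UV(\mathsf{CO}\mathcal{RO}(X))$. By Corollary~\ref{SameTopCor}, this space is precisely the upper Vietoris space of the Stone locale $(\mathrm{Filt}(\mathsf{CO}\mathcal{RO}(X)),\subseteq)$, which is a Stone locale by Lemma~\ref{ZLem}.\ref{ZLem1}. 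Hence $X$ is homeomorphic to the upper Vietoris space of a Stone locale.

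For the right-to-left direction, suppose $X$ is homeomorphic to the upper Vietoris space of a Stone locale $L$. By Lemma~\ref{ZLem}.\ref{ZLem2}, there is a locale isomorphism $\phi\colon L\to (\mathrm{Filt}(Z(L)),\subseteq)$, where $Z(L)$ is the BA of complemented elements of $L$. I would then argue that the upper Vietoris space of $L$ is homeomorphic to the upper Vietoris space of $(\mathrm{Filt}(Z(L)),\subseteq)$. The latter equals $UV(Z(L))$ by Corollary~\ref{SameTopCor}, and $UV(Z(L))$ is a UV-space by Theorem~\ref{SecondThm}.\ref{SecondThmA}. Since being a UV-space is preserved under homeomorphism, it follows that $X$ is a UV-space, completing the proof.

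The only step requiring genuine verification---and thus the main (though modest) obstacle---is the invariance of the upper Vietoris space under the locale isomorphism $\phi$. Here one checks that $\phi$ restricts to a bijection between the underlying point sets $L^-$ and $(\mathrm{Filt}(Z(L)))^-$, using that a locale isomorphism preserves the top element, so $\phi(x)=1$ iff $x=1$. One then checks that $\phi$ carries the subbasic opens of one space onto those of the other: since $\phi$ preserves finite joins and the top, for each $x\in L$ we have $\phi[\blacksquare x]=\{\phi(y)\mid y\in L^-,\,x\vee y=1\}=\{z\in(\mathrm{Filt}(Z(L)))^-\mid \phi(x)\vee z=1\}=\blacksquare\phi(x)$. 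Thus $\phi$ maps the generating family of Definition~\ref{UVofLocale} bijectively onto the corresponding family, so it is a homeomorphism of upper Vietoris spaces. Everything else in the argument is bookkeeping that invokes the cited results, so I expect the whole proof to be short once this invariance is noted.
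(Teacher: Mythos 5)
Your proof is correct and takes essentially the same route as the paper's: Theorem~\ref{SecondThm}.\ref{SecondThmB} with Corollary~\ref{SameTopCor} for the forward direction, and Lemma~\ref{ZLem}.\ref{ZLem2} with Corollary~\ref{SameTopCor} and Theorem~\ref{SecondThm}.\ref{SecondThmA} for the converse. The only difference is that you explicitly verify the step the paper treats as immediate---that the upper Vietoris space construction is invariant under locale isomorphism---and your verification, via $\phi$ restricting to a bijection $L^-\to(\mathrm{Filt}(Z(L)))^-$ with $\phi[\blacksquare x]=\blacksquare\phi(x)$, is correct.
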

 
 \begin{proof} Suppose $X$ is a UV-space. Then by Theorem \ref{SecondThm}.\ref{SecondThmB}, $X$ is homeomorphic to $UV(\mathsf{CO}\mathcal{RO}(X))$. By Corollary \ref{SameTopCor}, $UV(\mathsf{CO}\mathcal{RO}(X))$ is the upper Vietoris space of the Stone locale $(\mathrm{Filt}(\mathsf{CO}\mathcal{RO}(X)),\subseteq)$. Thus, $X$ is homeomorphic to the upper Vietoris space of a Stone locale. 
 
 Conversely, suppose $X$ is homeomorphic to the upper Vietoris space of a Stone locale $L$. By Lemma \ref{ZLem}.\ref{ZLem2}, $L$ is isomorphic to $(\mathrm{Filt}(Z(L)),\subseteq)$. Thus, $X$ is homeomorphic to the upper Vietoris space of $(\mathrm{Filt}(Z(L)),\subseteq)$, which is equal to $UV(Z(L))$ by Corollary \ref{SameTopCor}, which is a UV-space by Theorem \ref{SecondThm}.\ref{SecondThmA}. Thus, $X$ is a UV-space.
 \end{proof}
 
 Theorem \ref{UVJustification} is a choice-free point-free analogue of the statement that $X$ is a UV-space iff $X$ is homeomorphic to $\mathscr{UV}(Y)$ for a Stone space $Y$. The left-to-right direction of that statement assumes the Boolean Prime Ideal Theorem (see Section \ref{UVStoneSection}). But by switching from Stone spaces to Stone locales, one obtains Theorem \ref{UVJustification} without choice.

\begin{remark}For a Stone locale $L$, in addition to defining the Vietoris space of $L$, Johnstone \cite[\S~4.1]{Johnstone1982} defines the \textit{Vietoris locale of $L$}, also known as the \textit{Vietoris powerlocale of $L$}.\footnote{Johnstone studies these constructions for any compact regular locale $L$, but here we need only consider Stone locales.} This is a purely localic construction, and the terminology is justified by the fact that the space of points of the Vietoris locale of $L$ is homeomorphic to the Vietoris space of $L$. Similarly, one can give a purely localic construction of the \textit{upper Vietoris locale of $L$}, also known as the \textit{upper powerlocale of $L$} \cite{Vickers1997,Vickers2009}, such that its space of points is homeomorphic to the upper Vietoris space of $L$.\end{remark}

Figure \ref{Locales&Spaces} below relates the different constructions we have discussed, viewed as ways of constructing the dual UV-space of a given BA.

 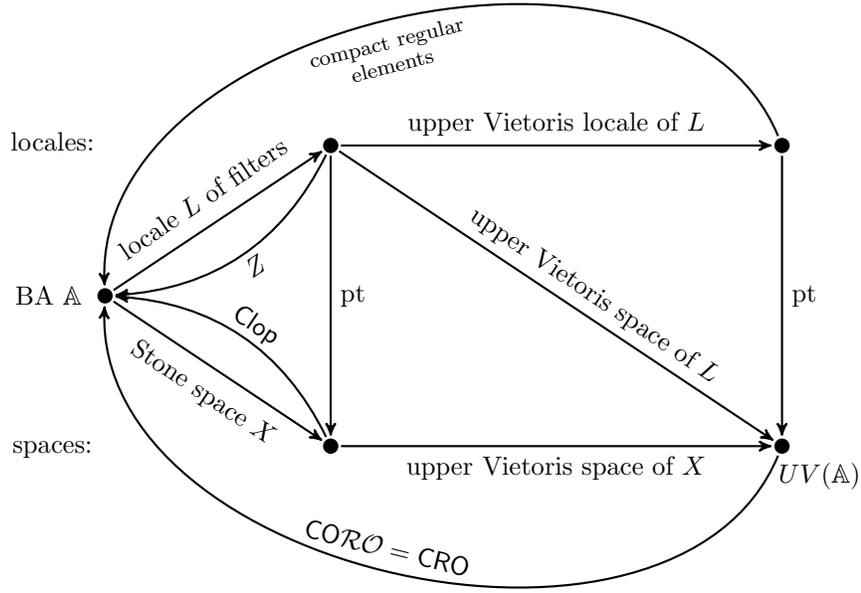
\begin{figure}[h]
\begin{center}
\begin{tikzpicture}[->,>=stealth',shorten >=1pt,shorten <=1pt, auto,node
distance=2cm,thick,every loop/.style={<-,shorten <=1pt}] \tikzstyle{every state}=[fill=gray!20,draw=none,text=black]

\node [circle,draw=black!100,fill=black!100, label=below:$$,inner sep=0pt,minimum size=.175cm]  (a) at (0,0) {{}}; 
\node [circle,draw=black!100,fill=black!100, label=above:$$,inner sep=0pt,minimum size=.175cm] (b) at (3,2) {{}}; 
\node  [circle,draw=black!100,fill=black!100, label=above:$$,inner sep=0pt,minimum size=.175cm] (b') at (9,2) {{}}; 
\node [circle,draw=black!100,fill=black!100, label=above:$$,inner sep=0pt,minimum size=.175cm]  (c) at (3,-2) {{}}; 
\node  [circle,draw=black!100,fill=black!100, label=above:$$,inner sep=0pt,minimum size=.175cm] (c') at (9,-2) {{}};

\path (a) edge[->] node {{}} (b); 
\node at (1.3,1.3) {{$\rotatebox{34}{locale $L$ of filters}$}};

\path (b) edge[->, bend left] node {{}} (a); 
\node at (2,0.4) {{$\rotatebox{34}{$Z$}$}};

\path (a) edge[->] node {{}} (c);
\node at (1.3,-1.3) {{$\rotatebox{-34}{Stone space $X$}$}};

\path (c) edge[->, bend right] node {{}} (a);
\node at (2,-0.4) {{$\rotatebox{-34}{$\mathsf{Clop}$}$}};

\path (b) edge[->] node {{upper Vietoris locale of $L$}} (b'); 

\path (c') edge[<-] node {{upper Vietoris space of $X$}} (c); 
\path (b) edge[->] node {{pt}} (c); 
\path (b') edge[->] node {{pt}} (c'); 
\path (b) edge[->] node {{}} (c'); 

\path (b') edge[->,bend right=80] node {{}} (a);
\node at (3.75,3.35) {{$\rotatebox{14}{{\footnotesize compact regular}}$}};
\node at (3.75,3.05) {{$\rotatebox{14}{{\footnotesize \, elements}}$}};

\path (c') edge[->,bend left=80] node {{}} (a);
\node at (3.75,-3.35) {{$\rotatebox{-14}{$\mathsf{CO}\mathcal{RO}=\mathsf{CRO}$}$}};

\node at (6.5,0) {{$\rotatebox{-34}{upper Vietoris space of $L$}$}};

\node at (-.75,0) {{BA $\mathbb{A}$}};
\node at (9.5,-2.4) {{$UV(\mathbb{A})$}};

\node at (-.7,2.05) {{locales:}};
\node at (-.7,-2.05) {{spaces:}};

\end{tikzpicture}
\caption{Routes to the dual UV-space of a BA and back.}\label{Locales&Spaces}
\end{center}
\end{figure}

\section{Duality dictionary}\label{DictionarySection} In this section, we explain the dictionary in Table \ref{Dictionary} for translating between BA notions and UV notions.

\begin{table}[h]
 \begin{center}
 \scriptsize
 \begin{tabular}{l|l|l}
   \hline
   {\bf BA} & {\bf UV} & {\bf Stone } \\
   \hline \hline
    BA & UV-space & Stone space \\
    \hline
   homomorphism & UV-map & continuous map \\
   \hline
   filter & $\mathord{\Uparrow}x$, $x\in X$ & closed set   \\
   \hline
   ideal & $U\in \mathsf{O}\mathcal{RO}(X)$ & open set\\
   \hline
    principal filter & $U\in \mathsf{CO}\mathcal{RO}(X)$ & clopen set    \\
       \hline
    principal ideal & $U\in \mathsf{CO}\mathcal{RO}(X)$ & clopen set   \\
   \hline
   maximal filter & $\{x\}$, $x\in\mathsf{Max}_\leqslant(X)$  & $\{x\}$, $x\in X$ \\
   \hline
   maximal ideal & $X\setminus\mathord{\Downarrow}x$, $x\in\mathsf{Max}_\leqslant(X)$ & $X\setminus \{x\}$, $x\in X$ \\
      \hline
   normal ideal & $U\in\mathsf{RO}(X)$ &  $U\in\mathsf{RO}(X)$ \\
   \hline
   relativization & subspace $U\in\mathsf{CO}\mathcal{RO}(X)$ & subspace $U\in\mathrm{Clop}(X)$ \\
   \hline
     complete algebra & complete UV-space & ED Stone space  \\
     \hline
   atom & isolated point & isolated point \\
   \hline
            atomless algebra & $X_\mathrm{iso}=\varnothing$ & $X_\mathrm{iso}=\varnothing$ \\
            \hline
      atomic algebra & $\mathsf{cl}(X_\mathrm{iso})=X$ & $\mathsf{cl}(X_\mathrm{iso})=X$ \\
   \hline
   homomorphic image  & subspace induced by $\mathord{\Uparrow}x$, $x\in X$ & subspace induced by closed set  \\
   \hline
   subalgebra & image under UV-map  & image under continuous map  \\
   \hline
    direct product & UV-sum  & disjoint union  \\
   \hline
   canonical extension & $\mathcal{RO}(X)$ & $\wp(X)$ \\
   \hline
   MacNeille completion & $\mathsf{RO}(X)$ & $\mathsf{RO}(X)$ \\

   \hline
 \end{tabular}
 \end{center}
\vspace{5mm} \caption{Dictionary for \textbf{BA}, \textbf{UV}, and \textbf{Stone}.}\label{Dictionary}
\end{table}

\subsection{Filters and ideals}

For a filter $F$ and ideal $I$ in a BA $\mathbb{A}$, we define:
\begin{eqnarray*}
\eta (F)&=& \bigcap\{\widehat{a}\mid a\in F\};\\
\zeta(I)&=& \bigcup\{\widehat{a}\mid a\in I\}.
\end{eqnarray*}

\begin{fact}\label{EtaFact} Let $\mathbb{A}$ be a BA and $X$ its dual UV-space. The map $\eta$ is a dual isomorphism between the poset of proper filters of $\mathbb{A}$ \textnormal{(}ordered by inclusion\textnormal{)} and the poset of principal upsets in the specialization order of $X$ \textnormal{(}ordered by inclusion\textnormal{)}.
\end{fact}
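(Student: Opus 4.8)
The plan is to observe that, under the identification of $X$ with $UV(\mathbb{A})$, the map $\eta$ sends each proper filter $F$ to its own principal upset in the specialization order, and then to read off the dual isomorphism from elementary order theory. First I would unpack the definition: since $\widehat{a}=\{G\in\mathrm{PropFilt}(\mathbb{A})\mid a\in G\}$, a point $G$ lies in $\eta(F)=\bigcap\{\widehat{a}\mid a\in F\}$ iff $a\in G$ for every $a\in F$, that is, iff $F\subseteq G$. By Proposition~\ref{IsSpectral}.\ref{Specialization}, the specialization order $\leqslant$ on $X=UV(\mathbb{A})$ is precisely inclusion, so this says $\eta(F)=\{G\mid F\leqslant G\}=\mathord{\Uparrow}F$. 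Hence $\eta$ takes values among the principal upsets of $(X,\leqslant)$, and conversely every principal upset $\mathord{\Uparrow}x$ equals $\eta(x)$, since each point $x$ of $X$ is itself a proper filter; this already gives surjectivity onto the collection of principal upsets.

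Next I would verify that $\eta$ is an order-reversing bijection. For order-reversal I would establish the biconditional $F\subseteq F'$ iff $\eta(F')\subseteq\eta(F)$. The forward direction is immediate: if $F\subseteq F'$ and $F'\subseteq G$, then $F\subseteq G$, so $\mathord{\Uparrow}F'\subseteq\mathord{\Uparrow}F$. The backward direction uses that $F'\in\mathord{\Uparrow}F'$: from $\mathord{\Uparrow}F'\subseteq\mathord{\Uparrow}F$ we obtain $F'\in\mathord{\Uparrow}F$, i.e.\ $F\subseteq F'$. Because this is a biconditional, the very same computation yields injectivity (apply it to $F\subseteq F'$ and $F'\subseteq F$ to conclude $F=F'$) and shows that the inverse assignment is also order-reversing, so $\eta$ is a dual isomorphism onto the poset of principal upsets ordered by inclusion.

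There is no genuine obstacle here: once the specialization order is identified with inclusion, the statement reduces to the routine fact that in any poset the assignment $x\mapsto\mathord{\Uparrow}x$ is an order-reversing bijection onto the principal upsets, with antisymmetry (available since $X$ is $T_0$) guaranteeing injectivity. The only point requiring care is keeping the two inclusion orders distinct in the bookkeeping---inclusion of filters inside $\mathbb{A}$ versus inclusion of subsets of $X$---and these are linked exactly by Proposition~\ref{IsSpectral}.\ref{Specialization}, which is what makes the identification $\eta(F)=\mathord{\Uparrow}F$ go through.
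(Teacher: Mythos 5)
Your proposal is correct and follows essentially the same route as the paper's proof: compute $\eta(F)=\mathord{\Uparrow}F$ by unwinding the definition of $\widehat{a}$ together with the identification of the specialization order with inclusion (Proposition~\ref{IsSpectral}.\ref{Specialization}), note that every principal upset arises this way since points of $X$ are proper filters, and conclude with the biconditional $F\subseteq F'$ iff $\eta(F)\supseteq\eta(F')$. You merely spell out more explicitly (via $F'\in\mathord{\Uparrow}F'$) the backward direction and the injectivity that the paper dismisses as clear.
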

\begin{proof} 
Given a filter $F$ in $\mathbb{A}$, we have 
\begin{eqnarray*}
\eta (F) &=& \bigcap\{\widehat{a}\mid a\in F\} \\
&=&\{F'\in UV(\mathbb{A})\mid \forall a\in F : F'\in\widehat{a}\} \\
&=&\{F'\in UV(\mathbb{A})\mid \forall a\in F : a\in F' \} \\
&=&\{F'\in UV(\mathbb{A})\mid F\subseteq F' \} \\
&=& \mathord{\Uparrow}F,
\end{eqnarray*}
where we recall that $\mathord{\Uparrow}F=\{G\in X\mid F\leqslant G\}$. By the same argument, any principal upset $\mathord{\Uparrow}F$ in the specialization order of the UV-space is equal to $\eta(F)$. Finally, it is clear that $F\subseteq F'$ iff $\eta(F)\supseteq \eta(F')$.
\end{proof}

\begin{fact}\label{IdealIso}  Let $\mathbb{A}$ be a BA and $X$ its dual UV-space. The map $\zeta$ is an isomorphism between the poset of ideals of $\mathbb{A}$ \textnormal{(}ordered by inclusion\textnormal{)} and $(\mathsf{O}\mathcal{RO}(X),\subseteq)$.
\end{fact}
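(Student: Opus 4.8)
The plan is to show that $\zeta$ is a bijection from the ideals of $\mathbb{A}$ onto $\mathsf{O}\mathcal{RO}(X)$ (where $X=UV(\mathbb{A})$) that both preserves and reflects inclusion; this is precisely what it means to be a poset isomorphism. First I would check that $\zeta$ lands in $\mathsf{O}\mathcal{RO}(X)$. That $\zeta(I)=\bigcup\{\widehat a\mid a\in I\}$ is open is immediate, being a union of basic opens. That it is regular open in $\mathsf{Up}(X,\leqslant)$ is the substantive point, but it is already carried out inside the proof of Proposition \ref{Nonprinc}.\ref{Nonprinc0}: given $F\notin\zeta(I)$, i.e.\ $F\cap I=\varnothing$, the filter generated by $\{a\wedge -c\mid a\in F,\ c\in I\}$ is proper and every proper filter extending it remains disjoint from $I$, which by (\ref{ROeq}) witnesses $\zeta(I)=\neg\neg\zeta(I)$. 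Crucially, that argument uses only that $I$ is an ideal, so it applies verbatim here.

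Next, order preservation is trivial, since $I\subseteq J$ gives $\zeta(I)\subseteq\zeta(J)$, and for order reflection I would exploit principal filters. If $\zeta(I)\subseteq\zeta(J)$ and $a\in I$, then the principal filter $\mathord{\uparrow}a$ lies in $\widehat a\subseteq\zeta(I)\subseteq\zeta(J)$, so $a\leq b$ for some $b\in J$; since $J$ is downward closed, this gives $a\in J$. Hence $\zeta$ reflects order, and in particular is injective.

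For surjectivity---the step I expect to require the most care---I would, given $U\in\mathsf{O}\mathcal{RO}(X)$, set $I=\{a\in\mathbb{A}\mid\widehat a\subseteq U\}$ and verify it is an ideal. Downward closure is clear from $b\leq a\Rightarrow\widehat b\subseteq\widehat a$. Closure under join is where regular-openness of $U$ enters: if $\widehat a,\widehat b\subseteq U$, then using $\widehat{a\vee b}=\mathsf{int}_\leqslant(\mathsf{cl}_\leqslant(\widehat a\cup\widehat b))$ from Theorem \ref{MainRep} together with $U=\mathsf{int}_\leqslant(\mathsf{cl}_\leqslant(U))$, monotonicity of the operator $\mathsf{int}_\leqslant(\mathsf{cl}_\leqslant(\cdot))$ yields $\widehat{a\vee b}\subseteq U$, so $a\vee b\in I$. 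Finally $\zeta(I)=U$ because $\{\widehat a\mid a\in\mathbb{A}\}$ is a basis for $X$: every open set is the union of the basic opens it contains, so $U=\bigcup\{\widehat a\mid\widehat a\subseteq U\}=\zeta(I)$. Combining the three steps shows $\zeta$ is an order isomorphism. The only genuinely nontrivial inputs are the regular-openness of $\zeta(I)$ and the closure of $I$ under joins, both of which trade on the regular-open structure established for $UV(\mathbb{A})$ earlier.
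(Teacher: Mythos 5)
Your proof is correct and takes essentially the same route as the paper's: the same three steps (well-definedness of $\zeta$ into $\mathsf{O}\mathcal{RO}(X)$ via a proper-filter extension witnessing regular openness, order preservation/reflection, and surjectivity via the ideal $I=\{a\mid \widehat{a}\subseteq U\}$ using $\widehat{a\vee b}=\mathsf{int}_\leqslant(\mathsf{cl}_\leqslant(\widehat{a}\cup\widehat{b}))$ and regular openness of $U$). Your two small deviations are harmless: deferring regular openness of $\zeta(I)$ to the argument inside Proposition \ref{Nonprinc}.\ref{Nonprinc0} is legitimate, since that argument indeed uses only that $I$ is an ideal and the paper's own proof of this fact repeats essentially the same filter construction, and you make explicit, via principal filters, the order-reflection step that the paper dismisses as easy to see.
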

\begin{proof} First, we show that for any ideal $I$ in $\mathbb{A}$, we have $\zeta(I)\in \mathsf{O}\mathcal{RO}(X)$. The set $\zeta(I)$ is a union of basic opens and hence is open. We claim that $\zeta(I)$ is also an $\mathcal{RO}$ set. To see that it is an $\leqslant$-upset, if $F\in \zeta(I)$, so for some $a\in I$, we have $F\in\widehat{a}$ and hence $a\in F$, then for any $F'\supseteq F$, we have $a\in F'$ and hence $F'\in\widehat{a}$, so $F'\in \zeta(I)$. Then to see that $\zeta(I)$ is an $\mathcal{RO}$ set, suppose $F\not\in \zeta(I)$, so for all $a\in I$, $F\not\in\widehat{a}$ and hence $a\not\in F$. Let $F'$ be the filter generated by $F\cup \{-a\mid a\in I\}$. We claim that $F'$ is proper. If not, then there are $b\in F$ and $a_1,\dots,a_n\in I$ such that $b\wedge -a_1\wedge\dots\wedge - a_n=\zero$, so $b\leq a_1\vee\dots\vee a_n$. Then since $F$ is a filter, $b\in F$ implies $a_1\vee\dots \vee a_n\in F$. But since $I$ is an ideal, $a_1,\dots,a_n\in I$ implies $a_1\vee\dots\vee a_n\in I$ and hence $a_1\vee\dots\vee a_n\not\in F$ by our choice of $F$. From this contradiction, we conclude that $F'$ is proper. Then since $-a\in F'$ for each $a\in I$, it follows that for any proper filter $F''\supseteq F'$, we have $F''\not\in \zeta(I)$. This shows that $\zeta(I)$ is an $\mathcal{RO}$ set by (\ref{ROeq}). This in turn completes the proof that $\zeta(I)\in \mathsf{O}\mathcal{RO}(X)$, and it is easy to see that $I\subseteq I'$ iff $\zeta(I)\subseteq \zeta(I')$.

Finally, to see that $f$ is surjective, given any $\mathsf{O}\mathcal{RO}$ subset $U$ of $UV(\mathbb{A})$, by the proof of Theorem \ref{MainRep} we have $U=\bigcup\{\widehat{a}\mid \widehat{a}\subseteq U\}$.  We claim that the set $I=\{a\mid \widehat{a}\subseteq U\}$ is an ideal in $\mathbb{A}$. If $a\in I$, so $\widehat{a}\subseteq U$, then for any $b\leq a$, we have $\widehat{b}\subseteq\widehat{a}$ and hence $\widehat{b}\subseteq U$, so $b\in I$. Finally, if $a,b\in I$, so $\widehat{a},\widehat{b}\subseteq U$ and hence $\widehat{a}\cup\widehat{b}\subseteq U$, then we have $\mathsf{int}_\leqslant(\mathsf{cl}_\leqslant(\widehat{a}\cup\widehat{b}))\subseteq \mathsf{int}_\leqslant(\mathsf{cl}_\leqslant(U))$. Since $U$ is an $\mathcal{RO}$ set, we have $\mathsf{int}_\leqslant(\mathsf{cl}_\leqslant(U))=U$, and then since $\mathsf{int}_\leqslant(\mathsf{cl}_\leqslant(\widehat{a}\cup\widehat{b}))=\widehat{a\vee b}$, it follows that $\widehat{a\vee b}\subseteq U$ and hence $a\vee b\in I$. Thus, $I$ is an ideal, and clearly $\zeta(I)=U$.
 \end{proof}
 
 \begin{fact}  Let $\mathbb{A}$ be a BA and $X$ its dual UV-space. The restriction of $\eta$ to principal filters is a dual isomorphism between the poset of principal filters  of $\mathbb{A}$ \textnormal{(}ordered by inclusion\textnormal{)} and $(\mathsf{CO}\mathcal{RO}(X),\subseteq)$.
 \end{fact}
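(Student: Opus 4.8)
The plan is to reduce everything to Theorem~\ref{MainRep} by first computing $\eta$ on a principal filter. For $a\in\mathbb{A}$, I would evaluate
\[
\eta(\mathord{\uparrow}a)=\bigcap\{\widehat{b}\mid b\in\mathord{\uparrow}a\}=\bigcap\{\widehat{b}\mid a\leq b\}=\widehat{a},
\]
where the final equality holds because $a\leq b$ implies $\widehat{a}\subseteq\widehat{b}$, so $\widehat{a}$ is the least member of the family being intersected, and $a\in\mathord{\uparrow}a$ guarantees it actually occurs in that family. This single identity carries the whole argument.

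Granting it, the conclusion follows by composing two order correspondences. By Theorem~\ref{MainRep}, $\mathsf{CO}\mathcal{RO}(X)=\{\widehat{a}\mid a\in\mathbb{A}\}$ and $a\mapsto\widehat{a}$ is an isomorphism from $\mathbb{A}$ onto $(\mathsf{CO}\mathcal{RO}(X),\subseteq)$; in particular it is a bijection onto $\mathsf{CO}\mathcal{RO}(X)$. The restriction of $\eta$ to principal filters thus factors as $\mathord{\uparrow}a\mapsto a\mapsto\widehat{a}$. Since $\mathord{\uparrow}a\subseteq\mathord{\uparrow}b$ iff $b\leq a$, the first factor $\mathord{\uparrow}a\mapsto a$ is an order-reversing bijection from the poset of principal filters onto $\mathbb{A}$; composing this with the order-preserving isomorphism $a\mapsto\widehat{a}$ yields an order-reversing bijection, i.e.\ a dual isomorphism, from principal filters onto $(\mathsf{CO}\mathcal{RO}(X),\subseteq)$.

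For the bijectivity claims used above I would note that surjectivity is immediate, since every $\widehat{a}\in\mathsf{CO}\mathcal{RO}(X)$ equals $\eta(\mathord{\uparrow}a)$, and injectivity follows because $\eta(\mathord{\uparrow}a)=\eta(\mathord{\uparrow}b)$ forces $\widehat{a}=\widehat{b}$, hence $a=b$ by the injectivity in Theorem~\ref{MainRep}, so $\mathord{\uparrow}a=\mathord{\uparrow}b$. I do not expect any genuine obstacle here: the only content is the computation $\eta(\mathord{\uparrow}a)=\widehat{a}$, with everything else being bookkeeping on top of Theorem~\ref{MainRep} and the order-reversing identification of principal filters with $\mathbb{A}$ already implicit in Fact~\ref{EtaFact}.
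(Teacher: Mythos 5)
Your proof is correct and matches the paper's approach: the paper's own proof is just the one-line observation that $\mathord{\uparrow}a\mapsto\widehat{a}$ is the dual isomorphism, citing Theorem~\ref{MainRep}, and your computation $\eta(\mathord{\uparrow}a)=\bigcap\{\widehat{b}\mid a\leq b\}=\widehat{a}$ together with the order-reversing identification $\mathord{\uparrow}a\mapsto a$ simply makes that one line explicit. No gaps; the extra bookkeeping on injectivity and surjectivity is exactly what the paper leaves implicit.
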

 \begin{proof} The map $\mathord{\uparrow}a\mapsto \widehat{a}$ is the dual isomorphism, using the fact from Theorem \ref{MainRep} that $\widehat{a}\in\mathsf{CO}\mathcal{RO}(UV(\mathbb{A}))$. \end{proof}
 
 \begin{fact} Let $\mathbb{A}$ be a BA and $X$ its dual UV-space. The restriction of $\zeta$ to principal ideals is a dual isomorphism between the poset of principal ideals of $\mathbb{A}$ \textnormal{(}ordered by inclusion\textnormal{)} and $(\mathsf{CO}\mathcal{RO}(X),\subseteq)$.
 \end{fact}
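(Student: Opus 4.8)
The plan is to mirror the proof of the preceding Fact for principal filters, reducing everything to the single computation $\zeta(\mathord{\downarrow}a)=\widehat{a}$. First I would evaluate $\zeta$ on an arbitrary principal ideal $\mathord{\downarrow}a=\{c\in\mathbb{A}\mid c\leq a\}$. By definition $\zeta(\mathord{\downarrow}a)=\bigcup\{\widehat{c}\mid c\leq a\}$, so a proper filter $F$ belongs to $\zeta(\mathord{\downarrow}a)$ iff $c\in F$ for some $c\leq a$; since proper filters are upward closed, this is equivalent to $a\in F$, i.e.\ to $F\in\widehat{a}$. Hence $\zeta(\mathord{\downarrow}a)=\widehat{a}$, so the restriction of $\zeta$ to principal ideals is exactly the map $\mathord{\downarrow}a\mapsto\widehat{a}$.

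Next I would read off bijectivity onto $\mathsf{CO}\mathcal{RO}(X)$ from Theorem \ref{MainRep}. That theorem gives that $a\mapsto\widehat{a}$ is an isomorphism of $\mathbb{A}$ onto $\mathsf{CO}\mathcal{RO}(X)$; in particular each member of $\mathsf{CO}\mathcal{RO}(X)$ is $\widehat{a}$ for a unique $a$. Composing with the bijection $a\mapsto\mathord{\downarrow}a$ between $\mathbb{A}$ and its principal ideals, the restriction $\mathord{\downarrow}a\mapsto\widehat{a}$ is therefore a well-defined bijection from the principal ideals of $\mathbb{A}$ onto $\mathsf{CO}\mathcal{RO}(X)$; in particular $\zeta(\mathord{\downarrow}a)=\zeta(\mathord{\downarrow}b)$ forces $\widehat{a}=\widehat{b}$, hence $a=b$ and $\mathord{\downarrow}a=\mathord{\downarrow}b$.

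It then remains to verify the order correspondence asserted in the statement, which is the one subtle point and where I expect the main bookkeeping to lie. Here I would exploit the ideal–filter duality underlying the neighbouring Facts: the principal-ideal poset of $\mathbb{A}$ is linked through complementation to the principal-filter poset treated in the preceding Fact, so that the dual isomorphism $\mathord{\uparrow}a\mapsto\widehat{a}=\eta(\mathord{\uparrow}a)$ established there (via Fact \ref{EtaFact}) transfers to the corresponding statement for $\zeta$ and $\mathsf{CO}\mathcal{RO}(X)$. Tracking the direction of inclusion carefully through this correspondence, together with the fact that $a\mapsto\widehat{a}$ is an order isomorphism by Theorem \ref{MainRep}, yields the dual isomorphism between the poset of principal ideals ordered by inclusion and $(\mathsf{CO}\mathcal{RO}(X),\subseteq)$ claimed in the statement. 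The genuine mathematical content is the first-paragraph identity $\zeta(\mathord{\downarrow}a)=\widehat{a}$; the remaining steps are routine given Theorem \ref{MainRep} and Fact \ref{EtaFact}.
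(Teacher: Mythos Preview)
Your first two paragraphs are exactly right and match the paper's (one-line) proof: the paper simply asserts that $\mathord{\downarrow}a\mapsto\widehat{a}$ is the claimed map, and your computation $\zeta(\mathord{\downarrow}a)=\bigcup\{\widehat{c}\mid c\leq a\}=\widehat{a}$ together with Theorem~\ref{MainRep} supplies the missing verification of bijectivity onto $\mathsf{CO}\mathcal{RO}(X)$.

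Your third paragraph, however, is where things go wrong. The indirect route through complementation and the preceding Fact about $\eta$ is unnecessary, and if you actually carry it out you will not obtain a \emph{dual} isomorphism. Check directly: $\mathord{\downarrow}a\subseteq\mathord{\downarrow}b$ iff $a\leq b$ iff $\widehat{a}\subseteq\widehat{b}$, so $\mathord{\downarrow}a\mapsto\widehat{a}$ is order-\emph{preserving}. This is consistent with Fact~\ref{IdealIso}, which already says $\zeta$ is an (order-preserving) isomorphism from ideals to $\mathsf{O}\mathcal{RO}(X)$; restriction cannot reverse order. The word ``dual'' in the statement appears to be a slip in the paper (compare the asymmetry with the principal-filter Fact, where $\mathord{\uparrow}a\subseteq\mathord{\uparrow}b$ iff $b\leq a$, genuinely giving a dual isomorphism). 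Your hedged phrase ``tracking the direction of inclusion carefully'' should be replaced by the one-line direct check above, and you should note that the result is an isomorphism rather than a dual isomorphism.
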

 \begin{proof}  The map $\mathord{\downarrow}a\mapsto \widehat{a}$ is the dual isomorphism. \end{proof}
 
  \begin{fact} Let $\mathbb{A}$ be a BA and $X$ its dual UV-space. The restriction of $\eta$ to maximal filters is a bijection between the collection of maximal filters of $\mathbb{A}$ and the collection of singleton sets $\{x\}$ for $x\in\mathsf{Max}_\leqslant(X)$.
 \end{fact}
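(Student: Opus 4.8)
The plan is to reduce everything to Fact~\ref{EtaFact} together with a correct identification of the relevant points of $X$. First I would record that, writing $X=UV(\mathbb{A})$, the specialization order $\leqslant$ on $X$ is the inclusion order on proper filters by Proposition~\ref{IsSpectral}.\ref{Specialization}. Hence a point $x\in X$ lies in $\mathsf{Max}_\leqslant(X)$ precisely when the proper filter $x$ is not properly contained in any other proper filter, i.e., precisely when $x$ is a maximal (proper) filter of $\mathbb{A}$, which is to say an ultrafilter. Thus the objects indexing the two collections in the statement---maximal filters of $\mathbb{A}$ on the one hand, and points $x\in\mathsf{Max}_\leqslant(X)$ on the other---are literally the same proper filters; only the packaging differs.

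The key computation is then immediate from Fact~\ref{EtaFact}, which gives $\eta(F)=\mathord{\Uparrow}F=\{G\in X\mid F\subseteq G\}$. If $F$ is a maximal filter, then no proper filter properly extends $F$, so $\mathord{\Uparrow}F=\{F\}$; hence $\eta(F)=\{F\}$, a singleton. Moreover, by the previous paragraph $F\in\mathsf{Max}_\leqslant(X)$, so $\{F\}$ is indeed of the required form $\{x\}$ with $x\in\mathsf{Max}_\leqslant(X)$. This shows $\eta$ maps maximal filters into the target collection.

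For the bijection, injectivity is inherited from Fact~\ref{EtaFact}, where $\eta$ is shown to be a dual isomorphism on all proper filters and in particular injective; restricting to maximal filters preserves injectivity. For surjectivity, given any $x\in\mathsf{Max}_\leqslant(X)$, the point $x$ is a maximal filter by the first paragraph, and $\eta(x)=\mathord{\Uparrow}x=\{x\}$, so every singleton $\{x\}$ with $x\in\mathsf{Max}_\leqslant(X)$ is in the image. I do not anticipate a genuine obstacle: the only point requiring care is the alignment of the two notions of maximality through the fact that the specialization order is inclusion, after which the result drops out of the identity $\eta(F)=\mathord{\Uparrow}F$.
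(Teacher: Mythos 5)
Your proposal is correct and follows essentially the same route as the paper: identify $\mathsf{Max}_\leqslant(X)$ with the maximal filters of $\mathbb{A}$ via the fact that the specialization order is inclusion, then apply the identity $\eta(F)=\mathord{\Uparrow}F$ from Fact~\ref{EtaFact} to get $\eta(F)=\{F\}$ for maximal $F$. You merely spell out the injectivity and surjectivity details that the paper leaves implicit, which is fine.
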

 
 \begin{proof} Since the specialization order $\leqslant$ of $X$ is the inclusion order $\subseteq$ on proper filters of $\mathbb{A}$, the elements of $\mathsf{Max}_\leqslant(X)$ are exactly the maximal filters of $\mathbb{A}$. By Fact \ref{EtaFact}, for any filter $F$, $\eta(F)=\mathord{\Uparrow}F$, so if $F$ is a maximal filter, then $\eta(F)=\mathord{\Uparrow}F=\{F\}$.\end{proof}
 
 \begin{fact} Let $\mathbb{A}$ be a BA and $X$ its dual UV-space. The restriction of $\zeta$ to maximal ideals is a bijection between the collection of maximal ideals of $\mathbb{A}$ and the collection of sets $X\setminus \mathord{\Downarrow}x$ for $x\in\mathsf{Max}_\leqslant(X)$.
\end{fact}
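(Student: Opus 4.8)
The plan is to lean on the already-established Fact \ref{IdealIso}, which says that $\zeta$ is an order-isomorphism between \emph{all} ideals of $\mathbb{A}$ and $\mathsf{O}\mathcal{RO}(X)$, and then merely identify what $\zeta$ does to maximal ideals in particular. Since an isomorphism is in particular injective, the restriction of $\zeta$ to maximal ideals is automatically injective; so the only real content is (i) computing the image $\zeta(I)$ of a maximal ideal $I$ and confirming that it has the form $X\setminus\mathord{\Downarrow}x$ for some $x\in\mathsf{Max}_\leqslant(X)$, and (ii) checking surjectivity onto this target collection.

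First I would recall the standard choice-free correspondence, internal to any Boolean algebra, between maximal ideals and maximal filters: $I$ is a maximal ideal iff $F:=\mathbb{A}\setminus I$ is a maximal filter (ultrafilter), with inverse $F\mapsto\mathbb{A}\setminus F$. This is pure Boolean algebra --- downward closure and closure under join of $\mathbb{A}\setminus F$ follow from $F$ being an ultrafilter, while properness and maximality are immediate --- so it holds in $\mathrm{ZF}$ irrespective of whether such ideals happen to exist.

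The heart of the argument is then the computation $\zeta(I)=X\setminus\mathord{\Downarrow}F$ for $I=\mathbb{A}\setminus F$. Unwinding the definition, a proper filter $G$ lies in $\zeta(I)=\bigcup\{\widehat{a}\mid a\in I\}$ exactly when $G$ contains some $a\in I$, i.e.\ when $G\cap I\neq\varnothing$. Since $I=\mathbb{A}\setminus F$, this says precisely that $G\not\subseteq F$, which --- because the specialization order on $X=UV(\mathbb{A})$ is inclusion by Proposition \ref{IsSpectral}.\ref{Specialization} --- is the same as $G\notin\mathord{\Downarrow}F$. Hence $\zeta(I)=X\setminus\mathord{\Downarrow}F$, and $F\in\mathsf{Max}_\leqslant(X)$ since the maximal elements of $(X,\leqslant)$ are exactly the maximal proper filters of $\mathbb{A}$.

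Surjectivity onto the target collection then follows from the correspondence: given $x\in\mathsf{Max}_\leqslant(X)$, the point $x$ is a maximal filter $F$, and $I=\mathbb{A}\setminus F$ is a maximal ideal with $\zeta(I)=X\setminus\mathord{\Downarrow}x$ by the computation just given, so every set in the target collection is hit. I do not expect a genuine obstacle here; the one place demanding care is keeping the duality straight, since maximal ideals pair with maximal filters and so the ``$\bigcup$'' appearing in $\zeta$ produces the \emph{complement} of a principal downset rather than the principal upset that $\eta$ produces for filters. Once this complement bookkeeping is carried out, the bijection drops out of the injectivity supplied by Fact \ref{IdealIso}.
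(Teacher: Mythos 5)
Your proposal is correct and follows essentially the same route as the paper: both rest on the choice-free complementation between maximal ideals and ultrafilters and the element-chasing computation $\zeta(I)=\bigcup\{\widehat{a}\mid a\in I\}=X\setminus\mathord{\Downarrow}F$ for $F=\mathbb{A}\setminus I$, using that the specialization order on $UV(\mathbb{A})$ is inclusion. Your explicit appeal to Fact~\ref{IdealIso} for injectivity and your surjectivity check merely make explicit what the paper leaves implicit, so there is no substantive difference.
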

 
 \begin{proof} If $I$ is a maximal ideal in $\mathbb{A}$, then the complement $F$ of $I$ is a maximal filter in $\mathbb{A}$ and hence an element of $\mathsf{Max}_\leqslant(X)$. We claim that $\zeta(I)= \bigcup\{\widehat{a}\mid a\in I\}=X\setminus \mathord{\Downarrow}F$. For the $\subseteq$ inclusion, if $G\in \zeta(I)$, then for some $a\in I$, we have $G\in\widehat{a}$ and hence $a\in G$, which implies $G\not\subseteq F$. Conversely, if $G\in X\setminus \mathord{\Downarrow}F$, then $G\not\subseteq F$, so there is an $a\in G$ such that $a\not\in F$. Thus, we have an $a\in I$ such that $G\in \widehat{a}$ and hence $G\in \zeta(I)$.\end{proof}
 
 In Section \ref{CompletionSection} we will prove a correspondence between the \textit{normal} ideals of $\mathbb{A}$ and sets in $\mathsf{RO}(UV(\mathbb{A}))$.
 
 \subsection{Relativization} As one would expect by analogy with standard Stone duality, the operation on a UV-space dual to relativizing a BA to an element is the operation of taking a $\mathsf{CO}\mathcal{RO}$ subspace of a UV-space.

\begin{prop}\label{SubSpaceProp} Let $X$ be a UV-space. If $U\in\mathsf{CO}\mathcal{RO}(X)$, then $U$ with the subspace topology is a UV-space.
\end{prop}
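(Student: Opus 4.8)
The plan is to avoid verifying the UV-space axioms for the subspace directly and instead recognize $U$ as the dual space of a Boolean algebra, namely the relativization of $\mathsf{CO}\mathcal{RO}(X)$ to the element corresponding to $U$. Since being a UV-space is a homeomorphism invariant and $UV(\mathbb{B})$ is a UV-space for every BA $\mathbb{B}$ by Theorem \ref{SecondThm}.\ref{SecondThmA}, it suffices to produce a BA $\mathbb{B}$ together with a homeomorphism from the subspace $U$ onto $UV(\mathbb{B})$. First I would pass to a concrete model: by Theorem \ref{SecondThm}.\ref{SecondThmB}, $X$ is homeomorphic to $UV(\mathbb{A})$ with $\mathbb{A}=\mathsf{CO}\mathcal{RO}(X)$, and such a homeomorphism carries the specialization order, compactness, regular-openness, and subspaces to their counterparts; so it is enough to treat $X=UV(\mathbb{A})$ and, using (\ref{MainEq}), $U=\widehat{a}$ for some $a\in\mathbb{A}$. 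The candidate algebra is the relativization $\mathbb{A}_a$: the principal ideal $\mathord{\downarrow}a$ made into a BA with meet and join inherited from $\mathbb{A}$, bottom $\zero$, top $a$, and complement $b\mapsto a\wedge{-}b$.

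Next I would set up the correspondence between points, defining $\phi\colon\widehat{a}\to UV(\mathbb{A}_a)$ by $\phi(F)=F\cap\mathord{\downarrow}a$ and $\psi\colon UV(\mathbb{A}_a)\to\widehat{a}$ by $\psi(G)=\{c\in\mathbb{A}\mid \exists b\in G:\ b\leq c\}$, the upward closure of $G$ in $\mathbb{A}$. The routine checks are that $\phi(F)$ is a proper filter of $\mathbb{A}_a$ (nonempty since $a\in F$, proper since $\zero\notin F$), that $\psi(G)$ is a proper filter of $\mathbb{A}$ containing $a$, and that $\phi,\psi$ are mutually inverse. The only mildly nontrivial identity is $\psi(\phi(F))=F$, whose $\supseteq$ direction uses $a\in F$: if $c\in F$ then $c\wedge a\in F\cap\mathord{\downarrow}a$ with $c\wedge a\leq c$, so $c\in\psi(\phi(F))$. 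All of this manipulates explicitly given filters and is therefore choice-free.

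Finally I would verify that $\phi$ is a homeomorphism by matching basic opens. For $c\leq a$, the basic open $\{G\in UV(\mathbb{A}_a)\mid c\in G\}$ pulls back along $\phi$ to $\{F\in\widehat{a}\mid c\in F\}=\widehat{c}\cap\widehat{a}$, a basic open of the subspace; conversely, a subspace basic open $\widehat{b}\cap\widehat{a}$ equals $\{F\in\widehat{a}\mid b\wedge a\in F\}$ (using $a\in F$), which is the $\phi$-preimage of the basic open determined by $b\wedge a\in\mathord{\downarrow}a$. Hence $\phi$ identifies the two topologies, so $U\cong UV(\mathbb{A}_a)$ and $U$ is a UV-space.

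I expect the main obstacle to be bookkeeping rather than conceptual: keeping the two meet/complement structures (of $\mathbb{A}$ and of $\mathbb{A}_a$) straight, and confirming the standard fact that $F\mapsto F\cap\mathord{\downarrow}a$ is a bijection from the proper filters of $\mathbb{A}$ containing $a$ onto the proper filters of $\mathbb{A}_a$. A direct verification of Definition \ref{VOspace} for the subspace is also possible but messier: one must compare the regular-open condition (\ref{ROeq}) for the restricted specialization order on $U$ with the condition for $\leqslant$ on all of $X$, and argue that no unwanted compact regular opens appear because of points of $X$ lying below $U$. The homeomorphism route sidesteps this comparison entirely by transporting $\mathsf{CO}\mathcal{RO}(U)$ through $\phi$.
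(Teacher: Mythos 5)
Your proof is correct and choice-free, but it takes a genuinely different route from the paper's. The paper verifies Definition \ref{VOspace} for the subspace directly: it first invokes the (choice-free) fact that a compact open subspace of a spectral space is spectral, then proves exactly the comparison you call ``messier'' and avoid---namely that for $V\subseteq U$, $V\in\mathsf{CO}\mathcal{RO}(U)$ iff $V\in \mathsf{CO}\mathcal{RO}(X)$, matching (\ref{ROeq}) for the restricted order against (\ref{ROeq}) for $\leqslant$ on all of $X$ (your worry about ``points of $X$ lying below $U$'' is handled there by the regular-openness of $U$ itself)---from which it gets $\mathsf{CO}\mathcal{RO}(U)=\{V'\cap U\mid V'\in\mathsf{CO}\mathcal{RO}(X)\}$, closure under relative negation via the identity $\mathsf{int}^U_\leqslant(W)=U\cap\mathsf{int}_\leqslant((X\setminus U)\cup W)$, and filter realization by extending a filter of $\mathsf{CO}\mathcal{RO}(U)$ to one of $\mathsf{CO}\mathcal{RO}(X)$. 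You instead exhibit the subspace outright as $UV(\mathbb{A}_a)$ for the relativized algebra, using the standard bijection $F\mapsto F\cap\mathord{\downarrow}a$ between proper filters of $\mathbb{A}$ containing $a$ and proper filters of $\mathord{\downarrow}a$; your filter computations, the inverse $\psi$, and the matching of basic opens (the $\phi$-preimage of the basic open of $UV(\mathbb{A}_a)$ determined by $c\leq a$ is $\widehat{c}\cap\widehat{a}$, and conversely $\widehat{b}\cap\widehat{a}$ is the $\phi$-preimage of the basic open determined by $b\wedge a$) are all correct, and no choice principle enters since every filter is explicitly constructed; there is also no circularity, as you rely only on Theorems \ref{MainRep} and \ref{SecondThm} and equation (\ref{MainEq}). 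Your route sidesteps the spectrality-of-open-subspaces argument and the intrinsic/relative $\mathcal{RO}$ comparison, and it yields Proposition \ref{RelaProp} essentially for free, since the subspace is identified with the dual of the relativization. What the paper's direct proof buys in exchange is the intrinsic description of the subspace structure---the equivalence (\ref{Viff}) and the trace description of $\mathsf{CO}\mathcal{RO}(U)$---stated for an abstract UV-space rather than the concrete model $UV(\mathbb{A})$; these identities are reused later (e.g., in Proposition \ref{RelaProp} and Lemma \ref{SubspaceComplete}), though on your approach they are recoverable by transporting basic opens through $\phi$ and the homeomorphism of Theorem \ref{SecondThm}.\ref{SecondThmB}.
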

\begin{proof} It is well known that every compact open subspace of a spectral space is again spectral.\footnote{This fact does not use any choice. To see that an open subspace of $X$ is sober, suppose $U$ is such a subspace. To prove that $U$ is sober, it suffices to show (see \cite[p.~2]{Picado2012}) that any open $V\subsetneq U$ is meet-irreducible iff it is the complement of the closure of a point. Let $V$ be an open proper subset of $U$, and suppose $V$ is meet-irreducible, so for all open $A,B\subseteq U$, if $A\cap B\subseteq V$, then $A\subseteq V$ or $B\subseteq V$. But then note that $V$ is also a meet-irreducible proper open subset of $X$. So by the sobriety of $X$, we have $V = X\setminus \mathsf{cl}\{x\}$ for some $x\in X$. Now if $x\in X\setminus U$ and hence $\mathsf{cl}\{x\}\subseteq X\setminus U$, then together $V\subsetneq U$ and $V = X\setminus \mathsf{cl}\{x\}$ imply $U = V$, a contradiction. Thus, $x\in U$ and $V = U \setminus \mathsf{cl}^U\{x\}$.}

Thus, since $X$ is a spectral space, so is the subspace induced by $U$. We denote the interior and closure operations given by the restriction of $\leqslant$ to $U$ by $\mathsf{int}_\leqslant^U$ and $\mathsf{cl}_\leqslant^U$, respectively.
It is easy to check that $\mathsf{CO}(U) = \{V\cap U \mid V\in \mathsf{CO}(X)\}$. We will now show that $\mathsf{CO}\mathcal{RO}(U) = \{V'\cap U\mid V'\in  \mathsf{CO}\mathcal{RO}(X)\}$. 
Let $V\subseteq U$. We first prove that 
\begin{equation}
V\in \mathsf{CO}\mathcal{RO}(U)\mbox{ iff }V\in \mathsf{CO}\mathcal{RO}(X).\label{Viff}
\end{equation}

Let $V\in \mathsf{CO}\mathcal{RO}(U)$. Then clearly $V\in\mathsf{CO}(X)$. We will show that $V\in\mathcal{RO}(X)$. Since $V$ is open in $U$, it is open in $X$. So $V$ is an $\leqslant$-upset, and $V\subseteq \mathsf{int}_\leqslant\mathsf{cl}_\leqslant (V)$. 
Now suppose  $x\in \mathsf{int}_\leqslant\mathsf{cl}_\leqslant (V)$. Then for each $y\in X$ with $x\leqslant y$, there is $z\in X$ with   $y\leqslant z$ and $z\in V$, which with $V\subseteq U$ implies $z\in U$. Thus, $x\in \mathsf{int}_\leqslant\mathsf{cl}_\leqslant (U)$, which implies $x\in U$ since $U\in\mathcal{RO}(X)$. Together $x\in\mathsf{int}_\leqslant\mathsf{cl}_\leqslant (V)$ and $x\in U$ imply $x\in \mathsf{int}^U_\leqslant\mathsf{cl}^U_\leqslant (V)$, which implies $x\in V$ since $V\in\mathcal{RO}(U)$. Thus, $\mathsf{int}_\leqslant\mathsf{cl}_\leqslant (V)\subseteq V$, so $V\in \mathcal{RO}(X)$. 

Conversely, suppose $V\in \mathsf{CO}\mathcal{RO}(X)$. Then clearly $V\in \mathsf{CO}(U)$. To show that $V\in \mathcal{RO}(U)$, suppose $x\in U$ but $x\not\in V$. Then since $V\in \mathcal{RO}(X)$, there is a $y\in X$ such that (a) $x\leqslant y$ and (b) for all $z\in X$ with $y\leqslant z$, we have $z\not\in V$. Since $U$ is an $\leqslant$-upset with $x\in U$, (a) implies $y\in U$. In addition, (b) implies that for all $z\in U$ with $y\leqslant z$, we have $z\not\in V$. Thus, we have shown that if $x\not\in V$, then there is a $y\in U$ such that $x\leqslant y$ and for all $z\in U$ with $y\leqslant z$, we have $z\not\in V$. Hence $V\in\mathcal{RO}(U)$.

The left-to-right direction of (\ref{Viff}) yields $\mathsf{CO}\mathcal{RO}(U)\subseteq \mathsf{CO}\mathcal{RO}(X)$. Now let $V'\in \mathsf{CO}\mathcal{RO}(X)$. Then $V'\cap U\in \mathsf{CO}\mathcal{RO}(X)$ and $V'\cap U\subseteq U$, so $V'\cap U\in \mathsf{CO}\mathcal{RO}(U)$ by the right-to-left direction of (\ref{Viff}). Therefore we have proved that $\mathsf{CO}\mathcal{RO}(U) = \{V'\cap U\mid  V'\in  \mathsf{CO}\mathcal{RO}(X)\}$.
 
 Next, we show that if $V\in \mathsf{CO}(U)$, then $\mathsf{int}^U_\leqslant (U\setminus V)\in  \mathsf{CO}(U)$. Note that for each $W\subseteq U$, we have $\mathsf{int}^U_\leqslant (W) = U\cap \mathsf{int}_\leqslant ((X\setminus U)\cup W)$. So  $\mathsf{int}^U_\leqslant (U\setminus V) = U\cap  \mathsf{int}_\leqslant ((X\setminus U)\cup (U\setminus V)) = U\cap  \mathsf{int}_\leqslant (X\setminus V)$. Since $X$ is a UV-space, $\mathsf{int}_\leqslant (X\setminus V)\in \mathsf{CO}\mathcal{RO}(X)$. Then as $U\in\mathsf{CO}(X)$ and  $\mathsf{CO}(X)$ is closed under finite intersections, $\mathsf{int}^U_\leqslant (U\setminus V)\in \mathsf{CO}(X)$. So $\mathsf{int}^U_\leqslant (U\setminus V)\in \mathsf{CO}(U)$.
 
 Finally, let $F$ be a filter in  $\mathsf{CO}\mathcal{RO}(U)$. Let $F'$ be the filter in $\mathsf{CO}\mathcal{RO}(X)$ generated by $F$. Then $F' = \mathsf{CO}\mathcal{RO}(x)$ for some $x\in X$. But then $x\in V$ for each $V\in F$. 
 So $x\in U$ and $F = \mathsf{CO}\mathcal{RO}(x)$. Thus, $U$ is a UV-space. \end{proof}

\begin{prop}\label{RelaProp} Let $X$ be a UV-space. For any $U\in\mathsf{CO}\mathcal{RO}(X)$, the relativization of the BA $\mathsf{CO}\mathcal{RO}(X)$ to $U$ is the dual of the subspace of $X$ induced by $U$.
\end{prop}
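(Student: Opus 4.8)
The plan is to show that the relativized algebra and the algebra $\mathsf{CO}\mathcal{RO}(U)$ dual to the subspace have the very same underlying set, and then to check that their Boolean operations coincide. Write $\mathbb{B}=\mathsf{CO}\mathcal{RO}(X)$. By definition the relativization of $\mathbb{B}$ to $U$ has carrier $\{V\in\mathbb{B}\mid V\subseteq U\}$, meet and join inherited from $\mathbb{B}$, and relative complement $V\mapsto U\wedge\neg V=U\cap\neg V$. By Proposition \ref{SubSpaceProp} the subspace induced by $U$ is a UV-space, so by Proposition \ref{COROBA} its dual BA $\mathsf{CO}\mathcal{RO}(U)$ has meet $\cap$, complement $\mathsf{int}^U_\leqslant(U\setminus\cdot)$, and join $\mathsf{int}^U_\leqslant\mathsf{cl}^U_\leqslant(\cdot\cup\cdot)$, the operations $\mathsf{int}^U_\leqslant,\mathsf{cl}^U_\leqslant$ being computed in the restriction of $\leqslant$ to $U$. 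The first step is to invoke the equivalence (\ref{Viff}) proved inside Proposition \ref{SubSpaceProp}: for $V\subseteq U$, one has $V\in\mathsf{CO}\mathcal{RO}(U)$ iff $V\in\mathsf{CO}\mathcal{RO}(X)$. This shows at once that $\{V\in\mathbb{B}\mid V\subseteq U\}=\mathsf{CO}\mathcal{RO}(U)$ as sets, both ordered by inclusion.

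It then remains to match the operations. Meet is intersection on both sides, so there is nothing to check. For the complement I would use the identity established in the proof of Proposition \ref{SubSpaceProp}, namely $\mathsf{int}^U_\leqslant(W)=U\cap\mathsf{int}_\leqslant((X\setminus U)\cup W)$ for $W\subseteq U$. Taking $W=U\setminus V$ and using $V\subseteq U$, so that $(X\setminus U)\cup(U\setminus V)=X\setminus V$, yields $\mathsf{int}^U_\leqslant(U\setminus V)=U\cap\mathsf{int}_\leqslant(X\setminus V)=U\cap\neg V$; thus the complement of $\mathsf{CO}\mathcal{RO}(U)$ agrees with the relative complement of the relativized algebra. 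Finally, having shown that the two structures share the same carrier, the same meet, and the same complement, they coincide as Boolean algebras, and in particular their joins agree, each join being recoverable as $\neg(\neg V\wedge\neg W)$ within its own algebra. If one prefers a direct check of the join, one notes that for $V,W\subseteq U$ the $\mathbb{B}$-join $V\vee W$ lies below $U$ because $U$ is regular open, so it is the relativized join, and then compares it with $\mathsf{int}^U_\leqslant\mathsf{cl}^U_\leqslant(V\cup W)$.

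The substantive work is entirely absorbed into Proposition \ref{SubSpaceProp}: once (\ref{Viff}) and the interior identity are in hand, the argument is pure bookkeeping, so I do not expect a genuine obstacle. The one point that deserves care is the complement, where one must keep the two interior operators $\mathsf{int}_\leqslant$ and $\mathsf{int}^U_\leqslant$ distinct and confirm that the relative complement $U\cap\neg V$ computed in the ambient algebra is exactly the intrinsic complement $\mathsf{int}^U_\leqslant(U\setminus V)$ of the subspace. The proposition is thus essentially a corollary of the preceding one, recording that relativizing $\mathsf{CO}\mathcal{RO}(X)$ is dual to passing to a $\mathsf{CO}\mathcal{RO}$ subspace, in exact parallel with relativizing a BA and passing to a clopen subspace in classical Stone duality.
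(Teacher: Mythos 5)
Your proof is correct and takes essentially the same route as the paper's: the substance is delegated to Proposition \ref{SubSpaceProp}, whose equivalence (\ref{Viff}) gives the identification of carriers $\mathsf{CO}\mathcal{RO}(U)=\{V\in\mathsf{CO}\mathcal{RO}(X)\mid V\subseteq U\}$, with the duality itself supplied by Theorem \ref{SecondThm}.\ref{SecondThmB}. Your explicit verification that the operations agree---in particular the computation $\mathsf{int}^U_\leqslant(U\setminus V)=U\cap\mathsf{int}_\leqslant(X\setminus V)=U\cap\neg V$---is precisely the calculation already carried out inside the proof of Proposition \ref{SubSpaceProp}; the paper's two-sentence proof leaves this (and the agreement of joins, which follows since both algebras are ordered by inclusion) implicit.
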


 \begin{proof} The proposition follows from two facts. First, by Proposition \ref{SubSpaceProp}, the subspace of $X$ induced by $U$ is a UV-space, so by Theorem \ref{SecondThm}.\ref{SecondThmB}, $U$ is homeomorphic to $UV(\mathsf{CO}\mathcal{RO}(U))$. Second, $\mathsf{CO}\mathcal{RO}(U) = \{V'\cap U\mid V'\in  \mathsf{CO}\mathcal{RO}(X)\}$ is the relativization of the BA $\mathsf{CO}\mathcal{RO}(X)$ to $U$.\end{proof}

 \subsection{Completeness} We now characterize the UV-duals of complete BAs.
 
 \begin{definition} A UV-space $X$ is \textit{complete} iff $\mathsf{int}( \mathsf{cl}(U))\in\mathsf{CO}\mathcal{RO}(X)$ for every open $U$.
 \end{definition}

 \begin{prop}\label{Comp} Let $\mathbb{A}$ be a BA and $X$ its dual UV-space. 
 \begin{enumerate}
 \item\label{Comp1} If $\{U_i\}_{i\in I}\subseteq \mathsf{CO}\mathcal{RO}(X)$, then $\{U_i\}_{i\in I}$ has a meet in $\mathsf{CO}\mathcal{RO}(X)$ iff 
 \[\mathsf{int}\underset{i\in I}{\bigcap}U_i\in\mathsf{CO}\mathcal{RO}(X),\]
 in which case \[\underset{i\in I}{\bigwedge}U_i=\mathsf{int}\underset{i\in I}{\bigcap}U_i.\]
 \item\label{Comp2} If $\{U_i\}_{i\in I}\subseteq \mathsf{CO}\mathcal{RO}(X)$, then $\{U_i\}_{i\in I}$ has a join in $\mathsf{CO}\mathcal{RO}(X)$ iff 
 \[\mathsf{int} (\mathsf{cl} \underset{i\in I}{\bigcup}U_i)\in\mathsf{CO}\mathcal{RO}(X),\]
 in which case \[\underset{i\in I}\bigvee U_i = \mathsf{int}( \mathsf{cl} \underset{i\in I}{\bigcup}U_i).\]
 \item\label{Comp3} $\mathbb{A}$ is complete iff $X$ is complete.
 \end{enumerate}
 \end{prop}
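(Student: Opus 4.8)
The plan is to make parts \ref{Comp1} and \ref{Comp2} the substance and derive part \ref{Comp3} formally. Throughout I work in $\mathsf{CO}\mathcal{RO}(X)$, recalling from Section \ref{ROsection} (Corollary \ref{Jakl}) that this is a subalgebra of the complete regular open algebra $\mathsf{RO}(X)$, so the Boolean complement of a $\mathsf{CO}\mathcal{RO}$ set $U$ may be written $\neg U=\mathsf{int}_\leqslant(X\setminus U)=\mathsf{int}(X\setminus U)$, and that every $\mathsf{CO}\mathcal{RO}$ set is a spectral-open $\leqslant$-upset. For part \ref{Comp1}, set $N=\mathsf{int}\bigcap_iU_i$. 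The easy direction is that if $N\in\mathsf{CO}\mathcal{RO}(X)$ then $N=\bigwedge_iU_i$: since $N\subseteq\bigcap_iU_i$ it is a lower bound, and any lower bound $V\in\mathsf{CO}\mathcal{RO}(X)$ satisfies $V\subseteq\bigcap_iU_i$ and, being open, $V\subseteq\mathsf{int}\bigcap_iU_i=N$. The work is the converse: assuming a meet $M\in\mathsf{CO}\mathcal{RO}(X)$ exists, I must show $M=N$, which in particular puts $N$ in $\mathsf{CO}\mathcal{RO}(X)$. The inclusion $M\subseteq N$ is immediate as above. For $N\subseteq M$ I argue by contradiction: given $x\in N\setminus M$, apply the first lemma of Section \ref{PossSection} (with $M$ regular open in $\mathsf{Up}(X,\leqslant)$) to obtain $x'\geqslant x$ with $x'\in\neg M$; since $N$ is an upset we still have $x'\in N$, and since $\mathsf{CO}\mathcal{RO}(X)$ is a basis there is $W\in\mathsf{CO}\mathcal{RO}(X)$ with $x'\in W\subseteq N\subseteq\bigcap_iU_i$. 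Then $W$ is a lower bound of $\{U_i\}$, but $x'\in W$ together with $x'\in\neg M$ gives $W\not\subseteq M$, contradicting that $M$ is the greatest lower bound.

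For part \ref{Comp2} I reduce to part \ref{Comp1} by De Morgan in $\mathsf{CO}\mathcal{RO}(X)$: the family $\{U_i\}$ has a join iff $\{\neg U_i\}$ has a meet, with $\bigvee_iU_i=\neg\bigwedge_i\neg U_i$. Using $\neg U_i=\mathsf{int}(X\setminus U_i)$, a short topological computation gives $\mathsf{int}\bigcap_i\neg U_i=\mathsf{int}(X\setminus\bigcup_iU_i)=(\bigcup_iU_i)^{*}$, and then $\neg\big((\bigcup_iU_i)^{*}\big)=\mathsf{int}\,\mathsf{cl}(\bigcup_iU_i)$, using that $X\setminus\mathsf{int}(X\setminus\bigcup_iU_i)=\mathsf{cl}(\bigcup_iU_i)$. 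Moreover $(\bigcup_iU_i)^{*}$ and $\mathsf{int}\,\mathsf{cl}(\bigcup_iU_i)$ are complementary spectral regular opens, so one lies in $\mathsf{CO}\mathcal{RO}(X)$ iff the other does (closure of $\mathsf{CO}\mathcal{RO}(X)$ under $\neg$). Feeding these identities into part \ref{Comp1} applied to $\{\neg U_i\}$ yields exactly the stated criterion and the formula $\bigvee_iU_i=\mathsf{int}\,\mathsf{cl}\bigcup_iU_i$. (Alternatively one can mirror the direct argument of part \ref{Comp1}: show $\mathsf{int}\,\mathsf{cl}\bigcup_iU_i$ is the least upper bound when it is $\mathsf{CO}\mathcal{RO}$, and rerun the separation step for the converse.)

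Part \ref{Comp3} is then formal. If $X$ is complete, then for any family in $\mathsf{CO}\mathcal{RO}(X)$ the set $\mathsf{int}\,\mathsf{cl}\bigcup_iU_i$ is $\mathsf{CO}\mathcal{RO}$, so the join exists by part \ref{Comp2}; hence $\mathsf{CO}\mathcal{RO}(X)\cong\mathbb{A}$ is complete. Conversely, if $\mathbb{A}$ is complete, then given any open $U$ I write $U=\bigcup\{V\in\mathsf{CO}\mathcal{RO}(X)\mid V\subseteq U\}$ (possible since $\mathsf{CO}\mathcal{RO}(X)$ is a basis); the join of this family exists by completeness, so part \ref{Comp2} gives $\mathsf{int}\,\mathsf{cl}(U)\in\mathsf{CO}\mathcal{RO}(X)$, and as $U$ was arbitrary, $X$ is complete.

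The main obstacle is the converse halves of parts \ref{Comp1}--\ref{Comp2}. Because $\mathsf{CO}\mathcal{RO}(X)$ is only a (possibly incomplete) subalgebra of $\mathsf{RO}(X)$, a meet computed inside it could a priori be strictly smaller than the regularized intersection $\mathsf{int}\bigcap_iU_i$, and ruling this out is precisely where the UV-space structure is used: one needs the $\mathsf{CO}\mathcal{RO}$ basis together with the regular-open separation supplied by $\neg$ and the specialization order to manufacture a $\mathsf{CO}\mathcal{RO}$ lower bound witnessing that no smaller meet can occur.
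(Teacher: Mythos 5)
Your proof is correct, but it takes a genuinely different route from the paper's in parts \ref{Comp1} and \ref{Comp2}. The paper pulls everything back to the algebra: it writes $U_i=\widehat{a_i}$ via the isomorphism of Theorem \ref{MainRep}, identifies the meet (join) in $\mathsf{CO}\mathcal{RO}(X)$ with $\widehat{\bigwedge a_i}$ (resp.\ $\widehat{\bigvee a_i}$), and verifies $\widehat{\bigwedge a_i}=\mathsf{int}\bigcap\widehat{a_i}$ and $\widehat{\bigvee a_i}=\mathsf{int}(\mathsf{cl}\bigcup\widehat{a_i})$ by direct arguments about proper filters; notably, its part \ref{Comp2} invokes the join-infinite distributive law $b\wedge\bigvee a_i=\bigvee(b\wedge a_i)$ to get $\widehat{\bigvee a_i}\subseteq\mathsf{cl}\bigcup\widehat{a_i}$. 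You instead stay inside the space: your part \ref{Comp1} uses only that $\mathsf{CO}\mathcal{RO}(X)$ is a basis, and your part \ref{Comp2} is obtained from part \ref{Comp1} by Boolean De Morgan duality, using $\neg U=U^*=\mathsf{int}(X\setminus U)$ on compact opens (Proposition \ref{StarNeg} and Corollary \ref{Jakl}, transferred to an abstract UV-space via Theorem \ref{SecondThm}.\ref{SecondThmB}) together with the standard identities $\mathsf{int}\bigcap_i\mathsf{int}(A_i)=\mathsf{int}\bigcap_iA_i$ and $A^{***}=A^*$; your ``complementary regular opens'' step for transferring membership in $\mathsf{CO}\mathcal{RO}(X)$ between $(\bigcup_iU_i)^*$ and $\mathsf{int}\,\mathsf{cl}\bigcup_iU_i$ is sound, since in each direction the set assumed to be in $\mathsf{CO}\mathcal{RO}(X)$ is compact open, licensing the computation of $\neg$ as $\mathsf{int}(X\setminus\cdot)$. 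What your route buys is a filter-free argument that sidesteps the distributive law entirely; what the paper's route buys is explicit filter-level formulas of the kind exploited throughout the duality dictionary. One simplification: your detour in part \ref{Comp1} through the lemma of Section 2 producing $x'\geqslant x$ with $x'\in\neg M$ is unnecessary (though harmless, since $N=\mathsf{int}\bigcap_iU_i$, being spectral-open, is indeed an upset) --- once $x\in N$, the basis property directly yields $W\in\mathsf{CO}\mathcal{RO}(X)$ with $x\in W\subseteq\bigcap_iU_i$, whence $W\subseteq M$ because $M$ is the greatest lower bound, giving $x\in M$ with no contradiction argument. Your part \ref{Comp3} coincides with the paper's.
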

 \begin{proof} For part \ref{Comp1}, if $\mathsf{int}\underset{i\in I}{\bigcap}U_i\in\mathsf{CO}\mathcal{RO}(X)$, then clearly $\mathsf{int}\underset{i\in I}{\bigcap}U_i$ is the greatest lower bound in $\mathsf{CO}\mathcal{RO}(X)$ of $\{U_i\}_{i\in I}$. Conversely, if $\underset{i\in I}{\bigwedge}U_i$ exists in $\mathsf{CO}\mathcal{RO}(X)$, then we claim that $\underset{i\in I}{\bigwedge}U_i=\mathsf{int}\underset{i\in I}{\bigcap}U_i$. By the proof of Theorem \ref{MainRep}, for each $i\in I$, we have $U_i=\widehat{a_i}$ for some $a_i\in\mathbb{A}$, so $\underset{i\in I}{\bigwedge}U_i=\underset{i\in I}{\bigwedge}\widehat{a}_i$. Since $a\mapsto\widehat{a}$ is an isomorphism from $\mathbb{A}$ to $\mathsf{CO}\mathcal{RO}(X)$, we have $ \underset{i\in I}{\bigwedge}\widehat{a_i}= \widehat{\underset{i\in I}{\bigwedge}a_i}$. Thus, it suffices to show that $ \widehat{\underset{i\in I}{\bigwedge}a_i} = \mathsf{int} \underset{i\in I}{\bigcap} \widehat{a_i}$. Suppose $F\in  \widehat{\underset{i\in I}{\bigwedge}a_i}$. Then since $\widehat{\underset{i\in I}{\bigwedge}a_i}\subseteq \underset{i\in I}{\bigcap} \widehat{a_i}$ and $\widehat{\underset{i\in I}{\bigwedge}a_i}$ is open, we have $F\in\mathsf{int} \underset{i\in I}{\bigcap} \widehat{a_i}$. For the reverse inclusion, suppose $F\in \mathsf{int}\underset{i\in I}{\bigcap} \widehat{a_i}$, so there is a $U\in \mathsf{CO}\mathcal{RO}(X)$ such that $F\in U\subseteq \underset{i\in I}{\bigcap} \widehat{a_i}$. Then $U=\widehat{b}$ for some $b\in\mathbb{A}$, and $\widehat{b}\subseteq \underset{i\in I}{\bigcap} \widehat{a_i}$ implies that $b$ is a lower bound of $\{a_i\}_{i\in I}$ in $\mathbb{A}$, so $b\leq \underset{i\in I}{\bigwedge}a_i$. Then we have the following chain of implications: \[F\in \widehat{b} \Rightarrow b\in F \Rightarrow \underset{i\in I}{\bigwedge}a_i\in F \Rightarrow F\in \widehat{\underset{i\in I}{\bigwedge}a_i} .\] 
 
For part \ref{Comp2}, if $\underset{i\in I}{\bigvee}U_i$ exists in $\mathsf{CO}\mathcal{RO}(X)$, then we claim that $\underset{i\in I}{\bigvee}U_i=\mathsf{int}(\mathsf{cl}\underset{i\in I}{\bigcup}U_i)$. By the proof of Theorem \ref{MainRep}, for each $i\in I$, we have $U_i=\widehat{a_i}$ for some $a_i\in\mathbb{A}$, so $\underset{i\in I}{\bigvee}U_i=\underset{i\in I}{\bigvee}\widehat{a}_i$. Since $a\mapsto\widehat{a}$ is an isomorphism from $\mathbb{A}$ to $\mathsf{CO}\mathcal{RO}(X)$, we have $ \underset{i\in I}{\bigvee}\widehat{a_i}= \widehat{\underset{i\in I}{\bigvee}a_i}$. Thus, it suffices to show that $ \widehat{\underset{i\in I}{\bigvee}a_i} = \mathsf{int}(\mathsf{cl} \underset{i\in I}{\bigcup} \widehat{a_i})$. For the right-to-left inclusion, since $\underset{i\in I}{\bigcup} \widehat{a_i}\subseteq \widehat{\underset{i\in I}{\bigvee}a_i}$ and $\widehat{\underset{i\in I}{\bigvee}a_i}\in\mathsf{CO}\mathcal{RO}(X)=\mathsf{CRO}(X)$ (by Corollary \ref{Jakl}), we have $\mathsf{int}(\mathsf{cl}\underset{i\in I}{\bigcup} \widehat{a_i})\subseteq \mathsf{int}(\mathsf{cl}\widehat{\underset{i\in I}{\bigvee}a_i})=\widehat{\underset{i\in I}{\bigvee}a_i}$. For the left-to-right inclusion, since $\widehat{\underset{i\in I}{\bigvee}a_i}$ is open, it suffices to show $\widehat{\underset{i\in I}{\bigvee}a_i}\subseteq \mathsf{cl} \underset{i\in I}{\bigcup} \widehat{a_i}$. Consider any $F\in \widehat{\underset{i\in I}{\bigvee}a_i}$ and basic open neighborhood $U$ of $F$, so $U=\widehat{b}$ for some $b\in\mathbb{A}$. Then since  $F\in \widehat{b}$ and $F\in \widehat{\underset{i\in I}{\bigvee}a_i}$, we have $b\in F$ and $\underset{i\in I}{\bigvee}a_i\in F$, so $b\wedge \underset{i\in I}{\bigvee}a_i=\underset{i\in I}{\bigvee} (b\wedge a_i) \in F$.\footnote{Here we use the join-infinite distributive law for BAs, which says that if $\underset{i\in I}{\bigvee}a_i$ exists, then $\underset{i\in I}{\bigvee} (b\wedge a_i)$ exists and $b\wedge \underset{i\in I}{\bigvee}a_i=\underset{i\in I}{\bigvee} (b\wedge a_i)$ \cite[p.~47, Lem.~3]{Givant2009}.} Since $F$ is a proper filter, it follows that for some $i\in I$, $b\wedge a_i\neq 0$ and hence $\widehat{b}\cap\widehat{a_i}\neq\varnothing$. Thus, $\widehat{b}\cap  \underset{i\in I}{\bigcup} \widehat{a_i}\neq\varnothing$. This shows that $F\in \mathsf{cl} \underset{i\in I}{\bigcup} \widehat{a_i}$.

  For part \ref{Comp3}, suppose $X$ is complete. For any $\{a_i\}_{i\in I}\subseteq\mathbb{A}$, the set $\underset{i\in I}{\bigcup}\widehat{a_i}$ is open, so by the completeness of $X$, we have $\mathsf{int}( \mathsf{cl} \underset{i\in I}{\bigcup}\widehat{a_i})\in\mathsf{CO}\mathcal{RO}(X)$, in which case $\underset{i\in I}{\bigvee}a_i$ exists by part \ref{Comp2}. Conversely, suppose $\mathbb{A}$ is complete and $U$ is an open set in $X$. Then by Definition \ref{VOspace}.\ref{CloseProp}, we have that $U=\bigcup\{V\in\mathsf{CO}\mathcal{RO}(X)\mid V\subseteq U\}$. Since $\mathbb{A}$ is complete, so is the isomorphic $\mathsf{CO}\mathcal{RO}(X)$, so $\bigvee \{V\in\mathsf{CO}\mathcal{RO}(X)\mid V\subseteq U\}$ exists. Then by part \ref{Comp2}, $\bigvee \{V\in\mathsf{CO}\mathcal{RO}(X)\mid V\subseteq U\}=\mathsf{int}( \mathsf{cl}  \bigcup \{V\in\mathsf{CO}\mathcal{RO}(X)\mid V\subseteq U\})$, so $\mathsf{int}( \mathsf{cl} \bigcup \{V\in\mathsf{CO}\mathcal{RO}(X)\mid V\subseteq U\})\in\mathsf{CO}\mathcal{RO}(X)$, i.e., $\mathsf{int}( \mathsf{cl} U)\in\mathsf{CO}\mathcal{RO}(X)$. Hence $X$ is complete.
 \end{proof}
 
 \begin{remark} In contrast to the equality in Proposition \ref{Comp}.\ref{Comp2} for arbitrary joins, we observed in Proposition \ref{COROBA} that for finite joins, we have $U_1\vee\dots\vee U_n =\mathsf{int}_\leqslant (\mathsf{cl}_\leqslant(U_1\cup\dots\cup U_n))$. However, we cannot assert this equality for arbitrary joins, as it is refutable in ZF + Boolean Prime Ideal Theorem. To see this, suppose $F$ is a non-principal ultrafilter. Then $\bigwedge F=\zero$. For if $b$ is a lower bound of $F$, then since $F$ is non-principal, $b\not\in F$, and then since $F$ is an ultrafilter, $- b\in F$. But then $b\leq - b$, so $b=\zero$. Now since $\bigwedge F=\zero$, we have $\bigvee \{- a\mid a\in F\}=\one$, so $\bigvee \{- a\mid a\in F\}\in F$. Thus, we have $F\in \reallywidehat{\bigvee \{- a\mid a\in F\}}=\bigvee \{\widehat{- a}\mid a\in F\}$, yet clearly $F\not\in  \mathsf{int}_\leqslant (\mathsf{cl}_\leqslant \bigcup \{\widehat{- a}\mid a\in F\})$; since $F$ is an ultrafilter, it is maximal in $\leqslant$, so $F\in  \mathsf{int}_\leqslant (\mathsf{cl}_\leqslant \bigcup \{\widehat{- a}\mid a\in F\})$ implies $F\in \bigcup \{\widehat{- a}\mid a\in F\}$, contradicting the fact that $F$ is a proper filter. \end{remark}
 
 \begin{lemma}\label{SubspaceComplete} If $X$ is a complete UV-space and $U\in\mathsf{CO}\mathcal{RO}(X)$, then the subspace induced by $U$ is a complete UV-space.
 \end{lemma}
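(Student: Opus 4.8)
The plan is to run the argument through the duality dictionary rather than computing interiors and closures in the subspace directly. First I would invoke Proposition~\ref{SubSpaceProp} to note that, since $U\in\mathsf{CO}\mathcal{RO}(X)$, the subspace of $X$ induced by $U$ is already a UV-space; call it $X_U$. What then remains is precisely to show that $X_U$ is \emph{complete}.

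Next I would identify the dual Boolean algebra of $X_U$. By Proposition~\ref{RelaProp}, the BA $\mathsf{CO}\mathcal{RO}(X_U)$ is (isomorphic to) the relativization of $\mathsf{CO}\mathcal{RO}(X)$ to the element $U$; concretely, $\mathsf{CO}\mathcal{RO}(X_U)=\{V\cap U\mid V\in\mathsf{CO}\mathcal{RO}(X)\}$ with top element $U$. Meanwhile, since $X$ is a complete UV-space, Proposition~\ref{Comp}.\ref{Comp3} tells us that the ambient BA $\mathsf{CO}\mathcal{RO}(X)$ is complete.

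The key algebraic step is then that the relativization of a complete BA to any element is again complete. This is standard: for any family $\{V_i\}_{i\in I}$ of elements below $U$, the join $\bigvee_i V_i$ taken in $\mathsf{CO}\mathcal{RO}(X)$ exists by completeness and is itself $\leq U$, so it already serves as the join in the relativized algebra; since a BA possessing all joins is complete, $\mathsf{CO}\mathcal{RO}(X_U)$ is complete. Finally, applying the other direction of Proposition~\ref{Comp}.\ref{Comp3} to the UV-space $X_U$ yields that $X_U$ is a complete UV-space, as desired.

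I expect the only point requiring genuine care to be the claim that relativization preserves completeness, and within that, the observation that joins computed in the ambient complete algebra stay below $U$, so that no new join operation has to be constructed from scratch. Everything else is bookkeeping with already-established equivalences: Proposition~\ref{SubSpaceProp} for ``the subspace is a UV-space,'' Proposition~\ref{RelaProp} for ``its dual is the relativization,'' and Proposition~\ref{Comp}.\ref{Comp3} used in both directions to pass between completeness of a space and completeness of its dual BA.
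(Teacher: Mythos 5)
Your proof is correct, but it takes a genuinely different route from the paper's. The paper argues directly in the subspace topology: by Proposition~\ref{Comp}.\ref{Comp1} it suffices to show that $\mathsf{int}^U\bigcap_{i\in I}U_i\in\mathsf{CO}\mathcal{RO}(U)$ for any family $\{U_i\}_{i\in I}\subseteq\mathsf{CO}\mathcal{RO}(U)$, and this is done by proving the point-level identity $\mathsf{int}^U\bigcap_{i\in I}U_i=U\cap\mathsf{int}\bigcap_{i\in I}U_i$ and then observing that the right-hand side is an intersection of a $\mathsf{CO}\mathcal{RO}(X)$ set (meets exist in $\mathsf{CO}\mathcal{RO}(X)$ by completeness of $X$) with $U$, hence lies in $\mathsf{CO}\mathcal{RO}(U)$ by Proposition~\ref{SubSpaceProp}. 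You instead route everything through the duality dictionary: Proposition~\ref{RelaProp} identifies $\mathsf{CO}\mathcal{RO}(X_U)$ with the relativization of $\mathsf{CO}\mathcal{RO}(X)$ to $U$, and the standard, visibly choice-free fact that relativizing a complete BA preserves completeness (ambient joins of elements below $U$ stay below $U$ and remain least among upper bounds in the relativized order, and a BA with all joins has all meets by De Morgan) does the work, with Proposition~\ref{Comp}.\ref{Comp3} used in both directions to pass between spaces and algebras. Your argument is more modular and conceptually cleaner, reusing established equivalences instead of computing subspace interiors; the paper's argument is more self-contained and yields as a byproduct the explicit formula for meets in $\mathsf{CO}\mathcal{RO}(U)$ in terms of the ambient space. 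One small bookkeeping point you should make explicit: Proposition~\ref{Comp}.\ref{Comp3} is stated for a BA and \emph{its dual} UV-space, so to apply it to the abstract UV-space $X_U$ you need Theorem~\ref{SecondThm}.\ref{SecondThmB} (every UV-space is homeomorphic to $UV(\mathsf{CO}\mathcal{RO}(\cdot))$ of itself) together with the observation that completeness of a UV-space is a homeomorphism invariant; this is harmless and the paper makes the same implicit move when applying Proposition~\ref{Comp}.\ref{Comp1} inside the subspace.
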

 
 \begin{proof} By Proposition~\ref{SubSpaceProp}, $U$ with the subspace topology is a UV-space. To show that $\mathsf{CO}\mathcal{RO}(U)$ is complete, it suffices to show that all meets exist. Thus, by Proposition~\ref{Comp}.\ref{Comp1}, it suffices to  show that for any $\{U_i\}_{i\in I}\subseteq \mathsf{CO}\mathcal{RO}(U)$,
we have $\mathsf{int}^U\bigcap_{i\in I} U_i\in \mathsf{CO}\mathcal{RO}(U)$. We show that $\mathsf{int}^U\bigcap_{i\in I} U_i = U\cap \mathsf{int}\bigcap_{i\in I} U_i $. Suppose $x\in \mathsf{int}^U \bigcap_{i\in I} U_i$. Then there is an open set $U_x\subseteq U$ such that $x\in U_x$ and
$U_x\subseteq U_i$ for each $i\in I$. But then $x\in U\cap \mathsf{int} \bigcap_{i\in I} U_i $. Conversely, if $x\in U\cap \mathsf{int}\bigcap_{i\in I} U_i$, then $x\in U$ and there is an open set $V_x$ such that $x\in V_x$ and $V_x\subseteq U_i$ for each $i\in I$. But then 
$V_x\subseteq U$ and so $x\in \mathsf{int}^U \bigcap_{i\in I} U_i$.

Therefore, by Proposition~\ref{Comp}.\ref{Comp1}, $\mathsf{int}^U(\bigcap_{i\in I} U_i)$ is the intersection of two 
$\mathsf{CO}\mathcal{RO}(U)$ sets and thus, by Proposition \ref{SubSpaceProp}, $\mathsf{int}^U \bigcap_{i\in I} U_i \in \mathsf{CO}\mathcal{RO}(U)$.
 \end{proof}
 
  \subsection{Atoms}
 
 Recall that an \textit{isolated point} of a space $X$ is an $x\in X$ such that $\{x\}$ is open. 
 
  \begin{prop}\label{Isolated} The map $a\mapsto \mathord{\uparrow}a$ is a bijection from the atoms of a BA to the isolated points of its dual UV-space.
 \end{prop}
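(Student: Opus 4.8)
The plan is to reduce everything to a single combinatorial fact about the basic opens of $UV(\mathbb{A})$: for a nonzero $a\in\mathbb{A}$, the element $a$ is an atom if and only if $\widehat{a}=\{\mathord{\uparrow}a\}$. Recalling from Proposition~\ref{IsSpectral}.\ref{Specialization} that the specialization order is inclusion and that $\mathord{\uparrow}a\in UV(\mathbb{A})$ exactly when $a\neq 0$, this fact will simultaneously show that atoms map to isolated points and supply the ingredients for the bijection.

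First I would prove the key lemma. For the forward direction, assuming $a$ is an atom, I would show $\widehat{a}=\{\mathord{\uparrow}a\}$: plainly $\mathord{\uparrow}a\in\widehat{a}$, and for any proper filter $F$ with $a\in F$ and any $b\in F$ we have $a\wedge b\in F$, so $a\wedge b\neq 0$, whence $a\leq b$ since $a$ is an atom; thus $F\subseteq\mathord{\uparrow}a\subseteq F$ and $F=\mathord{\uparrow}a$. Since the singleton $\widehat{a}$ is open, $\mathord{\uparrow}a$ is then isolated. For the converse, supposing $\widehat{a}=\{\mathord{\uparrow}a\}$ but that $a$ is not an atom, I would pick $c$ with $0<c<a$; then $\mathord{\uparrow}c$ is a proper filter with $a\in\mathord{\uparrow}c$ yet $\mathord{\uparrow}c\neq\mathord{\uparrow}a$ (as $c\in\mathord{\uparrow}c\setminus\mathord{\uparrow}a$), contradicting $\widehat{a}=\{\mathord{\uparrow}a\}$.

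With the lemma in hand, the bijection is quick. Injectivity is immediate, since $\mathord{\uparrow}a=\mathord{\uparrow}a'$ forces $a=a'$ for principal filters. For surjectivity, given an isolated point $F$ (a proper filter with $\{F\}$ open), I would use that the sets $\widehat{b}$ form a basis to obtain $b$ with $F\in\widehat{b}\subseteq\{F\}$; since $\mathord{\uparrow}b$ is a proper filter containing $b$, we get $\mathord{\uparrow}b\in\widehat{b}$, forcing $\mathord{\uparrow}b=F$ and hence $\widehat{b}=\{\mathord{\uparrow}b\}=\{F\}$. The lemma then gives that $b$ is an atom with $\mathord{\uparrow}b=F$, so $F$ lies in the image. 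I expect the surjectivity step to be the main obstacle: the real work is recognizing that an arbitrary isolated point must secretly be a \emph{principal} filter generated by an atom, which is exactly what extracting the generator $b$ from a basic open inside $\{F\}$ accomplishes, after which the lemma closes the argument.
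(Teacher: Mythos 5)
Your proposal is correct and takes essentially the same approach as the paper: both arguments hinge on the fact that $a$ is an atom iff $\widehat{a}=\{\mathord{\uparrow}a\}$, prove injectivity trivially for principal filters, and obtain surjectivity by fitting a basic open $\widehat{b}$ inside the open singleton $\{F\}$ and concluding that $b$ is an atom with $\mathord{\uparrow}b=F$. The only cosmetic differences are that you extract $b$ directly from the defining basis of $UV(\mathbb{A})$, where the paper instead passes through $\{F\}\in\mathsf{CO}\mathcal{RO}(UV(\mathbb{A}))$ and the identification $\mathsf{CO}\mathcal{RO}(UV(\mathbb{A}))=\{\widehat{a}\mid a\in\mathbb{A}\}$ from Theorem~\ref{MainRep}, and that you spell out the paper's ``clearly'' step (every proper filter containing an atom $a$ equals $\mathord{\uparrow}a$).
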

 
 \begin{proof} If $a$ is an atom of the BA $\mathbb{A}$, then clearly $\widehat{a}=\{\mathord{\uparrow}a\}$, and $\widehat{a}$ is open in $UV(\mathbb{A})$, so $\mathord{\uparrow}a$ is an isolated point. If $a\neq b$, then $\mathord{\uparrow} a\neq\mathord{\uparrow}b$, so the map is injective. Finally, to see that the map is surjective, if $F$ is an isolated point, then $\{F\}$ is open and hence $\{F\}\in \mathsf{CO}\mathcal{RO}(UV(\mathbb{A}))$ by Definition \ref{VOspace}.\ref{CloseProp}. Thus, by the proof of Theorem \ref{MainRep}, there is some $a\in\mathbb{A}$ such that $\widehat{a}=\{F\}$, which implies that $a$ is an atom. For if $a$ is not an atom, then there is a $b< a$ with $b\neq \zero$, in which case the proper filters $\mathord{\uparrow}b$ and $\mathord{\uparrow}a$ are distinct and belong to $\widehat{a}$. Since $a$ is an atom, $\widehat{a}=\{\mathord{\uparrow}a\}$, so $F=\mathord{\uparrow}a$. \end{proof}

 \begin{cor} A BA is atomless iff the set of isolated points of its dual UV-space is empty.
 \end{cor}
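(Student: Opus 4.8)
The plan is to obtain this immediately as a corollary of Proposition \ref{Isolated}, doing essentially no new work. First I would unfold the two definitions involved: a BA $\mathbb{A}$ is \emph{atomless} exactly when its set of atoms is empty, and the right-hand side of the biconditional says that the set of isolated points of the dual UV-space $UV(\mathbb{A})$ is empty. Proposition \ref{Isolated} already furnishes a bijection $a \mapsto \mathord{\uparrow}a$ from the atoms of $\mathbb{A}$ onto the isolated points of $UV(\mathbb{A})$, so the entire content of the corollary collapses to the elementary observation that a bijection between two sets has empty domain iff it has empty codomain.

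The single step, then, is to invoke this bijectivity in both directions. From left to right, if $\mathbb{A}$ is atomless there are no atoms in the domain of the map, and since the map is surjective onto the isolated points, there can be no isolated points. From right to left, if there are no isolated points then the map, being injective with empty codomain, must have empty domain, so $\mathbb{A}$ has no atoms. I do not expect any genuine obstacle here: the only thing to record is that the map of Proposition \ref{Isolated} is total on the atoms and surjective onto the isolated points, which is precisely what the word ``bijection'' in that proposition already guarantees. Hence the corollary follows at once, with no appeal to choice beyond whatever is used in Proposition \ref{Isolated} itself.
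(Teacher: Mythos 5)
Your proposal is correct and matches the paper exactly: the paper states this corollary without proof, as an immediate consequence of the bijection $a\mapsto\mathord{\uparrow}a$ of Proposition 8.14 between atoms and isolated points, which is precisely the observation you make. Nothing further is needed.
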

 
 Let $X_{\mathrm{iso}}$ be the set of all isolated points of the space $X$ and $At(\mathbb{A})$ the set of all atoms of the BA $\mathbb{A}$.
 
 \begin{prop}\label{AtomicProp} Let $\mathbb{A}$ be a BA and $X$ its dual space. The following are equivalent:
 \begin{enumerate}
 \item $\mathbb{A}$ is atomic;
 \item $\mathsf{int}( \mathsf{cl} X_\mathrm{iso})=X$;
 \item the set of isolated points is dense in $X$, i.e., $\mathsf{cl} X_\mathrm{iso}=X$.
 \end{enumerate}
 \end{prop}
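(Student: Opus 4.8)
The plan is to dispatch the equivalence of (2) and (3) immediately, and then to prove the equivalence of (1) and (3), which carries all the content, by translating density of $X_{\mathrm{iso}}$ into a statement about the nonempty basic opens $\widehat{a}$. For (2) $\Leftrightarrow$ (3), note that $\mathsf{int}(\mathsf{cl}(X_{\mathrm{iso}}))\subseteq \mathsf{cl}(X_{\mathrm{iso}})\subseteq X$ always; so the equality $\mathsf{int}(\mathsf{cl}(X_{\mathrm{iso}}))=X$ forces $\mathsf{cl}(X_{\mathrm{iso}})=X$, and conversely $\mathsf{cl}(X_{\mathrm{iso}})=X$ gives $\mathsf{int}(\mathsf{cl}(X_{\mathrm{iso}}))=\mathsf{int}(X)=X$. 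Thus (2) and (3) say the same thing, and it remains to connect either of them with atomicity.

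The key input for (1) $\Leftrightarrow$ (3) is Proposition \ref{Isolated}, which identifies $X_{\mathrm{iso}}$ with exactly the set $\{\mathord{\uparrow}b\mid b\in At(\mathbb{A})\}$ of principal filters generated by atoms. I would then record two elementary facts about $UV(\mathbb{A})$: first, since the sets $\widehat{a}$ form a basis, $X_{\mathrm{iso}}$ is dense iff every nonempty basic open $\widehat{a}$ meets $X_{\mathrm{iso}}$; second, $\widehat{a}\neq\varnothing$ iff $a\neq \zero$ (for $a\neq\zero$ the proper filter $\mathord{\uparrow}a$ witnesses nonemptiness, while no proper filter contains $\zero$), and $\mathord{\uparrow}b\in\widehat{a}$ iff $b\leq a$.

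With these in hand, both directions are short. For (1) $\Rightarrow$ (3), given a nonempty basic open $\widehat{a}$, so $a\neq\zero$, atomicity supplies an atom $b\leq a$; then $\mathord{\uparrow}b\in \widehat{a}\cap X_{\mathrm{iso}}$, so $X_{\mathrm{iso}}$ is dense. For (3) $\Rightarrow$ (1), given $a\neq\zero$, the open set $\widehat{a}$ is nonempty and hence, by density, meets $X_{\mathrm{iso}}$ at some $\mathord{\uparrow}b$ with $b$ an atom; then $b\leq a$, so $a$ lies above an atom, and $\mathbb{A}$ is atomic. The argument is essentially routine, with no serious obstacle; the only point requiring care is the bookkeeping of the previous paragraph—correctly singling out the nonempty basic opens and converting $\mathord{\uparrow}b\in\widehat{a}$ into $b\leq a$—which is where the real work of the proof resides.
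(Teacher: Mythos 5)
Your proof is correct, but it takes a genuinely different and more elementary route than the paper's. The paper proves the cycle $1 \Rightarrow 2 \Rightarrow 3 \Rightarrow 1$ by running atomicity through the infinitary join machinery: it uses the algebraic fact that $\mathbb{A}$ is atomic iff $\one=\bigvee At(\mathbb{A})$, and then invokes Proposition \ref{Comp}.\ref{Comp2} (the formula $\bigvee_i U_i = \mathsf{int}(\mathsf{cl}\bigcup_i U_i)$ for joins in $\mathsf{CO}\mathcal{RO}(X)$, whenever they exist) together with Proposition \ref{Isolated} to identify $\reallywidehat{\bigvee At(\mathbb{A})}$ with $\mathsf{int}(\mathsf{cl}\, X_{\mathrm{iso}})$; condition (2) thus appears as the direct dual of the join equation, and (3) is handled by the same computation. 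You instead dispose of $(2)\Leftrightarrow(3)$ by the trivial sandwich $\mathsf{int}(\mathsf{cl}\, X_{\mathrm{iso}})\subseteq \mathsf{cl}\, X_{\mathrm{iso}}$ and $\mathsf{int}(X)=X$, and prove $(1)\Leftrightarrow(3)$ pointwise: Proposition \ref{Isolated} gives $X_{\mathrm{iso}}=\{\mathord{\uparrow}b\mid b\in At(\mathbb{A})\}$, density reduces to every nonempty basic open $\widehat{a}$ (equivalently, every $a\neq\zero$, witnessed by $\mathord{\uparrow}a$) meeting $X_{\mathrm{iso}}$, and $\mathord{\uparrow}b\in\widehat{a}$ iff $b\leq a$ converts this exactly into the definition of atomicity. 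All of these bookkeeping claims check out, so your argument is complete; what it buys is independence from Proposition \ref{Comp} and from the characterization of atomicity via $\bigvee At(\mathbb{A})=\one$ (which the paper leaves implicit), using only the basis of $UV(\mathbb{A})$ and first-order reasoning about filters. What the paper's approach buys is coherence with its duality dictionary: it exhibits $\mathsf{int}(\mathsf{cl}\, X_{\mathrm{iso}})$ as the dual of the join of the atoms, which explains why clause (2) is stated at all and showcases how the completeness machinery of Section \ref{DictionarySection} is used in practice.
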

 
 \begin{proof}   1 $\Rightarrow$ 2. If $\mathbb{A}$ is atomic, then $\one = \bigvee \{a\in \mathbb{A}\mid a\in At(\mathbb{A})\}$. Then $X = \widehat{\one} = \reallywidehat{\bigvee \{a\in \mathbb{A}\mid a\in At(\mathbb{A})\}} = \bigvee \{\widehat{a}\in \mathbb{A}\mid a\in At(\mathbb{A})\}= \mathsf{int}(  \mathsf{cl} \bigcup \{\widehat{a}\mid a\in At(\mathbb{A})\})= \mathsf{int}(  \mathsf{cl} X_{\mathrm{iso}})$ by Propositions \ref{Comp}.\ref{Comp2} and \ref{Isolated}.
 
  2 $\Rightarrow$ 3. Since $\mathsf{int}( \mathsf{cl} X_\mathrm{iso})\subseteq \mathsf{cl} X_\mathrm{iso}$, $\mathsf{int}( \mathsf{cl} X_\mathrm{iso})=X$ implies $\mathsf{cl} X_\mathrm{iso}=X$.

 3 $\Rightarrow$ 1.  We need to show that $\one = \bigvee \{a\in A\mid a\in At(A)\}$. In dual terms this means that 
 $X = \widehat{\one} = \reallywidehat{\bigvee \{a\in A\mid a\in At(A)\}}$. By Propositions \ref{Comp}.\ref{Comp2} and \ref{Isolated}, we have $\reallywidehat{\bigvee \{a\in A\mid a\in At(A)\}} = \mathsf{int}(  \mathsf{cl} X_{\mathrm{iso}})$. As $\mathsf{cl}(X_{\mathrm{iso}}) = X$, we have $\mathsf{int}(\mathsf{cl}X_{\mathrm{iso}}) = \mathsf{int}X=X$.\end{proof}
 
  \subsection{Subalgebras and homomorphic images}
 
 We now characterize subalgebras and homomorphic images of BAs in terms of UV-spaces. 
 
 \begin{definition} Let $X$ and $Y$ be UV-spaces. An injective UV map $f:X\to Y$ is a \textit{UV-embedding} if for every $U\in\mathsf{CO}\mathcal{RO}(X)$ there is a $V\in \mathsf{CO}\mathcal{RO}(Y)$ such that $f[U]=f[X]\cap V$.
 \end{definition}
 
 \begin{fact}\label{thm: surj-inj} Let $\mathbb{A}$ and $\mathbb{B}$ be BAs and $h: \mathbb{A}\to \mathbb{B}$ a homomorphism. Let $h_+: UV(\mathbb{B})\to UV(\mathbb{A})$ be the UV-map dual to $h$. Then:
 \begin{enumerate}
 \item\label{surj1} if $h$ is injective, then $h_+$ is surjective;
 \item\label{inj1} if $h$ is surjective, then $h_+$ is a UV-embedding.
 \end{enumerate}
 \end{fact}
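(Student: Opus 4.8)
The plan is to prove each part by exhibiting an explicit filter and checking a set identity; the only external input needed is Theorem \ref{MainRep}, which lets me write every $\mathsf{CO}\mathcal{RO}$ set as some $\widehat{a}$. For part \ref{surj1}, given a proper filter $G\in UV(\mathbb{A})$, I would take $F$ to be the filter of $\mathbb{B}$ generated by $h[G]$ and show $h_+(F)=h^{-1}[F]=G$. First I would check that $F$ is proper: otherwise there are $a_1,\dots,a_n\in G$ with $h(a_1)\wedge\dots\wedge h(a_n)=\zero$, i.e.\ $h(a_1\wedge\dots\wedge a_n)=h(\zero)$, so injectivity of $h$ forces $a_1\wedge\dots\wedge a_n=\zero\in G$, contradicting the properness of $G$. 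The inclusion $G\subseteq h^{-1}[F]$ is immediate since $h[G]\subseteq F$. For $h^{-1}[F]\subseteq G$, suppose $h(a)\in F$; then there are $b_1,\dots,b_n\in G$ with $h(b_1)\wedge\dots\wedge h(b_n)\leq h(a)$, i.e.\ $h(b_1\wedge\dots\wedge b_n)\leq h(a)$. Because $h$ is injective it reflects the order (from $h(b)\wedge h(a)=h(b)$ we get $h(b\wedge a)=h(b)$, hence $b\wedge a=b$, i.e.\ $b\leq a$), so with $b:=b_1\wedge\dots\wedge b_n\in G$ and $b\leq a$ the filter $G$ yields $a\in G$.

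For part \ref{inj1}, I would first show that $h_+$ is injective. If $h^{-1}[F]=h^{-1}[F']$, then for any $b\in\mathbb{B}$ surjectivity of $h$ gives some $a$ with $h(a)=b$, and $b\in F$ iff $a\in h^{-1}[F]=h^{-1}[F']$ iff $b\in F'$, so $F=F'$. For the embedding condition, let $U\in\mathsf{CO}\mathcal{RO}(UV(\mathbb{B}))$; by Theorem \ref{MainRep}, $U=\widehat{b}$ for some $b\in\mathbb{B}$, and by surjectivity $b=h(a)$ for some $a\in\mathbb{A}$. Taking $V=\widehat{a}\in\mathsf{CO}\mathcal{RO}(UV(\mathbb{A}))$, I claim $h_+[\widehat{b}]=h_+[UV(\mathbb{B})]\cap\widehat{a}$. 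The key observation is that for every $F\in UV(\mathbb{B})$ we have $a\in h^{-1}[F]$ iff $h(a)=b\in F$ iff $F\in\widehat{b}$; hence $h_+(F)\in\widehat{a}$ is equivalent to $F\in\widehat{b}$, and the claimed equality follows directly by unwinding both sides (for the right-to-left inclusion, any witness $F$ with $h^{-1}[F]\in\widehat{a}$ automatically satisfies $F\in\widehat{b}$).

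Neither part presents a genuine obstacle; the most delicate point is the verification in part \ref{surj1} that the filter generated by $h[G]$ pulls back exactly to $G$, which relies on the fact that an injective Boolean homomorphism reflects the order. In part \ref{inj1} the work is essentially bookkeeping, with surjectivity of $h$ supplying both the injectivity of $h_+$ and the witness $V=\widehat{a}$ for the embedding condition.
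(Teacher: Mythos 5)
Your proof is correct and follows essentially the same route as the paper's: for part \ref{surj1} you take the filter generated by $h[G]$, use injectivity of $h$ (with $h(\zero)=\zero$) for properness, and use the fact that an injective Boolean homomorphism reflects order to get $h^{-1}[F]\subseteq G$, which is exactly the paper's argument in contrapositive dress; for part \ref{inj1} your injectivity check and the witness $V=\widehat{a}$ with $h(a)=b$, verified via the equivalence $h_+(F)\in\widehat{a}\Leftrightarrow F\in\widehat{b}$, match the paper's proof step for step.
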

 \begin{proof} For part \ref{surj1}, consider a proper filter $F\in UV(\mathbb{A})$, and let $G = \{b\in \mathbb{A}\mid \exists a\in h[F]: a\leq b\}$. We show that $G$ is a proper filter such that $h^{-1}[G] = F$. Suppose $\zero_\mathbb{B}\in G$. Then $\zero_\mathbb{B}\in h[F]$, so there is an $a\in F$  such that $h(a) = \zero_\mathbb{B}$. As $F$ is proper, $a\neq \zero_\mathbb{A}$, which is a contradiction as $h$ is injective and $h(\zero_\mathbb{A}) = \zero_\mathbb{B}$. Now if $c, d\in G$, then there are $a,b\in F$ such that $h(a)\leq c$ and $h(b)\leq d$. Since $F$ is a filter, $a,b\in F$ implies $a\wedge b\in F$, so $h(a\wedge b)\in h[F]$.  Then since $h(a\wedge b) = h(a)\wedge h(b) \leq c\wedge d$, we have $c\wedge d\in G$. It is also obvious that $G$ is an upset. Thus, $G$ is a proper filter. 
 
 We now show that $h^{-1}[G] = F$. Clearly $F\subseteq h^{-1}[G]$. Suppose $a\in h^{-1}[G]$. Then $h(a)\in G$, so there is a $b\in F$ such that $h(b)\leq h(a)$. If $a\notin F$, then $b\nleq a$ and so $a\wedge b\neq b$. On the other hand, $h(a\wedge b) = h(a)\wedge h(b) = h(b)$, which is a contradiction as $h$ is injective. Therefore, $h^{-1}[G] = F$. As $h_+(G) = h^{-1}[G]$, we obtain that $h_+$ is a surjective {UV-map}.
 
 For part \ref{inj1}, let $F$ and $G$ be proper filters in $\mathbb{B}$ such that $F\neq G$. Then without loss of generality there is a $b\in F$ such that $b\notin G$. As $h$ is surjective there is an $a\in \mathbb{A}$ such that $h(a) = b$. Obviously, $a\in h^{-1}[F]$ and $a\notin h^{-1}[G]$. So $ h^{-1}[F]\neq h^{-1}[G]$, implying that $h_+$ is injective. Finally, we check the UV-embedding condition. Each $U\in \mathsf{CO}\mathcal{RO}(UV(\mathbb{B}))$ is of the form $\widehat{b}$ for some $b\in\mathbb{B}$. Since $h$ is surjective, there is an $a\in\mathbb{A}$ such that $h(a)=b$, so $h_+[\widehat{b}]=h_+[\widehat{h(a)}]$. Now it suffices to show \[h_+[\widehat{h(a)}]=h_+[UV(\mathbb{B})]\cap \widehat{a}.\] From left to right, suppose $F\in h_+[\widehat{h(a)}]$, so there is a $G\in \widehat{h(a)}$ such that $h_+(G)=F$. Since $G\in\widehat{h(a)}$, we have $h(a)\in G$. Since $h_+(G)=F$, we have $h^{-1}[G]=F$. From  $h(a)\in G$ and $h^{-1}[G]=F$, we have $a\in F$, so $F\in\widehat{a}$. From right to left, suppose $F\in h_+[UV(\mathbb{B})]\cap \widehat{a}$. Since $F\in h_+[UV(\mathbb{B})]$, there is a $G\in UV(\mathbb{B})$ such that $h_+(G)=F$ and hence $h^{-1}[G]=F$. Since $F\in \widehat{a}$, we have $a\in F$ and hence $h(a)\in G$. Thus,  $G\in\widehat{h(a)}$, which with $h_+(G)=F$ implies $F\in h_+[\widehat{h(a)}]$. This completes the proof.\end{proof}
 
 \begin{fact}\label{thm: surj-inj2}  Let $X$ and $Y$ be UV-spaces and $f: X\to Y$ a UV-map. Let $f^+: \mathsf{CO}\mathcal{RO}(Y)\to\mathsf{CO}\mathcal{RO}(X)$ be the homomorphism dual to $h$. Then:
 \begin{enumerate}
  \item\label{inj2} if $f$ is surjective, then $f^+$ is injective;
 \item\label{surj2} if $f$ is UV-embedding, then $f^+$ is surjective.
 \end{enumerate}
 \end{fact}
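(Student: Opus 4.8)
The plan is to observe that both claims reduce to elementary properties of preimages, once we unwind that $f^+$ is the inverse-image map $f^+(U)=f^{-1}[U]$, which is well-defined into $\mathsf{CO}\mathcal{RO}(X)$ by Corollary \ref{InverseCor}. No appeal to the UV-space axioms beyond that corollary is needed; the substantive hypotheses are just surjectivity (for the first part) and the defining properties of a UV-embedding (for the second).

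For part \ref{inj2}, I would use the standard fact that a surjection $f$ satisfies $f[f^{-1}[W]]=W$ for every $W\subseteq Y$. Hence if $f^+(U)=f^+(V)$, i.e.\ $f^{-1}[U]=f^{-1}[V]$, then applying $f[\cdot]$ to both sides gives $U=f[f^{-1}[U]]=f[f^{-1}[V]]=V$, so $f^+$ is injective. (Equivalently, if $U\neq V$ one picks a point $y$ in the symmetric difference and a preimage $x$ of $y$ via surjectivity, which separates $f^{-1}[U]$ from $f^{-1}[V]$.)

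For part \ref{surj2}, given any $U\in\mathsf{CO}\mathcal{RO}(X)$ I would invoke the UV-embedding condition to obtain $V\in\mathsf{CO}\mathcal{RO}(Y)$ with $f[U]=f[X]\cap V$, and then show $f^{-1}[V]=U$, which exhibits $f^+(V)=U$. The inclusion $U\subseteq f^{-1}[V]$ is immediate: $x\in U$ gives $f(x)\in f[U]\subseteq V$. For the reverse inclusion, if $x\in f^{-1}[V]$ then $f(x)\in f[X]\cap V=f[U]$, so $f(x)=f(x')$ for some $x'\in U$; since a UV-embedding is by definition injective, $x=x'\in U$. Thus $f^{-1}[V]=U$ and $f^+$ is surjective.

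The only real subtlety lies in part \ref{surj2}: one must keep in mind that the UV-embedding condition yields only $f[U]=f[X]\cap V$, and that recovering $f^{-1}[V]=U$ exactly (rather than merely up to the image of $f$) depends essentially on the injectivity built into the definition of a UV-embedding. Everything else is a routine preimage computation.
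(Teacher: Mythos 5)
Your proposal is correct and follows essentially the same route as the paper: part \ref{inj2} is the paper's argument (your parenthetical elementwise version is verbatim its proof, and your $f[f^{-1}[W]]=W$ phrasing is just a repackaging), and part \ref{surj2} matches the paper's computation, which likewise derives $f^{-1}[V]=U$ from $f[U]=f[X]\cap V$ together with the injectivity built into a UV-embedding. Your closing remark correctly identifies the one point of substance---that injectivity is needed to pass from $f^{-1}[f[U]]$ back to $U$---so nothing is missing.
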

 
 \begin{proof} For part \ref{inj2}, suppose for $U,V\in\mathsf{CO}\mathcal{RO}(Y)$ that $U\neq V$. Suppose $y\in U\setminus V$. Since $f$ is surjective, there is an $x\in X$ such that $f(x)=y$, so $x\in f^{-1}[U]=f^+[U]$ but $x\not\in f^{-1}[V]=f^+(V)$. Hence $f^+$ is injective.
 
 For part \ref{surj2}, suppose $U\in\mathsf{CO}\mathcal{RO}(X)$. Then since $f$ is a UV-embedding, there is a $V\in\mathsf{CO}\mathcal{RO}(Y)$ such that $f[U]=f[X]\cap V$. Then $f^{-1}[f[U]]=f^{-1}[f[X]\cap V]=f^{-1}[f[X]]\cap f^{-1}[V]=X\cap f^{-1}[V]=f^{-1}[V]$. Since $f$ is injective, $f^{-1}[f[U]]=U$. Hence $U=f^{-1}[V]=f^+[V]$.\end{proof}

\begin{cor} $\,$
\begin{enumerate}
\item\label{subimage} There is a one-to-one correspondence between subalgebras of a BA $\mathbb{A}$ and images via onto UV-maps of its dual UV-space $X_\mathbb{A}$.
\item\label{homprinc} There is a one-to-one correspondence  between homomorphic images of a BA $\mathbb{A}$ and subspaces induced by principal upsets in the specialization order of the dual UV-space $X_\mathbb{A}$.
\end{enumerate}
\end{cor}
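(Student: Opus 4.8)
The plan is to derive both parts from the dual equivalence of Theorem \ref{DualityThm} together with Facts \ref{thm: surj-inj} and \ref{thm: surj-inj2}, which already translate injectivity/surjectivity on the algebra side into surjectivity/embedding on the space side. Throughout I write $X_\mathbb{A}=UV(\mathbb{A})$ and use the isomorphism $\mathbb{A}\cong\mathsf{CO}\mathcal{RO}(X_\mathbb{A})$ of Theorem \ref{MainRep}, under which the specialization order of $X_\mathbb{A}$ is inclusion of proper filters (Proposition \ref{IsSpectral}.\ref{Specialization}).

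For part \ref{subimage}, to a subalgebra $\mathbb{B}\leq\mathbb{A}$ I associate the onto UV-map obtained by dualizing the inclusion $\iota\colon\mathbb{B}\hookrightarrow\mathbb{A}$: since $\iota$ is an injective homomorphism, Fact \ref{thm: surj-inj}.\ref{surj1} gives that $\iota_+\colon X_\mathbb{A}\to X_\mathbb{B}$ is a surjective UV-map, so $X_\mathbb{B}$ is an image of $X_\mathbb{A}$ via an onto UV-map. Conversely, to an onto UV-map $f\colon X_\mathbb{A}\to Y$ I associate the subalgebra $f^+[\mathsf{CO}\mathcal{RO}(Y)]$ of $\mathbb{A}\cong\mathsf{CO}\mathcal{RO}(X_\mathbb{A})$, which is a subalgebra because $f^+$ is an injective homomorphism by Fact \ref{thm: surj-inj2}.\ref{inj2}. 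It then remains to check that these assignments are mutually inverse once onto UV-maps out of $X_\mathbb{A}$ are identified when they agree up to a homeomorphism of their codomains. In one direction, $(\iota_+)^+$ is, up to the representing isomorphisms, just $\iota$, so its image is $\mathbb{B}$; in the other, $f^+$ corestricts to an isomorphism $\mathsf{CO}\mathcal{RO}(Y)\cong f^+[\mathsf{CO}\mathcal{RO}(Y)]$ whose dual is a homeomorphism $Y\cong X_{f^+[\mathsf{CO}\mathcal{RO}(Y)]}$ commuting with the structure maps, both verifications being immediate from the naturality established in Theorem \ref{DualityThm}.

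For part \ref{homprinc}, a nondegenerate homomorphic image of $\mathbb{A}$ is presented by a surjective homomorphism $q\colon\mathbb{A}\to\mathbb{C}$, with kernel filter $F=q^{-1}(\one_\mathbb{C})$, a proper filter of $\mathbb{A}$. Dualizing, Fact \ref{thm: surj-inj}.\ref{inj1} gives that $q_+\colon X_\mathbb{C}\to X_\mathbb{A}$ is a UV-embedding, hence a homeomorphism onto its image (the defining condition $f[U]=f[X]\cap V$ makes $q_+$ open onto its image, and it is continuous and injective). The crux is to compute this image: I claim $q_+[X_\mathbb{C}]=\mathord{\Uparrow}F$. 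Indeed $q_+(G)=q^{-1}[G]$ is a proper filter containing $q^{-1}(\one_\mathbb{C})=F$, giving $\subseteq$; and for any proper filter $H\supseteq F$, surjectivity of $q$ and the usual correspondence between filters above $F$ and filters of $\mathbb{C}$ yield that $q[H]$ is a proper filter of $\mathbb{C}$ with $q^{-1}[q[H]]=H$, so $H=q_+(q[H])$, giving $\supseteq$. Thus $\mathbb{C}$ is realized as the subspace induced by the principal upset $\mathord{\Uparrow}F$. Conversely, a principal upset $\mathord{\Uparrow}x$ determines the point $x$, hence a proper filter $F$, hence the homomorphic image $\mathbb{A}/F$; by Fact \ref{EtaFact} the assignment between proper filters and principal upsets is a bijection, and this transports to the desired one-to-one correspondence.

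The main obstacle is the geometric identification in part \ref{homprinc}, namely showing $q_+[X_\mathbb{C}]=\mathord{\Uparrow}F$ and that $q_+$ realizes $X_\mathbb{C}$ as the subspace on this upset, since this is where the special feature of the duality actually gets used: the subobjects of $X_\mathbb{A}$ arising from quotients of $\mathbb{A}$ are exactly the principal-upset subspaces, and these are \emph{not} in general the $\mathsf{CO}\mathcal{RO}$-subspaces of Proposition \ref{SubSpaceProp}. A secondary point requiring care is pinning down the intended identifications so that each correspondence is genuinely bijective, and noting that the degenerate quotient of $\mathbb{A}$ corresponds to the empty upset and is excluded by the requirement $x\in X$.
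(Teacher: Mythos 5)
Your proposal is correct and follows essentially the same route as the paper: part \ref{subimage} is obtained from Facts \ref{thm: surj-inj}.\ref{surj1} and \ref{thm: surj-inj2}.\ref{inj2} together with Theorem \ref{DualityThm}, and part \ref{homprinc} combines the paper's two arguments---the bijection between (proper) filters and principal upsets from Fact \ref{EtaFact}, and the direct observation that the dual $q_+$ of a surjection is a UV-embedding, hence a homeomorphism onto its image, which is a principal upset. Your explicit computation that $q_+[X_\mathbb{C}]=\mathord{\Uparrow}F$ for the kernel filter $F$, and your attention to identifying onto UV-maps up to homeomorphism of codomains and to excluding the degenerate quotient, merely spell out details the paper's sketch leaves implicit.
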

 \begin{proof} Part \ref{subimage} follows from Facts~\ref{thm: surj-inj}.\ref{surj1} and \ref{thm: surj-inj2}.\ref{inj2} and Theorem \ref{DualityThm}.
 
 Since there is a one-to-one correspondence between homomorphic images of $\mathbb{A}$ and filters of $\mathbb{A}$, part \ref{homprinc} follows directly from Fact~\ref{EtaFact}. However, we also sketch a more direct argument.   By Facts~\ref{thm: surj-inj}.\ref{inj1} and \ref{thm: surj-inj2}.\ref{surj2} and Theorem \ref{DualityThm}, there is a one-to-one correspondence between homomorphic images of $\mathbb{A}$ and UV-embeddings into its dual $X_\mathbb{A}$. Let $\mathbb{B}$ be a homomorphic image of $\mathbb{A}$ via $h$. Then $h_+:X_\mathbb{B}\to X_\mathbb{A}$ is a UV-embedding. First, since $h_+$ is an injective p-morphism, $h_+[X_\mathbb{B}]$ is a principal upset in the specialization order of $X_\mathbb{A}$. Second, if $Y$ is the subspace of $X_\mathbb{A}$ induced by $h_+[X_\mathbb{B}]$, we claim that $X_\mathbb{B}$ and $Y$ are homeomorphic via the bijection $h_+: X_\mathbb{B}\to Y$. Since $h_+$ is a continuous map from $X_\mathbb{B}$ to $X_\mathbb{A}$, it follows that $h_+$ is a continuous map from $X_\mathbb{B}$ to $Y$, and since $h_+$ is a UV-embedding from $X_\mathbb{B}$ to $X_\mathbb{A}$, it follows that $h_+$ is an open map from $X_\mathbb{B}$ to $Y$.  \end{proof}

\subsection{Products} The operation on UV-spaces dual to taking direct products of BAs is the following.

\begin{definition}\label{UVunionDef} The \textit{UV-sum} of disjoint UV-spaces $X$ and $Y$ is the space $X\bigcirc Y$ whose underlying set is $X\cup Y\cup (X\times Y)$ and whose topology is generated by the collection of sets
\[U\cup V \cup (U\times V)\]
for $U\in\mathsf{CO}\mathcal{RO}(X)$ and $V\in \mathsf{CO}\mathcal{RO}(Y)$.
\end{definition}

The following lemma is helpful for visualizing UV-sums.

\begin{lemma}\label{SumLem1} Given UV-spaces $X$ and $Y$ with specialization orders $\leqslant_X$ and $\leqslant_Y$, respectively, the specialization order in $X\bigcirc Y$ is given by:
\begin{eqnarray}
&& \leqslant_X\cup\leqslant_Y\cup \nonumber \\
&& \{\langle\langle x,y\rangle, x'\rangle\mid x\leqslant_X x'\}\cup \{\langle\langle x,y\rangle, y'\rangle\mid y\leqslant_Y y'\} \cup \nonumber \\
&& \{\langle\langle x,y\rangle,\langle x',y'\rangle\rangle\mid x\leqslant_X x',y\leqslant_Yy'\}.\label{SpecialDef}
\end{eqnarray}
\end{lemma}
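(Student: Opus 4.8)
The plan is to compute the specialization order directly from the generating family, using the standard fact that in any space $p\leqslant q$ holds iff every open set containing $p$ also contains $q$, and hence iff every \emph{basic} open containing $p$ contains $q$.

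First I would check that the generating family
\[\mathcal{B}=\{U\cup V\cup(U\times V)\mid U\in\mathsf{CO}\mathcal{RO}(X),\ V\in\mathsf{CO}\mathcal{RO}(Y)\}\]
is closed under binary intersection and so is a genuine basis. Because $X$ and $Y$ are disjoint, no element of $X$ is an element of $Y$ and neither is an ordered pair, so all cross-terms vanish and
\[\bigl(U_1\cup V_1\cup(U_1\times V_1)\bigr)\cap\bigl(U_2\cup V_2\cup(U_2\times V_2)\bigr)=(U_1\cap U_2)\cup(V_1\cap V_2)\cup\bigl((U_1\cap U_2)\times(V_1\cap V_2)\bigr).\]
Since $\mathsf{CO}\mathcal{RO}(X)$ and $\mathsf{CO}\mathcal{RO}(Y)$ are each closed under $\cap$ by Definition~\ref{VOspace}.\ref{CloseProp}, the right-hand side again lies in $\mathcal{B}$. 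Thus it suffices to test membership in a single set $W=U\cup V\cup(U\times V)$, and the test is transparent: $x\in X$ lies in $W$ iff $x\in U$; $y\in Y$ lies in $W$ iff $y\in V$; and $\langle x,y\rangle$ lies in $W$ iff $x\in U$ and $y\in V$.

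Next I would record the ingredients that let me produce witnessing basic opens: $\mathsf{CO}\mathcal{RO}(X)$ is a basis for $X$ and contains both $\varnothing$ and $X$ (the whole space being compact open by Corollary~\ref{UVspectral}.\ref{UVspectral1} and regular open in its upset topology, and likewise $\varnothing$), and symmetrically for $Y$. In particular $x\leqslant_X x'$ holds iff every $U\in\mathsf{CO}\mathcal{RO}(X)$ with $x\in U$ also contains $x'$, and similarly for $\leqslant_Y$. With this, the nine cases determined by the three point-types are routine. When $p,q$ both lie in $X$ the relation collapses to $\leqslant_X$, and when both lie in $Y$ to $\leqslant_Y$. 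No point of $X$ is below any point of $Y$ or any pair, and conversely, since taking $U\ni x$ and $V=\varnothing$ gives a basic open containing $x$ but no point of $Y$ and no pair. For $p=\langle x,y\rangle$ and $q=x'\in X$, choosing $V=Y$ (so that $y\in V$ automatically) reduces $\langle x,y\rangle\leqslant x'$ to $x\leqslant_X x'$, and symmetrically $\langle x,y\rangle\leqslant y'$ reduces to $y\leqslant_Y y'$; finally, for two pairs, projecting once with $V=Y$ and once with $U=X$ shows $\langle x,y\rangle\leqslant\langle x',y'\rangle$ iff $x\leqslant_X x'$ and $y\leqslant_Y y'$. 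Collecting the cases yields exactly the relation in (\ref{SpecialDef}).

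I expect the one step that genuinely needs care to be the reduction to a single basic open: without first verifying that $\mathcal{B}$ is intersection-closed one would have to reason about finite intersections of generators, and the clean projection arguments rely on knowing that $\varnothing$, $X$, and $Y$ all belong to the relevant $\mathsf{CO}\mathcal{RO}$ families. Once those points are in place, the remainder is bookkeeping over the three types of points.
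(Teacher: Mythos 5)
Your proof is correct and follows essentially the same route as the paper's: both argue by case analysis on the three point types, testing the specialization order against basic opens $U\cup V\cup(U\times V)$ and exploiting that $\varnothing$, $X$, and $Y$ lie in the relevant $\mathsf{CO}\mathcal{RO}$ families to project (the paper separates points with sets of the form $U$ and $U\cup Y\cup(U\times Y)$, exactly your $V=\varnothing$ and $V=Y$ choices). Your preliminary check that the generating family is intersection-closed is fine but dispensable, since for the specialization order it already suffices that every \emph{subbasic} open containing $p$ contain $q$.
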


\begin{proof} Suppose $\langle z,z'\rangle $ belongs to the set in (\ref{SpecialDef}), and $z$ belongs to an open set $U\cup V\cup (U\times V)$ of $X\bigcirc Y$. We must show that $z'$ also belongs to the set. There are five cases. If $z\leqslant_X z'$ (resp.~$z\leqslant_Y z'$), then $z\in U$ (resp.~$z\in V$), which with $U\in\mathsf{CO}\mathcal{RO}(X)$ (resp.~$V\in \mathsf{CO}\mathcal{RO}(Y)$) implies $z'\in U$ (resp.~$z'\in V$) and hence $z'\in U\cup V\cup (U\times V)$. On the other hand, if $z=\langle x,y\rangle $, then $\langle x,y\rangle\in U\times V$, so $x\in U$ and $y\in V$. Therefore, if $x\leqslant_X z'$ (resp.~$y\leqslant_Y z'$), then $z'\in U$ (resp.~$z'\in V$) and hence $z'\in U\cup V\cup (U\times V)$. Similarly, if $z'=\langle x',y'\rangle$, and $x\leqslant_X x'$ and $y\leqslant_Y y'$, then $x'\in U$ and $y'\in V$, so $z'\in U\times V$. This completes the proof that $z\leqslant_{X\bigcirc Y}z'$.

Conversely, suppose $\langle z,z'\rangle$ does not belong to the set in (\ref{SpecialDef}). Again there are five cases. For example, if $z,z'\in X$, it follows that $z\not\leqslant_X z'$, so there is an open set $U$ of $X$ such that $z\in U$ but $z'\not\in U$, and $U$ is open in $X\bigcirc Y$, so $z\not\leqslant_{X\bigcirc Y}z'$. Similarly, if $z=\langle x,y\rangle$ and $z'\in X$, it follows that $x\not\leqslant_X z'$, so there is an open set $U$ of $X$ such that $x\in U$ but $z'\not\in U$. Thus, $\langle x,y\rangle \in U\cap Y\cup ( U\times Y)$ but $z'\not\in U\cup Y\cup (U\times Y)$, so $\langle x,y\rangle\not\leqslant_{X\bigcirc Y}z'$. The other cases are analogous.
\end{proof}

\begin{example} For finite UV-spaces, the UV-sum is easily drawn. Figure \ref{Figure} shows the UV-sum of the UV-duals of the four-element and two-element BAs, \textbf{4} and \textbf{2} (recall Corollary \ref{UVspectral}.\ref{UVspectral4.5}). Solid lines indicate the specialization order $\leqslant$ in $UV(\mathbf{4})$, so $x\leqslant y_1$ and $x\leqslant y_2$. Dashed lines indicate the new part of the relation defined in Lemma \ref{SumLem1}. Note that $UV(\mathbf{4})\bigcirc UV(\mathbf{2})=UV(\mathbf{4}\times\mathbf{2})$ in line with Proposition \ref{UnionProduct}.
\end{example}

\begin{figure}[h]
\begin{center}
\begin{tikzpicture}[->,>=stealth',shorten >=1pt,shorten <=1pt, auto, node distance=2in,thick,every loop/.style={<-,shorten <=1pt}] \tikzstyle{every state}=[fill=gray!20,draw=none,text=black]

\node  at (-6,.5) {{$UV(\mathbf{4})$}};
\node (x) at (-6,1.25) {{$x$}};
\node (y1) at (-7,2.5) {{$y_1$}};
\node (y2) at (-5,2.5) {{$y_2$}};

\path (x) edge[-] node {{}} (y1); 
\path (x) edge[-] node {{}} (y2); 

\node  at (-3.5,.5) {{$UV(\mathbf{2})$}};
\node (z) at (-3.5,2.5) {{$z$}};

\node  at (0,-.75) {{$UV(\mathbf{4})\bigcirc UV(\mathbf{2})$}};
\node  at (0,-1.25) {{$UV(\mathbf{4}\times\mathbf{2})$}};

\node  (bot) at (0,0) {{$\langle x,z\rangle$}}; 
\node (A) at (-2,1.25) {{$x$}}; 
\node (B) at (0,1.25) {{$\langle y_1,z\rangle$}}; 
\node (C) at (2,1.25) {{$\langle y_2,z\rangle$}};

 \node (A') at (-2,2.5) {{$y_1$}}; 
 \node (B') at (0,2.5) {{$y_2$}}; 
\node (C') at (2,2.5) {{$z$}};

\path (A) edge[-] node {{}} (A'); 
\path (A) edge[-] node {{}} (B'); 
\path (B) edge[-,dashed] node {{}} (A'); 
\path (B) edge[-,dashed] node {{}} (C'); 
\path (C) edge[-,dashed] node {{}} (C'); 
\path (C) edge[-,dashed] node {{}} (B'); 
\path (A) edge[-,dashed] node {{}} (bot); 
\path (B) edge[-,dashed] node {{}} (bot); 
\path (C) edge[-,dashed] node {{}} (bot);

\end{tikzpicture}
\end{center}
\caption{UV-sum of the UV-duals of the BAs \textbf{4} and \textbf{2}.}\label{Figure}
\end{figure}
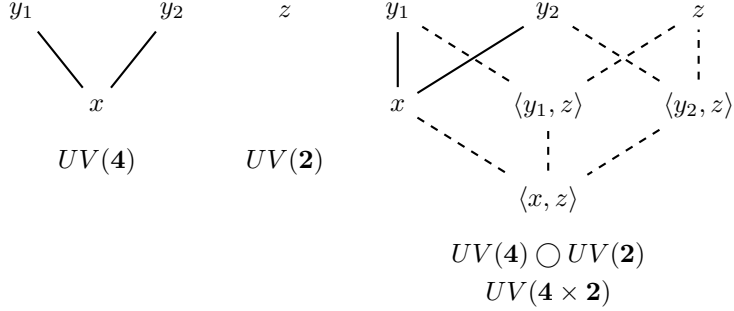

\begin{prop}\label{UnionProduct} For any BAs $\mathbb{A}$ and $\mathbb{B}$, $UV(\mathbb{A})\bigcirc UV(\mathbb{B})$ is homeomorphic to $UV(\mathbb{A}\times\mathbb{B})$.
\end{prop}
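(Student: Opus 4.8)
The plan is to exhibit an explicit bijection $\phi$ between the underlying set $X\cup Y\cup(X\times Y)$ of $UV(\mathbb{A})\bigcirc UV(\mathbb{B})$, where $X=UV(\mathbb{A})$ and $Y=UV(\mathbb{B})$, and the set of proper filters of $\mathbb{A}\times\mathbb{B}$, and then to check that $\phi$ carries the generating base of one space onto that of the other.

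First I would record the structure of filters of a product. Every filter $H$ of $\mathbb{A}\times\mathbb{B}$ is of the form $F\times G=\{\langle a,b\rangle\mid a\in F,\ b\in G\}$ for a unique pair consisting of a filter $F=\{a\mid\langle a,\one_\mathbb{B}\rangle\in H\}$ of $\mathbb{A}$ and a filter $G=\{b\mid\langle \one_\mathbb{A},b\rangle\in H\}$ of $\mathbb{B}$; conversely any such product is a filter, and these assignments are mutually inverse. Moreover $H=F\times G$ contains $\langle\zero_\mathbb{A},\zero_\mathbb{B}\rangle$ iff both $F$ and $G$ are improper, so $H$ is proper iff at least one of $F,G$ is proper. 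This partitions the proper filters of $\mathbb{A}\times\mathbb{B}$ into exactly three disjoint classes: those with $G=\mathbb{B}$ and $F$ proper, those with $F=\mathbb{A}$ and $G$ proper, and those with both $F,G$ proper; uniqueness of the decomposition guarantees the classes are disjoint. I would then define $\phi$ by matching these three classes to the three pieces of the UV-sum: $\phi(F)=F\times\mathbb{B}$ for $F\in X$, $\phi(G)=\mathbb{A}\times G$ for $G\in Y$, and $\phi(\langle F,G\rangle)=F\times G$ for $\langle F,G\rangle\in X\times Y$. By the preceding, $\phi$ is a well-defined bijection onto $UV(\mathbb{A}\times\mathbb{B})$.

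Finally I would verify that $\phi$ is a homeomorphism at the level of bases. By Theorem~\ref{MainRep}, $\mathsf{CO}\mathcal{RO}(X)=\{\widehat{a}\mid a\in\mathbb{A}\}$ and $\mathsf{CO}\mathcal{RO}(Y)=\{\widehat{b}\mid b\in\mathbb{B}\}$, so by Definition~\ref{UVunionDef} the basic opens of the UV-sum are exactly the sets $\widehat{a}\cup\widehat{b}\cup(\widehat{a}\times\widehat{b})$, while the basic opens of $UV(\mathbb{A}\times\mathbb{B})$ are the sets $\widehat{\langle a,b\rangle}$. A direct computation over the three classes shows that $\langle a,b\rangle\in H$ translates, under $\phi$, to membership in $\widehat a$ (for $H=F\times\mathbb{B}$, since $b\in\mathbb{B}$ automatically), in $\widehat b$ (for $H=\mathbb{A}\times G$), or in $\widehat a\times\widehat b$ (for $H=F\times G$ with both proper); hence
\[\phi^{-1}[\widehat{\langle a,b\rangle}]=\widehat{a}\cup\widehat{b}\cup(\widehat{a}\times\widehat{b}).\]
Since the pairs $\langle a,b\rangle$ range over all of $\mathbb{A}\times\mathbb{B}$, this single identity shows simultaneously that $\phi$ is continuous (each basic open pulls back to a basic open) and open (each basic open of the UV-sum is the $\phi$-preimage, hence the $\phi$-image under the bijection, of a basic open), so $\phi$ is a homeomorphism.

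I expect the only genuine obstacle to be the bookkeeping in the first step: getting the correspondence between proper filters of the product and the three-part underlying set exactly right, in particular seeing that properness excludes precisely the ``both improper'' pair and that the three surviving cases match the three summands $X$, $Y$, and $X\times Y$. Everything after that is a routine unwinding of definitions, and the specialization-order description in Lemma~\ref{SumLem1} can serve as an independent sanity check that $\phi$ respects the order structure of the two spaces.
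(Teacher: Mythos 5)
Your proposal is correct and takes essentially the same approach as the paper: the paper's proof defines the inverse map $h\colon UV(\mathbb{A}\times\mathbb{B})\to UV(\mathbb{A})\bigcirc UV(\mathbb{B})$, sending a proper filter $F$ of the product to $F_\mathbb{A}$, $F_\mathbb{B}$, or $\langle F_\mathbb{A},F_\mathbb{B}\rangle$ according to the same three-way classification by properness of the components, and verifies the same identity $h[\widehat{\langle a,b\rangle}]=\widehat{a}\cup\widehat{b}\cup(\widehat{a}\times\widehat{b})$ on basic opens. The only cosmetic differences are the direction in which the bijection is defined and your packaging of the case analysis as the (correct) structure fact that every filter of $\mathbb{A}\times\mathbb{B}$ factors uniquely as $F\times G$, which the paper uses implicitly via the projections $F_\mathbb{A}$ and $F_\mathbb{B}$.
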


\begin{proof} Given $F\in  UV(\mathbb{A}\times\mathbb{B})$, so that $F$ is a proper filter in $\mathbb{A}\times \mathbb{B}$, we have that $F_\mathbb{A}=\{a\mid \exists b :\langle a,b\rangle\in F \}$ and $F_\mathbb{B}=\{b\mid \exists a :\langle a,b\rangle\in F \}$ are filters in $\mathbb{A}$ and $\mathbb{B}$, respectively, at least one of which is a proper filter. We define a function $h$ from $UV(\mathbb{A}\times\mathbb{B})$ to $UV(\mathbb{A})\bigcirc UV(\mathbb{B})$ as follows:
\[h(F)=\begin{cases}F_\mathbb{A} &\mbox{if }F_\mathbb{B}\mbox{ is improper} \\
F_\mathbb{B} &\mbox{if }F_\mathbb{A}\mbox{ is improper} \\
\langle F_\mathbb{A},F_\mathbb{B}\rangle & \mbox{otherwise}.
 \end{cases}\]
 We claim that $h$ is a homeomorphism from $UV(\mathbb{A}\times\mathbb{B})$ to $UV(\mathbb{A})\bigcirc UV(\mathbb{B})$. Clearly $h$ is injective. For surjectivity, suppose $G\in UV(\mathbb{A})\bigcirc UV(\mathbb{B})$. If $G\in UV(\mathbb{A})$, then for $F=\{\langle a,b\rangle\mid a\in G,b\in\mathbb{B}\}\in UV(\mathbb{A}\times\mathbb{B})$, we have that $G=F_\mathbb{A}$ and $F_\mathbb{B}$ is improper,  so $G=h(F)$. Similarly, if $G\in UV(\mathbb{B})$, then for $F=\{\langle a,b\rangle\mid a\in \mathbb{A},b\in G\}\in UV(\mathbb{A}\times\mathbb{B})$, we have that $G=F_\mathbb{B}$ and $F_\mathbb{A}$ is improper, so $G=h(F)$. Finally, if $G=\langle G^\mathbb{A},G^\mathbb{B}\rangle$ for $G^\mathbb{A}\in UV(\mathbb{A})$ and $G^\mathbb{B}\in UV(\mathbb{B})$, then $G^\mathbb{A}\times G^\mathbb{B}\in UV(\mathbb{A}\times\mathbb{B})$ and  $G=h(G^\mathbb{A}\times G^\mathbb{B})$, since $(G^\mathbb{A}\times G^\mathbb{B})_\mathbb{A}=G^\mathbb{A}$ and $(G^\mathbb{A}\times G^\mathbb{B})_\mathbb{B}=G^\mathbb{B}$. Thus, $h$ is surjective.
 
 To show that $h$ is continuous, it suffices to show that the inverse image of each basic open is open. By Definition \ref{UVunionDef}, each basic open in $UV(\mathbb{A})\bigcirc UV(\mathbb{B})$ is of the form $U\cup V\cup (U\times V)$ for $U\in \mathsf{CO}\mathcal{RO}(UV(\mathbb{A}))$ and $V\in \mathsf{CO}\mathcal{RO}(UV(\mathbb{B}))$. By the proof of Theorem \ref{MainRep}, $U=\widehat{a}$ and $V=\widehat{b}$ for some $a\in\mathbb{A}$ and $b\in\mathbb{B}$, so our basic open in $UV(\mathbb{A})\bigcirc UV(\mathbb{B})$ is $\widehat{a}\cup\widehat{b}\cup (\widehat{a}\times\widehat{b})$. Then we have:
 \begin{eqnarray*}
 h^{-1}[\widehat{a}\cup\widehat{b}\cup (\widehat{a}\times\widehat{b})] &=&  h^{-1}[\widehat{a}]\cup  h^{-1}[\widehat{b}]\cup  h^{-1}[\widehat{a}\times\widehat{b}] \\
 &=& \widehat{\langle a,\zero\rangle} \cup \widehat{\langle \zero,b\rangle} \cup \widehat{\langle a,b\rangle},
 \end{eqnarray*}
 so $h^{-1}[\widehat{a}\cup\widehat{b}\cup (\widehat{a}\times\widehat{b})] $ is a union of basic opens in $UV(\mathbb{A}\times\mathbb{B})$.
 
 Finally, to see that $h^{-1}$ is continuous, for any basic open set $\widehat{\langle a,b\rangle}$ of $UV(\mathbb{A}\times\mathbb{B})$, we have:
 \begin{eqnarray*}
 \widehat{\langle a,b\rangle} &=& \{F\in\mathrm{PropFilt}(\mathbb{A}\times\mathbb{B})\mid \langle a,b\rangle\in F\mbox{ and } F_\mathbb{B}\mbox{ improper} \} \cup \\
  && \{F\in\mathrm{PropFilt}(\mathbb{A}\times\mathbb{B})\mid \langle a,b\rangle\in F \mbox{ and }F_\mathbb{A}\mbox{ improper} \} \cup \\
  && \{F\in\mathrm{PropFilt}(\mathbb{A}\times\mathbb{B})\mid \langle a,b\rangle\in F\mbox{ and } F_\mathbb{A}, F_\mathbb{B}\mbox{ proper} \},
 \end{eqnarray*}
which implies
 \begin{eqnarray*}
 h[\widehat{\langle a,b\rangle}]&=& \widehat{a}\cup \widehat{b}\cup (\widehat{a}\times\widehat{b}),
 \end{eqnarray*}
 so that $h[\widehat{\langle a,b\rangle}]$ is basic open in $UV(\mathbb{A})\bigcirc UV(\mathbb{B})$.
\end{proof}

\begin{cor} For any UV-spaces $X$ and $Y$, $X\bigcirc Y$ is a UV-space.
\end{cor}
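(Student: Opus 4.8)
The plan is to reduce to Proposition \ref{UnionProduct} by replacing $X$ and $Y$ with the canonical UV-spaces of their dual Boolean algebras. I would set $\mathbb{A}=\mathsf{CO}\mathcal{RO}(X)$ and $\mathbb{B}=\mathsf{CO}\mathcal{RO}(Y)$, which are BAs by Proposition \ref{COROBA}, and then invoke Theorem \ref{SecondThm}.\ref{SecondThmB} to obtain homeomorphisms $\epsilon_X\colon X\to UV(\mathbb{A})$ and $\epsilon_Y\colon Y\to UV(\mathbb{B})$, given by $\epsilon_X\colon x\mapsto\mathsf{CO}\mathcal{RO}(x)$ and likewise for $\epsilon_Y$. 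From the proof of that theorem, $\epsilon_X$ restricts to a bijection $\mathsf{CO}\mathcal{RO}(X)\to\mathsf{CO}\mathcal{RO}(UV(\mathbb{A}))$ with $\epsilon_X[U]=\widehat{U}$, and similarly for $\epsilon_Y$; this is the feature that makes the reduction go through.

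The main step would be to show that $X\bigcirc Y$ is homeomorphic to $UV(\mathbb{A})\bigcirc UV(\mathbb{B})$. I would define $\psi\colon X\bigcirc Y\to UV(\mathbb{A})\bigcirc UV(\mathbb{B})$ on the three pieces of $X\cup Y\cup(X\times Y)$ by $\psi(x)=\epsilon_X(x)$, $\psi(y)=\epsilon_Y(y)$, and $\psi(\langle x,y\rangle)=\langle\epsilon_X(x),\epsilon_Y(y)\rangle$; since $\epsilon_X$ and $\epsilon_Y$ are bijections, so is $\psi$. For a basic open $U'\cup V'\cup(U'\times V')$ of $UV(\mathbb{A})\bigcirc UV(\mathbb{B})$ with $U'\in\mathsf{CO}\mathcal{RO}(UV(\mathbb{A}))$ and $V'\in\mathsf{CO}\mathcal{RO}(UV(\mathbb{B}))$, I would set $U=\epsilon_X^{-1}[U']\in\mathsf{CO}\mathcal{RO}(X)$ and $V=\epsilon_Y^{-1}[V']\in\mathsf{CO}\mathcal{RO}(Y)$ and check directly from the definition of $\psi$ that
\[\psi^{-1}[U'\cup V'\cup(U'\times V')]=U\cup V\cup(U\times V),\]
a basic open of $X\bigcirc Y$, so that $\psi$ is continuous; the symmetric computation gives $\psi[U\cup V\cup(U\times V)]=U'\cup V'\cup(U'\times V')$, so that $\psi$ is open. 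Hence $\psi$ is a homeomorphism.

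Finally I would chain the homeomorphisms: by Proposition \ref{UnionProduct}, $UV(\mathbb{A})\bigcirc UV(\mathbb{B})$ is homeomorphic to $UV(\mathbb{A}\times\mathbb{B})$, which is a UV-space by Theorem \ref{SecondThm}.\ref{SecondThmA}. Because the conditions in Definition \ref{VOspace} ($T_0$, the closure and basis conditions on $\mathsf{CO}\mathcal{RO}$, and the filter-realization condition) depend only on the topology and its specialization order, ``being a UV-space'' is invariant under homeomorphism, so $X\bigcirc Y$, being homeomorphic to $UV(\mathbb{A}\times\mathbb{B})$, is a UV-space. The hardest point is the homeomorphism-invariance of the $\bigcirc$ construction in the middle step: one must be sure that $\psi$ genuinely matches basic opens with basic opens, which rests on $\epsilon_X$ and $\epsilon_Y$ restricting to bijections of the $\mathsf{CO}\mathcal{RO}$ families so that every $U'$ arises as $\epsilon_X[U]$ for a unique $U\in\mathsf{CO}\mathcal{RO}(X)$. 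The remainder is routine bookkeeping.
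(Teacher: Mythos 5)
Your proposal is correct and takes essentially the same route as the paper: the paper's proof also chains the homeomorphisms $X\bigcirc Y\cong UV(\mathsf{CO}\mathcal{RO}(X))\bigcirc UV(\mathsf{CO}\mathcal{RO}(Y))\cong UV(\mathsf{CO}\mathcal{RO}(X)\times\mathsf{CO}\mathcal{RO}(Y))$ via Theorem \ref{SecondThm}.\ref{SecondThmB}, Proposition \ref{UnionProduct}, and Theorem \ref{SecondThm}.\ref{SecondThmA}. The only difference is that the paper leaves implicit the step you verify explicitly with $\psi$, namely that the $\bigcirc$ construction transports along the homeomorphisms $\epsilon_X$ and $\epsilon_Y$; your check that basic opens match via $\epsilon_X[U]=\widehat{U}$ is a sound justification of that implicit step.
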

\begin{proof} By Theorem \ref{SecondThm}.\ref{SecondThmB}, $X$ and $Y$ are respectively homeomorphic to $UV(\mathsf{CO}\mathcal{RO}(X))$ and $UV(\mathsf{CO}\mathcal{RO}(Y))$, which implies that $X\bigcirc Y$ is homeomorphic to $UV(\mathsf{CO}\mathcal{RO}(X))\bigcirc UV(\mathsf{CO}\mathcal{RO}(Y))$. By Proposition \ref{UnionProduct}, $UV(\mathsf{CO}\mathcal{RO}(X)) \bigcirc UV(\mathsf{CO}\mathcal{RO}(Y))$ is homeomorphic to $UV(\mathsf{CO}\mathcal{RO}(X)\times\mathsf{CO}\mathcal{RO}(Y))$, which is a UV-space by Theorem \ref{SecondThm}.\ref{SecondThmA}. Thus, by the two homeomorphisms, $X\bigcirc Y$ is a UV-space.
\end{proof}

\begin{cor} For any UV-spaces $X$ and $Y$, $\mathsf{CO}\mathcal{RO}(X\bigcirc Y)$ is isomorphic to $\mathsf{CO}\mathcal{RO}(X)\times \mathsf{CO}\mathcal{RO}(Y)$.
\end{cor}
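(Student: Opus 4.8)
The plan is to chain together the representation theorem and the product result already established, using the fact that homeomorphic UV-spaces have isomorphic dual Boolean algebras. First I would record this preliminary fact: a homeomorphism $g\colon X\to X'$ restricts to a bijection $\mathsf{CO}\mathcal{RO}(X')\to\mathsf{CO}\mathcal{RO}(X)$, $U\mapsto g^{-1}[U]$, since $g$ and $g^{-1}$ preserve open sets, compact sets, and the specialization order, hence regular opens in the upset topology; as this bijection preserves inclusion in both directions, it is a Boolean algebra isomorphism. Equivalently, this is just the contravariant functor $\mathsf{CO}\mathcal{RO}(\cdot)$ from the proof of Theorem~\ref{DualityThm} sending an isomorphism to an isomorphism.

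With that in hand, the argument is a composition of earlier results. By Theorem~\ref{SecondThm}.\ref{SecondThmB}, the spaces $X$ and $Y$ are homeomorphic to $UV(\mathsf{CO}\mathcal{RO}(X))$ and $UV(\mathsf{CO}\mathcal{RO}(Y))$, respectively, so $X\bigcirc Y$ is homeomorphic to $UV(\mathsf{CO}\mathcal{RO}(X))\bigcirc UV(\mathsf{CO}\mathcal{RO}(Y))$. By Proposition~\ref{UnionProduct}, the latter is homeomorphic to $UV(\mathsf{CO}\mathcal{RO}(X)\times\mathsf{CO}\mathcal{RO}(Y))$. Applying $\mathsf{CO}\mathcal{RO}(\cdot)$ to this composite homeomorphism and invoking the preliminary fact, I obtain
\[\mathsf{CO}\mathcal{RO}(X\bigcirc Y)\cong \mathsf{CO}\mathcal{RO}\big(UV(\mathsf{CO}\mathcal{RO}(X)\times\mathsf{CO}\mathcal{RO}(Y))\big).\]
Finally, Theorem~\ref{MainRep}.\ref{MainRep1} applied to the BA $\mathbb{A}=\mathsf{CO}\mathcal{RO}(X)\times\mathsf{CO}\mathcal{RO}(Y)$ gives $\mathsf{CO}\mathcal{RO}(UV(\mathbb{A}))\cong\mathbb{A}$, so the right-hand side is isomorphic to $\mathsf{CO}\mathcal{RO}(X)\times\mathsf{CO}\mathcal{RO}(Y)$, completing the proof.

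There is essentially no hard step here: the result is a bookkeeping consequence of Theorems~\ref{MainRep} and~\ref{SecondThm} together with Proposition~\ref{UnionProduct}. The only point requiring any care is the opening observation that $\mathsf{CO}\mathcal{RO}(\cdot)$ respects homeomorphisms, which one could also cite directly from the functoriality established in the proof of Theorem~\ref{DualityThm}.
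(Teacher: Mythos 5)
Your proposal is correct and follows essentially the same route as the paper, whose entire proof reads ``Apply Proposition~\ref{UnionProduct} and duality (Theorem~\ref{DualityThm})''; you have simply unpacked that one-liner into its constituent steps (Theorem~\ref{SecondThm}.\ref{SecondThmB} to replace $X$ and $Y$ by $UV(\mathsf{CO}\mathcal{RO}(X))$ and $UV(\mathsf{CO}\mathcal{RO}(Y))$, Proposition~\ref{UnionProduct}, functoriality of $\mathsf{CO}\mathcal{RO}(\cdot)$ on homeomorphisms, and Theorem~\ref{MainRep}.\ref{MainRep1}). Your preliminary observation that a homeomorphism induces a Boolean isomorphism of $\mathsf{CO}\mathcal{RO}$ algebras is exactly the instance of the duality functor the paper invokes, so there is no substantive difference in method.
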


\begin{proof} Apply Proposition \ref{UnionProduct} and duality (Theorem \ref{DualityThm}). 
\end{proof}

\begin{remark}
Another natural question is how one can characterize products in the category of UV-spaces with UV-maps, which will be the duals of coproducts in the category of BAs with BA homomorphisms. We cannot characterize the product of UV-spaces $X$ and $Y$ as a topological space based on the Cartesian product of the underlying sets of $X$ and $Y$. E.g., if we take the Cartesian product of two copies of the three-element set underlying $UV(\mathbf{4})$ (Figure \ref{Figure}),  then we obtain a set with nine elements; this cannot be the underlying set of any poset obtained from a BA by deleting its top element, so by Corollary \ref{UVspectral}.\ref{UVspectral4.5} it cannot be the underlying set of a UV-space. We leave for future work the problem of characterizing products in the category of UV-spaces, which is reminiscent of the open problem of characterizing products in the category of Esakia spaces \cite{Esakia2018}.\end{remark}

\subsection{Completions}\label{CompletionSection} The \textit{canonical extension} of a BA $\mathbb{A}$, as defined in \cite{Gehrke2001}, is the unique (up to isomorphism) complete BA $\mathbb{B}$ for which there is a Boolean embedding $e$ of $\mathbb{A}$ into $\mathbb{B}$ such that every element of $\mathbb{B}$ is a join of meets of $e$-images of elements of $\mathbb{A}$, and for any sets $S$ and $T$ of elements of $\mathbb{A}$, if $\bigwedge e[S]\leq \bigvee e[T]$, then there are finite sets $S'\subseteq S$ and $T'\subseteq T$ such that $\bigwedge S'\leq \bigvee T'$. It is shown in \cite[\S~5.6]{Holliday2018} that the canonical extension of a BA $\mathbb{A}$ can be constructed without choice as the BA of all regular open upsets in the poset of proper filters of $\mathbb{A}$ ordered by inclusion. Putting this in terms of UV-spaces, we have the following.

\begin{theorem}\label{CanonicalExt} Let $\mathbb{A}$ be a BA and $X$ its dual UV-space. Then $\mathcal{RO}(X)$ is \textnormal{(}up to isomorphism\textnormal{)} the canonical extension of $\mathbb{A}$.
\end{theorem}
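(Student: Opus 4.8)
The plan is to reduce to the construction of the canonical extension in \cite{Holliday2018} via the identification of $\mathcal{RO}(X)$ with the algebra of regular open upsets of the filter poset, and then, for self-containedness, to verify the two defining properties of a canonical extension directly. The starting point is Proposition \ref{IsSpectral}.\ref{Specialization}: since $X = UV(\mathbb{A})$ has specialization order $\leqslant\ =\ \subseteq$ on $\mathrm{PropFilt}(\mathbb{A})$, the topology $\mathsf{Up}(X,\leqslant)$ is the upset topology on $(\mathrm{PropFilt}(\mathbb{A}),\subseteq)$, so $\mathcal{RO}(X)$ is exactly the complete Boolean algebra of regular open upsets of that poset. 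The embedding will be $e = \widehat{\cdot}$; by Theorem \ref{MainRep} it is an isomorphism of $\mathbb{A}$ onto $\mathsf{CO}\mathcal{RO}(X)$, a subalgebra of the full regular open algebra $\mathcal{RO}(X)$, so $e\colon\mathbb{A}\to\mathcal{RO}(X)$ is a Boolean embedding into a complete BA.

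First I would record the closed elements. The poset $(\mathrm{PropFilt}(\mathbb{A}),\subseteq)$ is separative: if $F\not\subseteq G$, choosing $a\in F\setminus G$, the filter generated by $G\cup\{-a\}$ is proper (no $g\in G$ has $g\leq a$, else $a\in G$) and every extension of it omits $a$, while every extension of $F$ contains $a$. Hence each principal upset $\mathord{\Uparrow}F$ is regular open, and by Fact \ref{EtaFact}, $\mathord{\Uparrow}F=\bigcap_{a\in F}\widehat{a}=\bigwedge e[F]$ is a meet of $e$-images. For density, any $U\in\mathcal{RO}(X)$ is an $\leqslant$-upset, so $U=\bigcup_{F\in U}\mathord{\Uparrow}F$; since the join in $\mathcal{RO}(X)$ of a family is the regularization $\mathsf{int}_\leqslant\mathsf{cl}_\leqslant$ of its union and $U$ is already regular open, $U=\bigvee_{F\in U}\mathord{\Uparrow}F=\bigvee_{F\in U}\bigwedge e[F]$, exhibiting every element as a join of meets of $e$-images.

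Next I would prove the compactness condition contrapositively. Suppose no finite $S'\subseteq S$ and $T'\subseteq T$ satisfy $\bigwedge S'\leq\bigvee T'$ in $\mathbb{A}$. Then the filter $F'$ generated by $S\cup\{-b\mid b\in T\}$ is proper, since an equation $a_1\wedge\dots\wedge a_m\wedge -b_1\wedge\dots\wedge -b_n=\zero$ would give precisely $a_1\wedge\dots\wedge a_m\leq b_1\vee\dots\vee b_n$. Now $S\subseteq F'$ yields $F'\in\bigcap_{a\in S}\widehat{a}=\bigwedge e[S]$, whereas every proper $F''\supseteq F'$ contains each $-b$ and hence omits each $b\in T$, so no extension of $F'$ lies in $\bigcup_{b\in T}\widehat{b}$; thus $F'\notin\mathsf{cl}_\leqslant(\bigcup_{b\in T}\widehat{b})\supseteq\bigvee e[T]$. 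This contradicts $\bigwedge e[S]\leq\bigvee e[T]$, establishing compactness. Uniqueness of the canonical extension then identifies $\mathcal{RO}(X)$ with it.

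The main obstacle is the compactness clause, and within it the correct computation of the infinite operations of the regular open algebra: one must see $\bigwedge e[S]$ as the (regular open) principal upset $\mathord{\Uparrow}F_S$ of the filter generated by $S$, and $\bigvee e[T]$ as $\mathsf{int}_\leqslant\mathsf{cl}_\leqslant(\bigcup_{b\in T}\widehat{b})$, and then manufacture an explicit proper filter witnessing failure. The separativity of the filter poset is what keeps the closed elements clean. Throughout, all filters are produced by explicit finite generation, so the argument---like the cited construction---uses no choice principles.
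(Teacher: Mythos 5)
Your proof is correct and is essentially the paper's own approach made explicit: the paper establishes this theorem simply by citing the construction in \cite[\S~5.6]{Holliday2018} of the canonical extension as the BA of regular open upsets of the proper-filter poset, together with the observation (your first paragraph) that $\mathcal{RO}(X)$ is exactly that algebra because the specialization order of $UV(\mathbb{A})$ is inclusion on proper filters. Your self-contained verification---denseness via separativity of the filter poset, so that $\mathord{\Uparrow}F=\bigcap_{a\in F}\widehat{a}=\bigwedge e[F]$ is regular open and every $U\in\mathcal{RO}(X)$ is $\bigvee_{F\in U}\mathord{\Uparrow}F$, and compactness via the proper filter generated by $S\cup\{-b\mid b\in T\}$, using that $\bigwedge e[S]\supseteq\bigcap_{a\in S}\widehat{a}$ while $\bigvee e[T]=\mathsf{int}_\leqslant\mathsf{cl}_\leqslant\bigl(\bigcup_{b\in T}\widehat{b}\bigr)\subseteq\mathsf{cl}_\leqslant\bigl(\bigcup_{b\in T}\widehat{b}\bigr)$---is precisely the content of the cited construction, and every step checks out choice-free.
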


 The \textit{MacNeille completion} of a BA $\mathbb{A}$ is the unique (up to isomorphism) complete BA $\mathbb{B}$ for which there is a Boolean embedding $e$ of $\mathbb{A}$ into $\mathbb{B}$ such that every non-minimum element of $\mathbb{B}$ is above the $e$-image of some non-minimum element of $\mathbb{A}$ (see, e.g., \cite[Ch.~25]{Givant2009}). The MacNeille completion of $\mathbb{B}$ may be constructed as the lattice of normal ideals of $\mathbb{B}$ ordered by inclusion; an ideal $I$ of $\mathbb{B}$ is normal iff $I=I^{u\ell}$, where for any $A\subseteq \mathbb{B}$, $A^u$ is the set of upper bounds of $A$, and $A^\ell$ is the set of lower bounds of $A$.\footnote{The MacNeille completion of a BA $\mathbb{A}$ can also be constructed as the BA of all regular open upsets in the poset that results from deleting the bottom element of $\mathbb{A}$ and reversing the restricted order \cite[\S~5.6]{Holliday2018}.} 

\begin{theorem} Let $\mathbb{A}$ be a BA and $X$ its dual space. Then $\mathsf{RO}(X)$ is \textnormal{(}up to isomorphism\textnormal{)} the MacNeille completion of $\mathbb{A}$.\end{theorem}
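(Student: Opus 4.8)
The plan is to identify $\mathsf{RO}(X)$, where $X=UV(\mathbb{A})$, as a complete Boolean algebra into which $\mathbb{A}$ embeds in the dense manner required by the definition of the MacNeille completion, and then to appeal to the uniqueness of that completion. Concretely, I would verify three things about the candidate completion $\mathsf{RO}(X)$ together with the embedding $e\colon\mathbb{A}\to\mathsf{RO}(X)$ given by $e(a)=\widehat{a}$: that $\mathsf{RO}(X)$ is a complete BA, that $e$ is a Boolean embedding, and that every non-minimum element of $\mathsf{RO}(X)$ lies above $e(a)$ for some non-minimum $a\in\mathbb{A}$.

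First I would record completeness: by Tarski's theorem, recalled in Section~\ref{PossSection}, the regular open sets of any space form a \emph{complete} Boolean algebra, with meet $U\cap V$, complement $U^{*}=\mathsf{int}(X\setminus U)$, and join $\mathsf{int}(\mathsf{cl}(U\cup V))$. Next I would check that $e$ is a Boolean embedding. It lands in $\mathsf{RO}(X)$ because $\widehat{a}\in\mathsf{CO}\mathcal{RO}(X)=\mathsf{CRO}(X)\subseteq\mathsf{RO}(X)$ by Corollary~\ref{Jakl}, and it is injective since $a\mapsto\widehat{a}$ is. That it preserves the operations follows from the equalities $\widehat{a\wedge b}=\widehat{a}\cap\widehat{b}$, $\widehat{-a}=\mathsf{int}(X\setminus\widehat{a})=\widehat{a}^{*}$, and $\widehat{a\vee b}=\mathsf{int}(\mathsf{cl}(\widehat{a}\cup\widehat{b}))$, the latter two being exactly the content of the remark following Corollary~\ref{Jakl}; these are precisely the complement and join of the relevant elements as computed inside $\mathsf{RO}(X)$.

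The crux is the density condition. Let $U\in\mathsf{RO}(X)$ with $U\neq\varnothing$. Since the sets $\widehat{b}$ form a basis for $X$ and $U$ is a nonempty open set, there is a point $F\in U$ and a basic open $\widehat{a}$ with $F\in\widehat{a}\subseteq U$; because $F\in\widehat{a}$, the set $\widehat{a}$ is nonempty, so $a\neq 0$. Thus $a$ is a non-minimum element of $\mathbb{A}$ with $e(a)=\widehat{a}\subseteq U$, i.e.\ $e(a)\leq U$ in $\mathsf{RO}(X)$. Together with completeness, the embedding property, and the uniqueness (up to isomorphism) of the MacNeille completion, this identifies $\mathsf{RO}(X)$ as the MacNeille completion of $\mathbb{A}$.

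I expect the only genuinely delicate point to be confirming that $e$ is a homomorphism \emph{into} $\mathsf{RO}(X)$, rather than merely into the algebra $\mathsf{CO}\mathcal{RO}(X)=\mathsf{CRO}(X)$ whose operations were phrased using $\mathsf{int}_{\leqslant}$ and $\mathsf{cl}_{\leqslant}$ instead of the spectral $\mathsf{int}$ and $\mathsf{cl}$; this is exactly what Corollary~\ref{Jakl} and its following remark secure, since they show the two pseudocomplement operations agree on $\mathsf{CO}\mathcal{RO}(X)$, so the complement and join inherited from $\mathsf{RO}(X)$ restrict correctly to $e[\mathbb{A}]$. As an alternative route one could instead pass through normal ideals, using the standard construction of the MacNeille completion as the lattice of normal ideals ordered by inclusion together with the correspondence between normal ideals of $\mathbb{A}$ and sets in $\mathsf{RO}(X)$ promised in Section~\ref{CompletionSection}; but the density argument above avoids needing that correspondence.
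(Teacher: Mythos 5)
Your proof is correct, but it takes a genuinely different route from the paper's. The paper proves the theorem by explicitly constructing the MacNeille completion as the lattice of normal ideals and exhibiting mutually inverse, inclusion-preserving maps $r(I)=\bigcup\{\widehat{c}\mid c\in I\}$ and $i(U)=\{-b\mid \widehat{b}\subseteq U^{*}\}^{\ell}$ between normal ideals of $\mathbb{A}$ and $\mathsf{RO}(X)$, verifying $i(r(I))=I$ and $r(i(U))=U$ by direct computation with the pseudocomplement $U^{*}$. You instead verify the abstract defining property quoted in Section~\ref{CompletionSection}---$\mathsf{RO}(X)$ is a complete BA (Tarski), $a\mapsto\widehat{a}$ is a Boolean embedding into it, and density holds---and then invoke uniqueness of the MacNeille completion. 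All the ingredients check out: $\widehat{a}\in\mathsf{CRO}(X)\subseteq\mathsf{RO}(X)$ by Corollary~\ref{Jakl}; the operation-preservation issue you flag (spectral $\mathsf{int}$, $\mathsf{cl}$ versus $\mathsf{int}_{\leqslant}$, $\mathsf{cl}_{\leqslant}$) is real and is settled exactly by the remark following Corollary~\ref{Jakl}, as you say; your density argument is a clean one-liner from the fact that the $\widehat{a}$ form a basis; and every step, including Tarski's completeness theorem and the uniqueness of the MacNeille completion, is choice-free, so you stay within the paper's ZF setting. The trade-off: your argument is shorter and more conceptual, but the paper's explicit isomorphism does double duty---it simultaneously establishes the duality-dictionary entry ``normal ideals of $\mathbb{A}$ correspond to $\mathsf{RO}(X)$'' promised at the end of the filters-and-ideals subsection, which your route leaves unproved (you acknowledge this in your closing alternative). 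If the dictionary correspondence is wanted, one would still need the paper's computation or an equivalent.
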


\begin{proof} We show an order isomorphism between $\mathsf{RO}(X)$ and the set of normal ideals of $\mathbb{A}$ ordered by inclusion. It suffices to define an inclusion-preserving map $r$ from normal ideals to $\mathsf{RO}(X)$ and an inclusion-preserving map $i$ from $\mathsf{RO}(X)$ to normal ideals such that $i(r(I))=I$ and $r(i(U))=U$.

Suppose $I$ is a normal ideal, so $I=I^{u\ell}$. Let $r(I):=\bigcup \{\widehat{c}\mid c\in I\}$. To see that $r(I)\in\mathsf{RO}(X)$, let
$U:=\bigcup \{\widehat{-a}\mid a\in I^u\}$. Then as in the proof of Proposition 4.3, we have
\begin{eqnarray*}
U^*&=&\bigcup\{\widehat{c}\mid \forall a\in I^u\;\, \mathord{-}a\wedge c=0\}\\
&=&\bigcup\{\widehat{c}\mid \forall a\in I^u\;\,  c\leq a\}\\
&=&\bigcup \{\widehat{c}\mid c\in I^{u\ell}\}\\
&=&\bigcup \{\widehat{c}\mid c\in I\} = r(I).
\end{eqnarray*}
Thus, $r(I)\in\mathsf{RO}(X)$. Clearly $I\subseteq J$ implies $r(I)\subseteq r(J)$.

In the other direction, suppose $V\in \mathsf{RO}(X)$. Let $i(V)=\{-b\mid \widehat{b}\subseteq V^*\}^\ell$. It is easy to see that for any $S\subseteq\mathbb{A}$,  $S^\ell$ is a normal ideal, so $i(V)$ is a normal ideal. Also observe that $i$ is inclusion-preserving:
\begin{eqnarray*}
&& V\subseteq U \\
&\Rightarrow& U^*\subseteq V^*\\
&\Rightarrow &\{-b\mid \widehat{b}\subseteq U^*\}\subseteq \{-b\mid \widehat{b}\subseteq V^*\} \\
&\Rightarrow & \{-b\mid \widehat{b}\subseteq V^*\}^\ell \subseteq \{-b\mid \widehat{b}\subseteq U^*\}^\ell \\
&\Rightarrow & i(V)\subseteq i(U).
\end{eqnarray*}

Next, observe:
\begin{eqnarray*}
i(r(I)) &=&i\big(\bigcup \{\widehat{c}\mid c\in I\}\big) \\
&=& \{-b\mid \widehat{b}\subseteq\big(\bigcup \{\widehat{c}\mid c\in I\}\big)^*\}^\ell \\
&=& \{-b\mid \widehat{b}\subseteq\bigcup \{\widehat{d}\mid \forall c\in I\;\, c\wedge d=0\}\}^\ell \\
&=& \{-b\mid \widehat{b}\subseteq\bigcup \{\widehat{d}\mid \forall c\in I\;\, c\leq -d\}\}^\ell \\
&=& \{-b\mid \mathord{\uparrow}b\in \bigcup\{\widehat{d}\mid \forall c\in I\;\, c\leq -d\}^\ell  \\
&=& \{-b\mid \exists d: b\leq d\mbox{ and } \forall c\in I\;\, c\leq -d\}^\ell \\
&=& \{-b\mid \forall c\in I\;\, c\leq -b\}^\ell \\
&=& I^{u\ell}=I.
\end{eqnarray*}

Finally, observe:
\begin{eqnarray*}
r(i(U)) &=&r(\{-b\mid \widehat{b}\subseteq U^*\}^\ell)\\
&=&\bigcup \{\widehat{c}\mid c\in \{-b\mid \widehat{b}\subseteq U^*\}^\ell\}. \\
&=&\bigcup \{\widehat{c}\mid \forall b\,(\widehat{b}\subseteq U^*\Rightarrow c\leq -b)\}\\
&=&\bigcup \{\widehat{c}\mid \forall b\,(\widehat{b}\subseteq U^*\Rightarrow b\wedge c=0)\} \\
&=&U^{**} =U.
\end{eqnarray*}
This completes the proof.\end{proof}

\section{Example applications}\label{ApplicationSection} In this section, we apply our duality to prove some basic theorems about BAs in Propositions \ref{ChainsAnti}, \ref{CompleteBAProp}, and \ref{SubalgebrasProp}. 

\subsection{Chains and antichains in BAs} By an \textit{antichain} in a BA, we mean a collection $C$ of elements such that for all $x,y\in C$ with $x\neq y$, we have $x\wedge y=\zero$.

\begin{prop}\label{ChainsAnti} Every infinite BA contains infinite chains and infinite antichains.
\end{prop}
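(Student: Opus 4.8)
The plan is to reduce both assertions to the construction of an infinite family of pairwise disjoint nonzero elements, and to carry out that construction on the dual side using Lemma~\ref{EitherInfinite}. First I would observe that if $\mathbb{A}$ is infinite then its dual space $X=UV(\mathbb{A})$ has infinitely many points: otherwise $\mathsf{CO}\mathcal{RO}(X)$ would be a family of subsets of a finite set and hence finite, contradicting $\mathbb{A}\cong\mathsf{CO}\mathcal{RO}(X)$ from Theorem~\ref{MainRep}. Recall also that every UV-space is separative, so $(X,\leqslant)$ is an infinite separative poset, and $\mathbb{A}$ is isomorphic to $\mathsf{CO}\mathcal{RO}(X)\subseteq\mathcal{RO}(X)$.

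The heart of the argument is a \emph{peeling step}: if $W$ is an infinite UV-space (which for us will always be a $\mathsf{CO}\mathcal{RO}$ subspace of $X$), then there are disjoint sets $A,W'\in\mathsf{CO}\mathcal{RO}(W)$ with $A\neq\varnothing$ and $W'$ infinite. To prove it, note that since $W$ has at least two points, the $T_0$ axiom together with the Priestley-like separation property noted after Definition~\ref{VOspace} yields a $U\in\mathsf{CO}\mathcal{RO}(W)$ with $\varnothing\neq U\subsetneq W$. Applying Lemma~\ref{EitherInfinite} to the infinite separative poset $W$ and the regular open $U$, either $U$ or $\neg U$ (where $\neg U=\mathsf{int}_\leqslant(W\setminus U)$) is infinite. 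If $\neg U$ is infinite, set $A:=U$ and $W':=\neg U$; if instead $U$ is infinite, set $A:=\neg U$ and $W':=U$, where $\neg U\neq\varnothing$ because a dense regular open set equals the whole space, contradicting $U\subsetneq W$. In either case $A\cap W'=\varnothing$, $A\neq\varnothing$, $W'$ is infinite, and both lie in $\mathsf{CO}\mathcal{RO}(W)$ (closed under $\mathsf{int}_\leqslant(W\setminus\cdot)$ by Definition~\ref{VOspace}).

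Next I would iterate the peeling step starting from $W_0=X$, at each stage passing to the subspace $W'$, which is again a UV-space by Proposition~\ref{SubSpaceProp}, with $\mathsf{CO}\mathcal{RO}(W')$ consisting precisely of the $\mathsf{CO}\mathcal{RO}(X)$-sets contained in $W'$. This produces nonempty pairwise disjoint sets $A_1,A_2,\dots\in\mathsf{CO}\mathcal{RO}(X)$; pairwise disjointness holds because each $A_{k+1}$ lives inside the remainder $W_{k+1}$, which is disjoint from $A_k$. Transporting along $\mathbb{A}\cong\mathsf{CO}\mathcal{RO}(X)$, write $A_k=\widehat{a_k}$; then the $a_k$ are nonzero and $a_i\wedge a_j=\zero$ for $i\neq j$, giving an infinite antichain. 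Finally the finite joins $c_n=a_1\vee\dots\vee a_n$ satisfy $c_n<c_{n+1}$, since $a_{n+1}\neq\zero$ is disjoint from $c_n$, so $\{c_n\mid n\geq 1\}$ is an infinite chain.

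The main obstacle is not any single step but the bookkeeping of the recursion: the construction makes infinitely many successive choices of splitting elements, so it relies on Dependent Choice, which is exactly the setting adopted in this section. Care is also needed in the peeling step to guarantee that the peeled-off piece $A$ is \emph{nonempty} in the case where $U$ itself is the infinite side, and this is precisely where regularity of $U$ (no proper dense regular open) is used.
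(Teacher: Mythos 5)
Your proof is correct and follows essentially the same route as the paper's: both pass to the infinite dual UV-space, split an infinite $\mathsf{CO}\mathcal{RO}$ set into two disjoint nonempty pieces using $T_0$ plus the separation property together with Lemma~\ref{EitherInfinite}, and iterate via DC to obtain an infinite family of pairwise disjoint nonzero elements. The only cosmetic differences are that you invoke the subspace machinery of Proposition~\ref{SubSpaceProp} where the paper instead regards the infinite set $U$ directly as a separative poset and takes $\neg_U(U\cap V)$ inside $\mathcal{RO}(U)$, and that you manufacture the chain algebraically from the finite joins $a_1\vee\dots\vee a_n$ whereas the paper reads the chain off directly from the nested sets $U_0\supsetneq U_1\supsetneq\dots$ produced by the recursion.
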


\begin{proof} By duality, it suffices to show that in any infinite UV-space $X$, there is an infinite descending chain $U_0\supsetneq U_1\supsetneq\dots$ of sets from  $\mathsf{CO}\mathcal{RO}(X)$, as well as an infinite family of pairwise disjoint sets from $\mathsf{CO}\mathcal{RO}(X)$. For this it suffices to show that for every infinite $U\in \mathsf{CO}\mathcal{RO}(X)$ (note that $X$ is such a $U$), there is an infinite $U'\in \mathsf{CO}\mathcal{RO}(X)$ with $U\supsetneq U'$ and $U\cap\neg U'\neq\varnothing$.  For then by DC, there is an infinite descending chain $U_0\supsetneq U_1\supsetneq \dots$ of  sets from $\mathsf{CO}\mathcal{RO}(X)$ with $U_i\cap \neg U_{i+1}\neq\varnothing$ for each $i\in\mathbb{N}$, in which case $\{U_0\cap \neg U_1,U_1\cap \neg U_2,\dots\}$ is our antichain.

Assume $U\in \mathsf{CO}\mathcal{RO}(X)$ is infinite. Since $X$ is $T_0$, there are $x,y\in U$ such that $x\not\leqslant y$. Then by the separation property of UV-spaces, there is a $V\in\mathsf{CO}\mathcal{RO}(X)$ such that $x\in V$ and $y\not\in V$, which with $y\in U$ and $U,V\in\mathcal{RO}(X)$ implies that there is a $z\geqslant y$ such that $z\in U\cap \neg V$. Since $U,V\in \mathsf{CO}\mathcal{RO}(X)$, we have $U\cap V,U\cap \neg V\in \mathsf{CO}\mathcal{RO}(X)$ by Definition \ref{VOspace}.\ref{CloseProp}; and since  $z\in U\cap \neg V$ and $x\in U\cap V$, we have $z\in U\cap \neg (U\cap V)\neq \varnothing$ and $x\in U\cap \neg (U\cap \neg V)\neq \varnothing$. Thus, if $U\cap V$ is infinite, then we can set $U':=U\cap V$, and otherwise we claim that $U\cap\neg V$ is infinite, in which case we can set $U':=U\cap\neg V$. Since $U\in \mathcal{RO}(X)$, we may regard $U$ as a separative partial order. Given $V\in\mathcal{RO}(X)$, we have  $U\cap V,U\cap \neg V\in \mathcal{RO}(U)$ and $U\cap \neg V=\neg_U(U\cap V)$, where $\neg_U$ is the complement operation in $\mathcal{RO}(U)$. Then since $U$ is infinite, by Lemma \ref{EitherInfinite} either $U\cap V$ or $\neg_U(U\cap V)$ is infinite, as desired.\end{proof}

\subsection{Products of BAs} Before our second example application in Proposition \ref{CompleteBAProp}, we prove a preliminary lemma. Recall from Proposition \ref{SubSpaceProp} that a subspace of a UV-space induced by a $\mathsf{CO}\mathcal{RO}$ set is also a UV-space.

\begin{lemma}\label{SumLem} If $X$ is a UV-space and $U\in\mathsf{CO}\mathcal{RO}(X)$, then $X$ is homeomorphic to the UV-sum of the subspaces induced by $U$ and $\neg U$, respectively.
\end{lemma}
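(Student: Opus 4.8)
The plan is to avoid building an explicit homeomorphism by hand and instead route everything through the dual Boolean algebras, reusing the UV-sum and relativization machinery already developed. First I would apply Theorem~\ref{SecondThm}.\ref{SecondThmB} to replace $X$ by its canonical presentation $UV(\mathbb{A})$, where $\mathbb{A}=\mathsf{CO}\mathcal{RO}(X)$. By Proposition~\ref{COROBA} the complement operation of this BA is $\neg$, so $\neg U$ is precisely the Boolean complement $-U$ of $U$ in $\mathbb{A}$; hence $U$ and $\neg U$ are complementary elements of $\mathbb{A}$, with $U\cap \neg U=\varnothing$ and $U\vee\neg U=X$. The subspaces induced by $U$ and $\neg U$ therefore have disjoint underlying sets $U$ and $\neg U$, so their UV-sum is defined, and each is a UV-space by Proposition~\ref{SubSpaceProp}.

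Let $Y$ denote this UV-sum. By the corollary to Proposition~\ref{UnionProduct} stating that $\mathsf{CO}\mathcal{RO}(X'\bigcirc Y')\cong\mathsf{CO}\mathcal{RO}(X')\times\mathsf{CO}\mathcal{RO}(Y')$ for UV-spaces $X',Y'$, we get $\mathsf{CO}\mathcal{RO}(Y)\cong\mathsf{CO}\mathcal{RO}(U)\times\mathsf{CO}\mathcal{RO}(\neg U)$, where $\mathsf{CO}\mathcal{RO}(U)$ abbreviates the dual BA of the subspace induced by $U$. By Proposition~\ref{RelaProp} this dual BA is the relativization $\mathbb{A}_U$ of $\mathbb{A}$ to $U$ (the BA with underlying set $\{c\in\mathbb{A}\mid c\subseteq U\}$), and likewise $\mathsf{CO}\mathcal{RO}(\neg U)\cong\mathbb{A}_{\neg U}$. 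I would then invoke the standard Boolean fact that relativizing to a pair of complementary elements splits the algebra as a product: since $U,\neg U$ are complementary in $\mathbb{A}$, the map $c\mapsto\langle c\cap U,\,c\cap\neg U\rangle$ is an isomorphism $\mathbb{A}\cong\mathbb{A}_U\times\mathbb{A}_{\neg U}$. Chaining these isomorphisms yields $\mathsf{CO}\mathcal{RO}(Y)\cong\mathbb{A}=\mathsf{CO}\mathcal{RO}(X)$.

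To finish, since $X$ and $Y$ are both UV-spaces with isomorphic dual BAs, Theorem~\ref{SecondThm}.\ref{SecondThmB} (equivalently, the functoriality in Theorem~\ref{DualityThm}) gives $X\cong UV(\mathsf{CO}\mathcal{RO}(X))\cong UV(\mathsf{CO}\mathcal{RO}(Y))\cong Y$, which is exactly the claimed homeomorphism between $X$ and the UV-sum of the subspaces induced by $U$ and $\neg U$.

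The \emph{main obstacle} is bookkeeping rather than any genuine difficulty: one must be sure that the abstract product decomposition of $\mathbb{A}$ is correctly matched to the concrete three-part underlying set $U\cup\neg U\cup(U\times\neg U)$ of $Y$, and that Proposition~\ref{RelaProp} is applied to the subspaces (disjoint subsets of $X$) rather than to abstract homeomorphic copies. All of this compatibility is already verified inside the proof of Proposition~\ref{UnionProduct}, so the duality route incurs no new computation. Should one prefer an explicit homeomorphism instead, the natural candidate is $h\colon X\to Y$ sending a filter $F$ to $F$ itself when $F\in U$ or $F\in\neg U$, and to $\langle F\sqcup\mathord{\uparrow}a,\,F\sqcup\mathord{\uparrow}{-}a\rangle$ otherwise (writing $U=\widehat{a}$ via Theorem~\ref{MainRep}); this is precisely the decomposition $F=(F\sqcup\mathord{\uparrow}a)\sqcap(F\sqcup\mathord{\uparrow}{-}a)$ underlying Corollary~\ref{UVspectral}.\ref{UVspectral5}--\ref{UVspectral6}, and its continuity and openness would be checked by the same computations as in Proposition~\ref{UnionProduct}. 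I would nonetheless present the duality argument as primary, since it sidesteps the improper-filter edge cases that complicate a direct treatment.
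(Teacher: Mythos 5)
Your proposal is correct, and it takes a genuinely different route from the paper. The paper proves the lemma by writing down an explicit map $f\colon U\bigcirc\neg U\to X$ with $f(z)=z$ for $z\in U\cup\neg U$ and $f(\langle x,y\rangle)=x\sqcap y$, then verifying bijectivity from the unique meet decomposition of Corollary \ref{UVspectral}.\ref{UVspectral5}, continuity from the fact that each $V\in\mathsf{CO}\mathcal{RO}(X)$ is a filter with respect to $\sqcap$ (Corollary \ref{UVspectral}.\ref{UVspectral4}), and openness from the formula $U\vee V=U\cup V\cup\{x\sqcap y\mid x\in U,\,y\in V\}$ of Corollary \ref{UVspectral}.\ref{UVspectral6}; the explicit candidate you sketch in your last paragraph is precisely the inverse of this $f$. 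You instead argue entirely on the algebra side: $\mathsf{CO}\mathcal{RO}(U\bigcirc\neg U)\cong\mathsf{CO}\mathcal{RO}(U)\times\mathsf{CO}\mathcal{RO}(\neg U)$ by the corollary to Proposition \ref{UnionProduct}, each factor is the relativization of $\mathbb{A}=\mathsf{CO}\mathcal{RO}(X)$ by Propositions \ref{SubSpaceProp} and \ref{RelaProp}, and since $\neg U$ is the Boolean complement of $U$ in $\mathbb{A}$ (Proposition \ref{COROBA}, using Definition \ref{VOspace}.\ref{CloseProp} to get $\neg U\in\mathsf{CO}\mathcal{RO}(X)$), the standard splitting $c\mapsto\langle c\cap U,\,c\cap\neg U\rangle$ gives $\mathbb{A}\cong\mathbb{A}_U\times\mathbb{A}_{\neg U}$; duality (Theorems \ref{SecondThm}.\ref{SecondThmB} and \ref{DualityThm}) then yields the homeomorphism. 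All the results you invoke precede the lemma in the paper and are choice-free, so there is no circularity, and since the downstream application (Proposition \ref{CompleteBAProp}) only needs the homeomorphism type, your weaker conclusion suffices. What each approach buys: your route incurs no new point-level computation, pushing all continuity and openness checks into the already-proved Proposition \ref{UnionProduct}, whereas the paper's construction delivers a concrete canonical homeomorphism that restricts to the identity on $U\cup\neg U$ --- slightly more information, obtained at the cost of the $\sqcap$-calculations --- and is self-contained modulo Corollary \ref{UVspectral}. If you present the duality argument, do make explicit (as you note) that the product decomposition of $\mathbb{A}$ respects the relative complement $U\cap\neg V$ computed in Proposition \ref{SubSpaceProp}, so that the factors really are the dual algebras of the subspaces and not merely abstractly isomorphic lattices.
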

\begin{proof} By Corollary \ref{UVspectral}.\ref{UVspectral4}, $(X,\leqslant)$ has a meet $x\sqcap y$ for any two elements $x,y\in X$. We define $f\colon U\bigcirc \neg U\to X$ as follows: if $z\in U\cup \neg U$, then $f(z)=z$; otherwise $z=\langle x,y\rangle$ for $x\in U$ and $y\in\neg U$, so we define $f(\langle x,y\rangle)=x\sqcap y$. That $f$ is a bijection follows from Corollary \ref{UVspectral}.\ref{UVspectral5}. To see that $f$ is continuous, we show that the inverse image of each basic open is open. Given $V\in\mathsf{CO}\mathcal{RO}(X)$, we have:
\begin{eqnarray*}
f^{-1}[V] &=& (U\cap V)\cup (\neg U\cap V)\cup \{\langle x,y\rangle\mid x\in U,y\in \neg U,x\sqcap y\in V\} \\
&=& (U\cap V)\cup (\neg U\cap V)\cup ((U\cap V) \times (\neg U\cap V)),
\end{eqnarray*}
where we have used the fact that $V$ is a filter with respect to $\sqcap$ (Corollary \ref{UVspectral}.\ref{UVspectral4}). Since $U\cap V\in \mathsf{CO}\mathcal{RO}(U)$ and $\neg U\cap V\in\mathsf{CO}\mathcal{RO}(\neg U)$, it follows from the above equation and Definition \ref{UVunionDef} that $f^{-1}[V]$ is open in $U\bigcirc\neg U$.

Finally, to see that $f^{-1}$ is continuous, each basic open of $U\bigcirc\neg U$ is of the form $V\cup V'\cup (V\times V')$ for $V\in\mathsf{CO}\mathcal{RO}(U)$ and $V'\in\mathsf{CO}\mathcal{RO}(\neg U)$ by Definition \ref{UVunionDef}. Then $V,V'\in\mathsf{CO}\mathcal{RO}(X)$ and 
\begin{eqnarray*}
f[V\cup V'\cup (V\times V')] &=& f[V]\cup f[V']\cup f[V\times V'] \\
&=& V\cup V' \cup \{x\sqcap y\mid x\in V,\, y\in V'\}\\
&=& V\vee V' \in\mathsf{CO}\mathcal{RO}(X),
\end{eqnarray*}
where the last equality uses Corollary \ref{UVspectral}.\ref{UVspectral6}.
\end{proof}

\begin{prop}\label{CompleteBAProp} Any complete BA is isomorphic to the product of a complete and atomless BA and a complete and atomic BA.
\end{prop}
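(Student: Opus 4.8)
The plan is to pass to the dual UV-space and split it along the closure of its isolated points. Let $\mathbb{A}$ be a complete BA and let $X$ be its dual UV-space; by Proposition \ref{Comp}.\ref{Comp3}, $X$ is a complete UV-space. Write $X_{\mathrm{iso}}$ for the set of isolated points of $X$. Since each isolated point $x$ has $\{x\}$ open, $X_{\mathrm{iso}}$ is a union of open singletons and hence open. Applying the completeness of $X$ to the open set $X_{\mathrm{iso}}$, I set $W := \mathsf{int}(\mathsf{cl}(X_{\mathrm{iso}}))$ and obtain $W \in \mathsf{CO}\mathcal{RO}(X)$. This $W$ is the crucial choice: it is the $\mathsf{CO}\mathcal{RO}$ set carving out the ``atomic part'' of $X$, with $\neg W$ carving out the ``atomless part''.

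Next I would verify that the subspace induced by $W$ is atomic and the subspace induced by $\neg W$ is atomless. Since $X_{\mathrm{iso}}$ is open we have $X_{\mathrm{iso}} = \mathsf{int}(X_{\mathrm{iso}}) \subseteq \mathsf{int}(\mathsf{cl}(X_{\mathrm{iso}})) = W$, and because $W$ is open in $X$, a point is isolated in the subspace $W$ iff it is isolated in $X$; thus the isolated points of $W$ are exactly $X_{\mathrm{iso}}$. As $W \subseteq \mathsf{cl}(X_{\mathrm{iso}})$, we get $\mathsf{cl}^{W}(X_{\mathrm{iso}}) = \mathsf{cl}(X_{\mathrm{iso}}) \cap W = W$, so the isolated points are dense in $W$, and by Proposition \ref{AtomicProp} the BA $\mathsf{CO}\mathcal{RO}(W)$ is atomic. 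For the other factor, $\neg W$ is disjoint from $W$ and hence from $X_{\mathrm{iso}} \subseteq W$; since $\neg W$ is open in $X$, any point isolated in the subspace $\neg W$ would be isolated in $X$ and so lie in $X_{\mathrm{iso}} \subseteq W$, a contradiction. Therefore the subspace $\neg W$ has no isolated points, and by the corollary to Proposition \ref{Isolated} (that a BA is atomless iff its dual has no isolated points), $\mathsf{CO}\mathcal{RO}(\neg W)$ is atomless.

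To assemble the result, I invoke Lemma \ref{SubspaceComplete} to conclude that the subspaces induced by $W$ and by $\neg W$ are both complete UV-spaces, whence $\mathsf{CO}\mathcal{RO}(W)$ and $\mathsf{CO}\mathcal{RO}(\neg W)$ are complete BAs (Proposition \ref{Comp}.\ref{Comp3}). By Lemma \ref{SumLem}, $X$ is homeomorphic to the UV-sum $W \bigcirc \neg W$, so by the duality of Theorem \ref{DualityThm} together with the isomorphism $\mathsf{CO}\mathcal{RO}(W \bigcirc \neg W) \cong \mathsf{CO}\mathcal{RO}(W) \times \mathsf{CO}\mathcal{RO}(\neg W)$, we obtain $\mathbb{A} \cong \mathsf{CO}\mathcal{RO}(W) \times \mathsf{CO}\mathcal{RO}(\neg W)$. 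Here $\mathsf{CO}\mathcal{RO}(W)$ is complete and atomic while $\mathsf{CO}\mathcal{RO}(\neg W)$ is complete and atomless, which is the desired decomposition.

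I expect the main obstacle to be the two topological claims of the second paragraph — specifically, correctly identifying the isolated points of each subspace (the key point being that both $W$ and $\neg W$ are \emph{open} in $X$, so isolation in a subspace coincides with isolation in $X$) and establishing that $X_{\mathrm{iso}}$ is dense in $W$. Once these are in place, the rest is a routine application of the completeness machinery of Proposition \ref{Comp} and Lemma \ref{SubspaceComplete}, the splitting Lemma \ref{SumLem}, and the product duality.
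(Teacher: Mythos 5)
Your proposal is correct and follows essentially the same route as the paper's proof: both split the dual complete UV-space $X$ along $\mathsf{int}(\mathsf{cl}(X_{\mathrm{iso}}))\in\mathsf{CO}\mathcal{RO}(X)$, show the induced subspace and its $\neg$-complement are complete (Lemma \ref{SubspaceComplete}), atomic (Proposition \ref{AtomicProp}), and atomless (via isolated points), and conclude with Lemma \ref{SumLem} and the product duality. The only difference is presentational---you spell out why isolation in the open subspaces coincides with isolation in $X$ and compute $\mathsf{cl}^{W}(X_{\mathrm{iso}})$ directly, where the paper instead verifies $\mathsf{int}^{U}\mathsf{cl}^{U}U_{\mathrm{iso}}=U$---which is harmless added detail, not a different argument.
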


\begin{proof} By duality, it suffices to show that any complete UV-space $X$ is the UV-sum of a complete UV-space with no isolated points and a complete UV-space in which the isolated points form a dense subset. Since $X$ is complete, $U:=\mathsf{int} (\mathsf{cl} X_\mathrm{iso})\in\mathsf{CO}\mathcal{RO}(X)$. Form the subspaces induced by $U$ and $\neg U$. By Lemma \ref{SumLem}, $X$ is homeomorphic to the UV-sum of these subspaces. By Lemma \ref{SubspaceComplete}, both subspaces are complete UV-spaces. Clearly $(\neg U)_\mathrm{iso}\subseteq X_\mathrm{iso}$, and $X_\mathrm{iso}=U_\mathrm{iso}$, which with $\neg U\cap U=\varnothing$ implies $(\neg U)_\mathrm{iso}=\varnothing$. Thus, the subspace induced by $\neg U$ has no isolated points. Finally, in the subspace induced by $U$, we have 
\[\mathsf{int}^U\mathsf{cl}^U U_\mathrm{iso}=U\cap\mathsf{int}(\mathsf{cl} U_\mathrm{iso})=U\cap U=U,\] which implies $\mathsf{cl}^U U_\mathrm{iso}=U$ by Proposition \ref{AtomicProp}.\end{proof}

\subsection{Subalgebras of BAs} Let $\mathbb{B}_n$ be the finite Boolean algebra with $n$ atoms. As our final example, we will prove using our duality that every infinite BA contains subalgebras isomorphic to $\mathbb{B}_n$ for each positive integer $n$. First, we prove some preliminary results about UV-spaces.

\begin{definition} Let $X$ be a UV-space and $\{U_0,\dots,U_n\}$ a family of $\mathsf{CO}\mathcal{RO}(X)$ sets. We say that $\{U_0,\dots,U_n\}$ is a \textit{regular partition} of $X$ iff $U_0,\dots,U_n$ are pairwise disjoint and $X=U_0\vee\dots \vee U_n$.
\end{definition}

\begin{prop}\label{prop: partition}
Let $X$ be an infinite UV-space. For each $n\in \omega$, there is a family $\{V_0,\dots, V_n\}$ of $\mathsf{CO}\mathcal{RO}$ sets that is a regular partition of $X$.
\end{prop}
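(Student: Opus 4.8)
The plan is to pass to the dual Boolean algebra $\mathbb{A}:=\mathsf{CO}\mathcal{RO}(X)$, which is a BA by Proposition \ref{COROBA}, and to prove the nontrivial form of the statement, namely that there is a regular partition into $n+1$ \emph{nonempty} sets (this is the version used in the sequel, and the hypothesis that $X$ is infinite is exactly what makes it available). Such a partition is precisely a partition of unity $\one=V_0\vee\dots\vee V_n$ in $\mathbb{A}$ into $n+1$ nonzero, pairwise-meet-zero elements, since in $\mathbb{A}$ meet is $\cap$, the top element is $X$, and the bottom element $\zero$ is $\varnothing$. First I would record that $\mathbb{A}$ is infinite: by Theorem \ref{SecondThm}.\ref{SecondThmB}, $X$ is homeomorphic to $UV(\mathbb{A})$, and since the proper filters of a finite BA form a finite set, finiteness of $\mathbb{A}$ would force $X$ to be finite, contrary to hypothesis.

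Next I would invoke Proposition \ref{ChainsAnti}: being an infinite BA, $\mathbb{A}$ contains an infinite antichain, and discarding $\zero$ if it occurs leaves an infinite family $A_0,A_1,A_2,\dots$ of nonzero, pairwise-meet-zero elements of $\mathbb{A}$ (equivalently, nonempty pairwise-disjoint sets in $\mathsf{CO}\mathcal{RO}(X)$). For a fixed $n$, I would then set $V_i:=A_i$ for $0\leq i\leq n-1$ and $V_n:=\neg(A_0\vee\dots\vee A_{n-1})$, reading the empty join as $\zero$ when $n=0$, so that $V_0=\neg\zero=X$. Each $V_i$ lies in $\mathsf{CO}\mathcal{RO}(X)$ by Definition \ref{VOspace}.\ref{CloseProp}.

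The verification is then routine. For distinct $i,j<n$ we have $V_i\wedge V_j=A_i\wedge A_j=\zero$, and for $i<n$ we have $V_i\wedge V_n=A_i\wedge\neg(A_0\vee\dots\vee A_{n-1})=\zero$ because $A_i\leq A_0\vee\dots\vee A_{n-1}$; hence the $V_i$ are pairwise disjoint. Their join telescopes to $(A_0\vee\dots\vee A_{n-1})\vee\neg(A_0\vee\dots\vee A_{n-1})=\one=X$, so $\{V_0,\dots,V_n\}$ is a regular partition.

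The one genuinely load-bearing point, and the step I expect to require care, is showing that the final block $V_n$ is nonempty, for this is where infinitude of $X$ (rather than merely the presence of $\geq n$ atoms) enters. Here I would use that the antichain has \emph{strictly more} than $n$ nonzero members: the element $A_n$ satisfies $A_n\neq\zero$ and $A_n\wedge(A_0\vee\dots\vee A_{n-1})=\zero$, so $A_0\vee\dots\vee A_{n-1}\neq\one$, whence $V_n=\neg(A_0\vee\dots\vee A_{n-1})\neq\zero$, i.e.\ $V_n\neq\varnothing$ (and for $n=0$, $V_0=X\neq\varnothing$ directly since $X$ is infinite). This yields the desired regular partition for every $n\in\omega$. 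An alternative, purely inductive route would build the partition one block at a time, splitting a non-atomic block at each stage; the obstacle there is the same, since the inductive step fails exactly when every block is an atom, which would make $\mathbb{A}\cong\mathbf{B}_{n+1}$ finite.
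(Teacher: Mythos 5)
Your proof is correct, but it runs along a more algebraic track than the paper's. The paper's proof mines the proof of Proposition \ref{ChainsAnti}: it reuses the specific chain $X=U_0\supsetneq U_1\supsetneq\dots$ constructed there, takes the blocks $U_0\cap\neg U_1,\dots,U_{n-1}\cap\neg U_n$ together with the final block $U_n$ (written as $U_n\cap\neg U_{n+1}$ with $U_{n+1}=\varnothing$), and then verifies that the join of the blocks is $X$ by a point-level density argument: using the description of $\vee$ as $\mathsf{int}_\leqslant(\mathsf{cl}_\leqslant(\cdot\cup\cdot))$ from Proposition \ref{COROBA}, it shows that every $x\in X$ has some $y\geqslant x$ lying in one of the blocks, by walking up the chain. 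You instead use only the \emph{statement} of Proposition \ref{ChainsAnti}, take $n+1$ nonzero pairwise-disjoint elements $A_0,\dots,A_{n-1},A_n$ (plus the spare $A_n$), and cap the partition with the Boolean complement $V_n=\neg(A_0\vee\dots\vee A_{n-1})$, so the join computation collapses to the identity $a\vee\neg a=\one$ in the BA $\mathsf{CO}\mathcal{RO}(X)$ and no density argument is needed. The two constructions in fact coincide in substance: with $A_i:=U_i\cap\neg U_{i+1}$ one checks by telescoping and uniqueness of complements that $\neg(A_0\vee\dots\vee A_{n-1})=U_n$. What your route buys is modularity (no reliance on the internals of another proof) and, more importantly, an explicit nonemptiness verification via the spare antichain element $A_n$; this is a genuine sharpening, since the paper's definition of regular partition does not formally demand nonempty blocks (so the bare statement admits degenerate instances padded with $\varnothing$), yet the proof of Corollary \ref{cor: partition2} tacitly needs nonemptiness, e.g., when it forms the proper filter $\{V\in\mathsf{CO}\mathcal{RO}(X)\mid \bigvee_{k\in K}U_k\subseteq V\}$ and when it derives $\bigvee_{k\in K}U_k\not\subseteq\bigvee_{j\in J}U_j$ from a nonempty $U_i$. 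The paper's construction secures nonemptiness implicitly, because the proof of Proposition \ref{ChainsAnti} arranges $U_i\cap\neg U_{i+1}\neq\varnothing$, and its pointwise verification fits the section's stated aim of illustrating spatial reasoning; both routes invoke DC only through Proposition \ref{ChainsAnti}, consistent with the paper's conventions for Section 8.
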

\begin{proof} Consider the antichain $\{U_0\cap \neg U_1$, $U_1\cap\neg U_2$, \dots , $U_{n-1}\cap\neg U_n\}$ constructed in the proof of Proposition \ref{ChainsAnti}. Let $U_{n+1}=\varnothing$. We claim that the antichain $\{U_0\cap \neg U_1,\dots, U_{n-1}\cap\neg U_n, U_n\cap \neg U_{n+1}\}$ is a regular partition, i.e., its join is $X$. Using the equation for join in terms of $\mathsf{int}_\leqslant \mathsf{cl}_\leqslant$ and union (Proposition \ref{COROBA}), it suffices to show that for every $x\in X$, there is a $y\geqslant x$ such that $y\in U_i\cap U_{i+1}$ for some $i\in\{0,\dots, n\}$. If $x\in \neg U_1$, then since $U_0=X$, we have $x\in U_0\cap\neg U_1$, so we take $y=x$ and $i=0$. If $x\not\in U_1$, then there is an $x_1\geqslant x$ such that $x_1\in U_1$. Now if $x_1\in \neg U_2$, then $x_1\in U_1\cap\neg U_2$, so we take $y=x_1$ and $i=1$. If $x_1\not\in \neg U_2$, then there is an $x_2\geqslant x_1$ such that $x_2\in U_2$. By transitivity, $x_2\geqslant x$. If $x_2\in\neg U_3$, then $x_2\in U_2\cap \neg U_3$, so we take $y=x_2$ and $i=2$. If $x_2\not\in U_3$, then there is an $x_3\geqslant x_2$ such that $x_3\in U_3$, etc. If we do not find our $y$ and $i$ in this way by $n-1$, then we reason as follows: given $x_{n-1}\not\in \neg U_n$, there is an $x_n\geqslant x_{n-1}$ such that $x_n\in U_n$. Then since $U_{n+1}=\varnothing$, we have $x_n\in U_n\cap\neg U_{n+1}$, so we set $y=x_n$ and $i=n$.\end{proof}

To obtain Corollary \ref{cor: partition2} below from Proposition \ref{prop: partition}, we use the following topological fact.

\begin{fact}\label{TopFact} For any space $X$, $V\subseteq X$, and open $U\subseteq X$, if $U\cap V=\varnothing$, then $U\cap \mathsf{int}(\mathsf{cl}(V))=\varnothing$.
\end{fact}

\begin{cor}\label{cor: partition2}
Let $X$ be a UV-space. For each positive integer $m$, there is a family $\{U_1,\dots, U_m\}$ of pairwise disjoint $\mathsf{CO}\mathcal{RO}$ sets such that:
\begin{enumerate}
\item\label{partition2a} for every $x\in X$, there is a unique $K\subseteq \{1,\dots, m\}$ such that $x\in {\bigvee}_{k\in K}U_k$ and $x\not\in {\bigvee}_{j\in J} U_j$ for each $J\subsetneq K$;
\item\label{partition2b} for every $K\subseteq  \{1,\dots, m\}$ such that $K\neq\varnothing$, there is an $x\in X$ such that $x\in \underset{k\in K}{\bigvee} U_k$ and  $x\not\in \underset{j\in J}{\bigvee} U_j$ for each $J\subsetneq K$;
\item\label{partition2c} for every $K\subseteq  \{1,\dots, m\}$, if $x\not\in \neg \bigvee_{k\in K} U_k \vee \bigvee_{j\in J} U_j $ for each $J\subsetneq K$, then there is a $y\geqslant x$ such that $y\in {\bigvee}_{k\in K} U_k$ and $y\not\in {\bigvee}_{j\in J} U_j$ for each $J\subsetneq K$.
\end{enumerate}
\end{cor}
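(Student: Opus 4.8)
The plan is to reduce everything to filter arithmetic in $\mathbb{A}:=\mathsf{CO}\mathcal{RO}(X)$. By Theorem~\ref{SecondThm}.\ref{SecondThmB} I may assume $X=UV(\mathbb{A})$, so that points are proper filters, $\leqslant$ is $\subseteq$, and by Theorem~\ref{MainRep} the $\mathsf{CO}\mathcal{RO}$ sets are exactly the $\widehat{a}$ for $a\in\mathbb{A}$, with $\widehat{a}\cap\widehat{b}=\widehat{a\wedge b}$, $\neg\widehat{a}=\widehat{-a}$, and $\widehat{a}\vee\widehat{b}=\widehat{a\vee b}$. To build the family I would apply Proposition~\ref{prop: partition} (using that $X$ is infinite, as throughout this subsection) with $n=m-1$ to obtain a regular partition $\{U_1,\dots,U_m\}$ of $X$. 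Writing $U_k=\widehat{a_k}$, the $a_1,\dots,a_m$ are then pairwise disjoint with $\bigvee_k a_k=\one$, each $a_k\neq\zero$ since the pieces of the partition are nonempty by construction (Proposition~\ref{ChainsAnti}). For $K\subseteq\{1,\dots,m\}$ I set $a_K:=\bigvee_{k\in K}a_k$, so $\bigvee_{k\in K}U_k=\widehat{a_K}$ and $\neg\bigvee_{k\in K}U_k=\widehat{-a_K}$; disjointness of the $a_k$ yields the key identity $a_K\wedge a_{K'}=a_{K\cap K'}$.

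For parts \ref{partition2a} and \ref{partition2b} I would attach to each point $F$ the set $\mathcal{K}_F=\{K: a_K\in F\}$. It is upward closed (monotonicity of $a_{(\cdot)}$), closed under intersection (by $a_K\wedge a_{K'}=a_{K\cap K'}$, hence $\widehat{a_K}\cap\widehat{a_{K'}}=\widehat{a_{K\cap K'}}$), and nonempty (as $a_{\{1,\dots,m\}}=\one\in F$); so in the finite lattice $\wp(\{1,\dots,m\})$ it is the principal filter generated by $K_F:=\bigcap\mathcal{K}_F$. Then $K_F$ is the unique $K$ with $F\in\widehat{a_K}$ and $F\notin\widehat{a_J}$ for all $J\subsetneq K$, which is part \ref{partition2a}. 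For part \ref{partition2b}, given $K\neq\varnothing$ I take $F=\mathord{\uparrow}a_K$; this is proper since $a_K\neq\zero$, and $\mathcal{K}_F=\{K': K\subseteq K'\}$, so $K_F=K$ and $F$ is the required witness.

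Part \ref{partition2c} will be the crux. Fix $K\neq\varnothing$ (the case $K=\varnothing$ is vacuous and must be read as excluded, since the conclusion would otherwise demand $y\in\bigvee_{k\in\varnothing}U_k=\varnothing$) and put $b=a_K$. Writing $c_J:=-a_K\vee a_J$, the set $\neg\bigvee_{k\in K}U_k\vee\bigvee_{j\in J}U_j$ equals $\widehat{c_J}$, so the hypothesis says exactly that $c_J\notin F$ for every $J\subsetneq K$. I would let $G$ be the filter generated by $F\cup\{b\}$. The instance $J=\varnothing$ gives $c_\varnothing=-b\notin F$, so $G$ is proper; plainly $F\subseteq G$ and $b\in G$, i.e. $G\in\widehat{a_K}$. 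For each $J\subsetneq K$ one computes that $a_J\in G$ iff some $f\in F$ has $f\wedge b\leq a_J$, i.e. $f\leq -b\vee a_J=c_J$, i.e. $c_J\in F$; since $c_J\notin F$, this gives $a_J\notin G$, that is $G\notin\widehat{a_J}$. Hence $y:=G\geqslant x=F$ is the desired point.

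The main obstacle is part \ref{partition2c}: the work is to recognize that the opaque hypothesis $x\notin\neg\bigvee_{k\in K}U_k\vee\bigvee_{j\in J}U_j$ is precisely the condition $c_J\notin F$ that simultaneously makes the generated filter $G$ proper (its $J=\varnothing$ instance) and keeps $G$ off each $\widehat{a_J}$, together with handling the $K=\varnothing$ reading. A purely point-set verification of parts \ref{partition2a}--\ref{partition2b}---that the regions $\bigvee_{k\in K}U_k$ overlap only in $\bigvee_{k\in K\cap K'}U_k$, so that the ``types'' $K_F$ are well defined---can instead be organized around Fact~\ref{TopFact}, since disjointness from an open set is preserved under $\mathsf{int}(\mathsf{cl}(\cdot))$; but passing to the filters of $\mathbb{A}$ reduces all three conditions to the Boolean computations above.
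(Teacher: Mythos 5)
Your proposal is correct, and it is essentially the paper's proof transported across the duality: where you pass to $X=UV(\mathbb{A})$ and compute with proper filters of $\mathbb{A}=\mathsf{CO}\mathcal{RO}(X)$, the paper stays inside the space and invokes the filter-realization axiom (Definition \ref{VOspace}.\ref{PossCompact}) at exactly the spots where you realize your filters as points: your $\mathord{\uparrow}a_K$ in part \ref{partition2b} is the paper's realization of the proper filter $\{V\in\mathsf{CO}\mathcal{RO}(X)\mid \bigvee_{k\in K}U_k\subseteq V\}$, and your $G$ in part \ref{partition2c} is the paper's filter generated by $\mathsf{CO}\mathcal{RO}(x)\cup\{\bigvee_{k\in K}U_k\}$, with your residuation step ($f\wedge b\leq a_J$ iff $f\leq c_J$) making explicit the properness and avoidance claims the paper compresses into ``it follows that.'' The genuine divergence is part \ref{partition2a}: the paper proves uniqueness of $K$ by a point-level argument using the specialization order and Fact \ref{TopFact} (a point with two incomparable types would have a successor in some $U_{k'}$ disjoint from $\bigcup_{k\in K}U_k$), whereas your identity $a_K\wedge a_{K'}=a_{K\cap K'}$ makes your $\mathcal{K}_F$ a filter in the finite lattice $\wp(\{1,\dots,m\})$, hence principal, yielding existence and uniqueness in one stroke and dispensing with Fact \ref{TopFact} altogether --- a tidier argument, at the cost of first cashing out the duality bookkeeping. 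Your side remarks also track real features of the paper's proof: nonemptiness of the partition pieces (needed for properness in part \ref{partition2b}) comes only from the construction in Propositions \ref{ChainsAnti} and \ref{prop: partition}, not from the latter's statement, and $X$ must be infinite though the corollary's statement omits it; and your exclusion of $K=\varnothing$ in part \ref{partition2c} is forced in the paper's proof for the same reason you give, since its properness claim likewise rests on the $J=\varnothing$ instance of the hypothesis, which exists only when $K\neq\varnothing$.
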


\begin{proof} By Proposition \ref{prop: partition}, there is a family $\{U_1,\dots, U_m\}$ of pairwise disjoint $\mathsf{CO}\mathcal{RO}(X)$ sets such that $X= U_1\vee\dots\vee U_m$. It follows that for each $x\in X$, there is a $K\subseteq \{1,\dots, m\}$ such that $x\in \bigvee_{k\in K} U_k$ and such that $x\not\in \bigvee_{j\in J} U_j$ for each $J\subsetneq K$. It remains to show that this $K$ is unique. Suppose not, so there is a $K'\subseteq \{1,\dots, m\}$ such that $K'\neq K$, $x\in \bigvee_{k\in K'} U_k$, and $x\not\in \bigvee_{j\in J'} U_j$ for each $J'\subsetneq K'$. Since $K'\not\subseteq K$, pick $k'\in K'\setminus K$. Since $x\in \bigvee_{k\in K'} U_k$ but $x\not\in \bigvee_{j\in K'\setminus \{k'\}} U_j$, it follows that there is some $y\geqslant x$ such that $y\in U_{k'}$. Since $U_{k'}$ is disjoint from $\bigcup_{k\in K} U_k$, we have $y\not\in \bigvee_{k\in K} U_k$ by Fact \ref{TopFact} and the equation for join in terms of $\mathsf{int}_\leqslant \mathsf{cl}_\leqslant$ and union (Proposition \ref{COROBA}). But then since $y\geqslant x$, we have $x\not\in \bigvee_{k\in K} U_k$, contradicting our assumption. Thus, $K$ is unique.

For part 2, let $K\subseteq  \{1,\dots, m\}$ and $K\neq\varnothing$. Since $\{V\in \mathsf{CO}\mathcal{RO}(X)\mid \bigvee_{k\in K}U_k\subseteq V\}$ is a proper filter in $\mathsf{CO}\mathcal{RO}(X)$, it follows by the definition of a UV-space (Definition \ref{VOspace}.\ref{PossCompact}) that there is some $x\in X$ such that $\mathsf{CO}\mathcal{RO}(x)=\{V\in \mathsf{CO}\mathcal{RO}(X)\mid \bigvee_{k\in K}U_k\subseteq V\}$. Now suppose $J\subsetneq K$ and consider some $i\in K\setminus J$. Hence by Fact \ref{TopFact}, $U_i$ is disjoint from $\bigvee_{j\in J}U_j$. Thus, $\bigvee_{k\in K}U_k\not\subseteq \bigvee_{j\in J}U_j$, which with $\mathsf{CO}\mathcal{RO}(x)=\{V\in \mathsf{CO}\mathcal{RO}(X)\mid \bigvee_{k\in K}U_k\subseteq V\}$ implies that $\bigvee_{j\in J}U_j\not\in \mathsf{CO}\mathcal{RO}(x)$, i.e., $x\not\in \bigvee_{j\in J}U_j$.

For part 3, let $K\subseteq  \{1,\dots, m\}$, and suppose  $x\not\in \neg \bigvee_{k\in K} U_k \vee \bigvee_{j\in J} U_j $ for each $J\subsetneq K$. It follows that the filter $F$ in $\mathsf{CO}\mathcal{RO}(X)$ generated by $\mathsf{CO}\mathcal{RO}(x)\cup \{ \bigvee_{k\in K}U_k\}$ is a proper filter such that $\bigvee_{j\in J} U_j\not\in F$ for each $J\subsetneq K$. Then by the definition of a UV-space (Definition \ref{VOspace}.\ref{PossCompact}), there is some $y\in X$ such that $\mathsf{CO}\mathcal{RO}(y)=F$. Hence $\mathsf{CO}\mathcal{RO}(x)\subseteq\mathsf{CO}\mathcal{RO}(y)$, which implies $x\leqslant y$ by the definition of a UV-space (Definition \ref{VOspace}), and $y\in \bigvee_{k\in K}U_k$. Finally, for $J\subsetneq K$, from $\bigvee_{j\in J} U_j\not\in F$ and $F=\mathsf{CO}\mathcal{RO}(y)$, we have $y\not\in \bigvee_{j\in J}U_j$.\end{proof}

\begin{theorem}\label{SubalgebrasProp} Every infinite BA $\mathbb{B}$ contains subalgebras isomorphic to $\mathbb{B}_n$ for each positive integer $n$.
\end{theorem}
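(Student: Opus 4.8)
The plan is to prove the statement by duality, producing the copy of $\mathbb{B}_n$ as the image of the homomorphism dual to a surjection of spaces. By Theorem \ref{DualityThm} and Theorem \ref{MainRep}, $\mathbb{B}\cong\mathsf{CO}\mathcal{RO}(X)$ for its dual UV-space $X$, and $\mathbb{B}_n\cong\mathsf{CO}\mathcal{RO}(UV(\mathbb{B}_n))$. By Fact \ref{thm: surj-inj2}.\ref{inj2}, any \emph{surjective} UV-map $f\colon X\to UV(\mathbb{B}_n)$ induces an \emph{injective} homomorphism $f^+\colon\mathsf{CO}\mathcal{RO}(UV(\mathbb{B}_n))\to\mathsf{CO}\mathcal{RO}(X)$, whose image is a subalgebra of $\mathbb{B}$ isomorphic to $\mathbb{B}_n$. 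So it suffices to build such an $f$. Since $\mathbb{B}$ is infinite, $X$ is infinite, and the partition results are available.

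Concretely, I would set $m=n$ and take the pairwise disjoint family $\{U_1,\dots,U_n\}\subseteq\mathsf{CO}\mathcal{RO}(X)$ from Corollary \ref{cor: partition2}. Identifying $\mathbb{B}_n$ with $\wp(\{1,\dots,n\})$ (so its atoms are singletons) and writing $U_K=\bigvee_{k\in K}U_k$, the assignment $K\mapsto U_K$ is a Boolean embedding of $\mathbb{B}_n$ into $\mathsf{CO}\mathcal{RO}(X)$; in particular $U_{K\cap K'}=U_K\cap U_{K'}$, $U_\varnothing=\varnothing$, and $\neg U_K=U_{K^c}$ by Proposition \ref{COROBA}. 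Define $f(x)=\{K\mid x\in U_K\}$. This is upward closed, closed under $\cap$, and omits $\varnothing$, so it is a proper filter of $\mathbb{B}_n$; Corollary \ref{cor: partition2}.\ref{partition2a} identifies it as the principal filter $\mathord{\uparrow}K_x$ for the unique minimal index set $K_x$ of $x$. Thus $f\colon X\to UV(\mathbb{B}_n)$ is well defined, recalling that the specialization order of $UV(\mathbb{B}_n)$ is inclusion (Proposition \ref{IsSpectral}.\ref{Specialization}).

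Then I would check that $f$ is a surjective UV-map. Each subbasic open of $UV(\mathbb{B}_n)$ is $\widehat{K}$, and $f^{-1}[\widehat{K}]=\{x\mid x\in U_K\}=U_K\in\mathsf{CO}(X)$, so $f$ is spectral by Lemma \ref{SubbasisLem}. Surjectivity is exactly Corollary \ref{cor: partition2}.\ref{partition2b}. For the p-morphism condition, suppose $f(x)\subseteq G'$ with $G'=\mathord{\uparrow}K$. Since $K\not\subseteq K^c\cup J$ whenever $J\subsetneq K$, the inclusion $f(x)\subseteq\mathord{\uparrow}K$ forces $x\notin U_{K^c\cup J}=\neg U_K\vee U_J$ for every such $J$; this is precisely the antecedent of Corollary \ref{cor: partition2}.\ref{partition2c}, whose conclusion furnishes $y\geqslant x$ with $y\in U_K$ and $y\notin U_J$ for all $J\subsetneq K$, i.e.\ $f(y)=\mathord{\uparrow}K=G'$.

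The step I expect to require the most care is the p-morphism verification: correctly translating the space-level inclusion $f(x)\leqslant G'$ into the combinatorial antecedent of Corollary \ref{cor: partition2}.\ref{partition2c} (via the complement bookkeeping $\neg U_K\vee U_J=U_{K^c\cup J}$ together with the observation that $K\not\subseteq K^c\cup J$ for $J\subsetneq K$), and confirming that its conclusion pins down $f(y)$ to be exactly $\mathord{\uparrow}K$ rather than a strictly larger filter. Everything else is bookkeeping: once $f$ is confirmed to be a surjective UV-map, Fact \ref{thm: surj-inj2}.\ref{inj2} yields the desired embedding $\mathbb{B}_n\hookrightarrow\mathbb{B}$, uniformly for every positive integer $n$.
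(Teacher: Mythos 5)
Your proposal is correct and follows essentially the same route as the paper's proof: both reduce the theorem via duality (Fact \ref{thm: surj-inj2}) to constructing a surjective UV-map from $X$ onto the dual of $\mathbb{B}_n$, both build that map from the regular partition of Corollary \ref{cor: partition2}, and both verify spectrality via Lemma \ref{SubbasisLem}, surjectivity via Corollary \ref{cor: partition2}.\ref{partition2b}, and the p-morphism condition via Corollary \ref{cor: partition2}.\ref{partition2c}. The only difference is notational: you work concretely with $\mathbb{B}_n\cong\wp(\{1,\dots,n\})$ and principal filters $\mathord{\uparrow}K$ in $UV(\mathbb{B}_n)$, where the paper works with the abstract finite UV-space $X_n$ and writes the same points as meets $\bigwedge_{k\in K}x_k$ of its maximal elements.
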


\begin{proof} 
Let $X$ be the infinite UV-space dual to $\mathbb{B}$ and $X_n$ the finite UV-space dual to $\mathbb{B}_n$. By duality, it suffices to show there is a surjective UV-map  $f$ from $X$ onto $X_n$.  Let $x_1,\dots, x_m$ be the maximal elements of $X_n$.  By Corollary \ref{UVspectral}.\ref{UVspectral4.5}, $x_1,\dots, x_m$ are the co-atoms of a Boolean algebra obtained by adding a top node to $X_n$. Therefore, we have that (a) for every $y\in X_n$, there is a unique $K\subseteq \{1,\dots,m\}$ such that $y=\bigwedge_{k\in K} x_k$. 

Take a family $\{U_1,\dots, U_m\}$ of $\mathsf{CO}\mathcal{RO}(X)$ sets as in Corollary \ref{cor: partition2}.  By Corollary \ref{cor: partition2}.\ref{partition2a}, for each $x\in X$, there is a unique $K\subseteq\{1,\dots,m\}$ such that $x\in \bigvee_{k\in K} U_k$ and  $x\not\in \bigvee_{j\in J} U_j$ for each $J\subsetneq K$. Let $f(x)=\bigwedge_{k\in K} x_k$. Then by (a) and Corollary \ref{cor: partition2}.\ref{partition2b}, $f$ is surjective.

Now we show that $f$  is a UV-map. First note that the compact opens of $X_n$ are exactly the upsets of $X_n$ with respect to $\leqslant$. Now let $y\in X_n$ be such that  $y=\bigwedge_{i\in I} x_i$ for some $I\subseteq \{1,\dots,m\}$. Then it follows from the definition of 
$f$ that $f^{-1}[{\Uparrow}y] = \bigvee_{i\in I} U_i \in \mathsf{CO}\mathcal{RO}(X)$. Now let $U\subseteq X_n$. Then $U = \bigcup_{y\in U} {\Uparrow}y$ and $f^{-1}[U] = f^{-1} [ \bigcup_{y\in U} {\Uparrow}y] = \bigcup_{y\in U} f^{-1}[{\Uparrow}y]$. 
Since the collection of compact open sets is closed under finite unions, we obtain that $f^{-1}[U] $ is compact open in $X$. Therefore, $f$ is a spectral map. 

Finally, suppose $f(x)\leqslant y'$.  Then there are $I, K\subseteq \{1,\dots,m\}$ such that $I\subseteq K$, $y' =   \bigwedge_{i\in I} x_i$, $f(x) = \bigwedge_{k\in K} x_k$, and $K$ is the unique subset of $\{1,\dots,m\}$ such that $x\in \bigvee_{k\in K}U_k$ and (b) $x\not\in \bigvee_{j\in J}U_j$ for each $J\subsetneq K$. We claim  there is a $y\geqslant x$ such that (c) $y\in \bigvee_{i\in I}U_i$ and $y\not\in \bigvee_{\ell\in L}U_\ell$ for each $L\subsetneq I$. By Corollary \ref{cor: partition2}.\ref{partition2c}, it suffices to show that $x\not\in \neg \bigvee_{i\in I}U_i\vee \bigvee_{\ell\in L}U_\ell$ for each $L\subsetneq I$. For contradiction, suppose $x\in \neg \bigvee_{i\in I}U_i\vee \bigvee_{\ell\in L}U_\ell$ for some $L\subsetneq I$. Then since $I\subseteq K$ and $x\in \bigvee_{k\in K}U_k$, it follows that $x\in \bigvee_{k\in K\setminus (I\setminus L)}U_k$, which contradicts (b). By (c) and the definition of $f$, we have $f(y)=\bigwedge_{i\in I} x_i=y'$. Thus, $f$ is a UV-map.\end{proof} 

\section{Perspectives on UV-spaces assuming choice}\label{ChoiceSection} In this penultimate section, we briefly discuss some results about UV-spaces that can be proved under the assumption of the Boolean Prime Ideal Theorem (BPI).

\subsection{UV-spaces as upper Vietoris spaces of Stone spaces}\label{UVStoneSection} Recall from Definition \ref{UVStoneDef} that for a Stone space $X$, $\mathscr{UV}(X)$ is the hyperspace of nonempty closed subsets of $X$ endowed with the upper Vietoris topology. We already observed (Corollary \ref{StoneCor}) that $\mathscr{UV}(X)$ is a UV-space. Assuming the BPI, every UV-space arises homeomorphically in this way.

\begin{prop} Assuming the BPI, every UV-space is homeomorphic to $\mathscr{UV}(X)$ for some Stone space $X$.
\end{prop}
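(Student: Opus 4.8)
The plan is to reduce the claim to the representation results already established, with the Boolean Prime Ideal Theorem entering at exactly one point. Given a UV-space $X$, I would first form the Boolean algebra $\mathbb{A}=\mathsf{CO}\mathcal{RO}(X)$, which is a BA by Proposition \ref{COROBA}. By Theorem \ref{SecondThm}.\ref{SecondThmB}, $X$ is homeomorphic to $UV(\mathsf{CO}\mathcal{RO}(X))=UV(\mathbb{A})$. So it suffices to exhibit a Stone space $Y$ for which $\mathscr{UV}(Y)$ is homeomorphic to $UV(\mathbb{A})$.

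Next I would invoke the BPI to pass to the Stone dual $Y=\mathrm{Stone}(\mathbb{A})$, so that $\mathbb{A}$ is isomorphic to $\mathsf{Clop}(Y)$ via the standard Stone representation. This is the only nonconstructive step and is precisely where the BPI is used: the existence of enough ultrafilters to represent $\mathbb{A}$ as the clopen algebra of its Stone space is equivalent to the BPI. Everything else in the argument is choice-free.

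Finally I would chain the homeomorphisms together. By Proposition \ref{UVStone}, $\mathscr{UV}(Y)$ is homeomorphic to $UV(\mathsf{Clop}(Y))$. Since $\mathbb{A}\cong\mathsf{Clop}(Y)$ and the construction $UV(\cdot)$ sends isomorphic BAs to homeomorphic spaces---a BA isomorphism $g\colon\mathbb{A}\to\mathsf{Clop}(Y)$ induces the bijection $F\mapsto g[F]$ between proper filters carrying each $\widehat{a}$ to $\widehat{g(a)}$, hence a homeomorphism, and this also follows from functoriality in Theorem \ref{DualityThm}---we have $UV(\mathsf{Clop}(Y))\cong UV(\mathbb{A})$. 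Combining the three homeomorphisms gives $X\cong UV(\mathbb{A})\cong UV(\mathsf{Clop}(Y))\cong\mathscr{UV}(Y)$, so $X$ is homeomorphic to $\mathscr{UV}(Y)$ for the Stone space $Y$.

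There is no serious technical obstacle here, since the statement is a recombination of results already proved; the only point that demands care is isolating exactly which link in the chain requires choice. That link is the passage $\mathbb{A}\cong\mathsf{Clop}(\mathrm{Stone}(\mathbb{A}))$, and the value of the lemma is precisely to make transparent that this single use of the BPI is what separates the present statement from the choice-free localic analogue in Theorem \ref{UVJustification}.
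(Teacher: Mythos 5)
Your proof is correct and follows essentially the same route as the paper's: both pass between the UV-space and $UV(\mathsf{CO}\mathcal{RO}(X))$ via Theorem \ref{SecondThm}.\ref{SecondThmB}, take the Stone dual of $\mathsf{CO}\mathcal{RO}(X)$ using the BPI, and apply Proposition \ref{UVStone} to identify the upper Vietoris hyperspace of that Stone space with $UV$ of its clopen algebra. Your only addition is to spell out that a BA isomorphism induces a homeomorphism of the corresponding $UV$-spaces, a step the paper leaves implicit when it ``combines the previous two facts.''
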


\begin{proof} Let $Y$ be a UV-space. Let $X$ be the Stone dual of $\mathsf{CO}\mathcal{RO}(Y)$, so $\mathsf{Clop}(X)$ is isomorphic to $\mathsf{CO}\mathcal{RO}(Y)$. By Proposition \ref{UVStone}, $\mathscr{UV}(X)$ is homeomorphic to $UV(\mathsf{Clop}(X))$. Combining the previous two facts, we have that $\mathscr{UV}(X)$ is homeomorphic to $UV(\mathsf{CO}\mathcal{RO}(Y))$, which is homeomorphic to $Y$ by Theorem \ref{SecondThm}.\ref{SecondThmB}. Thus, $Y$ is homeomorphic to $\mathscr{UV}(X)$.\end{proof}

\subsection{Equivalent Priestley spaces assuming choice}\label{Priestley} In this subsection, we relate UV-spaces to Priestley spaces \cite{Priestley1970}. For convenience, we now keep track of the topology $\tau$ of a space explicitly.

For a spectral space $(X,\tau)$, its corresponding Priestley space $(X, \leq, \tau^+)$ is defined as follows: $\tau^+$ is the patch topology of $\tau$, i.e., the topology generated by $\tau \cup\{X\setminus U \mid U\in \tau\}$ as a subbasis, and $\leq$ is the specialization order of $\tau$. Note that the BPI, in its equivalent form as the Alexander Subbasis Theorem, is used already in showing that the patch topology of a spectral topology is 
compact. Conversely,  if $(X, \leq, \tau)$ is a Priestley space, then $X$ together with the topology given  
by open upsets of $(X, \leq, \tau)$ is a spectral space.  It is well known that $U\subseteq X$ is compact open in a spectral space iff $U$ is a clopen upset in the associated Priestley space, with the right-to-left direction using the~BPI.

Since UV-spaces are spectral, given a UV-space $(X, \tau)$ we can consider the corresponding Priestley space  $(X, \leq, \tau^+)$. It is easy to see that $U\subseteq X$ is 
$\mathsf{CO}\mathcal{RO}$ in $(X, \tau)$ iff $U$ is a clopen $\mathcal{RO}$ subset of $(X, \leq, \tau^+)$, where $\mathcal{RO}$ is now taken with respect to $\leq$. 
Let $\mathsf{Clop}\mathcal{RO}(X)$ be the set of clopen $\mathcal{RO}$ subsets of  $(X, \leq, \tau)$.
Then the definition of a UV-space easily translates into 
the following definition in terms of Priestley spaces.   

\begin{definition}
A Priestley space $(X, \leq, \tau)$ is a \emph{UV-Priestley space}  iff:
\begin{enumerate}
\item $\mathsf{Clop}\mathcal{RO}(X)$ is closed under  $\mathsf{int}_\leq (X\setminus \cdot)$;

\item if $x\nleq y$, then there is a $U\in  \mathsf{Clop}\mathcal{RO}(X)$ such that $x\in U$ and $y\notin U$.\footnote{Note that if we had only required that $U$ be a clopen upset, then part 2 would be exactly the Priestley separation axiom.} 

\item every proper filter in $\mathsf{Clop}\mathcal{RO}(X)$ is $\mathsf{Clop}\mathcal{RO}(x)$ for some $x \in  X$, where $\mathsf{Clop}\mathcal{RO}(x) = \{U\in \mathsf{Clop}\mathcal{RO}(X) \mid x\in U\}$.
\end{enumerate}
\end{definition}

It is easy to verify that if $(X, \tau)$ is a UV-space, then $(X, \leq,\tau^+)$ is a UV-Priestley space, and if
$(X, \leq,\tau)$ is a UV-Priestley space, then $X$ together with the topology given by open upsets of $(X, \leq,\tau)$ is a UV-space. 
Moreover, given a UV-Priestley space $(X, \leq,\tau)$, it is easy to see that $\mathsf{Clop}\mathcal{RO}(X)$ is a BA with meet as intersection and $\neg U = \mathsf{int}_\leqslant (X\setminus U)$. Conversely, given a BA $\mathbb{A}$, we obtain a dual UV-Priestley space $X$ based on the set of proper filters in $\mathbb{A}$ by defining $\leq$ as $\subseteq$ and generating a topology by declaring $\{\widehat{a}, \mathrm{PropFilt}(\mathbb{A})\setminus \widehat{a}\mid a\in \mathbb{A}\}$ as a subbasis. It is easy to see that this is the same as taking the UV-space dual 
to $\mathbb{A}$ and considering its corresponding UV-Priestley space. Then $\mathbb{A}$ is isomorphic to the BA $\mathsf{Clop}\mathcal{RO}(X)$, and each UV-Priestley space $Y$ is order-homeomorphic to the dual of $\mathsf{Clop}\mathcal{RO}(Y)$.

Next we discuss morphisms, which are the obvious adaptation of the UV-maps of Definition \ref{UVmapDef} to the Priestley setting.

\begin{definition}
 A map $f:X\to X'$ between UV-Priestley spaces is called a \emph{UV-Priestley morphism} iff it is a Priestley morphism (i.e., continuous and order-preserving) satisfying the p-morphism condition:
 \begin{center} if $f(x) \leq' y'$, then  $\exists y$: $x \leq  y$ and $f(y)=y'$.
 \end{center}
\end{definition}

Assuming the BPI, it is easy to show that the category of UV-spaces and UV-maps is isomorphic to the category of UV-Priestley spaces and UV-Priestley morphisms, which is therefore dually equivalent to the category of BAs and BA homomorphisms by Theorem \ref{DualityThm}.

One can also develop a duality dictionary for this duality similar to the one discussed in Section 6. But we will not do so here, as our primary goal is to study the setting of choice-free dualities for BAs.

Just as one can move freely between Priestley spaces and the \textit{pairwise Stone spaces} of \cite{BBGK10}, one can also move freely between UV-Priestley spaces and analogous pairwise UV-spaces. We omit the details, as they are straightforward to reconstruct based on the information above and in \cite{BBGK10}.
 
\subsection{Goldblatt's representation of ortholattices}\label{GoldblattSection} Our choice-free duality for BAs is related to Goldblatt's \cite{Goldblatt1975} representation of ortholattices. An ortholattice is a bounded lattice equipped with an additional unary operation $'$ such that $a\wedge a'=\zero$, $a\vee a'=1$, $a''=a$, and $a\leq b$ only if $b'\leq a'$. Goldblatt showed that ortholattices can be represented using a Stone space $X$ equipped with a symmetric and irreflexive relation $\bot$. A subset $U\subseteq X$ is \textit{$\bot$-regular} iff $U=U^{\bot\bot}$ where $V^\bot=\{x\in X\mid x\bot y \mbox{ for all }y\in V\}$. The collection of all $\bot$-regular subsets ordered by inclusion forms a complete ortholattice with $'$ as $^\bot$. Conversely, every complete ortholattice is isomorphic to the collection of $\bot$-regular subsets with $^\bot$ coming from a set with a symmetric and irreflexive relation $\bot$. To represent an arbitrary ortholattice $L$, Goldblatt defined a space $X$ with a binary relation $\bot$ as follows:
\begin{enumerate}
\item the underlying set of $X$ is the set of all proper filters of $L$;
\item for $F,G\in X$, let $F \bot G$ iff there is some $a\in F$ such that $a'\in G$;
\item the topology of $X$ is generated by the collection of sets $\widehat{a}$ and $X\setminus \widehat{a}$ as a subbasis, i.e., the patch topology associated with $\tau=\{\widehat{a}\mid a\in L\}$.
\end{enumerate}
Assuming the BPI, Goldblatt proved that $X$ is a Stone space and $L$ is isomorphic to the collection of \textit{clopen $\bot$-regular} sets ordered by inclusion with the operation $^\bot$. Since every BA is an ortholattice with $'$ as Boolean complement, this representation applies to BAs. Like our representation of BAs, it uses the proper filters of $L$. Indeed, Goldblatt's representation applied to BAs is essentially the UV-Priestley representation discussed in Section \ref{Priestley} but using the incompatibility relation $\bot$ between proper filters instead of the inclusion order on proper filters, which is the specialization order of $\tau$. It is easy to see that for a BA, the $\bot$-regular sets are exactly the regular open sets with respect to the inclusion order. 

There are two important differences between Goldblatt's representation applied to BAs and ours. First, because we work with the spectral topology $\tau$ instead of the patch topology, we do not need the extra datum of the relation $\bot$; the regular sets can be defined simply in terms of the specialization order of the space. Thus, we can work with spaces instead of spaces plus a binary relation. Second, because we work with the spectral topology $\tau$ instead of the patch topology, we do not require the nonconstructive BPI.

\section{Conclusion}\label{Conclusion} We have developed a full choice-free duality for BAs in terms of UV-spaces. We showed how to translate, via this duality, the main algebraic concepts and constructions into topological terms. We also gave several sample applications of this duality in the form of choice-free proofs, using spatial intuition essentially, of some basic facts about BAs.

The distinguishing features of the duality for BAs in this paper are that (a) the duals of BAs are topological spaces and (b) the duality is choice-free. Standard Stone duality satisfies (a) but not (b). The pointfree duality using Stone locales satisfies (b) but not (a). To draw a contrast with the localic approach, we characterized our approach to choice-free Stone duality as the \textit{hyperspace approach}. The choice-freeness is achieved by not working with Stone spaces, but rather with UV-spaces, examples of which are given by the upper Vietoris hyperspace of a Stone space. Assuming choice, all UV-spaces arise homeomorphically in this way; but we do not need this assumption to carry out our duality for BAs.

Though we have concentrated on BAs, we believe that choice-free
duality does not end here. In future work, we aim to generalize the strategy of
this paper to obtain choice-free spatial dualities for other classes of algebras (connecting with work in \cite{Massas2016}), giving rise to choice-free completeness proofs for non-classical logics. We hope that this can be the beginning of a new area of choice-free
duality in non-classical logic and beyond.\\

\subsection*{Acknowledgement}
For helpful feedback, we wish to thank Johan van Benthem, Benno van den Berg, Guram Bezhanishvili, Yifeng Ding, David Gabelaia, Tom\'{a}\v{s} Jakl, Mamuka Jibladze, Frederik Lauridsen, Vincenzo Marra, Shezad Mohamed, Floris Sluijter, Joran van Weel, and the anonymous referee for \textit{The~Journal~of~Symbolic~Logic}. We are also grateful for comments from audiences at ALCOP 2017 in Glasgow, the 2017 Algebra and Coalgebra Seminar at the ILLC, University of Amsterdam, ToLo 2018 in Tbilisi, and BLAST 2018 in Denver.

 \bibliographystyle{asl}
  \bibliography{choicefree}

\end{document}